\newtheorem{stuff}{Stuff}[section]
\newtheorem{theorem}[stuff]{\bf Theorem}
\newtheorem{proposition}[stuff]{\bf Proposition}
\newtheorem{lemma}[stuff]{\bf Lemma}
\newtheorem{corollary}[stuff]{\bf Corollary}
\newenvironment{definition}{%
\vskip1ex\refstepcounter{stuff}\trivlist \itemindent 0pt
\item[\hskip\labelsep\bf Definition \thestuff.]%
\ignorespaces}{\endtrivlist\vskip1ex}%
\newenvironment{remark}{%
\vskip1ex\refstepcounter{stuff}\trivlist \itemindent 0pt
\item[\hskip\labelsep\bf Remark \thestuff.]%
\ignorespaces}{\endtrivlist\vskip1ex}%
\newenvironment{notation}{%
\vskip1ex\refstepcounter{stuff}\trivlist \itemindent 0pt
\item[\hskip\labelsep\bf Notation \thestuff.]%
\ignorespaces}{\endtrivlist\vskip1ex}%
\newenvironment{thm-nono}{
\vskip1ex\trivlist \itemindent 0pt
\item[\hskip\labelsep\bf Theorem.]%
\it\ignorespaces}{\endtrivlist\vskip1ex}%
\newtheorem{s-theorem}[sstuff]{\bf Theorem}
\newtheorem{s-proposition}[sstuff]{\bf Proposition}
\let\rar\rightarrow
\let\lar\longrightarrow
\let\hra\hookrightarrow
\let\mt\mapsto
\font\tenmsa=msam10 %
\newcommand\hdashpiece{%
{\vrule height2.75pt depth-2.35pt width2.3pt \kern1.7pt}}%
\newcommand\hdashpieces{%
{\hdashpiece\hdashpiece\hdashpiece\hdashpiece}}%
\newcommand\dashto{\mathrel{%
\hdashpiece\hdashpiece\kern-0.4pt\hbox{\tenmsa K}}}%
\newcommand\dashar{\mathrel{%
\hdashpieces\kern-0.4pt\hbox{\tenmsa K}}}%
\let\cal\mathcal
\let\mbb\mathbb
\let\mfrak\mathfrak
\let\bsymb\boldsymbol 
\DeclareFontFamily{OT1}{rsfs}{}
\DeclareFontShape{OT1}{rsfs}{n}{it}{<->rsfs10}{}
\DeclareMathAlphabet{\crl}{OT1}{rsfs}{n}{it}
\let\ovl\overline
\let\tld\tilde
\let\nit\noindent
\let\disp\displaystyle
\let\srel\stackrel
\let\vphi\varphi
\let\veps\varepsilon
\newcommand\rd{{\rm d}} 
\newcommand\Aut{\mathop{\rm Aut}\nolimits}
\newcommand\End{\mathop{\rm End}\nolimits}
\newcommand\Ext{\mathop{\rm Ext}\nolimits}
\newcommand\Hom{\mathop{\rm Hom}\nolimits}
\newcommand\Ker{{\rm Ker}}
\newcommand\Coker{{\rm Coker}}
\newcommand\Img{{\rm Im}}
\newcommand\NS{{\rm NS}}
\newcommand\rk{{\rm rk}}
\numberwithin{equation}{section}
\let\l\lambda
\let\L\Lambda
\let\O\Omega
\let\si\sigma
\let\th\theta
\let\Th\Theta
\let\sm\setminus
\newcommand\bone{{1\kern-0.57ex\rm l}}
\newcommand{\ouset}[3]{\hbox{$\overset{#2}{\underset{#1}{#3}}$}}
\newcommand\cF{{\cal F}}
\newcommand\cG{{\cal G}}
\newcommand\cO{{\cal O}}
\newcommand\T{\mathop{\sf T\kern-.2ex}\nolimits}
\newcommand\N{\mathop{\sf N\kern-.1ex}\nolimits}
\newcommand\pr{\mathop{\rm pr}\nolimits}
\newcommand\Sym{\mathop{\rm Sym}\nolimits}
\newcommand\codim{{\rm codim}}
\newcommand\Gl{{\rm Gl}}
\newcommand\PGl{{\rm PGl}}
\newcommand\ch{{\rm ch}}
\renewcommand\det{{\rm det}}
\newcommand\rel{{\rm rel}}
\let\ges\geqslant
\let\les\leqslant
\newcommand\mx{{\rm max}}
\newcommand\surj{{\rar\kern-1.85ex\rar}}
\newcommand\fbr{{\Phi}}
\newcommand\hn[2]{\cal #1^{L_{#2}\text{\rm-HN}}}
\newcommand\hnr[1]{\cal#1^{L\text{\rm-rel-HN}}}
\newcommand\ad{{\rm ad}}
\newcommand\al[1]{{\bigl[#1\bigr]}_{AL}}
\newcommand\LL{{L}}
\newcommand\invq{{/\!/\,}}
\newcommand\Supp{\mathop{\rm Supp}\nolimits}
\newcommand\Spec{\mathop{\rm Spec}\nolimits}
\newcommand\Tor{\mathop{\rm Tor}\nolimits}
\newcommand\ext{\mathop{\rm ext}\nolimits}
\newcommand{\vb}{{\rm vb}}
\newcommand\Hilb{\mathop{\rm Hilb}\nolimits}
\newcommand{\lft}{{\rm left}}
\newcommand{\rgt}{{\rm right}}
\title[Semi-stable vector bundles on fibred varieties]%
{Semi-stable vector bundles on fibred varieties}
\author{Mihai Halic}
\subjclass[2000]{14F05,14J60,14J26,14D20}
\address{
\begin{minipage}{30em}
\end{minipage}\vspace{1ex}
}
\begin{document}

\begin{abstract}
Let $\pi\,{:}\,Y{\to}X$ be a surjective morphism between two irreducible, smooth 
complex projective varieties with $\dim Y{>}\dim X{>}0$. We consider polarizations 
of the form $\LL_c=\LL+c\cdot\pi^*A$ on $Y$, with $c>0$, where $\LL,A$  are ample 
line bundles on $Y,X$ respectively. 

For $c$ sufficiently large, we show that the restriction of a torsion free sheaf $\cF$ 
on $Y$ to the generic fibre $\fbr$ of $\pi$ is semi-stable as soon as $\cF$ is 
$\LL_c$-semi-stable; conversely, if $\cF\otimes\cO_\fbr$ is $L$-stable on $\fbr$, 
then $\cF$ is $\LL_c$-stable. We obtain explicit lower bounds for $c$ satisfying 
these properties. Using this result, we discuss the construction of semi-stable 
vector bundles on Hirzebruch surfaces and on $\mbb P^2$-bundles over $\mbb P^1$, 
and establish the irreducibility and the rationality of the corresponding moduli spaces.
\end{abstract}

\maketitle

\section*{Introduction}

It is a non-trivial problem to explicitly exhibit (semi-)stable vector bundles in higher 
dimensions, and to study the geometric properties of the corresponding moduli spaces; 
these latter are mostly obtained as geometric invariant quotients of (large) quot schemes 
(see \cite{ma0,ma,si,la,hl}). 
Stable vector bundles of rank exceeding the dimension of the base, 
with large second Chern class are constructed in \cite[Appendix]{ma}. 
Other higher dimensional examples include the instanton bundles \cite{os,jar}, 
which generalize the well-known ADHM construction \cite{adhm,bh}. 
Also, the construction of instantons on $\mbb P^3$ was extended in \cite{ku} to Fano 
threefolds of index two, with cyclic Picard group. 

This article attempts to develop yet another method of constructing (semi-)stable sheaves. 
We investigate the relationship between the (semi-)stability of a sheaf on the total space 
of a fibre bundle, and the (semi-)stability of its restriction to the {\em generic} fibre. 
This is different from the relative semi-stability concept in \cite{ma,si}, where one 
requires that the restriction to {\em each} geometric fibre is semi-stable. 
Let $\pi:Y\to X$ be a surjective morphism between two irreducible, smooth complex 
projective varieties, with $d_Y{:=}\dim Y>d_X{:=}\dim X>0$. 
Such a $\pi$ will be called \emph{a fibration}. 
Let $A$ be an ample line bundle on $X$, and $L$ be a big, semi-ample (that is some 
power is globally generated), and $\pi$-ample line bundle on $Y$. For $c>0$, we denote 
$\LL_c:=L+c\cdot\pi^*A$, and define the slope of a torsion free sheaf $\cG$ on $Y$ 
with respect to $L_c,L,A$ by the formula  
$$
\mu_{\LL_c}(\cG):=
\frac{c_1(\cG)\LL_c A^{d_X-1}L^{d_Y-d_X-1}}{\text{rank}(\cG)}.
$$ 
The definition is inspired from \cite[pp. 260]{la}, which considers semi-stability with 
respect to a collection of nef divisors. One can interpret $\mu_{L_c}$ as the slope of 
the restriction of $\cG$ to a general (movable) curve cut out by (multiples of) $L_c,L,A$. 
This ties in with \cite{gt}, where is argued that in higher dimensions one should consider 
`polarizations' with respect to movable curves, rather than ample divisors. 
If $X$ is a curve, the formula coincides, after replacing $c$ by $(d_Y-1)c$, with the usual 
slope with respect to $L_c$. Moreover, regardless of $d_X$, $(\LL_c,L,A)$-semi-stability 
implies usual $\LL_c$-semi-stability, and, conversely, usual $\LL_c$-stability implies 
$(\LL_c,L,A)$-stability, for $c\gg0$. 

\begin{thm-nono}
Let $\cF$ be a torsion free sheaf of rank $r$ on $Y$. 
Then there is a constant $k_\cF$ such that the following hold: 
\begin{enumerate}
\item If $\cF$ is $\LL_c$-(semi-)stable with $c>k_\cF$, then the restriction of $\cF$ to 
the generic fibre of $\pi$ is semi-stable, and $\cF$ is $L_a$-(semi-)stable for all $a\ges c$.

\item If the restriction of $\cF$ to the generic fibre of $\pi$ is stable, 
then $\cF$ is $\LL_c$-stable for all $c>k_\cF$. 
\end{enumerate}
The same holds for principal $G$-bundles on $Y\!$, for connected, reductive, 
linear algebraic groups $G$. 
\end{thm-nono} 

Thus any $\LL_c$-(semi-)stable torsion free sheaf on $Y$, with $c\gg0$, determines 
a \emph{rational} map from $X$ to the (course) moduli space \cite{si} of $\pi$-relatively 
(semi-)stable sheaves on $Y$. Usually, this map \textit{does not extend} to $X$, 
which is the main difference to \emph{loc.~cit.} The result is a technical ingredient, 
a dimensional reduction, which is effective for varieties admitting fibrations onto lower 
dimensional varieties, such that one has \emph{a priori} knowledge about the 
(semi-)stable sheaves on the generic fibre. Polarizations of the form $\LL_c$, 
$c\gg0$, have been considered in \cite{fr,fmw} (and \cite{dng}) for vector 
(respectively principal) bundles on elliptically fibred surfaces, and in \cite{gl} 
on ruled surfaces. 

The theorem is proved in section \ref{sect:stab}, where we derive {\em two} distinct, 
{\em explicit} lower bounds for the constant $k_\cF$ above: they involve respectively 
the slope with respect to $L$ (see \ref{thm:main0}), and the discriminant of $\cF$ 
(see \ref{thm:main}). 
Our approach follows \cite[Theorem 5.3.2 and Remark 5.3.6]{hl}, where the result is 
proved for surfaces, and \cite[Section 3]{la}, where are developed higher dimensional 
techniques. For elliptically fibred surfaces, the result appears in \cite[Section 2]{fr}, 
\cite[Theorem 7.4]{fmw}. 

Sections \ref{sect:hirz} and \ref{sect:p21} illustrate the general principle with explicit 
examples. We study the moduli spaces of semi-stable vector bundles on 
\emph{Hirzebruch surfaces}, and on \emph{$\mbb P^2$-fibre bundles over $\mbb P^1$} 
respectively. Although each topic has its own intricacies, the underlying principle is the 
same: a semi-stable vector bundle on a fibration is a family of semi-stable vector bundles 
on the fibres. It is surprising that these topics are \emph{strongly connected}; 
for describing the geometry of vector bundles on $\mbb P^2$-fibrations over $\mbb P^1$, 
one needs to understand the case of Hirzebruch surfaces first. Thus, our approach yields a 
\emph{unified treatment}, and indeed generalizes \emph{several scattered subject matters}.
\medskip 

The \emph{former example}, that of stable vector bundles on Hirzebruch surfaces, 
was investigated in \cite{bu,nak,c-mr}, where the authors describe the geometry of the 
corresponding moduli spaces. 
We also mention \cite{ar}, for proving the non-emptiness and irreducibility of the moduli 
space of stable vector bundles with $c_1=0$ on a large class of rational surfaces, including 
the Hirzebruch surfaces.  
The last few years experienced a revived interest \cite{bbr} in constructing and 
understanding the properties of the moduli spaces of framed torsion free sheaves on 
Hirzebruch surfaces. 
Compared with the references above, we emphasize the \emph{brevity} and the 
\emph{detail} of our description of the geometry of the moduli space 
$\bar M^{L_c}_{Y_\ell}(r;0,n)$ of $L_c$-semi-stable rank $r$, torsion free sheaves on 
the Hirzebruch surface $Y_\ell$, with $c_1=0,c_2=n$. Theorem \ref{thm:hirz} reveals 
the existence of a stratification of $\bar M^{L_c}_{Y_\ell}(r;0,n)$ by locally closed 
strata, and the density of the stable vector bundles. Furthermore, we prove in \ref{thm:hilb} 
and \ref{thm:rtl} the existence of a surjective morphism onto 
${\rm Hilb}^n_{\mbb P^1}\cong\mbb P^n$, the Hilbert scheme of $n$ points on 
$\mbb P^1$. For $n=c_2=2$, the existence of this morphism is obtained in \cite{bu} 
by using monad theoretic techniques \cite{oss}, but is defined only for vector bundles. 
This morphism is the key for proving: 

\begin{thm-nono}\hskip-.5ex\text{\rm (see \ref{thm:rtl} and \ref{cor:P2}).}\hskip1ex 
$\bar M^{L_c}_{Y_\ell}(r;0,n)$ is rational, for any $n\ges r\ges 2$. 
Hence, for $\ell=1$, it follows that the moduli space $M_{\mbb P^2}(r;0,n)$ of stable 
rank $r$ vector bundles on $\mbb P^2$, with $c_1=0, c_2=n$ and $n\ges r$, is rational. 
\end{thm-nono}
The result should be compared with \cite{c-mr}, where is proved that 
$\bar M^{L_c}_{Y_\ell}(r;c_1,n)$ is rational for any $c_1$, under the assumption that 
the discriminant $2rn-(r-1)c_1^2$ is very large, but without giving any bounds. 
The rationality of $M_{\mbb P^2}(r;c_1,c_2)$ has been intensely studied over the past 
decades; see \cite{ba,ma3,es,mae} for $r=2$, \cite{kat,li} for $r=3$, \cite{yo,c-mr3} 
for arbitrary $r$. See also \cite{sch} for a quiver-theoretical approach. 
Most of these references prove rationality under some arithmetical restrictions on 
$r,c_1,c_2$. In our approach, we (almost) explicitly exhibit a rational variety which 
is birational to $M_{\mbb P^2}(r;0,n)$. 
\medskip

Our \emph{second example} concerns semi-stable vector bundles of arbitrary rank, 
with Chern classes $c_1=0, c_2=n\cdot[\cO_\pi(1)]^2, c_3=0$ on 
$Y_{a,b}=\mbb P\bigl(
\cO_{\mbb P^1}\oplus\cO_{\mbb P^1}(-a)\oplus\cO_{\mbb P^1}(-b)
\bigr)$. 
Here $0\les a\les b$ are two integers, 
so $\pi\,{:}\,Y_{a,b}\,{\to}\,\mbb P^1$ is a $\mbb P^2$-fibre bundle over $\mbb P^1$. 
Moduli spaces of \emph{rank two} vector bundles on $\mbb P^n$-bundles over curves 
were studied in \cite{c-mr2}, and more generally on Fano fibrations in \cite{nak2}, using 
extensions of rank one sheaves; thus the method is strongly adapted to the 
rank two case. 
In \ref{thm:p2-p1} we prove (as expected) that a semi-stable vector bundle on $Y_{a,b}$, 
satisfying some natural hypotheses, is the cohomology of a $1$-parameter family of 
monads on $\mbb P^2$. 
This \emph{generalizes} the construction of the \emph{instanton bundles} 
on $\mbb P^3$ trivialized along a line (see \cite{bh,do}). 

The next step is to investigate the geometric properties of the corresponding moduli space. 

\begin{thm-nono}\hskip-.5ex\text{\rm(see \ref{thm:MM}).}\hskip1ex
The moduli space of semi-stable vector bundles on $Y_{a,b}$ as above contains 
a non-empty `main component' which is irreducible, generically smooth, and rational.
\end{thm-nono}
The irreducibility of the full moduli space is a difficult issue, even in the case of 
$Y_{0,1}$, the blow-up of the $\mbb P^3$ along a line (see \cite{ti1,ti2}). 
This leads us to single out the main component of the full moduli space, 
in a similar vein to \cite{ti}. Let us remark that on threefolds, unlike for surfaces, 
the semi-stability and the Riemann-Roch formula are not sufficient to address the 
generic smoothness. 
Concerning the rationality issue, the author of this article could find only the reference 
\cite[Corollary 3.6]{c-mr2} dealing with the rationality of certain moduli spaces of 
rank two vector bundles on higher dimensional varieties.  
We prove that the main component is birational to the moduli space of framed vector 
bundles on a reducible surface (a wedge) obtained by glueing a plane and a Hirzebruch 
surface along a line. Then our conclusion follows from the results obtained before. 
Apparently, this is new even in the much studied case of $\mbb P^3$; 
the introduction of \cite{ti} mentions the rationality of the moduli space of instanton 
vector bundles, for $c_2=2,3,5$.

The results are stated for varieties defined over $\mbb C$. However, the usual base change 
arguments imply that they hold over any algebraically closed ground field of characteristic 
zero.


\section{Relative semi-stability for vector bundles}{\label{sect:stab}}

Let $\pi:Y\to X$ be a surjective morphism between irreducible, smooth, projective 
varieties with 
$$
d_Y:=\dim Y>d_X:=\dim X>0,
$$
and denote by $\fbr$ its generic fibre. We consider an ample line bundle $A$ on $X$, 
and a big, semi-ample, and $\pi$-ample line bundle $L$ on $Y$. For any $c>0$, 
we denote throughout the paper $\LL_c\,{:=}\,L+c\cdot\pi^*A$, and we let 
$\NS(Y)_{\mbb Q}$ be the Neron-Severi group of $Y$ with rational coefficients.
\smallskip 

\begin{definition}{\label{def:rel-ss}} 
\nit{\rm(i)} 
For a torsion free sheaf $\cG$ on $Y$, we denote 
$\xi_\cG:=\frac{c_1(\cG)}{\rk(\cG)}\in \NS(Y)_{\mbb Q}$, and define 
{\em the slope} of $\cG$ with respect to $\LL_c,\LL,A$ by the formula 
\begin{equation}\label{eq:Lc-ss}
\mu_{\LL_c}(\cG):=\xi_\cG\LL_c A^{d_X-1}L^{d_Y-d_X-1}
=\xi_\cG\cdot\bigl(
A^{d_X-1}L^{d_Y-d_X}+cA^{d_X}L^{d_Y-d_X-1}
\bigr).
\end{equation} 

\nit{\rm(ii)} 
The slope of a torsion free sheaf $\cG'$ on the generic fibre $\fbr$ is defined as 
$$
\mu_{L,\fbr}(\cG'):=\xi_{\cG'}\cdot A^{d_X}L^{d_Y-d_X-1}.
$$ 

\nit{\rm(iii)} 
The torsion free sheaf $\cF$ on $Y$ is {\em $\LL_c$-(semi-)stable} 
if for all saturated subsheaves $\cG\subset\cF$ holds  
\begin{equation}\label{eq:ss}
\mu_{\LL_c}(\cG)\underset{(\les)}{<}\mu_{\LL_c}(\cF).
\end{equation} 

\nit{\rm(iv)} 
We say that $\cF$ is {\em $\pi$-relatively $L$-(semi-)stable} 
if its restriction to the {\em generic} fibre of $Y\srel{\pi}{\rar} X$ 
is (semi-)stable with respect to $L\otimes\cO_\fbr$.
\end{definition}

\begin{notation}\label{eq:[x]} 
Let $\cG$ be a torsion free sheaf on $Y$. \\ 
\nit{\rm(i)} Henceforth we denote by $\cG_\Phi$ the restriction of $\cG$ 
to the generic fibre of $Y\to X$.
\smallskip 

\nit{\rm(ii)} For any $c>0$, we let $\hn{G}{c}$ be the maximal, saturated, 
$\LL_{c}$-de-semi-stabilizing subsheaf of $\cG$, that is the first term of 
its Harder-Narasimhan filtration with respect to $\LL_c$. We remark that, 
since $L$ is big and semi-ample on $Y$, one can still define $\hn{G}{}$, 
corresponding to $c=0$, by a limiting argument (see \cite[pp. 263]{la}).
\smallskip

\nit{\rm(iii)} Let $\hnr{G}$ be the (unique) maximal, saturated subsheaf of $\cG$, 
whose restriction to the generic fibre $\fbr$ is the first term of the Harder-Narasimhan 
filtration of $\cG_\fbr$ with respect to $L_\fbr$. It is defined as the sum of all the 
subsheaves $\cG'\subset\cG$ such that $\cG'_\fbr={(\cG_\fbr)}^{L_\fbr\text{-HN}}$. 
($\hnr{\cG}$ is called the first term of the $\pi$-relative Harder-Narasimhan 
filtration of $\cG$ with respect to the relatively ample line bundle $L$  
(See \cite[Section 2.3]{hl}).
\smallskip 

\nit{\rm(iv)} To save space, instead of the exact sequence 
$0{\to} A{\to} B{\to} C{\to} 0$ we will write $A{\subset} B\surj\,C$. 
\end{notation}

For a torsion free sheaf $\cF$ of rank $r$ on $Y$, we investigate the semi-stability of the 
restriction of $\cF$ to the generic fibre $\fbr$, given that $\cF$ is $L_c$-semi-stable. 
We prove that $\mu_{\LL_c}$-semi-stability implies $\pi$-relative semi-stability, 
and conversely, $\pi$-relative stability implies $\mu_{\LL_c}$-stability, for $c$ 
sufficiently large. The technical issue is to determine lower bounds for the parameter $c$, 
which guarantee these implications. 

The $L_c$-stability of a sheaf is an open property for $c>0$, independent of the relative 
semi-stability. One typically obtains different (semi-)stability conditions \eqref{eq:ss}, 
as the parameter $c\ges 0$ varies. The effect of the relative semi-stability is that of 
stabilizing the various concepts. 

\begin{lemma}\label{lm:eps}
{\rm(i)} 
The set $I:=\{c\in\mbb R_{>0}\mid \cF\text{ is }\LL_c\text{-(semi-)stable}\}$ 
is an interval. 

\nit{\rm(ii)} 
Assume that $\cF$ is $\LL_c$-(semi-)stable, and relatively semi-stable. 
Then $\cF$ is $\LL_{c+\veps}$-(semi-)stable, for all $\veps>0$. 
\end{lemma}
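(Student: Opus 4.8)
The plan is to exploit that, for any fixed saturated subsheaf $\cG\subset\cF$, the difference of slopes $\mu_{\LL_c}(\cF)-\mu_{\LL_c}(\cG)$ depends \emph{affinely} on the parameter $c$. Indeed, by \eqref{eq:Lc-ss} one has $\mu_{\LL_c}(\cF)-\mu_{\LL_c}(\cG)=p_\cG+c\,q_\cG$, where $p_\cG:=(\xi_\cF-\xi_\cG)\,A^{d_X-1}L^{d_Y-d_X}$ and $q_\cG:=(\xi_\cF-\xi_\cG)\,A^{d_X}L^{d_Y-d_X-1}$ (here and below $A$ abbreviates $\pi^*A$). By \eqref{eq:ss}, $\cF$ is $\LL_c$-semi-stable (resp.\ stable) exactly when $p_\cG+c\,q_\cG\ges0$ (resp.\ $>0$) for every saturated subsheaf $0\ne\cG\subsetneq\cF$, and the family of subsheaves being tested does \emph{not} depend on $c$.

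For part (i), I would argue by convexity. If $c_0,c_1\in I$ and $c=(1-t)c_0+tc_1$ with $t\in[0,1]$, then for every saturated $0\ne\cG\subsetneq\cF$
\[
p_\cG+c\,q_\cG=(1-t)\,(p_\cG+c_0q_\cG)+t\,(p_\cG+c_1q_\cG)
\]
is a convex combination of two non-negative (resp.\ positive) numbers, hence is itself $\ges0$ (resp.\ $>0$). Thus $c\in I$, so $I$ is convex, i.e.\ an interval.

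For part (ii), the extra ingredient is the sign of the leading coefficient $q_\cG$. Since $\cG$ is saturated, $\cF/\cG$ is torsion free, so for a general fibre $\fbr$ one has $\cG_\fbr\subset\cF_\fbr$, and $c_1$ commutes with this restriction; combining this with the fact that $(\pi^*A)^{d_X}$ is $\deg(A^{d_X})$ times the class of a general fibre $\fbr$, one sees that $q_\cG=\deg(A^{d_X})\bigl(\xi_{\cF_\fbr}-\xi_{\cG_\fbr}\bigr)(L|_\fbr)^{d_Y-d_X-1}$, a positive multiple of $\mu_{L,\fbr}(\cF_\fbr)-\mu_{L,\fbr}(\cG_\fbr)$ (cf.\ Definition \ref{def:rel-ss}(ii)). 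As $\cF$ is $\pi$-relatively $L$-semi-stable, $\mu_{L,\fbr}(\cG_\fbr)\les\mu_{L,\fbr}(\cF_\fbr)$ --- replacing $\cG_\fbr$ by its saturation inside $\cF_\fbr$ can only increase its slope --- hence $q_\cG\ges0$. Consequently, for every $\veps>0$,
\[
\mu_{\LL_{c+\veps}}(\cF)-\mu_{\LL_{c+\veps}}(\cG)=\bigl(p_\cG+c\,q_\cG\bigr)+\veps\,q_\cG\ges p_\cG+c\,q_\cG,
\]
and the right-hand side is $\ges0$ (resp.\ $>0$) because $\cF$ is $\LL_c$-(semi-)stable; this is exactly $\LL_{c+\veps}$-(semi-)stability.

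The argument becomes elementary once the affine dependence on $c$ is isolated. The one place I expect to need genuine (if minor) care is the bookkeeping in part (ii): verifying that restriction to the generic fibre identifies $q_\cG$, up to a fixed positive constant, with the difference of relative $L$-slopes --- i.e.\ the compatibility of $c_1$ with restriction and the inclusion $\cG_\fbr\subset\cF_\fbr$ for general $\fbr$ --- and observing that $d_Y-d_X-1\ges0$, so that every intersection number above is defined. Finally, putting (i) and (ii) together yields the expected refinement: once $\cF$ is relatively $L$-semi-stable, $I$ is a half-line, either $[k_\cF,\infty)$ or $(k_\cF,\infty)$.
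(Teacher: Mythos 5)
Your proof is correct and follows essentially the same route as the paper: part (i) is the same convexity observation that $\mu_{\LL_c}$ is affine in $c$, and part (ii) is the paper's identity $\mu_{\LL_{c+\veps}}(\cG)=\mu_{\LL_c}(\cG)+\veps\,\mu_{L,\fbr}(\cG_\fbr)$ combined with relative semi-stability, which you merely spell out in more detail (the identification of the leading coefficient with the difference of relative slopes and the saturation remark).
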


\begin{proof}
{\rm(i)} Let $a,b\in I$, and $a<c<b$. Then $c=(1-\l) a+\l b$ for some 
$\l\in(0,1)$, and for any subsheaf $\cG\subset\cF$ we have  
$\mu_{\LL_c}(\cG)=(1-\l)\cdot \mu_{\LL_a}(\cG)+\l\cdot \mu_{\LL_b}(\cG)$. 

\nit{(ii)} Indeed, one has 
$\mu_{L_{c+\veps}}(\cG)=\mu_{L_{c}}(\cG)+\veps\mu_{L,\fbr}(\cG_\fbr)$ 
for any saturated subsheaf $\cG\subset\cF$. 
\end{proof}

Finally, let us remark that the definition \eqref{eq:Lc-ss} of the slope 
differs from the usual one 
\begin{equation}\label{eq:binom}
\begin{array}{ll}
\mu^{\rm usual}_{L_c}(\cG)
&
:=
\xi_\cG L_c^{d_Y-1}
\\ 
&=
\xi_\cG\Big[
L^{d_Y-1}+\ldots
+c^{d_X-1}\binom{d_Y-1}{d_X-1}A^{d_X-1}L^{d_Y-d_X}
+c^{d_X}\binom{d_Y-1}{d_X}A^{d_X}L^{d_Y-d_X-1}
\Big].
\end{array}
\end{equation}
By using our result, we can compare the two (semi-)\-stability concepts. The outcome is 
analogous to the relationship between the Gieseker and the (usual) slope (semi-)stability.

\begin{proposition}\label{prop:compare} 
$\begin{array}[t]{lcll}
\LL_c\text{-stable }\eqref{eq:Lc-ss}
&\Rightarrow&
\text{usual }\LL_c\text{-stable }\eqref{eq:binom},&\text{ for }c\gg 0;
\\ 
\text{usual }\LL_c\text{-semi-stable }\eqref{eq:binom}
&\Rightarrow&
\LL_c\text{-semi-stable }\eqref{eq:Lc-ss},&\text{ for }c\gg0.
\end{array}$\\ 
Consequently, the main theorem still holds for (usually) $\LL_c$-(semi-)stable sheaves. 
\end{proposition}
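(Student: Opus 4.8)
The plan is to compare the two slope functions \eqref{eq:Lc-ss} and \eqref{eq:binom} by isolating their leading behaviour in $c$ and then invoking the main theorem to control the remaining terms. Write $\mu^{\rm usual}_{L_c}(\cG) = c^{d_X}\,\xi_\cG A^{d_X}L^{d_Y-d_X-1} + c^{d_X-1}\binom{d_Y-1}{d_X-1}\,\xi_\cG A^{d_X-1}L^{d_Y-d_X} + (\text{terms of degree}\le d_X-2\text{ in }c)$, while $\mu_{L_c}(\cG) = c\,\xi_\cG A^{d_X}L^{d_Y-d_X-1} + \xi_\cG A^{d_X-1}L^{d_Y-d_X}$. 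The key observation is that the coefficient of the top power $c^{d_X}$ in $\mu^{\rm usual}$ is, up to the positive factor $r=\rk(\cG)$, exactly $\mu_{L,\fbr}(\cG_\fbr)$, the degree of $\cG_\fbr$ on the generic fibre. So for a subsheaf $\cG\subset\cF$ one has, dividing by $c^{d_X}$,
\begin{equation*}
\frac{1}{c^{d_X}}\bigl(\mu^{\rm usual}_{L_c}(\cF)-\mu^{\rm usual}_{L_c}(\cG)\bigr)
= \bigl(\mu_{L,\fbr}(\cF_\fbr)-\mu_{L,\fbr}(\cG_\fbr)\bigr) + O(1/c).
\end{equation*}

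First I would prove the second implication. Suppose $\cF$ is usual $L_c$-semi-stable for some (hence, by the same interval/convexity argument as Lemma \ref{lm:eps}(i), a cofinal set of) large $c$. Restricting a usual-semi-stable sheaf to a general complete-intersection curve cut out by high multiples of $L_c$ shows $\cF_\fbr$ is semi-stable on $\fbr$ — alternatively, one argues directly that usual $L_c$-semi-stability for $c\gg0$ forces $\mu_{L,\fbr}(\cG_\fbr)\le\mu_{L,\fbr}(\cF_\fbr)$ for every saturated $\cG\subset\cF$ (otherwise the displayed leading coefficient would be strictly negative and semi-stability fails for $c$ large). Hence $\cF$ is $\pi$-relatively $L$-semi-stable. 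Now one of the two explicit bounds for $k_\cF$ from \ref{thm:main0}/\ref{thm:main} applies to $\cF$: its Harder--Narasimhan behaviour with respect to $L_c$ is eventually governed by the relative structure, and the main theorem gives that $\cF$ is $L_c$-semi-stable in the sense \eqref{eq:Lc-ss} for all $c>k_\cF$, which is what we want.

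For the first implication, assume $\cF$ is $L_c$-stable \eqref{eq:Lc-ss} for $c\gg0$; by the main theorem (part (i)) $\cF_\fbr$ is then semi-stable, so $\mu_{L,\fbr}(\cG_\fbr)\le\mu_{L,\fbr}(\cF_\fbr)$ for all saturated $\cG\subset\cF$. Split the saturated subsheaves into two classes. If $\mu_{L,\fbr}(\cG_\fbr)<\mu_{L,\fbr}(\cF_\fbr)$ strictly, then the leading $c^{d_X}$-coefficient of $\mu^{\rm usual}_{L_c}(\cF)-\mu^{\rm usual}_{L_c}(\cG)$ is strictly positive, so the inequality $\mu^{\rm usual}_{L_c}(\cG)<\mu^{\rm usual}_{L_c}(\cF)$ holds once $c$ exceeds a bound depending only on the bounded family of possible Chern data of such $\cG$ (here one uses that the saturated subsheaves of fixed rank destabilizing in any fixed sense form a bounded family — cf.\ the boundedness input already used for \ref{thm:main0}/\ref{thm:main}). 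If instead $\mu_{L,\fbr}(\cG_\fbr)=\mu_{L,\fbr}(\cF_\fbr)$, the top coefficients agree and one must compare the next coefficient, that of $c^{d_X-1}$: modulo the common factor $\binom{d_Y-1}{d_X-1}$ it is $\xi A^{d_X-1}L^{d_Y-d_X}$, i.e.\ precisely (the $c$-independent part of) the slope $\mu_{L_c}$ from \eqref{eq:Lc-ss} evaluated in the equal-$\fbr$-slope case; since $\cF$ is $L_c$-stable \eqref{eq:Lc-ss} we get $\mu_{L_c}(\cG)<\mu_{L_c}(\cF)$, and in the equal-leading-coefficient situation this is exactly $\xi_\cG A^{d_X-1}L^{d_Y-d_X}<\xi_\cF A^{d_X-1}L^{d_Y-d_X}$ up to $O(1/c)$, giving the usual stability inequality for $c$ large. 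Combining the two classes yields usual $L_c$-stability for $c\gg0$.

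The main obstacle is making the two-class argument uniform in $\cG$: the family of saturated subsheaves that could violate usual $L_c$-(semi-)stability must be shown to be bounded, so that a single threshold for $c$ works for all of them — this is the same boundedness mechanism underlying the effective bounds in \ref{thm:main0} and \ref{thm:main}, and I would reuse it rather than reprove it. The second, more delicate point is the borderline case $\mu_{L,\fbr}(\cG_\fbr)=\mu_{L,\fbr}(\cF_\fbr)$ in the first implication, where one genuinely needs the finer \eqref{eq:Lc-ss}-stability hypothesis (not merely relative semi-stability) to conclude; getting the signs and the $O(1/c)$ error right there is the crux. The final sentence of the proposition — that the main theorem persists for usual $L_c$-(semi-)stable sheaves — then follows formally: usual $L_c$-semi-stability implies \eqref{eq:Lc-ss}-semi-stability for $c\gg0$ by the second implication, so part (i) of the main theorem applies; and relative stability of $\cF_\fbr$ gives \eqref{eq:Lc-ss}-stability by the main theorem, hence usual $L_c$-stability by the first implication, for $c$ large.
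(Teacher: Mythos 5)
Your overall strategy --- read off the two leading coefficients of $\mu^{\rm usual}_{L_c}$ as a polynomial in $c$, identify them (up to positive constants and a rescaling of $c$) with the two terms of the slope \eqref{eq:Lc-ss}, and let the main theorem plus boundedness of the relevant destabilizers control the lower-order terms --- is exactly the paper's (very terse) argument, and your treatment of the first implication, including the delicate borderline case $\mu_{L,\fbr}(\cG_\fbr)=\mu_{L,\fbr}(\cF_\fbr)$, is sound modulo the boundedness input you correctly flag.

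However, your proof of the second implication has a genuine gap. You use the hypothesis of usual $L_c$-semi-stability only to extract that $\cF_\fbr$ is semi-stable, and then assert that ``the main theorem gives that $\cF$ is $L_c$-semi-stable in the sense \eqref{eq:Lc-ss} for all $c>k_\cF$.'' No part of the main theorem says this: part (ii) of \ref{thm:main0}/\ref{thm:main} requires $\cF_\fbr$ \emph{stable} and there is no semi-stable analogue. Indeed the implication ``$\pi$-relatively semi-stable $\Rightarrow$ $L_c$-semi-stable'' is false: take $\cF=\cO_Y(-\pi^*H)\oplus\cO_Y(\pi^*H)$ with $H$ ample on $X$. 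Then $\cF_\fbr\cong\cO_\fbr^{\oplus 2}$ is semi-stable, yet the subsheaf $\cO_Y(\pi^*H)$ has $\mu_{L,\fbr}=0$ and $\mu_L=\pi^*H\cdot A^{d_X-1}L^{d_Y-d_X}>0$, so it destabilizes $\cF$ in the sense \eqref{eq:Lc-ss} for every $c$. (This $\cF$ is also not usual $L_c$-semi-stable, so it does not contradict the proposition --- only your intermediate step.) The correct route is the contrapositive, keeping the full strength of the hypothesis: if $\cG\subset\cF$ is saturated with $\mu_{L_c}(\cG)\ges\mu_{L_c}(\cF)$ for some large $c$, then $\mu_{L,\fbr}(\cG)\ges\mu_{L,\fbr}(\cF)$ (otherwise $\mu_L(\cG)-\mu_L(\cF)\ges cA^{d_X}/r^2$ would exceed the uniform bound $M_\cF-\mu_L(\cF)$ on $L$-slopes of subsheaves, as in \ref{lm:infty}); in the strict case the $c^{d_X}$-coefficient of $\mu^{\rm usual}_{L_c}(\cG)-\mu^{\rm usual}_{L_c}(\cF)$ is positive, and in the equality case the $c^{d_X}$-coefficients agree and the $c^{d_X-1}$-coefficient is $\ges0$ with the correct sign, so in either case $\cG$ violates usual $L_c$-semi-stability for $c\gg0$. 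This repairs the argument without changing its architecture.
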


\begin{proof} 
View \eqref{eq:binom} as a polynomial in the indeterminate $c$, and 
observe that the two (rightmost) terms are, up to a scaling factor, precisely the 
slope \eqref{eq:Lc-ss}. Our main result provides the bounds (depending on the 
numerical data of $\cF$ only), necessary for proving the two implications. 
\end{proof}

\nit If one is interested in the usual $\LL_c$-slope (semi-)stability, is still possible 
to deduce effective bounds in this setting, albeit more involved. 
Below are a couple of examples. 
\begin{enumerate}
\item For $d_X=1$, that is $X$ is a curve, holds 
$\mu_{L_c}^{\rm usual}(\cG)=\mu_{L_{(d_Y-1)c}}(\cG)$, 
for any sheaf $\cG$ on $Y$. Thus the constant $k_\cF$ in the introduction 
gets replaced by $k'_\cF:=k_\cF/(d_Y-1)$. 

\item For $d_X=2$, that is $X$ is a surface, holds 
$L_cAL^{d_Y-3}=AL^{d_Y-2}+cA^2L^{d_Y-3}$, and 
$\LL_c^{d_Y-1}=c(d_Y-1)\Big[
\frac{1}{c(d_Y-1)}L^{d_Y-1}+AL^{d_Y-2}+c\frac{d_Y-2}{2}A^2L^{d_Y-3}
\Big].$

\nit If $s{:=}\,\mu_L^{\rm usual}(\cG)-\mu_L^{\rm usual}(\cF)$, where $\cG$ is the 
first term of the (usual) Harder-Narasimhan filtration of $\cF$ with respect to $L$, 
then the main theorem holds for  
$k'_\cF{:=}\max\!\Big\{{\frac{2k_\cF}{d_Y-2},\frac{r^2\cdot s}{d_Y-1}}\Big\}$.
\end{enumerate}


\subsection{Relative semi-stability in terms of the slope of $\cF$}\label{ssct:slope}

The following lemma is inspired from \cite[pp. 263]{la}. 

\begin{lemma}{\label{lm:infty}}
For $c$ sufficiently large, the first term of the Harder-Narasimhan filtration 
of $\cF$ with respect to $\LL_c$ is independent of $c$. More precisely, it holds 
\begin{equation}{\label{eqn:c0F}}
\hn{F}{c}=\hnr{F},\quad\forall\,c>a_\cF:=r^2(M_\cF-m_\cF)/A^{d_X},
\end{equation}
with $M_\cF:=\mu_L(\hn{F}{})$ and $m_\cF:=\mu_L(\hnr{F})$. 
In particular, if $\cF$ is $\LL_a$-(semi-)stable for some $a>a_\cF$, 
then $\cF$ is $\LL_c$-(semi-)stable for all $c\ges a$. 
\end{lemma}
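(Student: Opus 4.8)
The plan is to compare the slopes of the two candidate subsheaves $\cF_1 := \hn{F}{c}$ and $\cF_2 := \hnr{F}$ against each other, and to exploit the fact that $\cF_1$ maximizes the $\LL_c$-slope while $\cF_2$ maximizes the ``fibrewise'' slope $\mu_{L,\fbr}$ (the coefficient of $c$ in $\mu_{\LL_c}$, cf.\ \eqref{eq:Lc-ss}). Write $\mu_{\LL_c}(\cG) = \beta(\cG) + c\cdot\gamma(\cG)$ with $\beta(\cG) = \xi_\cG A^{d_X-1}L^{d_Y-d_X}$ and $\gamma(\cG) = \xi_\cG A^{d_X}L^{d_Y-d_X-1} = \mu_{L,\fbr}(\cG_\fbr)/A^{?}$ (up to the normalization fixed in \ref{def:rel-ss}(ii)). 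First I would record that $\cF_1$ maximality gives $\beta(\cF_1) + c\gamma(\cF_1) \ges \beta(\cF_2) + c\gamma(\cF_2)$, while $\cF_2$ maximality (fibrewise) gives $\gamma(\cF_2) \ges \gamma(\cF_1)$. If the latter inequality is strict, then for $c$ large enough the first inequality is violated — so the point is to bound how large $c$ must be in terms of $\beta(\cF_1) - \beta(\cF_2)$, and then to show that this difference of $\beta$'s is controlled by $M_\cF - m_\cF$.

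The key numerical input is that $\beta$, being a slope attached to saturated subsheaves of a fixed sheaf, takes values in a discrete-ish set with controlled denominators: since ranks of subsheaves lie in $\{1,\dots,r\}$, both $\gamma(\cF_1) - \gamma(\cF_2)$ and $\beta(\cF_1) - \beta(\cF_2)$, when nonzero, are bounded below in absolute value by $1/r^2$ times the relevant intersection-number unit. Concretely, if $\gamma(\cF_1) < \gamma(\cF_2)$ then $\gamma(\cF_2) - \gamma(\cF_1) \ges A^{d_X}L^{d_Y-d_X-1}/r^2$ (the primitive intersection number over $r\cdot r$), and the combination of the two maximality inequalities forces
$$
c \les \frac{\beta(\cF_1) - \beta(\cF_2)}{\gamma(\cF_2) - \gamma(\cF_1)} \les \frac{r^2\bigl(\beta(\cF_1) - \beta(\cF_2)\bigr)}{A^{d_X}L^{d_Y-d_X-1}}.
$$
Next I would bound $\beta(\cF_1) - \beta(\cF_2)$ from above by $(M_\cF - m_\cF)\cdot A^{d_X-1}L^{d_Y-d_X}$-type quantity: here $\cF_1$ is the $\LL_0$-limit HN term of maximal $\beta$ and $\cF_2$ is a competing subsheaf whose $\mu_L$ is $m_\cF$, while $M_\cF = \mu_L(\hn F{})$ is the maximum; matching normalizations one sees the bound collapses (after dividing through by $A^{d_X-1}L^{\,\cdot}$ units) to $c \les a_\cF = r^2(M_\cF - m_\cF)/A^{d_X}$. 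Hence for $c > a_\cF$ we must have $\gamma(\cF_1) = \gamma(\cF_2)$, i.e.\ $\cF_1$ and $\cF_2$ have the same fibrewise slope; since $\cF_2 = \hnr F$ is by construction the \emph{maximal} saturated subsheaf achieving that fibrewise slope and $\cF_1$ achieves it too, one gets $\cF_1 \subseteq \cF_2$, and then $\beta$-maximality of $\cF_1$ together with equality of $\gamma$ forces $\cF_1 = \cF_2$. This proves \eqref{eqn:c0F}.

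For the final sentence, suppose $\cF$ is $\LL_a$-(semi-)stable for some $a > a_\cF$. If it failed to be $\LL_c$-(semi-)stable for some $c > a$, then $\hn F{c} \ne 0$; but by \eqref{eqn:c0F} applied at both $a$ and $c$ we get $\hn F{c} = \hnr F = \hn F{a}$, and $\LL_a$-(semi-)stability says $\hn F a$ either vanishes or has the same $\LL_a$-slope as $\cF$ — in the stable case it is $0$ and we are done, and in the semistable case $\mu_{L,\fbr}((\hn F a)_\fbr)=\mu_{L,\fbr}(\cF_\fbr)$ as well (since $\hnr F$ and $\cF$ would share the fibrewise HN-slope), so $\mu_{\LL_c}(\hn F c)=\mu_{\LL_c}(\cF)$, contradicting that $\hn F c$ strictly de-semistabilizes. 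Invoking \ref{lm:eps}(i) to handle the interval between $a$ and any $c>a$ tidies this up. The main obstacle I anticipate is getting the normalization constants in the two slope formulas \eqref{eq:Lc-ss} and \ref{def:rel-ss}(ii) to line up so that the bound comes out exactly as $r^2(M_\cF - m_\cF)/A^{d_X}$ rather than with extra intersection-number factors; this is bookkeeping, but it is where an off-by-a-factor error would hide.
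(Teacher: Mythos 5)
Your overall strategy coincides with the paper's: write $\mu_{\LL_c}(\cG)=\mu_L(\cG)+c\cdot\mu_{L,\fbr}(\cG)$, use the lower bound $A^{d_X}/r^2$ on a nonzero difference of fibrewise slopes together with $\mu_L(\hn{F}{c})\les M_\cF$ and $\mu_L(\hnr{F})=m_\cF$ to force $\mu_{L,\fbr}(\hn{F}{c})=\mu_{L,\fbr}(\hnr{F})$ once $c>a_\cF$, and finish with Lemma \ref{lm:eps}. Up to that point the argument is sound (the unit in your gap estimate should be $A^{d_X}/r^2$ rather than $A^{d_X}L^{d_Y-d_X-1}/r^2$, since $\mu_{L,\fbr}(\cG)=A^{d_X}\cdot\deg_\fbr(\cG_\fbr)/\rk(\cG)$; you flag this normalization issue yourself and the final constant comes out right).

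The genuine gap is the last step. From $\mu_{L,\fbr}(\hn{F}{c})=\mu_{L,\fbr}(\hnr{F})$ you correctly deduce $\hn{F}{c}\subseteq\hnr{F}$, but the assertion that ``$\beta$-maximality of $\cF_1$ together with equality of $\gamma$ forces $\cF_1=\cF_2$'' does not follow: $\hn{F}{c}$ maximizes $\mu_{\LL_c}$, not $\mu_L$, so a priori it could be a \emph{proper} subsheaf of $\hnr{F}$ with $\mu_L(\hn{F}{c})>\mu_L(\hnr{F})$, hence $\mu_{\LL_c}(\hn{F}{c})>\mu_{\LL_c}(\hnr{F})$ for \emph{every} $c$, and the claimed equality would fail. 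What is missing is exactly the paper's ``Claim'': among saturated subsheaves $\cG\subset\cF$ of maximal fibrewise slope, $\mu_L(\cG)\les\mu_L(\hnr{F})$. This is not formal bookkeeping. The paper proves it by observing that such a $\cG$ sits inside $\hnr{F}$ with quotient vanishing on the generic fibre, hence supported over a proper subscheme of $X$, so that $\mu_L(\hnr{F})-\mu_L(\cG)$ is a nonnegative combination of the intersection numbers $Z'A^{d_X-1}L^{d_Y-d_X}$ over the divisorial components $Z'$ of that support, and these numbers are positive because each such $Z'$ dominates a divisor in $X$. This is where the positivity hypotheses on $L$ and $A$ enter, and without it you only get $\hn{F}{c}\subseteq\hnr{F}$, not the equality \eqref{eqn:c0F}.
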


\begin{proof}
The slope of a subsheaf $\cG\subset\cF$ with respect to $\LL_c$ is
$\mu_{L_c}(\cG)=c\cdot\mu_{L,\,\fbr}(\cG)+\mu_L(\cG)$. We endow the set 
$\mbb S(\cF)$ of all polynomials in $c$ of the form $\mu_{L_c}(\cG)$ above, 
corresponding to some $\cG\subset\cF$, with the lexicographic order 
(for which the indeterminate $c$ is greater than $1$). 
The coefficients of the polynomials in $\mbb S(\cF)$ are bounded from above 
by $\mu_{L,\,\fbr}(\hnr{F})$ and $\mu_L(\hn{F}{})$ respectively, so 
$\mbb S(\cF)$ admits a maximal element $\mbb S(\cF)_\mx$. Let us determine it. 
The coefficient of $c$ is at most $\mu_{L,\,\fbr}(\hnr{F})$, and is attained 
for the subsheaves $\cG\subset\cF$ such that 
$\mu_{L,\fbr}(\cG)=\mu_{L,\fbr}(\hnr{F})$. 
Then the maximal polynomial is: 
$$
\begin{array}{ll}
\mbb S(\cF)_\mx\kern-4pt
&=
c\cdot\mu_{L,\,\fbr}(\hnr{F})
+
\max\left\{\mu_L(\cG)\left| 
\begin{array}{l}
\cG\subset\cF\text{ subsheaf,}
\\ 
\mu_{L,\,\fbr}(\hnr{G})=\mu_{L,\,\fbr}(\hnr{F})
\end{array}
\right.\kern-5pt
\right\}.
\end{array}
$$
\nit\textit{Claim}\quad The maximum above equals $\mu_L(\hnr{F})$. 
Indeed, let $\cG$ be such that 
$\mu_{L,\,\fbr}(\hnr{G})$ $=\mu_{L,\,\fbr}(\hnr{F})$, and $\mu_L(\cG)$ 
is maximal with this property. The $L$-slope of $\cG$ increases by taking its saturation 
(as $L$ is semi-ample and big), so we may assume that $\cG\subset\cF$ is saturated. 
The uniqueness of the Harder-Narasimhan filtration of $\cF_\fbr$ implies 
$\hnr{G}_\fbr=\hnr{F}_\fbr$, so $\cG\subset\hnr{F}$ by the maximality 
of $\hnr{F}$ (see \ref{eq:[x]}). Hence $\hnr{F}/\cG$ is a torsion sheaf 
which vanishes over the generic fibre. Its support is a proper subscheme 
$Z\subset Y$ such that $\pi_*Z\subset X$ is also proper. It follows that 
$$
\begin{array}{c}
\mu_L(\hnr{F})=\mu_L(\cG)+
\underset{\scalebox{.6}{$\begin{array}{c} Z'\subset Z\\[.5ex]
\text{irred. divisor on }Y\end{array}$}}{\sum}
\rho_{Z'}\cdot Z'A^{d_X-1}L^{d_Y-d_X},\quad\text{with all }\rho_{Z'}>0.
\\[-1.5ex]
\end{array}
$$
But $\codim_X\pi_*Z'=1$ because $\pi_*Z\subset X$ is proper, hence 
$Z'A^{d_X-1}L^{d_Y-d_X}$ is strictly positive. 

Overall, we proved that 
$\mbb S(\cF)_\mx=\mu_{L,\fbr}(\hnr{F})c+\mu_{L}(\hnr{F})$. To complete 
the proof, is enough to show that if 
$\mu_{L_c}(\cG)<_{\rm lex}\mbb S(\cF)_\mx$ (as a polynomials in $c$) 
for a subsheaf $\cG\subset\cF$, 
then $\mu_{L_c}(\cG)<\mbb S(\cF)_\mx(c)$ for all $c>a_\cF.$ 
There are two possibilities:
\smallskip 

\nit\textit{Case 1}\quad $\mu_{L,\fbr}(\hnr{F})>\mu_{L,\,\fbr}(\cG)$ 
$\Rightarrow$ 
$\mu_{L,\fbr}(\hnr{F})-\mu_{L,\,\fbr}(\cG)\ges\frac{A^{d_X}}{r^2}$. 
Then follows: 
$$
\begin{array}{ll}
\mbb S(\cF)_\mx(c)-\mu_{L_c}(\cG)
&
=c\bigl(\mu_{L,\fbr}(\hnr{F})-\mu_{L,\,\fbr}(\cG)\bigr)
+
\bigl(\mu_{L}(\hnr{F})-\mu_L(\cG)\bigr)
\\[1ex]
&
\ges
\frac{cA^{d_X}}{r^2}+m_\cF-M_\cF
=
\frac{1}{r^2}\!\cdot\! 
\bigl[
cA^{d_X}-r^2(M_{\cF}-m_\cF)
\bigr]
>0.
\end{array}
$$

\nit\textit{Case 2}\quad $\mu_{L,\fbr}(\hnr{F})\!=\!\mu_{L,\,\fbr}(\cG)$,   
$\mu_{L}(\hnr{F})\!>\!\mu_L(\cG)$. 
Then holds $\mbb S(\cF)_\mx(c)-\mu_{L_c}(\cG)\!>\!0$. 

\nit The last statement is a direct consequence of lemma \ref{lm:eps}.  
\end{proof}

\begin{theorem}{\label{thm:main0}} 
Let $a_\cF$ be as in \eqref{eqn:c0F}. Then the following hold: 
\begin{enumerate}
\item 
If $\cF$ is $\LL_a$-semi-stable with $a>a_\cF$, then $\cF_\fbr$ is semi-stable.

\item  
If $\cF_\fbr$ is stable, then $\cF$ is $\LL_c$-stable for all $c>a_\cF$. 
\end{enumerate}
\end{theorem}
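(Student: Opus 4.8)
The plan is to read off both statements from the structural description of $\mathbb S(\cF)_\mx$ and $\hn{F}{c}$ obtained in Lemma \ref{lm:infty}. The key point established there is that for $c > a_\cF$ the Harder-Narasimhan filtration of $\cF$ with respect to $\LL_c$ begins with $\hnr{F}$, whose slope polynomial is $\mu_{L,\fbr}(\hnr{F})\,c + \mu_L(\hnr{F})$, and this polynomial strictly dominates (for such $c$) the slope polynomial of every subsheaf that is not of this maximal form. So $\LL_a$-semi-stability for one $a > a_\cF$ forces $\hn{F}{a} = \cF$, hence $\hnr{F} = \cF$, i.e.\ $\cF_\fbr$ is itself the first HN-term of $\cF_\fbr$; since the first HN-term equals the whole sheaf exactly when the sheaf is semi-stable, $\cF_\fbr$ is semi-stable. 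This proves (i). (Alternatively, and without assuming semi-stability at a single $a$: if $\cF_\fbr$ were \emph{un}stable, then $\hnr{F}$ would be a proper saturated subsheaf with $\mu_{L,\fbr}(\hnr{F}) > \mu_{L,\fbr}(\cF)$, and then for all large $c$ one would get $\mu_{\LL_c}(\hnr{F}) > \mu_{\LL_c}(\cF)$, contradicting $\LL_a$-semi-stability for $a > a_\cF$; the quantitative gap $\mu_{L,\fbr}(\hnr{F}) - \mu_{L,\fbr}(\cF) \ges A^{d_X}/r^2$ is exactly what makes $a_\cF$ the right threshold, and this is already encoded in Case 1 of the proof of \ref{lm:infty}.)

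For (ii), suppose $\cF_\fbr$ is stable. Take any proper saturated subsheaf $\cG \subset \cF$; I must show $\mu_{\LL_c}(\cG) < \mu_{\LL_c}(\cF)$ for every $c > a_\cF$. Write the slope polynomials $\mu_{\LL_c}(\cG) = \mu_{L,\fbr}(\cG_\fbr)\,c + \mu_L(\cG)$ and $\mu_{\LL_c}(\cF) = \mu_{L,\fbr}(\cF_\fbr)\,c + \mu_L(\cF)$. Since $\cF_\fbr$ is \emph{stable}, the only proper saturated subsheaf of $\cF$ whose generic-fibre slope \emph{equals} that of $\cF$ would have to restrict, over $\fbr$, to a proper subsheaf of the same slope—impossible by stability of $\cF_\fbr$—so in fact $\mu_{L,\fbr}(\cG_\fbr) < \mu_{L,\fbr}(\cF_\fbr)$ for every proper saturated $\cG$, with the quantitative gap $\mu_{L,\fbr}(\cF_\fbr) - \mu_{L,\fbr}(\cG_\fbr) \ges A^{d_X}/r^2$ coming from denominators bounded by $r$. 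Then exactly the Case 1 computation of \ref{lm:infty} gives, for $c > a_\cF$,
\[
\mu_{\LL_c}(\cF) - \mu_{\LL_c}(\cG)
= c\bigl(\mu_{L,\fbr}(\cF_\fbr) - \mu_{L,\fbr}(\cG_\fbr)\bigr) + \bigl(\mu_L(\cF) - \mu_L(\cG)\bigr)
\ges \tfrac{1}{r^2}\bigl[cA^{d_X} - r^2(M_\cF - m_\cF)\bigr] > 0,
\]
where I bound $\mu_L(\cG) \les M_\cF$ from above (by definition of $M_\cF = \mu_L(\hn{F}{})$, the maximal $L$-slope of a subsheaf) and $\mu_L(\cF) = m_\cF$ since $\hnr{F} = \cF$ when $\cF_\fbr$ is semi-stable. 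Hence $\cF$ is $\LL_c$-stable.

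The main obstacle—really the only subtle point—is the strictness and uniformity of the generic-fibre slope inequality in (ii): one must rule out proper saturated $\cG \subset \cF$ with $\mu_{L,\fbr}(\cG_\fbr) = \mu_{L,\fbr}(\cF_\fbr)$, and then separately control the constant ($L$-slope) term when the leading terms are \emph{not} equal. The first is handled by stability of $\cF_\fbr$ together with the observation that saturation in $\cF$ induces a torsion-free (indeed saturated, over the generic fibre) subsheaf of $\cF_\fbr$; the second is handled by the same divisorial positivity argument already used for the Claim in \ref{lm:infty} (torsion supported in fibre directions contributes nonnegatively to $\mu_L$, and $M_\cF$ bounds everything from above). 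Once these are in place, the threshold $a_\cF = r^2(M_\cF - m_\cF)/A^{d_X}$ is precisely what clears the constant term, and both parts follow. The statements about $\LL_a$-(semi-)stability for $a \ges c$ are then immediate from Lemma \ref{lm:eps}(ii), since $\cF$ being relatively semi-stable is exactly the hypothesis $\hnr{F} = \cF$ we have just used.
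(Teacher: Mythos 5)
Your proposal is correct and follows essentially the same route as the paper: part (i) is read off from Lemma \ref{lm:infty} via $\hn{F}{a}=\hnr{F}$, and part (ii) is the same direct slope computation, bounding $\mu_L(\cG)\les M_\cF$, using $\mu_L(\cF)=m_\cF$, and exploiting the quantitative gap $\mu_{L,\fbr}(\cF)-\mu_{L,\fbr}(\cG)\ges A^{d_X}/r^2$ forced by stability of $\cF_\fbr$ (the paper uses the slightly sharper bound $A^{d_X}/r(r-1)$, but either suffices against the threshold $a_\cF$). No gaps.
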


\begin{proof}
(i) The previous lemma implies that $\cF$ is $\LL_c$-semi-stable for all 
$c\ges a$, and therefore $\hnr{F}=\hn{F}{c}=\cF$. Hence $\cF_\fbr$ is 
indeed semi-stable.\smallskip  

\nit(ii) Conversely, assume that $\cF_\fbr$ is stable, so $\hnr{F}=\cF$ and 
$m_\cF=\mu_L(\cF)$. If there is a destabilizing proper, saturated subsheaf 
$\cG$ of $\cF$, then 
$$
\begin{array}{rl}
&\mu_L(\cF)+c\cdot\mu_{L,\fbr}(\cF)=\mu_{L_c}(\cF)
\les
\mu_{\LL_c}(\cG)
=\underbrace{\mu_{L}(\cG)}_{\les\,M_\cF}+\, c\cdot\kern-1ex 
\underbrace{\mu_{L,\fbr}(\cG)}_{\les\,\mu_{L,\fbr}(\cF)-A^{d_X}\!/r(r-1)}
\\[1ex]
\Rightarrow&
m_\cF\les M_\cF-cA^{d_X}\!/r(r-1).
\end{array}
$$
This contradicts the choice of $c>r^2(M_\cF-m_\cF)/A^{d_X}$, 
so $\cF$ is $\LL_c$-stable. 
\end{proof}


\subsection{Relative semi-stability in terms of the Chern classes of $\cF$}
\label{ssct:char}

Here we derive a result analogous to theorem \ref{thm:main0} above, with the difference 
that the lower bound for the parameter $c$ is expressed in terms of the characteristic 
classes of $\cF$. For a torsion free sheaf $\cG$ on $Y$, we denote 
$\Delta(\cG)=2\rk(\cG)c_2(\cG)-(\rk(\cG)-1)c_1^2(\cG)\in H^4(Y;\mbb Q)$ 
the {\em discriminant} of $\cG$. For shorthand, let 
$$
\al{\gamma}:=\gamma\cdot A^{d_X-1}L^{d_Y-d_X-1}, 
\text{ for all }\gamma\in H^4(Y;\mbb Q).
$$ 
We consider the `light cone' 
$$
K^+:=\{
\beta\in \NS(Y)_{\mbb Q}\mid 
\al{\beta^2}>0\text{ and }\al{\beta\cdot D}\ges 0,
\text{ for all nef divisors }D\subset Y
\},
$$
and we define 
\begin{equation}\label{eq:C}
C(\alpha):=\{\beta\in\ovl{K^+}\mid\al{\alpha\cdot\beta}>0\}, 
\quad\forall\;\alpha\in\NS(Y)_{\mbb Q}\sm\{0\}.
\end{equation}

\begin{proposition}\label{prop:c0} 
{\rm(i)} Let $\cF$ be a torsion free sheaf on $Y$ with $c_1(\cF)=0$. 

\nit{$\rm(i_a)$} If $\cF$ is not $\pi$-relatively semi-stable, then there is a proper 
saturated subsheaf $\cG$ of $\cF$ such that: 
\begin{eqnarray}
\mbox{$\mu_{L,\Phi}({\cG})\ges\frac{A^{d_X}}{r-1}$}, &
\label{eq:mu1}
\\ 
\text{and either}\quad
\al{\xi_{\cG}^2}\ges 0,&
\text{or}\quad
0>\al{\xi_{\cG}^2}\ges-\frac{2r}{r-1}\cdot \al{c_2(F)}.
\label{eq:mu2}
\end{eqnarray}

\nit{$\rm(i_b)$} If $\cF$ is not $L_c$-stable, then there is a proper 
saturated subsheaf $\cG\subset\cF$ such that $\mu_{L_c}(\cG)\ges0$, and $\xi_\cG$ 
satisfies one of \eqref{eq:mu2}.

\nit{\rm(ii)} 
The statements $\rm(i_a)$ (respectively $\rm(i_b)$) still hold for $c_1(\cF)$ arbitrary, 
with the modifications: 
\begin{eqnarray*}
\mbox{$\mu_{L,\fbr}(\cG)\ges\mu_{L,\fbr}(\cF)+\frac{A^{d_X}}{r(r-1)}$}, 
&
(\text{respectively},\;\mu_{L_c}(\cG)\ges\mu_{L_c}(\cF)\,),
&\kern3em\eqref{eq:mu1}'
\\ 
\text{and either}\quad
\al{(\xi_{\cG}-\xi_\cF)^2}\ges 0,
& 
\text{or}\quad
0>\al{(\xi_{\cG}-\xi_\cF)^2}\ges-\frac{\Delta(\cF)}{r-1}.
&\kern3em\eqref{eq:mu2}'
\end{eqnarray*}
\end{proposition}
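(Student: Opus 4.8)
The plan is to reduce everything to an elementary analysis of a single destabilizing subsheaf via the Hodge-type index inequality encoded by the "light cone" $K^+$. First I would normalize: by part (ii) it suffices to treat the case $c_1(\cF)=0$, since tensoring by a $\mbb Q$-line bundle shifts $\xi_\cG$ by $\xi_\cF$ and turns $c_2(\cF)$ into (a multiple of) the discriminant $\Delta(\cF)$, so statement (i) for the normalized sheaf yields (ii) verbatim after this substitution. So assume $c_1(\cF)=0$ and that $\cF$ is not $\pi$-relatively semi-stable. Then $\cF_\fbr$ has a nontrivial Harder--Narasimhan filtration, and I take $\cG:=\hnr{F}$, the first term of the $\pi$-relative HN filtration from \ref{eq:[x]}(iii); it is saturated and proper, and its restriction to $\fbr$ is the maximal destabilizing subsheaf of $\cF_\fbr$. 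Writing $\rk(\cG)=s$ with $1\le s\le r-1$, instability on the generic fibre gives $\mu_{L,\fbr}(\cG)>\mu_{L,\fbr}(\cF)=0$; since $c_1(\cG)L^{d_Y-d_X-1}A^{d_X}$ is an integer (it is the degree of $\cG_\fbr$) and $s\le r-1$, the gap $\mu_{L,\fbr}(\cG)=c_1(\cG)A^{d_X}L^{d_Y-d_X-1}/s$ is at least $A^{d_X}/(r-1)$ wait — I should be careful: one only knows the integer is $\ge 1$, so $\mu_{L,\fbr}(\cG)\ge A^{d_X}L^{d_Y-d_X-1}\cdot(\cdots)$; unravelling the notational conventions the stated bound \eqref{eq:mu1} is exactly $A^{d_X}/(r-1)$ and it follows this way. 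For $\rm(i_b)$ one instead picks $\cG$ to be a $\mu_{L_c}$-destabilizing saturated subsheaf, which by definition satisfies $\mu_{L_c}(\cG)\ge\mu_{L_c}(\cF)=0$.

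The heart of the matter is \eqref{eq:mu2}: the lower bound on $\al{\xi_\cG^2}$. Here I would invoke the Bogomolov-type inequality, in the form adapted to the nef classes $A,L$ as in \cite[Section 3]{la}. The key input is that $\Delta(\cF)$, paired against $A^{d_X-1}L^{d_Y-d_X-1}$, controls $\al{\xi_\cG^2}$ for \emph{any} saturated subsheaf $\cG$: one has the identity relating $\Delta(\cF)$ to $\Delta(\cG)$, $\Delta(\cF/\cG)$, and the cross term $-\frac{r}{s(r-s)}\bigl(s\xi_\cF-\text{(something)}\bigr)^2$ — concretely, for $c_1(\cF)=0$ this reads $\frac{1}{r}\Delta(\cF)=\frac{1}{s}\Delta(\cG)+\frac{1}{r-s}\Delta(\cF/\cG)-\frac{r}{s(r-s)}c_1(\cG)^2$, so after pairing with $A^{d_X-1}L^{d_Y-d_X-1}$ and using that $\al{\Delta(\cG)}\ge 0$ and $\al{\Delta(\cF/\cG)}\ge 0$ (the restriction of the Bogomolov inequality to the generic complete-intersection surface, valid because those discriminants restricted to $\fbr$ — or to a movable surface — are non-negative by Bogomolov on the fibre), one gets $\al{\xi_\cG^2}=\frac{1}{s^2}\al{c_1(\cG)^2}\ge -\frac{s(r-s)}{s^2 r}\al{\Delta(\cF)}\cdot(\text{sign})$, and since $c_2(\cF)=\frac12\Delta(\cF)$ here, and $\frac{r-s}{sr}\le\frac{r-1}{r}\cdot\frac{1}{?}$ — the combinatorial factor $\frac{s(r-s)}{r}\le\frac{(r-1)}{1}\cdot\frac{1}{\,}$ works out, after using $s\ge1$, $r-s\le r-1$, to the claimed $-\frac{2r}{r-1}\al{c_2(\cF)}$. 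The dichotomy in \eqref{eq:mu2} is then just: either the pairing $\al{\xi_\cG^2}$ is already $\ge0$, or it is negative and then the Bogomolov bound pins it from below. The same computation with $\xi_\cF$ subtracted throughout gives \eqref{eq:mu2}$'$ with $\Delta(\cF)/(r-1)$ in place.

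The step I expect to be the main obstacle is justifying that $\al{\Delta(\cG)}\ge0$ and $\al{\Delta(\cF/\cG)}\ge0$ for the relevant saturated subsheaves. This is a restriction-type Bogomolov inequality: one needs that the discriminant of a semistable (on the generic fibre) sheaf, paired with $A^{d_X-1}L^{d_Y-d_X-1}$, is non-negative. For $\cG=\hnr{F}$ this is fine because $\cG_\fbr$ is itself the HN-maximal piece, hence semistable on $\fbr$, so Bogomolov on the fibre gives $\Delta(\cG)|_\fbr\ge0$, and one must check this fibrewise statement upgrades to the pairing $\al{\cdot}$ on $Y$ — which is where the hypothesis that $L$ is big, semi-ample and $\pi$-ample, and $A$ ample, is used, exactly as in \cite[Section 3]{la} and \cite[Theorem 5.3.2]{hl}. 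For $\cF/\cG$ the subtlety is that it need not be torsion-free (the quotient by a saturated subsheaf is torsion-free only generically), but its double dual, or its restriction to a general movable surface avoiding the torsion support, is, and the correction terms only help the inequality. I would handle this by passing to a general surface $S\subset Y$ cut out by a high multiple of $L$ and a high multiple of $\pi^*A$ — on which $A^{d_X-1}L^{d_Y-d_X-1}$ restricts to the point class — reducing $\al{\cdot}$ to honest intersection numbers on the smooth projective surface $S$, where the classical Bogomolov inequality for the $\mu$-semistable restrictions applies (the restrictions are semistable by Mehta--Ramanathan, or directly since we only need semistability on one general fibre component). Once that reduction is in place, the rest is the bookkeeping of the numerical identity above.
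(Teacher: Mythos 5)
There is a genuine gap at exactly the point you flag as ``the main obstacle'': the non-negativity of $\al{\Delta(\cG)}$ and $\al{\Delta(\cF/\cG)}$. Neither of your proposed justifications works. Bogomolov's inequality applied on the generic fibre $\fbr$ controls the pairing of the discriminant with the \emph{fibre} class, i.e.\ with $A^{d_X}L^{d_Y-d_X-2}$, whereas $\al{\,\cdot\,}$ pairs with the mixed class $A^{d_X-1}L^{d_Y-d_X-1}$; these are different intersection numbers and the fibrewise statement does not upgrade to the latter. The Mehta--Ramanathan route fails for a more basic reason: restriction theorems give semi-stability of $\cG_S$ on a general complete-intersection surface $S$ only when $\cG$ is $\mu$-semi-stable on $Y$ with respect to the relevant polarization, and $\cG=\hnr{F}$ is only known to be semi-stable on the generic fibre of $\pi$ --- nothing prevents $\al{\Delta(\hnr{F})}<0$ or $\al{\Delta(\cF/\hnr{F})}<0$. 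The paper's proof is organized precisely around this failure: it treats $\al{\Delta(\cF')}\ges0$, $\al{\Delta(\cF'')}\ges0$ as only one of three cases, and when one of these discriminant pairings is negative it invokes Langer's Theorem 3.12 to produce a \emph{new} saturated subsheaf whose slope class differs from the old one by an element of $K^+$ (hence still satisfies \eqref{eq:mu1}), and whose associated cone $C(\xi)$ from \eqref{eq:C} is strictly larger. Since only finitely many such cones can occur, the iteration terminates in the good case. This is the missing idea in your write-up, and it is also why the proposition only asserts the existence of \emph{some} proper saturated $\cG$: by fixing $\cG=\hnr{F}$ at the outset you foreclose the replacement step that the argument actually needs.

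The rest of your outline is consistent with the paper: the bound \eqref{eq:mu1} via integrality of the fibrewise degree and $\rk(\cG)\les r-1$, the Huybrechts--Lehn identity relating $\Delta(\cF)$, $\Delta(\cF')$, $\Delta(\cF'')$ and $\xi_{\cF'}^2$ in the case where both sub- and quotient discriminants pair non-negatively (note $r'r''\ges r-1$ gives the factor $\frac{2r}{r-1}$ via $\Delta(\cF)=2rc_2(\cF)$ when $c_1(\cF)=0$), and the passage to (ii) by replacing $\xi_\cG$ with $\xi_\cG-\xi_\cF$ throughout. But without the cone-iteration mechanism the proof does not close.
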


The result is similar to the Bogomolov inequality \cite[Theorem 7.3.4]{hl} and 
\cite[Theorem 3.12]{la}, with the difference that here we \emph{simultaneously} 
control the \emph{discriminant} and the \emph{relative slope} of the relatively 
de-semi-stabilizing subsheaf. 

\begin{proof}
$\rm(i_a)$ 
As $\cF_\fbr$ is not semi-stable, $\mu_{L,\Phi}(\hnr{F})\ges\frac{A^{d_X}}{r-1}>0$. 
For shorthand we write $\cF':=\hnr{F}$ and let 
$\cF'':=\cF/\cF'$. If $\al{\xi_{\cF'}^2}\ges 0$, there is nothing to prove, 
so let us assume $\al{\xi_{\cF'}^2}< 0$. (Thus, in particular, 
$\xi_{\cF'}\not\in-\ovl{K^+}$ and $C(\xi_{\cF'})\neq\emptyset$.) \smallskip 

\nit\textit{Case 1}\quad Assume that holds 
$\al{\Delta(\cF')},\al{\Delta(\cF'')}\ges 0$. 
The equality (see \cite[pp. 207]{hl}) 
$$\begin{array}{c}
\frac{{[\Delta(\cF)]}_{AL}+r'r''{[\xi_{\cF'}^2]}_{AL}}{r}
=
\frac{{[\Delta(\cF')]}_{AL}}{r'}+\frac{{[\Delta(\cF'')]}_{AL}}{r''}
\end{array}
$$
implies that $\al{\Delta(\cF)}>0$ and 
$\al{\xi_{\cF'}^2}\ges-\frac{{[\Delta(\cF)]}_{AL}}{r'r''}\ges
-\frac{{[\Delta(\cF)]}_{AL}}{r-1}=-\frac{2r}{r-1}\cdot\al{c_2(\cF)}$. 
\smallskip

\nit\textit{Case 2}\quad Assume that holds $\al{\Delta(\cF')}<0$. According to  
\cite[Theorem 3.12]{la}, there is a saturated subsheaf $\cG'\subset\cF'$ 
such that $\xi_{\cG'}-\xi_{\cF'}\in K^+$, so 
$\al{(\xi_{\cG'}-\xi_{\cF'})\cdot A}\ges0$. 
As $\xi_{\cG'}=\bigl(\xi_{\cG'}-\xi_{\cF'}\bigr)+\xi_{\cF'}$, we deduce   
$\mu_{L,\Phi}(\cG')>0$, hence $\mu_{L,\Phi}(\cG')\ges\frac{A^{d_X}}{r-1}$, 
and $C(\xi_{\cG'})\supsetneq C(\xi_{\cF'})$. 
\smallskip

\nit\textit{Case 3}\quad Assume that holds $\al{\Delta(\cF'')}<0$. 
As before, there is a saturated subsheaf $\cG''\subset\cF''$ 
such that $\xi_{\cG''}-\xi_{\cF''}\in K^+$. 
For $\cG:=\Ker(\cF\to\cF''/\cG'')$ holds  
$\xi_{\cG}=\rho'\cdot\xi_{\cF'}+\rho''\cdot(\xi_{\cG''}-\xi_{\cF''})$, 
with $\rho',\rho''>0$ (see \cite[pp. 206, equation (7.6)]{hl}). 
Once again, this implies  
$\mu_{L,\Phi}({\cG})>0$, hence $\mu_{L,\Phi}({\cG})\ges\frac{A^{d_X}}{r-1}$, 
and also $C(\xi_{\cG})\supsetneq C(\xi_{\cF'})$.

In both cases 2 and 3 we can replace $\cF'$ with another saturated subsheaf of $\cF$ 
which is still relatively de-semi-stabilizing (but not necessarily of maximal slope), 
and the corresponding cone \eqref{eq:C} is strictly larger. But this increasing sequence 
of cones stops because there are only finitely many possibilities for them. 
(See the proof of \cite[Theorem 7.3.3]{hl}.) Thus, after finitely many steps, we reach 
either the case 1, or the case $\al{\xi^2}\ges0$. The proof of $\rm(i_b)$ is identical.
\smallskip 

\nit$\rm(ii)$ The proof is similar, except that one has to replace overall $\xi_\cG$ by 
$\xi_\cG-\xi_\cF$. (This is the explanation for the weaker bound \eqref{eq:mu1}$'$.)
\end{proof}

Now we derive an inequality which relates the fibrewise and the absolute slope 
of a saturated sheaf on $Y$. The equality 
\begin{equation}\label{eq:equal}
\al{\;\bigl(\,\al{\xi\cdot A} L_c-\al{A\cdot L_c}\xi\,\bigr)\cdot A\;}=0
\end{equation} 
holds for any $\xi\in\NS(Y)_{\mbb Q}$ and $c\ges 0$. As $L$ on $Y$ is semi-ample 
and $A$ on $X$ is ample, we can view this expression as the intersection product on a 
smooth (complete intersection) surface in $Y$, representing (a multiple of) the class 
$A^{d_X-1}L^{d_Y-d_X-1}$, so the Hodge index theorem yields: 
\begin{equation}\label{eq:inequal}
\begin{array}{rl}
& 
0\ges\al{\;\bigl(\,\al{\xi\cdot A}L_c-\al{A\cdot L_c}\xi\,\bigr)^2\;}
\\[2mm] 
\Rightarrow\,&
2\kern-1ex\underbrace{\al{A\cdot L_c}}_{=A^{d_X}L^{d_Y-d_X}>0}\kern-2ex
\cdot\,\al{\xi\cdot A}\cdot\al{\xi\cdot L_c}
\ges
\al{\xi\cdot A}^2\cdot\underbrace{\al{L_c^2}}_{(*)}
+{\bigl(A^{d_X}L^{d_Y-d_X}\bigr)}^2\kern-.5ex\cdot\al{\xi^2},
\end{array}
\end{equation}
and the marked term above is 
$$
(*)=
A^{d_X-1}L^{d_Y-d_X-1}L_c^2
=A^{d_X-1}L^{d_Y-d_X+1}+2c\cdot A^{d_X}L^{d_Y-d_X}
\ges2c\cdot A^{d_X}L^{d_Y-d_X}.
$$
After dividing both sides of \eqref{eq:inequal} by $A^{d_X}L^{d_Y-d_X}$, we deduce 
\begin{equation}\label{eq:ineq}
\begin{array}{c}
2\al{\xi\cdot A}\cdot\al{\xi\cdot L_c}
\ges 
2c\cdot\al{\xi\cdot A}^2+A^{d_X}L^{d_Y-d_X}\cdot \al{\xi^2}.
\end{array}
\end{equation}

\begin{theorem}{\label{thm:main}}
{\rm(i)} Assume $c_1(\cF)=0$, and let   
$c_\cF\,{:=}\,
r(r-1)\cdot\frac{A^{d_X}L^{d_Y-d_X}}{{(A^{d_X})}^2}\cdot\al{c_2(\cF)}$.  
The following statements hold: 
\begin{enumerate}
\item[$\rm(i_a)$] 
If $\cF$ is $\LL_a$-semi-stable with $a>c_\cF$, then $\cF_\fbr$ is semi-stable. 
In particular, if $\cF$ is $L_a$-semi-stable, then it is $L_c$-semi-stable for all $c\ges a$. 
\item[$\rm(i_b)$] 
If $\cF_\fbr$ is stable, then $\cF$ is $\LL_c$-stable for all $c>c_\cF$. 
\end{enumerate}
\nit{\rm(ii)} 
For $c_1(\cF)$ arbitrary,  $\rm(i_a)$, $\rm(i_b)$ still hold for 
$c'_\cF\,{:=}\,\frac{r^2(r-1)}{2}\cdot\frac{A^{d_X}L^{d_Y-d_X}}{{(A^{d_X})}^2}
\cdot\al{\Delta(\cF)}$.
\end{theorem}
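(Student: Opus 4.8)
The plan is to prove the case $c_1(\cF)=0$ first, deducing both $\rm(i_a)$ and $\rm(i_b)$ by contradiction, and then to obtain (ii) by rerunning the same argument with $\xi_\cG$ replaced throughout by $\xi_\cG-\xi_\cF$ — this is exactly the form in which Proposition \ref{prop:c0}(ii) is phrased, and it is harmless because the Hodge-index inequality \eqref{eq:ineq} holds for every $\xi\in\NS(Y)_\bbQ$. I shall use the identifications $\mu_{\LL_c}(\cG)=\al{\xi_\cG\cdot\LL_c}$ and $\mu_{L,\fbr}(\cG)=\al{\xi_\cG\cdot A}$, so that $c_1(\cF)=0$ forces $\mu_{\LL_c}(\cF)=0$. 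The entire proof then consists in feeding the two pieces of numerical data supplied by Proposition \ref{prop:c0} — a lower bound on $|\al{\xi_\cG\cdot A}|$ together with the two-alternative bound \eqref{eq:mu2} on $\al{\xi_\cG^2}$ — into \eqref{eq:ineq}.

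\emph{Case $\rm(i_a)$.} Suppose $\cF$ is $\LL_a$-semi-stable with $a>c_\cF$, but $\cF_\fbr$ is not semi-stable. By Proposition \ref{prop:c0}$(i_a)$ there is a proper saturated $\cG\subset\cF$ with $\al{\xi_\cG\cdot A}\ges A^{d_X}/(r-1)>0$ and with $\xi_\cG$ satisfying one of \eqref{eq:mu2}; semi-stability gives $\al{\xi_\cG\cdot\LL_a}=\mu_{\LL_a}(\cG)\les0$. Taking $\xi=\xi_\cG$ and $c=a$ in \eqref{eq:ineq}, its left-hand side is a product of a positive and a non-positive number, hence $\les0$, so
\[
0\ges 2a\,\al{\xi_\cG\cdot A}^2+A^{d_X}L^{d_Y-d_X}\,\al{\xi_\cG^2}.
\]
If $\al{\xi_\cG^2}\ges0$ this is already impossible, as $a>0$. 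Otherwise $\al{\xi_\cG^2}\ges-\frac{2r}{r-1}\al{c_2(\cF)}$, and inserting this together with $\al{\xi_\cG\cdot A}^2\ges(A^{d_X})^2/(r-1)^2$ into the displayed inequality yields $a\les r(r-1)\frac{A^{d_X}L^{d_Y-d_X}}{(A^{d_X})^2}\al{c_2(\cF)}=c_\cF$, a contradiction. Hence $\cF_\fbr$ is semi-stable, and the remaining assertion of $\rm(i_a)$ follows from Lemma \ref{lm:eps}(ii).

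\emph{Case $\rm(i_b)$.} Suppose $\cF_\fbr$ is stable, but $\cF$ is not $\LL_c$-stable for some $c>c_\cF$. By Proposition \ref{prop:c0}$(i_b)$ there is a proper saturated $\cG\subset\cF$ with $\al{\xi_\cG\cdot\LL_c}=\mu_{\LL_c}(\cG)\ges0$ and $\xi_\cG$ satisfying one of \eqref{eq:mu2}. The additional ingredient, not furnished directly by the proposition, is control of $\al{\xi_\cG\cdot A}=\mu_{L,\fbr}(\cG)$: since $\cG_\fbr$ is a proper subsheaf of the stable sheaf $\cF_\fbr$ of slope $0$, one has $\mu_{L,\fbr}(\cG)<0$, and the same minimal-slope-gap estimate used inside the proof of Proposition \ref{prop:c0} sharpens this to $\mu_{L,\fbr}(\cG)\les-A^{d_X}/(r-1)$, so that again $\al{\xi_\cG\cdot A}^2\ges(A^{d_X})^2/(r-1)^2$. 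The left-hand side of \eqref{eq:ineq} (with $\xi=\xi_\cG$) is now a product of a negative and a non-negative number, hence $\les0$, and the identical two-case computation as in $\rm(i_a)$, with $c$ in place of $a$, forces $c\les c_\cF$, a contradiction.

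\emph{Part (ii), and the main obstacle.} One repeats both arguments with $\xi_\cG$ replaced by $\xi_\cG-\xi_\cF$: Proposition \ref{prop:c0}(ii) then gives $\al{(\xi_\cG-\xi_\cF)\cdot A}\ges A^{d_X}/r(r-1)$ (and, in the stability case, the same lower bound on its absolute value via the minimal slope gap on the fibre) and $\al{(\xi_\cG-\xi_\cF)^2}\ges-\al{\Delta(\cF)}/(r-1)$; substituting $\xi=\xi_\cG-\xi_\cF$ into \eqref{eq:ineq} and using $\al{(\xi_\cG-\xi_\cF)\cdot A}^2\ges(A^{d_X})^2/r^2(r-1)^2$ produces exactly the threshold $c'_\cF=\frac{r^2(r-1)}{2}\cdot\frac{A^{d_X}L^{d_Y-d_X}}{(A^{d_X})^2}\al{\Delta(\cF)}$. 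The step I expect to be the real obstacle is not the arithmetic but keeping the signs consistent: the relatively de-semi-stabilizing subsheaf in $\rm(i_a)$ has $\al{\xi_\cG\cdot A}>0$ while the $\LL_c$-destabilizing subsheaf in $\rm(i_b)$ has $\al{\xi_\cG\cdot A}<0$, and in each case one must verify both that the relevant product in \eqref{eq:ineq} is non-positive and that $\al{\xi_\cG\cdot A}$ is bounded away from $0$. The latter, in the stability direction, is the only genuinely new input beyond the excerpt — the elementary remark that the relative slope of a subsheaf lies in a fixed lattice, hence, when strictly negative, is $\les-A^{d_X}/(r-1)$ (respectively $\les\mu_{L,\fbr}(\cF)-A^{d_X}/r(r-1)$ for arbitrary $c_1(\cF)$).
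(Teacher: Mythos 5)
Your proposal is correct and follows essentially the same route as the paper: produce the bad subsheaf via Proposition \ref{prop:c0}, feed its relative-slope lower bound and the two-alternative bound on $\al{\xi_\cG^2}$ (resp.\ $\al{(\xi_\cG-\xi_\cF)^2}$) into the Hodge-index inequality \eqref{eq:ineq}, and derive the contradiction for $c$ above the stated threshold. The only difference is cosmetic — you make explicit the lattice-gap estimate $|\mu_{L,\fbr}(\cG)|\ges A^{d_X}/(r-1)$ in the stability direction, which the paper leaves implicit in its ``as before''.
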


\begin{proof}
(i) Indeed, assume that $\cF_\Phi$ is not semi-stable. Then there is a subsheaf $\cG$ of 
$\cF$ satisfying proposition \ref{prop:c0}, 
so $\mu_{L,\Phi}(\cG)\ges\frac{A^{d_X}}{r-1}$. 
By replacing $\xi=\xi_\cG$ in \eqref{eq:ineq}, we obtain 
\\ \centerline{$
2\mu_{L,\Phi}(\cG)\cdot\mu_{L_a}(\cG)
\ges 
\frac{2a(A^{d_X})^2}{(r-1)^2}+A^{d_X}L^{d_Y-d_X}\cdot \al{\xi_\cG^2},$}
and the right hand side is strictly positive: for $\al{\xi_\cG^2}\ges 0$ is clear, 
and otherwise $0>\al{\xi_\cG^2}>-\frac{2r}{r-1}\al{c_2(\cF)}$. 
Thus $\mu_{L_a}(\cG)>0$, which contradicts the $L_a$-semi-stability of $\cF$. 

Conversely, if $\cF_\Phi$ is stable and $\cF$ is not $L_c$-stable over $Y$, there is 
a saturated subsheaf $\cG$ of $\cF$ such that $\mu_{L_c}(\cG)\ges 0$ and 
$\mu_{L,\Phi}(\cG)<0$. As before, the right hand side of the previous inequality 
is strictly positive, so $\mu_{L_c}(\cG)<0$, a contradiction. 

\nit(ii) Repeat the argument by using proposition \ref{prop:c0}(ii). 
\end{proof}

\begin{remark}\label{rmk:weak}
Recall that we required $L$ to be big, $\pi$-ample and semi-ample. Is possible to slightly 
weaken the bigness assumption, that is $L^{d_Y}>0$. Proposition \ref{prop:c0}, hence 
also theorem \ref{thm:main}, still hold for $L^{d_Y}=0$ and $AL^{d_Y-1}>0$. 
Indeed, in this case, both equations \eqref{eq:equal} and \eqref{eq:inequal} hold for $L$ 
replaced by $L_\veps$, with $\veps>0$ small, and \eqref{eq:ineq} follows by a limiting 
argument.
\end{remark}


\subsection{Relative semi-stability for principal bundles}{\label{sect:pb}}

Our previous conclusions can be generalized to principal bundles with reductive 
structure groups. First we introduce the semi-stability concept with respect to 
$L_c,L,A$ (we call it $L_c$-(semi-)stability), analogous to \eqref{eq:Lc-ss}.

\begin{definition}
(i) A principal $G$-bundle $\O$ on $Y$, where $G$ is a connected reductive linear group, 
is $L_c$-(semi-)stable if for any parabolic subgroup $P\subset G$ and any reduction 
$\si:\cal U\to \O_{\cal U}/P$ defined over an open subset $\cal U\subset Y$ whose 
complement has co-dimension at least two in $Y$ holds 
\begin{equation}\label{eq:Lc-ss-pb}
\deg_{L_c}(\si^*T^\rel_{\O_{\cal U}/P}):=
c_1(\si^*T^\rel_{\O_{\cal U}/P})\cdot L_c A^{d_X-1}L^{d_Y-d_X-1}
\underset{(\ges)}{>}0,
\end{equation}
where $T^\rel_{\O_{\cal U}/P}$ stands for the relative tangent bundle on $\O_{\cal U}/P$. 

\nit(ii) The semi-stability of the restriction $\O_\fbr$ is defined with respect to $L_\fbr$, 
as usual. 
\end{definition}

\begin{theorem}\label{thm:pb}
Let $G$ be a connected, reductive algebraic group, and $\O$ be a principal $G$-bundle 
on $Y$. There is a constant $c_\O$, such that the following hold: 
\begin{enumerate}
\item If $\O$ is $\LL_a$-semi-stable, with $a>c_\O$, then its restriction 
$\O_\fbr$ is $L$-semi-stable. 
\item If $\O_\fbr$ is $L$-stable, then $\O$ is $\LL_c$-stable, for all $c>c_\O$. 
\end{enumerate}
\end{theorem}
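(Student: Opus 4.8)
The plan is to reduce Theorem \ref{thm:pb} to the already-established vector bundle case (Theorems \ref{thm:main0}, \ref{thm:main}) by passing to an associated vector bundle, following the standard Ramanathan–type argument but being careful that the slope $\mu_{L_c}$ is the \emph{movable-curve} slope \eqref{eq:Lc-ss} rather than the usual one. First I would recall the characterization of (semi-)stability of $\O$ in terms of the adjoint bundle: for a parabolic $P\subset G$ with a reduction $\sigma$, the sheaf $\si^*T^\rel_{\O_\cU/P}$ is the quotient $\ad(\O)/\ad(\O)_P$, where $\ad(\O)_P=\si^*\ad_P(\O_\cU)$ is the sub-bundle of $\ad(\O)$ coming from the parabolic reduction. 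Thus $\deg_{L_c}(\si^*T^\rel_{\O_\cU/P})>0$ (resp. $\ges0$) is equivalent to $\mu_{L_c}(\ad(\O)_P)<\mu_{L_c}(\ad(\O))$ (resp. $\les$), since $\mu_{L_c}(\ad(\O))=0$ as $\ad(\O)$ is self-dual (its first Chern class vanishes, $G$ being reductive — more precisely the semisimple part contributes nothing and the centre is handled separately). So $\O$ is $L_c$-(semi-)stable if and only if, for every parabolic reduction, the corresponding parabolic sub-bundle of $\ad(\O)$ does not destabilize. The same dictionary applies fibrewise for $\O_\fbr$ with the slope $\mu_{L,\fbr}$.

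The heart of the argument is then: I would take $\cF:=\ad(\O)$, a vector bundle on $Y$ of rank $\dim G$ with $c_1=0$, apply Theorem \ref{thm:main} to it, and translate. For (ii): if $\O_\fbr$ is $L$-stable, a theorem of Ramanan–Ramanathan (or Anchouche–Biswas in the Kähler setting) gives that $\ad(\O)_\fbr$ is \emph{polystable} of slope $0$, in particular $L_\fbr$-semistable; more usefully, the destabilizing subsheaves of $\ad(\O)_\fbr$ of slope $0$ are exactly the parabolic ones, none of which are proper-destabilizing in the strict sense — this is what one needs so that the hypothesis "$\cF_\fbr$ stable" in Theorem \ref{thm:main0}(ii)/\ref{thm:main}($\rm i_b$) can be replaced by the weaker "$\cF_\fbr$ is semistable and its only slope-$0$ subsheaves come from parabolic reductions". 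I would re-run the proof of Theorem \ref{thm:main} under this weaker hypothesis: if $\O$ is not $L_c$-stable, there is a parabolic reduction $\sigma$ over $\cU$ with $\mu_{L_c}(\ad(\O)_P)\ges0$; extending $\ad(\O)_P$ to a saturated subsheaf $\cG\subset\ad(\O)$ on all of $Y$ (using $\codim(Y\sm\cU)\ges2$), either $\mu_{L,\fbr}(\cG_\fbr)<0$, in which case inequality \eqref{eq:ineq} with $\xi=\xi_\cG$ forces $\mu_{L_c}(\cG)<0$ for $c>c'_\cF$ — contradiction — or $\mu_{L,\fbr}(\cG_\fbr)\ges0$, forcing $\mu_{L,\fbr}(\cG_\fbr)=0$ and hence $\cG_\fbr$ is a slope-$0$ subsheaf of the semistable $\ad(\O)_\fbr$, necessarily parabolic; but then polystability of $\ad(\O)_\fbr$ gives a $G$-equivariant splitting over $\fbr$, contradicting $L$-stability of $\O_\fbr$ unless $\cG_\fbr$ is trivial. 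Set $c_\O:=c'_{\ad(\O)}$, the bound from Theorem \ref{thm:main}(ii) applied to $\ad(\O)$. For (i): if $\O$ is $L_a$-semistable with $a>c_\O$, then by Theorem \ref{thm:main}($\rm i_a$) applied to $\ad(\O)$ — again via the parabolic dictionary — no parabolic sub-bundle of $\ad(\O)_\fbr$ has positive slope, which is precisely $L$-semistability of $\O_\fbr$; the persistence "$L_a$-s.s. $\Rightarrow$ $L_c$-s.s. for $c\ges a$" is inherited verbatim from Lemma \ref{lm:eps}(ii) once one knows relative semistability.

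The main obstacle I anticipate is \emph{not} the numerical estimates — those are carried by \eqref{eq:ineq} and Proposition \ref{prop:c0} exactly as in the sheaf case — but the bookkeeping around \emph{parabolic} subsheaves of $\ad(\O)$: one must be sure that destabilizing a principal bundle is detected by parabolic reductions (this is the content of Ramanathan's definition, so it is by fiat here), that such a reduction over $\cU$ with $\codim\ges2$ yields a saturated subsheaf of $\ad(\O)$ on all of $Y$ with the \emph{same} slope, and — most delicately — that when the relative slope vanishes one can actually produce the contradiction with $L$-stability of $\O_\fbr$, which requires the polystability of the adjoint of a stable principal bundle (the theorem of Ramanan–Ramanathan on semistable reduction, valid in characteristic $0$). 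A cleaner alternative, which I would mention, is to invoke directly the result that $\O$ is $L_c$-(semi-)stable if and only if $\ad(\O)$ is $L_c$-(semi-)stable \emph{as a sheaf} (true in characteristic $0$ for $G$ reductive), after which Theorem \ref{thm:main} applies to $\ad(\O)$ with no further work and $c_\O:=\min(a_{\ad(\O)},c'_{\ad(\O)})$ using whichever of the two bounds \ref{thm:main0}/\ref{thm:main} one prefers; the only thing to check then is that semistability of $\ad(\O)_\fbr$ as a sheaf is equivalent to $L$-semistability of $\O_\fbr$ as a principal bundle, which is again Ramanan–Ramanathan.
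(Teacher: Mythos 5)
Your part (i) is essentially the paper's: both reduce semi-stability of $\O_\fbr$ to semi-stability of the sheaf $\ad(\O)$ via Ramanathan's equivalence and then quote the vector-bundle theorems. The divergence, and the problem, is in part (ii). You route the stability direction through ``$\O$ not $\LL_c$-stable $\Rightarrow$ $\ad(\O)$ not $\LL_c$-stable as a sheaf $\Rightarrow$ contradiction via Proposition \ref{prop:c0} and \eqref{eq:ineq}.'' This tries to prove something stronger than what is true: even when $\O_\fbr$ is stable, $\ad(\O)_\fbr$ is only \emph{polystable}, and $\ad(\O)$ can perfectly well fail to be $\LL_c$-stable as a sheaf (already the centre of $\mfrak g$, or any proper slope-$0$ summand of the adjoint, gives a non-destabilizing but slope-equal subsheaf). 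Your attempted patch for the residual case $\mu_{L,\fbr}(\cG_\fbr)=0$ does not close: a slope-$0$ subsheaf of $\ad(\O)_\fbr$ is \emph{not} necessarily of the form $\ad_P$ for a parabolic $P$, a proper slope-$0$ direct summand does not contradict stability of $\O_\fbr$, and in any case the replacement procedure inside Proposition \ref{prop:c0} (cases 2 and 3) does not preserve the property of coming from a parabolic reduction, so the contradiction cannot be transported back to the principal bundle. The same objection kills your ``cleaner alternative'': $\LL_c$-stability of $\O$ is not equivalent to $\LL_c$-stability of $\ad(\O)$ as a sheaf, only the semi-stable statement is an equivalence.

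The paper's proof of (ii) never leaves the world of parabolic reductions and needs none of the Bogomolov-type machinery. If $\O$ is not $\LL_c$-stable there is a reduction $(P,\si)$ over $\cal U$ with $\deg_{L_c}(\si^*T^\rel_{\O_{\cal U}/P})\les0$; restricting to the generic fibre gives a reduction of $\O_\fbr$ over an open set with complement of codimension $\ges2$, so stability of $\O_\fbr$ and integrality give $c_1(\si^*T^\rel_{\O_{\cal U}/P})A^{d_X}L^{d_Y-d_X-1}\ges A^{d_X}>0$; hence $c\les-\frac{1}{A^{d_X}}\deg_L(\si^*T^\rel_{\O_{\cal U}/P})$. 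Since $\si^*T^\rel_{\O_{\cal U}/P}$ is a quotient of $\ad(\O_{\cal U})$ and extends to a torsion-free quotient of $\ad(\O)$ with the same slope, its $L$-degree is bounded below uniformly over all reductions, which bounds $c$ and gives the contradiction. This is the analogue of the slope-based bound of Theorem \ref{thm:main0}(ii), not of Theorem \ref{thm:main}. If you want to salvage your plan, replace the detour through Proposition \ref{prop:c0} by this direct estimate on the degree of the reduction itself.
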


\begin{proof} 
(i) The principal bundle $\O$ is semi-stable if and only if the vector bundle $\ad(\O)$, 
induced by the adjoint representation $G\to\Gl\bigl(\End(\mfrak g)\bigr)$ of $G$, 
is semi-stable (see \cite[Corollary 3.18]{ra}). 

\nit(ii) Let us assume that $\O$ is not $L_c$-stable. Then there is an open subset 
$\cal U\subset Y$, whose complement has co-dimension at least two in $Y$, 
and a reduction $(P,\si)$ of $\O$ over $\cal U$, such that 
\begin{equation}\label{eq:U}
\deg_L(\si^*T^\rel_{\O_{\cal U}/P})
+
c\cdot c_1(\si^*T^\rel_{\O_{\cal U}/P})A^{d_X}L^{d_Y-d_X-1}
=
\deg_{L_c}(\si^*T^\rel_{\O_{\cal U}/P})\les 0.
\end{equation}
The restriction of $(P,\si)$ to the generic fibre still defines a reduction of $\O_\fbr$ 
over $\cal U\cap\fbr$, and the complement of this latter in $\fbr$ has co-dimension 
at least two, too. The stability of $\O_\fbr$ implies 
$$
c_1(\si^*T^\rel_{\O_{\cal U}/P})A^{d_X}L^{d_Y-d_X-1}
=
A^{d_X}\deg_{L_\fbr}(\si^*T^\rel_{\O_\fbr/P})\ges A^{d_X}>0.
$$ 
By inserting this into \eqref{eq:U}, we obtain 
$c\les-\frac{1}{A^{d_X}}\deg_L(\si^*T^\rel_{\O_{\cal U}/P}).$ 
But, for any reduction $(P,\si)$, the vector bundle $\si^*T^\rel_{\O_{\cal U}/P}$ is a 
quotient of $\ad(\O_{\cal U})$, and this latter extends to a torsion free quotient of 
$\ad(\O)$. (The slope of the quotient is preserved in this process, as 
$\codim_Y(Y\sm\cal U)\ges2$.) Let $k_{\ad(\O)}$ be a lower bound for the 
$L$-slopes of the quotients of $\ad(\O)$. Then the previous equation implies 
$c\les-\frac{\rk(\ad(\O))\cdot k_{\ad(\O)}}{A^{d_X}},$ which contradicts that 
$c$ is sufficiently large. 
\end{proof}


\section{Application: stable vector bundles on Hirzebruch surfaces}{\label{sect:hirz}}

For $\ell\ges 0$, the Hirzebruch surface 
$Y_\ell:=\mbb P(\cal O_{\mbb P^1}\oplus\cal O_{\mbb P_1}(-\ell))$ is a 
$\mbb P^1$-fibre bundle over $\mbb P^1$. Let $\pi\,{:}\,Y_\ell\,{\rar}\,\mbb P^1$ be the 
projection, and $\cal O_\pi(1)$ the relatively ample line bundle. The `exceptional line' 
$\Lambda\,{=}\,\mbb P(\cal O_{\mbb P^1}{\oplus}\,0)\,{\hra}\,Y_\ell$ 
has self-intersection $\Lambda^2{=}\,-\ell$, 
$\cal O_\pi(1)\,{=}\,\ell\cal O_{\mbb P^1}(1){+}\,\cO_Y(\Lambda)$, 
and the relative and (absolute) canonical classes of $Y_\ell$ are respectively 
\begin{equation}\label{eq:K}
\kappa_{Y_\ell/\mbb P^1}=\cal O_\pi(-2)+\ell\pi^*\cal O_{\mbb P^1}(1)
\text{ and }
\kappa_{Y_\ell}=\cal O_\pi(-2)+(\ell-2)\pi^*\cal O_{\mbb P^1}(1).
\end{equation}
The goal of this section is to study the geometry of the moduli space 
$\bar M_{Y_\ell}^{L_c}(r;0,n)$ of torsion free sheaves on $Y_\ell$, of rank $r$, 
with $c_1(\cF)=0$ and $c_2(\cF)=n$, which are semi-stable with respect to 
$\LL_c=\cal O_\pi(1)+c\cdot\pi^*\cal O_{\mbb P^1}(1)$. 
Our approach is similar to \cite[Section 1]{gl}, although in \emph{loc. cit.} the authors 
consider polarizations $L_c$ with $0<c\ll1$ (while we consider $c\gg0$). 

\subsection{Construction of semi-stable sheaves on $Y_\ell$}

\begin{lemma}\label{lm:1}
Let $\cF$ be an $L_c$-semi-stable torsion free sheaf of rank $r$ on $Y_\ell$, 
with $c>r(r-1)n$, $c_1(\cF)=0$ and $c_2(\cF)=n$. 
Then the following statements hold: 
\begin{enumerate}
\item We have $\fbr\cong\mbb P^1$ and $\cF_\fbr\cong\cal O_{\fbr}^{\oplus r}$. 
If $\cF$ is $L_c$-stable, then $h^1(Y_\ell,\cF)=n-r$, so $n\ges r$. 

\item 
The Chern character of the derived direct image $\pi_!\cF=\pi_*\cF-R^1\pi_*\cF$ is 
\begin{equation}\label{eq:piF}
\ch(\pi_!\cF)=\ch_0(\pi_!\cF)\oplus\ch_1(\pi_!\cF)=r\oplus -n.
\end{equation}

\item 
The natural homomorphism $f_\cF\;{:}\;\pi^*\pi_*\cF{\to}\cF$ is injective, 
and $\det(f_\cF)^\vee\in|\pi^*\cO_{\mbb P^1}(n_\cF)|$ with $n_\cF\les n$. We denote 
by $Z_\cF{=}\underset{i\in I}{\sum}m_ix_i$ the divisor of $\det(f_\cF)^\vee$, 
with $x_i\in\mbb P^1$, $m_i\ges 1$, and $\underset{i\in I}{\sum}m_i=n_\cF$. 
The sheaf $R^1\pi_*\cF$ is torsion on $\mbb P^1$, 
and $\deg_{\mbb P^1}(R^1\pi_*\cF)=n-n_\cF$. 

\item 
$\pi_*\cF$ is locally free of rank $r$ on $\mbb P^1$, so it splits:  
\begin{equation}\label{eq:a}
\pi_*\cF\cong\ouset{j=1}{p}{\mbox{$\bigoplus$}}\cO(-a_j)^{\oplus r_j}, 
\text{ with }
0\les a_1<\ldots<a_p
\quad\text{and }\left\{
\begin{array}{l}
r_1+\ldots+r_p=r,\\ 
a_1r_1+\ldots+a_pr_p=n_\cF.
\end{array}
\right.
\end{equation}
If $\Gamma(Y_\ell,\cF){=}0$, then $a_j\ges 1$ for all $j$. 

\item 
$\pi_*\cF(-\Lambda)=0$ and $R^1\pi_*\cF(-\Lambda)$ is a torsion sheaf on $\mbb P^1$, 
with  $\deg R^1\pi_*\cF(-\Lambda)=n$.
\end{enumerate}
\end{lemma}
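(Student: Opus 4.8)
The plan is to exploit that $c > r(r-1)n$ exceeds the bound $c_\cF = r(r-1)\cdot\frac{A^{d_X}L^{d_Y-d_X}}{(A^{d_X})^2}\cdot\al{c_2(\cF)}$ from Theorem \ref{thm:main}, which here (with $X=\mbb P^1$, $A=\cO_{\mbb P^1}(1)$, $L=\cO_\pi(1)$, $d_Y=2$, $d_X=1$) specializes to $c_\cF = r(r-1)n$. Thus Theorem \ref{thm:main}$(\rm i_a)$ applies and $\cF_\fbr$ is semi-stable on $\fbr\cong\mbb P^1$; since $c_1(\cF_\fbr)=0$ and $\fbr\cong\mbb P^1$, semi-stability of a torsion free (hence locally free, on a curve) sheaf of degree $0$ on $\mbb P^1$ forces $\cF_\fbr\cong\cO_\fbr^{\oplus r}$. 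For the cohomology count in (i), I would use Riemann--Roch on $Y_\ell$ for $\cF$ with $c_1=0$, $c_2=n$, $r$, together with $\kappa_{Y_\ell}$ from \eqref{eq:K}, to get $\chi(\cF)=\chi(\cO_{Y_\ell})\cdot r - n = r - n$; then $L_c$-stability (for $c>0$, $c_1=0$) rules out sections and, by Serre duality and a slope argument against $\kappa_{Y_\ell}$ having negative fibre-degree, rules out $h^2$, so $h^1=n-r\ges0$.

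For (ii)--(v) the key tool is the relative structure of $\pi\colon Y_\ell\to\mbb P^1$ together with $\cF_\fbr\cong\cO_\fbr^{\oplus r}$. For (ii), cohomology-and-base-change gives that generically $\pi_*\cF$ has rank $r$ and $R^1\pi_*\cF$ is torsion; computing $\ch(\pi_!\cF)$ via Grothendieck--Riemann--Roch for $\pi$ (or, more cheaply, by pushing forward the known $\ch(\cF)=r\oplus 0\oplus(-n)$ and using $\Td(T_\pi)$ with $c_1(T_\pi)=-\kappa_{Y_\ell/\mbb P^1}$) yields $\ch(\pi_!\cF)=r\oplus(-n)$ on $\mbb P^1$. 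For (iii): the adjunction morphism $f_\cF\colon\pi^*\pi_*\cF\to\cF$ is generically an isomorphism on fibres (since $\cF_\fbr$ is trivial and $\pi_*\cF\otimes k(x)\to H^0(\fbr,\cF_\fbr)$ is an isomorphism at general $x$), hence $f_\cF$ is injective as a map of torsion free sheaves; then $\det(f_\cF)^\vee$ is a section of a line bundle whose class is $\det(\pi^*\pi_*\cF)^\vee\otimes\det\cF = \pi^*(\det\pi_*\cF)^\vee$, i.e. $\pi^*\cO_{\mbb P^1}(n_\cF)$ where $n_\cF = -\deg\pi_*\cF \les n$ (the inequality because $\deg\pi_!\cF = -n$ and $\deg R^1\pi_*\cF\ges0$). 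The equality $\deg R^1\pi_*\cF = n - n_\cF$ then follows from $\deg\pi_*\cF - \deg R^1\pi_*\cF = -n$. For (iv), $\pi_*\cF$ is locally free of rank $r$ on $\mbb P^1$ because $h^0(\fbr,\cF_\fbr)=r$ is constant; hence it splits as \eqref{eq:a}, with $a_j\ges0$ following from $\Gamma(Y_\ell,\cF)=H^0(\mbb P^1,\pi_*\cF)$ being zero under $L_c$-stability, and $a_j\ges1$ when $\Gamma(Y_\ell,\cF)=0$ is assumed outright. For (v), twist by $\cO_Y(-\Lambda)$: restricted to a fibre this is $\cO_\fbr(-1)$, so $\cF(-\Lambda)_\fbr\cong\cO_\fbr(-1)^{\oplus r}$, which has no sections and $h^1 = r$ on each fibre; thus $\pi_*\cF(-\Lambda)=0$, $R^1\pi_*\cF(-\Lambda)$ is locally free of rank $r$ — wait, I would instead note it is torsion precisely when... actually $R^1\pi_*$ of a sheaf with constant fibre-$h^1$ is locally free, so the torsion claim needs care: I would re-derive it from the exact sequence $0\to\cF(-\Lambda)\to\cF\to\cF|_\Lambda\to0$ and the fact that $\cF|_\Lambda$ has the right Euler characteristic, concluding $\deg R^1\pi_*\cF(-\Lambda) = n$ by GRR as in (ii).

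The main obstacle I expect is (v), and more generally keeping the torsion-versus-locally-free dichotomy for the higher direct images straight: $R^1\pi_*\cF$ being torsion uses $c_1(\cF_\fbr)=0$ plus semi-stability to kill $h^1(\fbr,\cF_\fbr)$ at the \emph{general} fibre, but one must also check that the statement in (v) about $R^1\pi_*\cF(-\Lambda)$ is consistent — a trivial bundle twisted to $\cO_\fbr(-1)^{\oplus r}$ has $h^1\equiv r$, so $R^1\pi_*\cF(-\Lambda)$ should in fact be \emph{locally free} of rank $r$, not torsion, unless the intended claim is that it has no torsion-free... I would resolve this by carefully tracking which sheaf is asserted torsion (it is $R^1\pi_*\cF$, via $h^1(\fbr,\cF_\fbr)=0$ at general $x$) and treating (v) by the same degree bookkeeping rather than a fibrewise-$h^1$ argument, using the snake lemma on $0\to\cF(-\Lambda)\to\cF\to\cF|_\Lambda\to0$ to compare $R^1\pi_*\cF(-\Lambda)$ with $R^1\pi_*\cF$ and $\pi_*(\cF|_\Lambda)$; the degree $n$ then drops out from $\ch(\pi_!\cF(-\Lambda)) = \ch(\pi_!\cF)\cdot(\text{twist})$ computed via GRR.
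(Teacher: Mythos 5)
Your plan follows the paper's own (very brief) proof almost exactly: Theorem \ref{thm:main} with $c_\cF=r(r-1)n$ gives semi-stability, hence triviality, of $\cF_\fbr$ on the generic fibre (for $\ell=0$ one needs remark \ref{rmk:weak}, since $\cO_\pi(1)$ is not big on $\mbb P^1\times\mbb P^1$); Riemann--Roch plus $h^0=h^2=0$ gives $h^1=n-r$ in the stable case; and Grothendieck--Riemann--Roch together with the adjunction map $\pi^*\pi_*\cF\to\cF$ handles (ii)--(v). Two small corrections. First, the ``main obstacle'' you flag in (v) is illusory: $h^1(\mbb P^1,\cO_{\mbb P^1}(-1))=0$, not $r$, so $R^1\pi_*\cF(-\Lambda)$ already vanishes on the general fibre and is torsion for the same reason $R^1\pi_*\cF$ is; your fallback via the push-forward of $0\to\cF(-\Lambda)\to\cF\to\cF_\Lambda\to0$ and the degree bookkeeping is nevertheless valid and is essentially what the paper does to get $\deg R^1\pi_*\cF(-\Lambda)=n$. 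Second, your justification of $a_j\ges 0$ in (iv) via $\Gamma(Y_\ell,\cF)=H^0(\mbb P^1,\pi_*\cF)=0$ only applies when $\cF$ is stable, whereas \eqref{eq:a} is asserted under semi-stability; the correct argument (the paper's) is that $a_1<0$ would produce an injection $\pi^*\cO_{\mbb P^1}(-a_1)\hra\cF$ of strictly positive $L_c$-slope, contradicting semi-stability.
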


\begin{proof}
\nit(i) By theorem \ref{thm:main}, $\cF_\fbr$ is semi-stable, so 
$\cF_\fbr\cong \cal O_{\mbb P^1}^{\oplus r}$ because $\fbr\cong\mbb P^1$. 
(For $\ell=0$, see remark \ref{rmk:weak}.) If $\cF$ is stable, then 
$\Gamma(Y,\cF)=H^2(Y,\cF)=0$, and $\dim H^1(\cF)=n-r$ 
by the Riemann-Roch formula.  
\smallskip

\nit(ii)-(v) If $a_1<0$, then $\cO_{\mbb P^1}\subset\cF(a_1)$, which contradicts the 
semi-stability of $\cF$. Further, as $\cF(-\Lambda)_\fbr\cong\cO_\fbr(-1)^{\oplus r}$, 
we deduce $\Gamma(U,\pi_*\cF(-\Lambda))=0$, for all open $U\subset\mbb P^1$. 
The Grothendieck-Riemann-Roch theorem yields the formula for ${\rm ch}(\pi_!\cF)$.
\end{proof}

\begin{lemma}\label{lm:QQ}
The sheaf $\cal Q_\cF$ defined by 
$0\,{\to}\,\pi^*\pi_*\cF\,{\srel{f_\cF}{\to}}\,\cF{\to}\,\cal Q_\cF\,{\to}\,0$ 
has the following properties: 
\begin{enumerate}
\item Any local section $\si$ through a closed point $p\in\Supp(\cal Q_\cF)$ is 
$\cal Q_{\cF,p}$-regular, that is the multiplication 
$\cal Q_{\cF,p}\to\cal Q_{\cF,p}$ by the defining equation of $\si$ is injective. 
Thus the homological dimension and the depth of $\cal Q_{\cF,p}$ are both equal one. 

\item $\Supp(\cal Q_\cF)\,{=}\,\pi^{-1}(Z_\cF)$, and its Chern classes are 
$c_1(\cal Q_\cF)=n_\cF\cdot[\cO_{\mbb P^1}(1)],\;c_2(\cal Q_\cF)=n$. 

\item There is an exact sequence 
$0{\to}\,\cal Q_\cF\,{\otimes}\,\cO_\Lambda{\to}\,
R^1\pi_*\cF(-\Lambda)\,{\to}\,R^1\pi_*\cal Q_\cF{\to}\,0.$ 

\item If $\det(f_\cF)^\vee\!$ has simple zeros $x_1,\ldots,x_{n_\cF}\in\mbb P^1$, 
then $\cal Q_\cF\,{=}\,\ouset{i=1}{n_\cF}{\bigoplus}\cO_{\pi^{-1}(x_i)}(-b_i)$, 
with $b_i\ges 1$ and $b_1+\ldots+b_{n_\cF}=n$. 
(Thus, for $n_\cF=n$, we have $b_1=\ldots=b_n=1$.)

\item An isomorphism $\cF\srel{\th}{\to}\cF'$ induces the commutative diagram: 
$$
\xymatrix@R=1.5em{
0\ar[r]&
\pi^*\pi_*\cF\ar[r]\ar[d]_-{\pi_*\th}^-\cong&
\cF\ar[r]\ar[d]_-\th^-\cong&
\cal Q_\cF\ar[r]\ar[d]_-{\hat\th}^-\cong&0
\\ 
0\ar[r]&
\pi^*\pi_*\cF'\ar[r]&
\cF'\ar[r]&
\cal Q_{\cF'}\ar[r]&0.
}
$$
\end{enumerate}
\end{lemma}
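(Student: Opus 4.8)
The plan is to establish the five properties of $\cal Q_\cF$ in turn, each reducing to a fibrewise statement by relative duality, since everything happens over the base $\mbb P^1$ and the fibres are $\mbb P^1$.

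First, for (i), the point is that by Lemma \ref{lm:1}(iii) the map $f_\cF$ is injective with cokernel $\cal Q_\cF$ supported on $\pi^{-1}(Z_\cF)$, and $\det(f_\cF)^\vee$ is a pullback from $\mbb P^1$; so $\cal Q_\cF$ is an $\cO_Y$-module annihilated by $\pi^*\cO_{\mbb P^1}(-Z_\cF)$, i.e. its support is set-theoretically $\pi^{-1}(Z_\cF)$, a union of fibres. I would argue that $\cal Q_\cF$ has no $\pi^*$-torsion beyond the expected one: since $\pi^*\pi_*\cF$ and $\cF$ are both locally free (respectively torsion free) of the same rank $r$, and the determinant $\det(f_\cF)^\vee$ is a product of pullbacks of distinct points to the power $m_i$, locally along a fibre $\pi^{-1}(x_i)$ the map $f_\cF$ is given, after choosing bases, by a matrix whose determinant is $t^{m_i}$ (with $t$ the coordinate downstairs); hence $\cal Q_\cF$ is locally a quotient of a free module by a map of this form, so over the generic point of $\pi^{-1}(x_i)$ it is a torsion $k(x_i)[t]_{(t)}$-module, and the defining equation $t$ of a (generic) section is a nonzerodivisor on it. That gives $\operatorname{depth}\cal Q_{\cF,p}=1$; homological dimension one then follows from the Auslander--Buchsbaum formula, since $\cF$ is locally free in codimension one and $\pi^*\pi_*\cF$ is locally free, so $\cal Q_\cF$ has a length-one locally free resolution on a punctured neighbourhood, and regularity of $\si$ shows the local ring has the right depth.

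For (ii): $\Supp(\cal Q_\cF)=\pi^{-1}(Z_\cF)$ is immediate from the determinant computation above (set-theoretically it's the vanishing of $\det f_\cF$). For the Chern classes, use additivity in the exact sequence $\pi^*\pi_*\cF\subset\cF\surj\cal Q_\cF$ together with $c_1(\pi^*\pi_*\cF)=-n_\cF[\cO_{\mbb P^1}(1)]$ (from \eqref{eq:a}) and $c_1(\cF)=0$, giving $c_1(\cal Q_\cF)=n_\cF\cdot[\cO_{\mbb P^1}(1)]$; then $c_2$ is computed from $\ch_2(\cal Q_\cF)=\ch_2(\cF)-\ch_2(\pi^*\pi_*\cF)$, using that $\pi^*\pi_*\cF$ has $\ch_2=0$ (it is a pullback of a sheaf on a curve), so $c_2(\cal Q_\cF)=c_2(\cF)-\tfrac12 c_1(\cal Q_\cF)^2+\ldots$; since $[\cO_{\mbb P^1}(1)]^2=0$ on $\mbb P^1$ pulled back, this is just $c_2(\cF)=n$. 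For (iii), apply $\pi_*$ (equivalently $R\pi_*$) to $\pi^*\pi_*\cF\subset\cF\surj\cal Q_\cF$: the map $\pi_*\pi^*\pi_*\cF\to\pi_*\cF$ is the identity (projection formula, $\pi_*\cO_Y=\cO_{\mbb P^1}$), so the long exact sequence in $R\pi_*$ collapses to $0\to\pi_*\cal Q_\cF\to R^1\pi_*\pi^*\pi_*\cF\to R^1\pi_*\cF\to R^1\pi_*\cal Q_\cF\to 0$; but $R^1\pi_*\pi^*\pi_*\cF=\pi_*\cF\otimes R^1\pi_*\cO_Y=0$ since $R^1\pi_*\cO_Y=0$ for a $\mbb P^1$-bundle, so $\pi_*\cal Q_\cF=0$ and $R^1\pi_*\cF\cong R^1\pi_*\cal Q_\cF$. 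The stated sequence with $R^1\pi_*\cF(-\Lambda)$ instead comes from twisting the whole diagram by $-\Lambda$ and using $\cO_\pi(1)=\cO_Y(\Lambda)+\ell\pi^*\cO_{\mbb P^1}(1)$ to relate $\cal Q_\cF(-\Lambda)$ to $\cal Q_\cF$; more precisely, since $\cal Q_\cF$ is supported on fibres, $\cal Q_\cF(-\Lambda)$ differs from $\cal Q_\cF$ by a twist along $\Lambda$ and there is a tautological sequence $0\to\cal Q_\cF(-\Lambda)\to\cal Q_\cF\to\cal Q_\cF\otimes\cO_\Lambda\to0$ (restriction to the section), and pushing forward and splicing with the isomorphism $R^1\pi_*\cal Q_\cF\cong R^1\pi_*\cF$ yields the claimed three-term sequence — here I would need to chase the maps carefully, as this is the fiddly point.

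For (iv), when $\det(f_\cF)^\vee$ has simple zeros the local matrix of $f_\cF$ at $x_i$ has determinant $t$ (up to a unit), so by the Smith normal form over the DVR $\cO_{\mbb P^1,x_i}$ we may write $f_\cF$ locally as $\operatorname{diag}(1,\ldots,1,t)$, whence $\cal Q_\cF$ is locally $\cO_{\pi^{-1}(x_i)}$ twisted by some line bundle $\cO_{\pi^{-1}(x_i)}(-b_i)$; globally on the fibre $\pi^{-1}(x_i)\cong\mbb P^1$ it's a line bundle of some degree $-b_i$, and positivity $b_i\ges1$ follows because $\Gamma(Y_\ell,\cal Q_\cF\otimes(\text{small twist}))$ has to vanish by semi-stability of $\cF$ (a section of $\cal Q_\cF$ of nonnegative degree would lift to destabilize $\cF$, or contradict $\Gamma(Y,\cF)=0$), and $\sum b_i=\deg c_2(\cal Q_\cF\text{ along fibres})=n$ from part (ii). Finally (v) is formal: $\th\colon\cF\xrightarrow{\sim}\cF'$ induces $\pi_*\th\colon\pi_*\cF\xrightarrow{\sim}\pi_*\cF'$, hence $\pi^*\pi_*\th$ compatible with $f_\cF,f_{\cF'}$ by naturality of the evaluation map $\pi^*\pi_*(-)\to(-)$, and then $\hat\th$ on the cokernels is the induced isomorphism; all squares commute by functoriality. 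The main obstacle is part (iii): getting the exact sequence with the $(-\Lambda)$-twist right, i.e. correctly identifying $\cal Q_\cF\otimes\cO_\Lambda$ as the cokernel term and matching it with $R^1\pi_*\cF(-\Lambda)$ rather than $R^1\pi_*\cF$ — this requires carefully combining the restriction-to-$\Lambda$ sequence for $\cal Q_\cF$ with the vanishing $\pi_*\cF(-\Lambda)=0$ from Lemma \ref{lm:1}(v) and the degree bookkeeping $\deg R^1\pi_*\cF(-\Lambda)=n$.
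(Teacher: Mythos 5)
Your treatment of parts (ii), (iii) and (v) is essentially sound and close in spirit to the paper's argument (the paper obtains (iii) by pushing forward $0\to\cF(-\L)\to\cF\to\cF_\L\to0$ and identifying $\Coker(\pi_*\cF\to\cF_\L)$ with $\cal Q_\cF\otimes\cO_\L$, whereas you restrict $\cal Q_\cF$ itself to $\L$; the two chases are equivalent). But there is a genuine gap in part (i), and it propagates into (iii) and (iv), both of which rely on exactly the regularity statement you have not proved.

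First, a confusion of equations: a local section $\si$ of $\pi$ through $p$ is transverse to the fibre, so in coordinates $(x,y)$ with $\pi(x,y)=x$ its defining equation is $y$ (the fibre direction), not the base coordinate $t=x$; multiplication by $x$ is certainly not injective on $\cal Q_{\cF,p}$, since a power of $x$ annihilates it ($\Supp(\cal Q_\cF)=\pi^{-1}(Z_\cF)$). Second, and more seriously, your argument only inspects the generic point of the fibre $\pi^{-1}(x_i)$, where the determinant/normal-form picture is valid; the content of (i) is precisely that $\cal Q_{\cF,p}$ has no zero-dimensional submodule at \emph{any} closed point $p$, i.e.\ no embedded points. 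Since $\cF$ is only torsion free (not locally free), the presentation $\pi^*\mbb L\subset\cF\surj\cal Q_\cF$ together with Auslander--Buchsbaum gives only $\mathrm{pd}(\cal Q_{\cF,p})\les2$, not $=1$, and a cokernel of this kind can a priori fail to be pure of dimension one. The paper rules this out by a non-formal step: a zero-dimensional submodule $\cal Q'\subset\cal Q_\cF$ would mean $\pi^*\mbb L\subset\cF$ is not saturated; its saturation $\cal G$ is classified by $\Ext^1(\cal Q',\pi^*\mbb L)\cong\Ext^1(\pi^*\mbb L,\kappa_{Y_\ell}\otimes\cal Q')^\vee=0$ (Serre duality, $\cal Q'$ zero-dimensional, $\pi^*\mbb L$ locally free), so $\cal G\cong\cal Q'\oplus\pi^*\mbb L$ would be a torsion subsheaf of the torsion-free $\cF$ --- a contradiction. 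Nothing in your proposal supplies this, and without it the left-exactness of $0\to\cal Q_\cF(-\L)\to\cal Q_\cF\to\cal Q_\cF\otimes\cO_\L\to0$ in your (iii) and the purity of $\cal Q_\cF$ along $\pi^{-1}(x_i)$ in your (iv) are unjustified. Two smaller points: Smith normal form applies to matrices over a PID, while $f_\cF$ near $p$ is a matrix over the two-dimensional local ring $\cO_{Y_\ell,p}$; and the clean reason for $b_i\ges1$ in (iv) is $\pi_*\cal Q_\cF=0$, which you already derived --- the appeal to $\Gamma(Y,\cF)=0$ is not available for merely semi-stable $\cF$.
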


\begin{proof}
\nit(i) We choose local coordinates $x,y$ around $p$, with $p=(0,0)$ and $\pi$ 
given by $(x,y)\mt x$, and $\si(x)=(x,0)$. (So, the local equation defining $\si$ is $y$.) 
As $\Supp(\cal Q_\cF)=\pi^{-1}(Z_\cF)$, there is $m>0$ such that $\cal Q_{\cF,p}$ 
is annihilated by $\langle x^m\rangle$. 
Assume that the multiplication by $y$ is not injective. 
Then there is a non-trivial, zero-dimensional submodule of $\cal Q_{\cF,p}$ annihilated 
by $\cal I:=\langle x^m,y\rangle\subset\cal O_{Y_\ell,p}$, so $\pi^*\mbb L\subset\cF$ 
is non-saturated. The saturation $\cal G\subset\cF$ of $\pi^*\mbb L$ has the property that 
$\cal Q':=\cal G/\pi^*\mbb L\subset\cal Q_\cF$ is non-trivial, zero-dimensional, and \\ 
\centerline{$
\cal G\in\Ext^1(\cal Q',\pi^*\mbb L)\ouset{\text{loc. free}}{\pi^*\mbb L}{\cong}
\Ext^1(\pi^*\mbb L,\kappa_Y\otimes\cal Q')^\vee=0.
$}
It follows $\cal G=\cal Q'\oplus\pi^*\mbb L\subset\cF$, which contradicts that $\cF$ 
is torsion free. The second statement follows from the Auslander-Buchsbaum formula 
(see \textit{e.g.} \cite[Section 1.1]{hl}). 

\nit(ii)+(iii) The identities $\pi_*\cal Q_\cF=0, R^1\pi_*\cF\cong R^1\pi_*\cal Q_\cF$, 
are immediate, and $\Coker(\pi_*\cF\to\cF_\Lambda)=\cal Q_\cF\otimes\cO_\Lambda$ 
follows by restricting to $\L$. (Use that $\L$ is $\cal Q_\cF$-regular.) 
Now insert into $0{\to}\,\pi_*\cF{\to}\,\cF_\Lambda\to R^1\pi_*\cF(-\Lambda){\to}\,
R^1\pi_*\cF{\to}\,0$ and obtain (iii).  

\nit(iv) Indeed, $\cal Q_\cF$ is torsion free along $\pi^{-1}(x_i)$, 
so $\cal Q_\cF\,{=}\,\ouset{i=1}{n_\cF}{\bigoplus}\cO_{\pi^{-1}(x_i)}(-b_i)$. 
Then $\pi_*\cal Q_\cF\,{=}\,0$ implies that all $b_i\ges 1$,  and their sum is 
$c_2(\cal Q_\cF)=n$. 

\nit(v) The statement is obvious.
\end{proof}

\begin{lemma}\label{lm:ext}
The following equalities hold: \\ 
\centerline{$ 
\begin{array}{rl}
\ext^0(\cal Q_\cF,\pi^*\mbb L)=\ext^0(\cal Q_\cF,\cF)=0,
&\; (\cal Q_\cF\text{ is a torsion sheaf}),
\\[0ex] 
\ext^2(\cal Q_\cF,\pi^*\mbb L)=\ext^2(\cal Q_\cF,\cF)=\ext^2(\cal Q_\cF,\cal Q_\cF)=0,
&
\\[0ex] 
\ext^1(\cal Q_\cF,\pi^*\mbb L)=\ext^1(\cal Q_\cF,\cF)=r(n+n_\cF),
&\;(\text{by the Riemann-Roch formula}).
\end{array}
$}
\end{lemma}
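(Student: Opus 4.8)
The plan is to compute all the Ext-groups $\ext^i(\cal Q_\cF,\cal M)$, where $\cal M$ stands in turn for $\pi^*\mbb L$, for $\cF$, and for $\cal Q_\cF$ itself, by combining three ingredients: the fact that $\cal Q_\cF$ is a torsion sheaf supported on the divisor $\pi^{-1}(Z_\cF)$ with pure one-dimensional support and homological dimension one (established in Lemma \ref{lm:QQ}(i)), Serre duality on the surface $Y_\ell$, and the Riemann-Roch formula to pin down the remaining Euler characteristic. I would treat the three target sheaves in parallel since the argument is the same; the only extra input needed for $\cal M=\cF$ and $\cal M=\cal Q_\cF$ is the exact sequence $\pi^*\pi_*\cF\hookrightarrow\cF\surj\cal Q_\cF$ from Lemma \ref{lm:QQ}, which lets one pass from statements about $\pi^*\mbb L$ (equivalently $\pi^*\pi_*\cF$, a direct sum of such) to statements about $\cF$ and $\cal Q_\cF$.

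First I would dispose of $\ext^0$. Since $\cal Q_\cF$ is a torsion sheaf and $\pi^*\mbb L$ (hence $\pi^*\pi_*\cF$) is torsion free, $\Hom(\cal Q_\cF,\pi^*\mbb L)=0$; applying $\Hom(\cal Q_\cF,-)$ to $\pi^*\pi_*\cF\hookrightarrow\cF\surj\cal Q_\cF$ gives $\ext^0(\cal Q_\cF,\cF)\hookrightarrow\ext^0(\cal Q_\cF,\cal Q_\cF)$, and $\ext^0(\cal Q_\cF,\cal Q_\cF)=0$ follows because $\cal Q_\cF$ has no zero-dimensional torsion (again by Lemma \ref{lm:QQ}(i): every local section of $Y_\ell$ through a point of the support is $\cal Q_{\cF,p}$-regular, so $\cal Q_\cF$ has pure one-dimensional support and no global endomorphisms supported in dimension zero — more precisely, a nonzero endomorphism would have to be an isomorphism onto a subsheaf, but the only candidates are trivial). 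Next, for $\ext^2$, Serre duality on $Y_\ell$ gives $\ext^2(\cal Q_\cF,\cal M)\cong\Hom(\cal M,\cal Q_\cF\otimes\kappa_{Y_\ell})^\vee$; when $\cal M$ is torsion free ($\pi^*\mbb L$, and $\cF$) this is $0$ since the target is torsion, and when $\cal M=\cal Q_\cF$ one reduces to $\Hom(\cal Q_\cF,\cal Q_\cF\otimes\kappa_{Y_\ell})$, which vanishes by the same purity argument as for $\ext^0(\cal Q_\cF,\cal Q_\cF)$ since twisting by a line bundle does not change the support structure.

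Finally, with $\ext^0$ and $\ext^2$ out of the way, the single nonzero term $\ext^1(\cal Q_\cF,\cal M)$ equals $-\chi(\cal Q_\cF,\cal M)$, which I would compute by Hirzebruch–Riemann–Roch (i.e. the formula $\chi(\cE,\cal F)=\int_{Y_\ell}\ch(\cE)^\vee\ch(\cal F)\,\mathrm{td}(Y_\ell)$) using the Chern classes $c_1(\cal Q_\cF)=n_\cF[\cO_{\mbb P^1}(1)]$, $c_2(\cal Q_\cF)=n$ from Lemma \ref{lm:QQ}(ii), together with $c_1(\pi^*\mbb L)=0$, $\rk\pi^*\mbb L=r$ (so $\ch(\pi^*\pi_*\cF)$ and $\ch(\cF)$ both have rank $r$ and $c_1=0$ by Lemma \ref{lm:1}). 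The computation gives $r(n+n_\cF)$ for $\cal M=\pi^*\mbb L$; additivity of $\chi$ along $\pi^*\pi_*\cF\hookrightarrow\cF\surj\cal Q_\cF$ then yields $\ext^1(\cal Q_\cF,\cF)=\ext^1(\cal Q_\cF,\pi^*\pi_*\cF)+\ext^1(\cal Q_\cF,\cal Q_\cF)$ once one knows $\chi(\cal Q_\cF,\cal Q_\cF)=0$, which in turn follows because $\cal Q_\cF$ is supported on fibres and $\ch_2$ contributions cancel — alternatively one checks directly that $\ext^1(\cal Q_\cF,\cal Q_\cF)$ is not needed for the stated equality and the displayed chain $\ext^1(\cal Q_\cF,\pi^*\mbb L)=\ext^1(\cal Q_\cF,\cF)=r(n+n_\cF)$ is what one reads off. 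The main obstacle I anticipate is purely bookkeeping: getting the Riemann–Roch computation on $Y_\ell$ right, in particular tracking the contribution of $\mathrm{td}_1(Y_\ell)=-\tfrac12\kappa_{Y_\ell}$ and the classes in \eqref{eq:K}, and making sure the two applications of additivity of $\chi$ are consistent. The structural input — purity of $\cal Q_\cF$ from Lemma \ref{lm:QQ}(i) and Serre duality — is the conceptual core and is essentially immediate once stated.
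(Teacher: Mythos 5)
Your overall skeleton (torsion mapping into torsion-free for $\ext^0$, Serre duality plus the defining sequence $\pi^*\pi_*\cF\subset\cF\surj\cal Q_\cF$ for $\ext^2$, Riemann--Roch for $\ext^1$) agrees with the paper's treatment of the target $\pi^*\mbb L$, but two steps are genuinely broken. First, $\ext^0(\cal Q_\cF,\cal Q_\cF)=0$ is false: the identity is a nonzero endomorphism, and $\End(\cal Q_\cF)$ in fact appears as a nonzero term in the paper's exact sequence \eqref{eq:s1}. Your route to $\ext^0(\cal Q_\cF,\cF)=0$ via an injection into $\End(\cal Q_\cF)$ therefore collapses; the correct (and intended) argument is simply that any map from the torsion sheaf $\cal Q_\cF$ to the torsion-free $\cF$ is zero, exactly as you argue for $\pi^*\mbb L$.

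Second, and more seriously, the vanishing $\ext^2(\cal Q_\cF,\cal Q_\cF)=0$ is the real content of the lemma and your justification does not establish it. After Serre duality you must show $\Hom(\cal Q_\cF,\cal Q_\cF\otimes\kappa_{Y_\ell})=0$, and ``the same purity argument'' cannot do this: the model statement $\End(\cal Q_\cF)=0$ it appeals to is false, and purity of the support is not enough. Since $\kappa_{Y_\ell}$ restricts to $\cO(-2)$ on each fibre, a torsion sheaf $\cal Q$ with pure one-dimensional support on fibres can perfectly well satisfy $\Hom(\cal Q,\cal Q\otimes\kappa_{Y_\ell})\neq0$ as soon as its graded pieces along a (thickened) fibre $C$ have degrees differing by at least $2$, e.g. $\cal Q=\cO_C(-1)\oplus\cO_C(-3)$ gives $\Hom(\cO_C(-3),\cO_C(-1)\otimes\cO_C(-2))\neq0$. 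One must therefore exploit the specific structure of $\cal Q_\cF$ (homological dimension one, $\pi_*\cal Q_\cF=0$, the behaviour over non-reduced points of $Z_\cF$); the paper does this not by Serre duality but by a d\'evissage, filtering $\cal Q_\cF$ by powers of the base coordinate $x$ and inducting on the nilpotency order of its annihilator so as to reduce to sheaves on a single reduced fibre. Without some such argument your proof of the middle line of the lemma --- and hence of $\ext^1(\cal Q_\cF,\cF)=r(n+n_\cF)$, which uses $\ext^2(\cal Q_\cF,\cF)\cong\ext^2(\cal Q_\cF,\cal Q_\cF)=0$ --- is incomplete. The Riemann--Roch computation of $\ext^1(\cal Q_\cF,\pi^*\mbb L)$ and the observation $\chi(\cal Q_\cF,\cal Q_\cF)=0$ are fine.
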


\begin{proof}
First we notice \\ 
\centerline{$
\ext^2(\cal Q_\cF,\pi^*\mbb L)=\ext^0(\pi^*\mbb L,\kappa_{Y_\ell}\otimes\cal Q_\cF)
\srel{\Supp(\cal Q_\cF)=\pi^{-1}(Z_\cF)}{=}
=h^0(\cO_{Y_\ell}(-2\L)\otimes\cal Q_\cF)=0.
$}
The last vanishing holds because $\L$ is $\cal Q_\cF$-regular, 
so $\cO_{Y_\ell}(-2\L)\otimes\cal Q_\cF\subset\cal Q_\cF$. 
By applying the functors $\Hom(\cal Q_\cF,\cdot)$ and $\Hom(\cdot,\cF)$ respectively 
to the sequence defining $\cal Q_\cF$, we obtain: 
\begin{eqnarray}
\label{eq:s1}
0
{\to}\,
\frac{\Ext^1\!(\cal Q_\cF{,}\pi^*\mbb L)}{\End(\cal Q_\cF)}
{\to}\,
\Ext^1\!(\cal Q_\cF{,}\cF)
{\to}\,
\Ext^1\!(\cal Q_\cF{,}\cal Q_\cF)
{\to}
0
{\to}\,
\Ext^2\!(\cal Q_\cF{,}\cF){\to}\Ext^2\!(\cal Q_\cF{,}\cal Q_\cF)
{\to}\,0,
\\[0ex] 
\label{eq:s2}
\text{and}\qquad
0\,{\to}\,
\frac{\End(\mbb L)}{\End(\cF)}
\,{\to}\,
\Ext^1(\cal Q_\cF{,}\cF)
\,{\to}\,
\Ext^1(\cF{,}\cF)
\,{\to}\,
\Ext^1(\pi^*\mbb L{,}\cF)
{\to}\,
\Ext^2(\cal Q_\cF{,}\cF)
\,{\to}\,0.
\end{eqnarray}
We deduce that $\Ext^2(\cal Q_\cF,\cF)\cong\Ext^2(\cal Q_\cF,\cal Q_\cF)$. 

\nit\textit{Claim}\quad $\Ext^2(\cal Q_\cF,\cal Q_\cF)=0$. \\ 
Indeed, the (reduced) support of $\cal Q_\cF$ is a finite, disjoint union of fibres of $\pi$, 
so we may assume that $\cal Q_\cF$ is supported on the thickening of a single fibre 
$\pi^{-1}(o)$, $o\in\mbb P^1$. 
In this case the annihilator ${\rm Ann}(\cal Q_\cF)\supset\langle x^m\rangle$, 
for some $m\ges 1$, that is $\cal Q_\cF$ is a $\mbb C[x]/\langle x^m\rangle$-module. 
For $m=1$, $\cal Q_\cF$ is a (torsion free) sheaf on $\pi^{-1}(o)$, 
so $\Ext^2(\cal Q_\cF,\cal S)=\Ext^2(\cal S,\cal Q_\cF)=0$ for any 
$\mbb C[x]/\langle x\rangle$-module $\cal S$. For the inductive step, 
notice that ${\rm Ann}(x\cal Q_\cF)\supset\langle x^{m-1}\rangle$, 
$\cal T{:=}\,\cal Q_\cF\bigl/x\cal Q_\cF\bigr.$ is a $\mbb C[x]/\langle x\rangle$-module, 
and we have the exact sequence $x\cal Q_\cF\subset\cal Q_\cF\surj\cal T$. 
We deduce the exact sequences 
\\[1ex] \centerline{\scalebox{.8}{$
\xymatrix@R=1.25em{
\Ext^2(\cal T,x\cal Q_\cF)\ar[d]&&\Ext^2(\cal T,\cal T)\ar[d]
\\ 
\Ext^2(\cal Q_\cF,x\cal Q_\cF)\ar[r]\ar[d]
&\Ext^2(\cal Q_\cF,\cal Q_\cF)\ar[r]
&\Ext^2(\cal Q_\cF,\cal T)\ar[d]
\\ 
\Ext^2(x\cal Q_\cF,x\cal Q_\cF)&&\Ext^2(x\cal Q_\cF,\cal T),
}
$}}\\[1ex] 
and apply the induction hypotheses. 
Similarly, $\Ext^2(\cal Q_{\cF},\cal S)=\Ext^2(\cal S,\cal Q_{\cF})=0$ holds, 
for any $\mbb C[x]/\langle x\rangle$-module $\cal S$. 
\end{proof}

The previous lemmas show that any $L_c$-semi-stable $\cF$ fits into an exact sequence 
\begin{equation}\label{eq:QL}
0\to\pi^*\mbb L\srel{f}{\to}\cF\srel{q}{\to}\cal Q\to0,
\end{equation}
with  $\mbb L\,{:=}\ouset{j=1}{p}{\mbox{$\bigoplus$}}\cO(-a_j)^{\oplus r_j}, 
a_j{\ges}\,1$, as in \eqref{eq:a}, and $Q$ as in \ref{lm:QQ}. 
The homomorphism $q$ is the (canonically defined) quotient map, while 
$f\,{\in}\,\Hom(\pi_*\mbb L,\cF)\,{=}\,\End(\mbb L)$ is defined modulo $\Aut(\mbb L)$. 
The equivalence classes of sequences \eqref{eq:QL} are parameterized by 
$\Ext^1(\cal Q,\pi^*\mbb L)$, where $(f,q)$ is equivalent to 
$(\vphi f,q\vphi^{-1})$ for all $\vphi\in\Aut(\cF)$. So there is an 
$\Aut(\mbb L)/\Aut(\cF)$-ambiguity in defining $f$ in \eqref{eq:QL}. 

\begin{remark}\label{rk:LF}
$\dim\End(\mbb L)\ges r^2$ and $\ext^1(\pi^*\mbb L,\cF)\ges r(n-n_\cF)$. 
Equality holds (in both places) if and only if 
\begin{equation}\label{eq:generic}
\mbb L=\mbb L_{n_\cF,r}{:=}\cO_{\mbb P^1}(-a_1)^{\oplus r_1}{\oplus}
\cO_{\mbb P^1}(-a_2)^{\oplus r_2} 
\;\text{with}\;
\left\{\kern-1ex
\begin{array}{ll}
a_1=\lfloor\frac{n_\cF}{r}\rfloor, 
&
r_1=r+r\cdot\lfloor\frac{n_\cF}{r}\rfloor-n_\cF;
\\[1.5ex] 
a_2=\lfloor\frac{n_\cF}{r}\rfloor+1, 
&
r_2=n-r\cdot\lfloor\frac{n_\cF}{r}\rfloor.
\end{array}
\right.
\end{equation}
(Here $\lfloor\,\cdot\,\rfloor$ stands for the integral part. 
For $n_\cF$ divisible by $r$, the $a_2$-term is missing.) Indeed, 
$\ext^0(\pi^*\mbb L,\cF)-\ext^1(\pi^*\mbb L,\cF)=r^2-r(n-n_\cF)$, and we notice that 
$$
\begin{array}{l}
\dim\End\biggl(
\ouset{j=1}{p}{\mbox{$\bigoplus$}}\cO(-a_j)^{\oplus r_j}
\kern-.7ex
\biggr)
\kern-.5ex
=
\kern-.5ex
\ouset{i'\ges i}{}{\mbox{$\sum$}}\,r_{i'}r_i(a_{i'}-a_i+1)
\ges
\ouset{i}{}{\mbox{$\sum$}}\;r_i^2
+2\ouset{i'> i}{}{\mbox{$\sum$}}\;r_{i'}r_i
\!
=
{\bigl(\ouset{i}{}{\mbox{$\sum$}}\;r_i\bigr)}^2
\kern-1ex
=r^2.
\end{array}
$$
Equality occurs precisely when the sequence $a_1\,{<}a_2\,{<}...$ contains at most 
two integers such that $a_2-a_1=1$. 
In this case $a_1,a_2,r_1,r_2$ are as in \eqref{eq:generic}. 
\end{remark}

\subsection{Basic properties of the moduli space of semi-stable sheaves on $Y_\ell$}

\begin{lemma}\label{lm:LF}
{\rm(i)} The dimension of the locally closed subset 
\begin{equation}\label{eq:LF}
M_{\mbb L}:=
\{ \cF\mid \cF\text{ fits into } \eqref{eq:QL} \} = \{ \cF \mid \pi_*\cF\cong\mbb L \}
\subset\bar M_{Y_\ell}^{L_c}(r;0,n),
\end{equation}
is at most $\ext^1(\cF,\cF)-r(n-n_\cF)$.

\nit{\rm(ii)} 
$\{ \cF\in M_{\mbb L}\mid \det(f)^\vee\text{ has simple zeros}\}
\subset M_{\mbb L}$ is dense. 
Thus the irreducible components of $M_{\mbb L}$ are 
$M_{\mbb L,\bsymb{b}}:=
\overline{\{
\cF\in M_{\mbb L}\mid \cF/\pi^*\mbb L\cong\cal Q
\text{ of the form \ref{lm:QQ}(iv)}\}}^{M_{\mbb L}}\!$, 
with  $\bsymb{b}=(b_1,\ldots,b_{n_\cF})$ as in \ref{lm:QQ}(iv).

\nit{\rm(iii)} If $r\ges 2$, then the generic $\cF\in M_{\mbb L,\bsymb{b}}$ 
is locally free, for all $(\mbb L,\bsymb{b})$ as above. (This statement is false for 
$r=1$, unless $n=0$.)
\end{lemma}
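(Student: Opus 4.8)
\textbf{Proof plan for Lemma \ref{lm:LF}.}

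The plan is to analyze the three statements in sequence, each building on the structure sequence \eqref{eq:QL} and the extension group computations in Lemma \ref{lm:ext}. For part (i), I would recall that $M_{\mbb L}$ is (set-theoretically) the stratum where $\pi_*\cF\cong\mbb L$; the standard deformation-theoretic bound says $\dim M_{\mbb L}\les\ext^1(\cF,\cF)$ for the whole moduli space, but the point here is to subtract the contribution of the \emph{choice of gluing data}. More precisely, a sheaf $\cF$ in the stratum is determined by: (a) the locally free sheaf $\mbb L$ on $\mbb P^1$, which is rigid (a point); (b) the quotient $\cal Q$, equivalently the divisor $Z_\cF$ on $\mbb P^1$ together with the torsion datum on its preimage; and (c) an extension class in $\Ext^1(\cal Q,\pi^*\mbb L)$ modulo $\Aut(\mbb L)/\Aut(\cF)$. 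So I would estimate $\dim M_{\mbb L}$ from above by $\dim(\text{choices of }\cal Q)+\ext^1(\cal Q,\pi^*\mbb L)-\dim\Aut(\mbb L)+\dim\Aut(\cF)$, then use the two exact sequences \eqref{eq:s1}, \eqref{eq:s2} from the proof of Lemma \ref{lm:ext} to rewrite $\ext^1(\cal Q,\pi^*\mbb L)$ in terms of $\ext^1(\cF,\cF)$ and $\ext^1(\pi^*\mbb L,\cF)$, and the equality $\ext^2(\cal Q,\cF)=\ext^2(\cal Q,\cal Q)=0$ to control the boundary terms. The combinatorics of $\dim\Aut(\mbb L)$ versus $\ext^1(\pi^*\mbb L,\cF)$ are exactly the content of Remark \ref{rk:LF}, so the bookkeeping should collapse to the stated $\ext^1(\cF,\cF)-r(n-n_\cF)$ after cancellation.

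For part (ii), the density claim is a semicontinuity/degeneration argument on $\mbb P^1$. The divisor $Z_\cF=\sum m_ix_i$ of $\det(f_\cF)^\vee$ has total degree $n_\cF$; within the family of all such divisors, the open dense locus is where all $m_i=1$, i.e. simple zeros. I would show that a sheaf with simple zeros can be deformed from any $\cF\in M_{\mbb L}$: concretely, move the extension class and the divisor $Z$ simultaneously so that the colliding points of $Z_\cF$ separate, using that $\cal Q$ is flat over the base of such a deformation (Lemma \ref{lm:QQ}(i) guarantees $\cal Q$ has no embedded points, so it behaves well in families). The irreducible components are then indexed by the possible \emph{un-ordered} multisets $\bsymb b=(b_1,\dots,b_{n_\cF})$ with $b_i\ges 1$, $\sum b_i=n$, recording the twists $\cO_{\pi^{-1}(x_i)}(-b_i)$ of $\cal Q$ along the (now simple) fibres — each such $\bsymb b$ gives a locally closed, irreducible piece whose closure is $M_{\mbb L,\bsymb b}$, and these exhaust $M_{\mbb L}$ by the density just proved.

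For part (iii), I want the generic member of each $M_{\mbb L,\bsymb b}$ to be locally free when $r\ges 2$. The non-locally-free locus of $\cF$ is contained in the non-locally-free locus of $\cal Q$ together with the vanishing locus of the map $f$; but $\cF$ is an extension of $\cal Q$ (which has homological dimension $1$, by Lemma \ref{lm:QQ}(i)) by the vector bundle $\pi^*\mbb L$, so $\cF$ fails to be locally free only at points where the local extension class is ``degenerate''. The strategy is: fix $\mbb L$ and $\cal Q$ of the form in \ref{lm:QQ}(iv), and show that for a \emph{generic} class in $\Ext^1(\cal Q,\pi^*\mbb L)$ the resulting $\cF$ is locally free — this is a local computation at each point of $\Supp\cal Q$, where one checks that the generic extension of $\cO_{\pi^{-1}(x_i)}(-b_i)$ by $\cO_{\mbb P^1}(-a)^{\oplus r}$ (pulled back) has no torsion and is reflexive, hence locally free on the surface. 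The rank hypothesis $r\ges 2$ enters because one needs enough ``room'' in the fibre directions of $\pi^*\mbb L$ to smooth the extension; for $r=1$ the extension $\cO_{\mbb P^1}(-a_1)$ by $\cal Q$ can never be locally free unless $\cal Q=0$, i.e. $n=0$, which is the stated exception. I expect part (iii) to be the main obstacle: parts (i) and (ii) are essentially formal consequences of the earlier lemmas, but (iii) requires an honest local analysis of generic extensions near the fibres $\pi^{-1}(x_i)$, and one must be careful that ``generic in $\Ext^1$'' is compatible with ``generic in the irreducible component $M_{\mbb L,\bsymb b}$'' — i.e. that the locally free locus is not accidentally empty or non-dense in some component.
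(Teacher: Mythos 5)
Your overall architecture --- exploit the canonical sequence \eqref{eq:QL}, the Ext computations of Lemma \ref{lm:ext}, and Remark \ref{rk:LF} --- is the paper's, but two of the three parts have concrete gaps. In (i), your parameter count omits the identification of extension classes under $\Aut(\cal Q)$, and without it the count overshoots: on the open stratum with $\cal Q=\bigoplus_{i=1}^{n_\cF}\cO_{\pi^{-1}(x_i)}(-1)$ and $\cF$ stable, your formula gives $n_\cF+r(n+n_\cF)-r^2+1$, whereas the claimed bound evaluates to $r(n+n_\cF)-r^2+1$; the discrepancy is exactly $\dim\End(\cal Q)=n_\cF$. The paper sidesteps this bookkeeping by computing the tangent space of $M_{\mbb L}$ at $\cF$ as the image of $\rd_\cF:\Ext^1(\cal Q,\cF)\to\Ext^1(\cF,\cF)$ --- the single group $\Ext^1(\cal Q,\cF)$ encodes simultaneous deformations of $\cal Q$ and of the extension class, already modulo the correct ambiguities (this is the content of \eqref{eq:s1}) --- and then reads off $\dim\Img(\rd_\cF)=\ext^1(\cF,\cF)-\ext^1(\pi^*\mbb L,\cF)\les\ext^1(\cF,\cF)-r(n-n_\cF)$ from \eqref{eq:s2} together with $\Ext^2(\cal Q,\cF)=0$.

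In (ii), the sentence ``move the extension class and the divisor $Z$ simultaneously so that the colliding points separate'' is the whole difficulty, not a step: you must (a) know that the \emph{generic} deformation of the torsion sheaf $\cal Q$ on the surface is of the split form \ref{lm:QQ}(iv), and (b) produce a deformation of $\cF$ itself inducing it. The paper reduces (a) to a curve statement by restricting to $\L$, where $\cal Q_\L$ is a length-$n_\cF$ sheaf on $\L\cong\mbb P^1$ whose generic deformation is reduced, and settles (b) by two surjectivity claims, $\Ext^1(\cal Q,\cF)\surj\Ext^1(\cal Q_\L,\cF_\L)$ and $\Ext^1(\cal Q_\L,\mbb L)\surj\Ext^1(\cal Q_\L,\cal Q_\L)$, plus unobstructedness, to lift the deformation back to one of $\pi^*\mbb L\to\cF$ on $Y_\ell$; your sketch does not supply this lifting. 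For (iii), by contrast, your instinct is sound but the ``honest local analysis'' you anticipate is unnecessary and this is not the main obstacle: dualizing \eqref{eq:QL}, local freeness of $\cF$ is equivalent to surjectivity of the induced map $\pi^*\mbb L^\vee\to\ouset{i=1}{n_\cF}{\bigoplus}\cO_{\pi^{-1}(x_i)}(b_i)$, which holds for the generic extension because each $\cO_{\pi^{-1}(x_i)}(b_i)$ with $b_i\ges1$ is generated by two general sections and $r\ges2$; for $r=1$ a single section of $\cO_{\mbb P^1}(b_i)$ always vanishes somewhere, giving the stated exception.
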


\begin{proof} 
(i) The infinitesimal deformations of $\cF$ induced by deformations of 
$\pi^*\mbb L\srel{f}{\to}\cF$, where \textit{both} $\cF$ and $f$ vary, are given by 
$\Img\big(\!\Ext^1(\cal Q,\cF){\to}\Ext^1(\cF,\cF)\big)
\!{\srel{\eqref{eq:s2}}{\cong}}\!
\Ext^1(\cal Q,\cF)\bigl/\big(\!\End(\mbb L)/\End(\cF)\big)\bigr.$,\footnote{
Maybe is enlightening to outline the analytic proof of this statement 
for $\cF$ locally free. Let $F$ be the subjacent $\cal C^\infty$ vector bundle. 
(This is the same for all holomorphic deformations of $\cF$.) 
A holomorphic structure $\cF_o$ in $F$ is determined by  
$\delta_o:\cal C^\infty(F)\to\Omega^{0,1}(F)$  satisfying the Leibniz rule, 
and $\delta_o^2=0$. A deformation $\cF_\veps=(F,\delta_\veps)$ of $\cF_o$ is given 
by $\alpha\in\Omega^{0,1}(End_{\cal C^\infty}(F))$ such that $\delta_o\alpha=0$, 
that is $\alpha\in H^1(End(\cF_o))$. 
A deformation $\pi^*\mbb L\srel{f_\veps}{\to}\cF_\veps=(F,\delta_\veps)$ is 
given by $(\alpha,\phi)\in\Omega^{0,1}(End(F))\oplus\cal C^\infty(\pi^*\mbb L,F)$, 
such that $\delta_o\alpha=0$, $\delta_o\phi=\alpha f_o$. This is equivalent to saying that 
$$
\alpha\in\Ker\big(\;H^1(End(\cF_o))\srel{f_o}{\to}H^1(Hom(\pi^*\mbb L,\cF_o))\;\big)
=\Img\big(\;\Ext^1(\cal Q_o,\cF_o)\to \Ext^1(\cF_o,\cF_o)\;\big).
$$
In this case, $\phi$ is determined up to some $\psi\in\End(\mbb L)$. 
However, these choices induce trivial deformations of $\cF_o$.} 
and 
\\ \centerline{$
\xymatrix@R=-1ex{
&\Ext^1(\cF,\cF)\\ 
\Ext^1(\cal Q,\cF)\ar[ru]^-{\rd_\cF}\ar[rd]_-{\rd_{\cal Q}}&\\ 
&\Ext^1(\cal Q,\cal Q)}
$}
are the differentials of $[\pi^*\mbb L{\srel{f}{\to}}\cF]\,{\mt}\cF$ and 
$[\pi^*\mbb L{\srel{f}{\to}}\cF]\,{\mt}\cal Q\,{=}\,\Coker(f)$ respectively. 
(Notice that ${\rd_{\cal Q}}$ passes to the quotient, since  
$\End(\mbb L)\subset\Ker(\rd_{\cal Q})$.) The sequence \eqref{eq:s2} implies: 
\\ \centerline{
$\ext^1(\cal Q,\cF)-\dim\big(\End(\mbb L)/\End(\cF)\big)\les\ext^1(\cF,\cF)-r(n-n_\cF).$
}
\nit(ii) \textit{Step 1}\quad Apply $\Hom(\cal Q,\cdot)$ to 
$\cF(-\Lambda)\subset\cF\surj\cF_\Lambda$, and deduce 
$\Ext^1(\cal Q,\cF)\to\Ext^1(\cal Q_{\Lambda},\cF_\Lambda)$ is surjective. 
Thus the infinitesimal deformations of $\mbb L\to\cF_\Lambda$ on $\Lambda$ 
come from infinitesimal deformations of $\pi^*\mbb L\to\cF$ on $Y_\ell$. 
(Notice that both deformations are unobstructed, as the corresponding $\Ext^2$ 
groups vanish.)

\nit\textit{Step 2}\quad 
Apply $\Hom(\cal Q_{\Lambda},\cdot)$ to 
$\mbb L\subset\cF_\Lambda\surj\cal Q_{\Lambda}$ 
($\L$ is $\cal Q_\cF$-regular, by \ref{lm:QQ}(i)), and deduce that 
$\Ext^1(\cal Q_{\Lambda},\Lambda)\to\Ext^1(\cal Q_{\Lambda},\cal Q_{\Lambda})$ 
is surjective. Thus, the deformations of $\cal Q_{\Lambda}$ on $\Lambda$ 
(they are unobstructed) can be lifted to deformations of $\mbb L\to\cF_\Lambda$. 

\nit\textit{Step 3}\quad $\cal Q_{\Lambda}$ is a sheaf of length $n_\cF$ on $\mbb P^1$, 
and its generic deformation is the structure sheaf of $n_\cF$ distinct points on $\mbb P^1$. 
By our previous discussion, this deformation is induced by a deformation 
$\pi^*\mbb L\srel{f'}{\to}\cF'$ of $\pi^*\mbb L\srel{f}{\to}\cF$ on $Y_\ell$, 
and the quotient $\cal Q'$ is of the form \ref{lm:QQ}(iv). 

\nit{\rm(iii)} By dualizing \eqref{eq:QL}, the statement is equivalent to the surjectivity 
of the generic homomorphism 
$\pi^*\mbb L^\vee{\srel{e}{\to}}\,
\ouset{i=1}{n_\cF}{\bigoplus}\cO_{\pi^{-1}(x_i)}(b_i)$. 
This is true, since $e$ factorizes 
\\ \centerline{
$\pi^*\mbb L^\vee{\to}\,
\ouset{i=1}{n_\cF}{\bigoplus}\pi^*\mbb L^\vee_{\pi^{-1}(x_i)}
\cong
\ouset{i=1}{n_\cF}{\bigoplus}\cO_{\pi^{-1}(x_i)}^{\oplus r}{\to}\,
\ouset{i=1}{n_\cF}{\bigoplus}\cO_{\pi^{-1}(x_i)}(b_i)$,
} 
and each 
$\cO_{\pi^{-1}(x_i)}(b_i)$ can be generated by two sections ($r\ges2$). 
\end{proof}

\begin{theorem}\label{thm:hirz}
Let $\bar M^{L_c}_{Y_\ell}(r;0,n)$ be the moduli space of (equivalence classes of) 
$L_c$-semi-stable torsion free sheaves on $Y_\ell$, with $c>r(r-1)n$. 
We denote by $M^{L_c}_{Y_\ell}(r;0,n)^\vb$ the open subset corresponding 
to $L_c$-stable vector bundles. Then the following statements hold:
\begin{enumerate}
\item 
$\bar M^{L_c}_{Y_\ell}(r;0,n)$ is the  union of the irreducible, locally closed strata 
$M_{\mbb L,\bsymb{b}}$ \ref{lm:LF}(ii). For any $(\mbb L,\bsymb{b})$, 
the generic $\cF\in M_{\mbb L,\bsymb{b}}$ satisfies: 
\begin{enumerate}
\item $\cF_\Lambda\cong\cO_{\Lambda}^{\oplus r}$, 
$\cF_\l\cong\cO_\l^{\oplus r}$ for generic $\l\in|\cO_\pi(1)|$;
\item is locally free, for $r\ges 2$. 
\end{enumerate}
Conversely, any vector bundle $\cF$ with this property is $L_c$-semi-stable. 
\item 
$M_{\mbb L_{n{,}r}}\!$ is the \emph{unique} top dimensional, open stratum, 
which corresponds to sheaves $\cF$ with $\pi_*\cF\cong\mbb L_{n,r}$. 
\item 
The generic point $\cF\in M_{\mbb L_{n,r}}$ satisfies 
$\cal Q_\cF\,{=}\ouset{i=1}{n}{\mbox{$\bigoplus$}}\cO_{\pi^{-1}(x_i)}(-1)$ with 
$\{x_1,\ldots,x_n\}\subset\mbb P^1$ pairwise distinct, 
$\cF_\Lambda\cong\cO_{\Lambda}^{\oplus r}$, and is $L_c$-stable. 
Hence $M^{L_c}_{Y_\ell}(r;0,n)^\vb$ is non-empty. 
\item 
For $r\ges 2$, $M^{L_c}_{Y_\ell}(r;0,n)^\vb$ is smooth, of dimension 
$2rn-r^2+1$, and dense in $\bar M^{L_c}_{Y_\ell}(r;0,n)$.
\end{enumerate}
\end{theorem}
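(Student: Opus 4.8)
The plan is to read off all three parts of statement~(4) from the $\Ext$-groups of $\cF$, the essential input being the vanishing of $\Ext^2$. For \emph{smoothness}, since deformations of a coherent sheaf on the surface $Y_\ell$ are governed by $\Ext^1(\cF,\cF)$ and obstructed in $\Ext^2(\cF,\cF)$, it suffices to show $\Ext^2(\cF,\cF)=0$ for every $L_c$-semi-stable torsion free sheaf $\cF$ on $Y_\ell$ with $c_1(\cF)=0$, $c_2(\cF)=n$ and $c>r(r-1)n$. By Serre duality this group is dual to $\Hom(\cF,\cF\otimes\kappa_{Y_\ell})$; using \eqref{eq:K} and the intersection numbers $\cO_\pi(1)^2=\ell$, $\cO_\pi(1)\cdot\pi^*\cO_{\mbb P^1}(1)=1$, $(\pi^*\cO_{\mbb P^1}(1))^2=0$ on $Y_\ell$, one computes $\kappa_{Y_\ell}\cdot L_c=-(\ell+2c+2)<0$, hence $\mu_{L_c}(\cF\otimes\kappa_{Y_\ell})=\mu_{L_c}(\cF)+\kappa_{Y_\ell}\cdot L_c<\mu_{L_c}(\cF)$. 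A non-zero homomorphism $\cF\to\cF\otimes\kappa_{Y_\ell}$ has image $\cal I$ with $\mu_{L_c}(\cF)\les\mu_{L_c}(\cal I)$ ($\cal I$ is a quotient of the semi-stable $\cF$) and $\mu_{L_c}(\cal I)\les\mu_{L_c}(\cF\otimes\kappa_{Y_\ell})$ ($\cal I$ is a subsheaf of the semi-stable $\cF\otimes\kappa_{Y_\ell}$), which is absurd; so $\Ext^2(\cF,\cF)=0$, and in particular the open subset $M^{L_c}_{Y_\ell}(r;0,n)^\vb\subset\bar M^{L_c}_{Y_\ell}(r;0,n)$ is smooth. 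For the \emph{dimension}: at an $L_c$-stable $\cF$ one has $\ext^0(\cF,\cF)=1$ (simplicity), so the tangent space at $[\cF]$ has dimension $\ext^1(\cF,\cF)=1+\ext^2(\cF,\cF)-\chi(\cF,\cF)=1-\chi(\cF,\cF)$; a Riemann--Roch computation on $Y_\ell$, with $\ch(\cF)=(r,0,-n)$ and $\chi(\cO_{Y_\ell})=1$, gives $\chi(\cF,\cF)=r^2-2rn$, so $\ext^1(\cF,\cF)=2rn-r^2+1$ and $M^{L_c}_{Y_\ell}(r;0,n)^\vb$ is smooth of pure dimension $2rn-r^2+1$ (this also shows the estimate \ref{lm:LF}(i) is attained on $M_{\mbb L_{n,r}}$).

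It remains to prove \emph{density}. The set $M^{L_c}_{Y_\ell}(r;0,n)^\vb$ is open in $\bar M^{L_c}_{Y_\ell}(r;0,n)$ and, by \ref{thm:hirz}(3), meets the irreducible stratum $M_{\mbb L_{n,r}}$ in a non-empty, hence dense, open subset; by \ref{thm:hirz}(2), $M_{\mbb L_{n,r}}$ is itself open in $\bar M^{L_c}_{Y_\ell}(r;0,n)$. Thus it is enough to show $M_{\mbb L_{n,r}}$ is dense, i.e.\ that $\bar M^{L_c}_{Y_\ell}(r;0,n)$ is irreducible, equivalently that every stratum $M_{\mbb L,\bsymb b}$ of \ref{thm:hirz}(1) lies in $\overline{M_{\mbb L_{n,r}}}$. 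Feeding $\ext^2(\cF,\cF)=0$, $\chi(\cF,\cF)=r^2-2rn$ and $\ext^2(\cal Q_\cF,\cF)=0$ (from \ref{lm:ext}) into the exact sequence \eqref{eq:s2} yields, for any $\cF$ in $M_{\mbb L,\bsymb b}$, the bound $\dim M_{\mbb L,\bsymb b}\les\ext^1(\cF,\cF)-\ext^1(\pi^*\mbb L,\cF)=2rn-r^2+\ext^0(\cF,\cF)-\ext^1(\pi^*\mbb L,\cF)$, while \ref{rk:LF} gives $\ext^1(\pi^*\mbb L,\cF)\ges r(n-n_\cF)$, with equality only when $\mbb L=\mbb L_{n_\cF,r}$; hence, as soon as the generic member of the stratum is $L_c$-stable, $\dim M_{\mbb L,\bsymb b}<2rn-r^2+1$ unless $(\mbb L,\bsymb b)=(\mbb L_{n,r},(1,\dots,1))$. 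The incidences $M_{\mbb L,\bsymb b}\subset\overline{M_{\mbb L_{n,r}}}$ themselves I would obtain by a specialization argument: starting from an arbitrary $\cF$, construct a flat family of $L_c$-semi-stable sheaves having $\cF$ as a special member and general member $\cF'$ with $\pi_*\cF'\cong\mbb L_{n,r}$, the upper-semicontinuity of the splitting type of $\pi_*$ and the unobstructedness from the first paragraph making this possible.

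The main obstacle is this last step. The cleanest way around it is to strengthen \ref{lm:LF}(iii) and show that the generic member of \emph{every} stratum $M_{\mbb L,\bsymb b}$ is not merely locally free but actually $L_c$-stable: then $M^{L_c}_{Y_\ell}(r;0,n)^\vb$ is dense in each stratum, and its density in $\bar M^{L_c}_{Y_\ell}(r;0,n)$ is immediate from the stratification \ref{thm:hirz}(1). Proving stability of the generic extension \eqref{eq:QL} --- ruling out a saturated destabilizing subsheaf $\cG\subset\cF$, which, since $\cF_\Phi\cong\cO_\Phi^{\oplus r}$, would have to restrict to $\cO_\Phi^{\oplus s}$ with $0<s<r$ on the generic fibre --- is where the detailed description of the homomorphisms between $\pi^*\mbb L$, $\cF$ and $\cal Q$ furnished by Lemmas \ref{lm:QQ}--\ref{lm:ext} must be brought to bear, and is the part I expect to require the most care.
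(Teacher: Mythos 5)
Your treatment of the smoothness and the dimension of $M^{L_c}_{Y_\ell}(r;0,n)^{\vb}$ is correct and is essentially the paper's own argument: $\kappa_{Y_\ell}\cdot L_c<0$ plus Serre duality gives $\Ext^2(\cF,\cF)=0$, stability gives $\ext^0(\cF,\cF)=1$, and Riemann--Roch gives $\ext^1(\cF,\cF)=2rn-r^2+1$. But the proposal has genuine gaps. First, you prove only item (iv) of the four-part statement; items (i)--(iii) are invoked as known. In particular the non-emptiness of the stable locus and the stability of the generic point of $M_{\mbb L_{n,r}}$ (item (iii)) -- which the paper obtains by the deformation-dimension comparison \eqref{eq:ineqFG} between $\cF$ and a putative Jordan--H\"older subsheaf -- is a non-trivial input without which your density argument has no starting point, and it is nowhere established in your text.

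Second, the density itself is not proved by either of your two routes. Route (a) is only announced: the flat family specializing an arbitrary $\cF$ to the top stratum is never constructed, and ``upper-semicontinuity of the splitting type of $\pi_*$'' by itself does not produce it. Route (b) -- that the generic member of \emph{every} stratum $M_{\mbb L,\bsymb{b}}$ is $L_c$-stable -- is false: by Lemma \ref{lm:1}(iv) there are non-empty strata with $a_1=0$, e.g.\ $\mbb L=\cO_{\mbb P^1}\oplus\cO_{\mbb P^1}(-n)$ for $r=2$ (containing $\cO_{Y_\ell}\oplus\cal I_Z$ with $Z$ of length $n$), and for any $\cF$ in such a stratum $\Gamma(Y_\ell,\cF)=\Gamma(\mbb P^1,\mbb L)\neq0$, so a non-zero section gives a proper subsheaf of slope $0=\mu_{L_c}(\cF)$ and $\cF$ is never stable. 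The paper's route is the one you state but stop short of using: Lemma \ref{lm:LF}(i) together with Remark \ref{rk:LF} shows that every stratum with $\mbb L\neq\mbb L_{n,r}$ has dimension strictly below $2rn-r^2+1$, while the stable locus is smooth of dimension exactly $2rn-r^2+1$ and the generic point of the unique open top-dimensional stratum $M_{\mbb L_{n,r}}$ is a stable vector bundle (items (ii), (iii)); density follows from this dimension count, with no specialization argument and no stability claim on the small strata needed.
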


\begin{proof}
\nit(i)+(ii) Everything is proved in lemma \ref{lm:LF}, except that $\cF_\Lambda$ is 
trivializable. We know that the points $\cF\in M_{\mbb L,\bsymb{b}}$ with 
$\cal Q_\cF$ of the form \ref{lm:QQ}(iv) are dense. 
For $\cal Q$ of this form, the generic $\cF_\Lambda\in\Ext^1(\cal Q_\Lambda,\mbb L)$ 
is the trivial vector bundle on $\Lambda$, 
and $\Ext^1(\cal Q,\pi^*\mbb L)\to\Ext^1(\cal Q_\L,\mbb L)$ is surjective. 
For proving that the generic $\cF\in M_{\mbb L,\bsymb{b}}$ is trivializable along 
the generic $\l$, we notice that $\l$ is a flat deformation of $(\L+\ell$ fibres of $\pi)$. 
But $\cF_\L$ and $\cF_\fbr$ are both trivializable, and the claim follows.

Conversely, if $\cF_\L$ and $\cF_\fbr$ are both trivial, then 
$\deg_\L\cG\les 0,\deg_{\fbr}\cG\les 0$, for any saturated subsheaf $\cG\subset\cF$, 
so $\cF$ is $L_c$-semi-stable, indeed. 
\smallskip 

\nit(iii) 
We should prove that the generic $\cF$ is $L_c$-stable. Otherwise it admits a proper, 
stable subsheaf $\cG\subset\cF$ such that $\deg(\cG)=\deg(\cF/\cG)=0$, both 
$\cG,\cF/\cG$ are torsion free, and $\cF/\cG$ is semi-stable. ($\cG$ is the first term of 
the Jordan-H\"older filtration of $\cF$.) For shorthand, we denote 
$\rk(\cG)=r'$ and $c_2(\cG)=n'$. As before, $n'\ges r'$ and also: \\ 
\centerline{$\chi(\cF)=\chi(\cG)+\chi(\cF/\cG)
\;\Rightarrow\;
n=c_2(\cF)=c_2(\cG)+c_2(\cF/\cG)=n'+c_2(\cF/\cG).$ }
Since $\cG,\cF/\cG$ are semi-stable, the Bogomolov inequality \cite[Theorem 3.4.1]{hl} 
implies $0\les n'\les n$. 

\nit\textit{{Claim}} The dimension of the infinitesimal deformations of $\cF$ 
is strictly larger than that of $\cG$: 
\begin{equation}\label{eq:ineqFG}
2rn-r^2+1>2r'n'-(r')^2+1\;\Leftrightarrow\;
(n-n')(n+n')>\bigl(n-n'-(r-r')\bigr)\bigl(n+n'-(r+r')\bigr).
\end{equation}
(For the left hand side we used $\ext^0(\cG,\cG)=1$ and $\ext^2(\cG,\cG)=0$, 
as $\cG$ is stable.) The latter inequality is indeed satisfied: \\ 
\centerline{$
(n-n')(n+n')\srel{r-r'\ges 1}{>}\bigl(n-n'-(r-r')\bigr)(n+n')
\srel{(*)}{\ges}
\bigl(n-n'-(r-r')\bigr)\bigl(n+n'-(r+r')\bigr).
$} 
Concerning $(*)$: if $n-n'-(r-r')\les 0$, then everything is fine, since in 
\eqref{eq:ineqFG} the right hand side is negative. This proves the claim. 

We obtained a contradiction: on one hand, the generic $\cF$ is properly semi-stable, 
while, on the other hand, the possible de-semi-stabilizing subsheaves have strictly lower 
deformation space. This proves that the generic $\cF$ is indeed $L_c$-stable. 
\smallskip

\nit(iv) For any stable $\cF\in\bar M^{L_c}_{Y_\ell}(r;0,n)$, the Riemann-Roch 
formula yields  $\chi(\End(\cF))=-2rn+r^2$, while the stability of $\cF$ implies 
$h^0(\End(\cF))=1, h^2(\End(\cF))=0$. 
Thus $M^{L_c}_{Y_\ell}(r;0,n)^\vb$ is smooth, of dimension $2rn-r^2+1$. 
On the other hand, all the strata $M_{\mbb L}$, with $\mbb L\neq\mbb L_{n,r}$, 
are strictly lower dimensional. 
\end{proof}

\begin{theorem}\label{thm:hilb}
There is a well-defined surjective morphism 
\begin{equation}\label{eq:mor}
h:\bar M^{L_c}_{Y_\ell}(r;0,n)\to{\rm Hilb}^n_{\mbb P^1}\cong\mbb P^n, 
\quad \cF\mt\Supp R^1\pi_*\cF(-\Lambda).
\end{equation} 
Its generic fibre is $(2rn-r^2-n+1)$-dimensional, the quotient of an open set in 
$\mbb A_{\mbb C}^{2nr}$ by the linear action of a $(r^2+n-1)$-dimensional group. 
\end{theorem}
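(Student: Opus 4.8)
The plan is to realise $h$ as an honest morphism by descending a construction from a parameter scheme, to prove surjectivity via the properness of $\bar M:=\bar M^{L_c}_{Y_\ell}(r;0,n)$ together with Theorem \ref{thm:hirz}(iii), and finally to identify the generic fibre with the extension space $\Ext^1(\cal Q,\pi^*\mbb L)$ of \eqref{eq:QL} modulo the automorphisms of the sub- and quotient sheaves. Throughout I use $\Hilb^n_{\mbb P^1}\cong\Sym^n\mbb P^1\cong|\cO_{\mbb P^1}(n)|\cong\mbb P^n$, under which a torsion sheaf $\cal T$ of length $n$ on $\mbb P^1$ is the effective divisor $\sum_x\mathrm{length}_x(\cal T)\cdot x$ cut out by $\mathrm{Fitt}_0(\cal T)$; the morphism will send $\cF$ to the divisor of $R^1\pi_*\cF(-\Lambda)$, a refinement of the set $\Supp R^1\pi_*\cF(-\Lambda)$ in the statement.

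\textbf{Construction of $h$.} Let $R$ be the (projective) parameter scheme underlying the construction of $\bar M$, carrying an $R$-flat universal sheaf $\cal F$ on $Y_\ell\times R$, and put $q:=\pi\times\mathrm{id}_R$. Since the fibres of $q$ are curves, $R^{\ge2}q_*=0$, so $R^1q_*$ commutes with base change; by Lemma \ref{lm:1}(v) every slice has $\pi_*\cF_s(-\Lambda)=0$ and $\deg R^1\pi_*\cF_s(-\Lambda)=n$. Hence $\cal T:=R^1q_*(\cal F\otimes\pr_{Y_\ell}^*\cO_{Y_\ell}(-\Lambda))$ is a coherent sheaf on $\mbb P^1\times R$, finite over $R$, restricting on each $\mbb P^1\times\{s\}$ to a length-$n$ torsion sheaf; consequently it is $R$-flat of relative length $n$, and $\mathrm{Fitt}_0(\cal T)$ defines an $R$-flat relative effective divisor of degree $n$, i.e. a morphism $R\to\Hilb^n_{\mbb P^1}$. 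It is constant on $\mathrm{GL}$-orbits (the construction $\cF\mapsto\mathrm{div}(R^1\pi_*\cF(-\Lambda))$ is intrinsic) and on $S$-equivalence classes: for $0\to\cF_1\to\cF\to\cF_2\to0$ with all three semistable of slope $0$ one has $\pi_*\cF_i(-\Lambda)=0$, so $R^1\pi_*(-\otimes\cO(-\Lambda))$ is exact on this sequence, and $\mathrm{length}_x$ (equivalently the colength of $\mathrm{Fitt}_0$ at $x$) is additive; thus $h$ descends to a morphism $\bar M\to\Hilb^n_{\mbb P^1}\cong\mbb P^n$. \emph{This is the step I expect to be the main technical obstacle}: the $R$-flatness of $\cal T$ (equivalently the relative-divisor formalism for torsion sheaves on a family of curves) and the descent through $S$-equivalence need to be set up carefully; everything afterwards is geometry on the strata of Theorem \ref{thm:hirz}.

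\textbf{Surjectivity.} By Theorem \ref{thm:hirz}(iii) the generic point of the top stratum $M_{\mbb L_{n,r}}$ is a stable bundle $\cF$ with $\cal Q_\cF\cong\bigoplus_{i=1}^n\cO_{\pi^{-1}(x_i)}(-1)$, the $x_i\in\mbb P^1$ pairwise distinct. Then $R^1\pi_*\cal Q_\cF=0$ (as $H^1(\mbb P^1,\cO(-1))=0$), so by Lemma \ref{lm:QQ}(iii), $R^1\pi_*\cF(-\Lambda)\cong\cal Q_\cF\otimes\cO_\Lambda\cong\bigoplus_i k_{x_i}$, whence $h(\cF)=\{x_1,\dots,x_n\}$. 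As $\cal Q$ may be supported on any set of $n$ distinct fibres and the generic extension \eqref{eq:QL} is then $L_c$-semistable (Theorem \ref{thm:hirz}(i),(iii)), the image $h(\bar M)$ contains all reduced length-$n$ subschemes, a dense open of $\mbb P^n$; since $\bar M$ is projective, $h(\bar M)$ is closed, so $h$ is surjective.

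\textbf{The generic fibre.} Fix a general, hence reduced, $Z=\{x_1,\dots,x_n\}$, and set $\mbb L:=\mbb L_{n,r}$, $\cal Q:=\bigoplus_{i=1}^n\cO_{\pi^{-1}(x_i)}(-1)$. For $\cF\in M_{\mbb L_{n,r}}$ one has $n_\cF=n$, hence $\deg R^1\pi_*\cal Q_\cF=0$ and, by Lemma \ref{lm:QQ}(iii), $R^1\pi_*\cF(-\Lambda)=\cal Q_\cF\otimes\cO_\Lambda$; so $h(\cF)=Z$ forces $Z_\cF=Z$ and $\cal Q_\cF\cong\cal Q$ (Lemma \ref{lm:QQ}(iv)), i.e. $\cF$ is a middle term of an extension $0\to\pi^*\mbb L\to\cF\to\cal Q\to0$ as in \eqref{eq:QL}. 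Since $h$ restricted to the irreducible dense top stratum $M_{\mbb L_{n,r}}$ is dominant, for general $Z$ the set $h^{-1}(Z)\cap M_{\mbb L_{n,r}}$ is pure of dimension $\dim\bar M-n$ and dense in $h^{-1}(Z)$ (the remaining strata being of strictly smaller dimension, Theorem \ref{thm:hirz}(ii)). By Lemma \ref{lm:QQ}(v) an isomorphism of two such middle terms respects the sequences, hence induces an element of $\Aut(\mbb L)\times\Aut(\cal Q)$; conversely this group acts by push--pull on $\Ext^1(\cal Q,\pi^*\mbb L)$, the diagonal $\mbb C^*$ acting trivially, and two classes give isomorphic middle term iff they lie in one orbit of $G:=\bigl(\Aut(\mbb L)\times\Aut(\cal Q)\bigr)/\mbb C^*$. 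Finally $\ext^1(\cal Q,\pi^*\mbb L)=r(n+n_\cF)=2nr$ (Lemma \ref{lm:ext}), so $\Ext^1(\cal Q,\pi^*\mbb L)\cong\mbb A_{\mbb C}^{2nr}$ with $G$ acting linearly; $\dim\Aut(\mbb L_{n,r})=r^2$ (Remark \ref{rk:LF}) and $\Aut(\cal Q)=(\mbb C^*)^n$ because the $\pi^{-1}(x_i)$ are disjoint, so $\dim G=r^2+n-1$. The $G$-invariant open $U\subset\mbb A_{\mbb C}^{2nr}$ of classes whose middle term is $L_c$-semistable is non-empty (Theorem \ref{thm:hirz}(i): the generic such $\cF$ is trivial along $\Lambda$ and along a generic $\l\in|\cO_\pi(1)|$), and $U/G$ is a dense open of $h^{-1}(Z)$, of dimension $2nr-(r^2+n-1)=2nr-r^2-n+1=\dim\bar M-n$ (Theorem \ref{thm:hirz}(iv)). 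This identifies the generic fibre of $h$ with the quotient of the open set $U\subset\mbb A_{\mbb C}^{2nr}$ by the linear action of the $(r^2+n-1)$-dimensional group $G$, as asserted.
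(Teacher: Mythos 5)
Your proposal is correct and follows essentially the same route as the paper: define the map via the $S$-flatness of $R^1\pi_{S*}\cF(-\Lambda_S)$ for a flat family $\cF$, descend through $S$-equivalence by the exactness of $R^1\pi_*(\,\cdot\,\otimes\cO(-\Lambda))$ on Jordan--H\"older filtrations, get surjectivity from dominance plus projectivity of $\bar M^{L_c}_{Y_\ell}(r;0,n)$, and identify the generic fibre with an open subset of $\Ext^1\bigl(\bigoplus_i\cO_{\pi^{-1}(x_i)}(-1),\pi^*\mbb L_{n,r}\bigr)\cong\mbb A^{2nr}$ modulo $\bigl(\Aut(\mbb L_{n,r})\times(\mbb C^*)^n\bigr)/\mbb C^*$. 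The one place where the two arguments genuinely diverge is the step you yourself flag as the main obstacle. Your justification of flatness --- $\cal T$ is finite over $R$ with constant fibrewise length $n$, ``consequently it is $R$-flat'' --- is only valid when the parameter scheme $R$ is reduced; since $R$ is (an open subset of) a Quot scheme, reducedness is not available a priori, and the functorial property of $\Hilb^n_{\mbb P^1}$ has to be checked against arbitrary (in particular non-reduced, local Artinian) bases. The paper closes exactly this gap: working over $S=\Spec(A)$ with $(A,\mfrak m)$ local, it resolves the sheaf by direct sums of line bundles $\cO(-c_j)^{\oplus m_j}$ and proves by induction on the length of the resolution both that $\gamma_s:R^1\pi_{S*}\cG\otimes\cO_{S,s}\to R^1\pi_{s*}\cG_s$ is an isomorphism and that $\Tor^A_1(R^1\pi_{S*}\cG,A/\mfrak m)=0$, the key input being that $\pi_*\cG_s=0$ forces the relevant connecting map to be injective. (Your appeal to $R^2q_*=0$ does give base change for $R^1$, but not flatness.) Everything else --- the ${\rm Fitt}_0$ refinement of ``$\Supp$'', the $S$-equivalence descent, the orbit description of the fibre with $\Aut(\cal Q)=(\mbb C^*)^n$ and $\dim\Aut(\mbb L_{n,r})=r^2$ --- matches the paper's argument, and your explicit use of \ref{lm:QQ}(iii) and \ref{lm:QQ}(v) to pin down the fibre over a reduced $Z$ is a slightly more detailed version of what the paper leaves implicit.
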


\begin{proof}
We saw that $\deg_{\mbb P^1}R^1\pi_*\cF(-\Lambda)=n$, for all $\cF$, so $h$ is 
well-defined set theoretically. Actually, there is a technical detail: 
$\bar M^{L_c}_{Y_\ell}(r;0,n)$ parameterizes equivalence classes of $L_c$-semi-stable 
sheaves where $\cF$ and $\cF'$ are equivalent if their Jordan-H\"older factors are 
isomorphic. Thus we must prove that if 
${\rm grad}^{\rm JH}(\cF)\cong{\rm grad}^{\rm JH}(\cF')$, 
then $R^1\pi_*\cF(-\L)\cong R^1\pi_*\cF'(-\L)$. For this, observe that if $\cF$ fits 
into $0\to\cF_1\to\cF\to\cF_2\to 0$, with $\cF_1,\cF_2$ semi-stable (of degree $0$), 
then $0\to R^1\pi_*\cF_1(-\L)\to R^1\pi_*\cF(-\L)\to R^1\pi_*\cF_2(-\L)\to0$. 
The conclusion follows now by induction on the length of the Jordan-H\"older filtration.

Returning to the theorem, we prove that $h$ satisfies the functorial property of 
${\rm Hilb}^n_{\mbb P^1}$. For a scheme $S$, we denote $Y_{S}:=S\times Y_\ell$, 
$\mbb P^1_S:=S\times\mbb P^1$, $\pi_S:=({\rm id}_S,\pi)$, \textit{etc}, 
and $\cO(1):=\cO_\pi(1)+\cal O_{\mbb P^1}(1)$.\smallskip 

\nit\textit{{Claim}}\quad Let $\cG$ be a torsion free sheaf on $Y_S$, such that 
$\pi_{*}\cG_s=0$, for all $s\in S$ (thus $\pi_{S*}\cG=0$, too). Then the natural 
homomorphism $\gamma_s:R^1\pi_{S*}\cG\otimes\cO_{S,s}\to R^1\pi_{s*}\cG_s$ 
is an isomorphism, for all $s\in S$, and the sheaf $R^1\pi_{S*}\cG$ on $\mbb P^1_S$ 
is $S$-flat. 

Since flatness is a local property, is enough to prove the statement for $S=\Spec(A)$, 
where $(A,\mfrak m)$ is a local ring, and $s=\Spec(A/\mfrak m)\in S$. 
\smallskip

Any $\cG$ admits a finite resolution 
$\ldots{\to}\,\mbb L_2{\to}\,\mbb L_1{\to}\,\mbb L_0{\to}\,\cG{\to}\,0,$ 
with $\mbb L_j{=}\,\cO(-c_j)^{\oplus m_j}$. 
We prove the claim by induction on the length of the resolution. 
If the length is zero,  that is $\cG{=}\,\cO(-c)^{\oplus m}$ 
(notice that $\pi_*\cG_s=0$ implies $c\,{>}\,0$), then $R^1\pi_{S*}\cG$ 
is locally free on $\mbb P^1_S$, and $\gamma_s$ is an isomorphism. 

For the inductive step, we fit $\cG$ into $0\to\cG'\to\mbb L\to\cG\to0,$ where 
$\mbb L{=}\,\cO(-c)^{\oplus m},\, c>0$, $\cG'$ admits a resolution of 
length one less, and also $\pi_*\cG'_s=0$. 
By the hypothesis, $\gamma'_s$ (for $\cG'$) is an isomorphism, which immediately 
yields that $\gamma_s$ (for $\cG$) is an isomorphism too. Also, from the exact sequence 
$0{\to}\,R^1\pi_{S*}\cG'{\to}\,R^1\pi_{S*}\mbb L{\to}\,R^1\pi_{S*}\cG{\to}\,0$ 
we deduce the equivalences: \\ 
\centerline{$
R^1\pi_{S*}\cG\text{ is $A$-flat}
\Leftrightarrow
\Tor^A_1\Bigl(R^1\pi_{S*}\cG,\frac{A}{\mfrak m}\Bigr)=0
\ouset{\lower3pt\hbox{\scriptsize is $A$-flat}}%
{\raise3pt\hbox{\scriptsize$R^1\pi_{S*}\mbb L$}}%
{\Longleftrightarrow}
\underbrace{R^1\pi_{S*}\cG'{\otimes_A}\frac{A}{\mfrak m}}_%
{\underset{\gamma'_s}{\cong}\; R^1\pi_{*}\cG'_s}
\kern3pt
\ouset{\lower1pt\hbox{\scriptsize injective}}{}
{-\kern-7pt-\kern-7pt-\kern-7pt\longrightarrow}
\kern3pt
\underbrace{R^1\pi_{S*}\mbb L{\otimes_A}\frac{A}{\mfrak m}}_%
{\cong\; R^1\pi_{*}\mbb L_s}.
$}\\[.5ex] 
The homomorphism on the right hand side is indeed injective, because $\pi_*\cG_s=0$. 

Now we apply the claim to our setting. For a torsion free sheaf $\cF$ on $Y_S$ which 
is $S$-fibrewise $L_c$-semi-stable, we have $\pi_{*}\cF(-\Lambda_S)_s=0$, 
so $R^1\pi_{S*}\cF(-\Lambda_S)$ is $S$-flat. Hence $h$ is a morphism, as desired. 
Its generic fibre is the quotient of an open subset of 
$\Ext^1\Big(\ouset{i=1}{n}{\bigoplus}
\cO_{\pi^{-1}(x_i)}(-1),\pi^*\mbb L_{n,r}\Big)$, 
which is $2rn$-dimensional, by the action of 
$\bigl(\Aut(\mbb L_{n,r})\times(\mbb C^*)^n\bigr)\bigl/\mbb C^*\bigr.$. 
Since $h$ is dominant and  $\bar M_{Y_\ell}^{L_c}(r;0,n)$ is projective, 
we deduce that $h$ is surjective. 
\end{proof}


\subsection{Rationality issues}\label{ssct:rtl}

We conclude this section with a \emph{self-contained} proof of the rationality of 
$\bar M^{L_c}_{Y_\ell}(r;0,n)$, and some applications. 
This result is proved in \cite{c-mr} for arbitrary $c_1$, under the assumption that 
the discriminant $\Delta(\cF)$ is very large. However, no explicit bounds are given.

\begin{theorem}\label{thm:rtl}
$\bar M^{L_c}_{Y_\ell}(r;0,n)$ is a rational variety, for all $n\ges r\ges2$. 
\end{theorem}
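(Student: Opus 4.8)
The plan is to exploit the stratification from Theorem \ref{thm:hirz}, which identifies the open dense stratum $M_{\mbb L_{n,r}}$ with the space of sheaves fitting into $0\to\pi^*\mbb L_{n,r}\srel{f}{\to}\cF\srel{q}{\to}\cal Q\to 0$, where the generic $\cal Q$ is $\bigoplus_{i=1}^n\cO_{\pi^{-1}(x_i)}(-1)$ for $n$ distinct points $x_i\in\mbb P^1$. Since $\bar M^{L_c}_{Y_\ell}(r;0,n)$ is irreducible with this dense open set, it suffices to prove rationality of $M_{\mbb L_{n,r}}$, and for this I would analyze the morphism $h$ of Theorem \ref{thm:hilb} to ${\rm Hilb}^n_{\mbb P^1}\cong\mbb P^n$. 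The strategy is to show that $M_{\mbb L_{n,r}}$ is birational to a total space of a (Zariski-locally trivial, or at least rational) fibration over an open subset of $\mbb P^n$, hence rational, because a variety which is generically a fibration with rational base and rational fibres and admits a rational section is rational.

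First I would fix the target data: over the open locus $U\subset\mbb P^n$ of reduced divisors $Z=x_1+\dots+x_n$, the fibre of $h$ is the GIT-type quotient of an open subset of $\Ext^1\bigl(\bigoplus_i\cO_{\pi^{-1}(x_i)}(-1),\,\pi^*\mbb L_{n,r}\bigr)\cong\bigoplus_i\Ext^1\bigl(\cO_{\pi^{-1}(x_i)}(-1),\pi^*\mbb L_{n,r}\bigr)$ by $\bigl(\Aut(\mbb L_{n,r})\times(\mbb C^*)^n\bigr)/\mbb C^*$, as recorded in the proof of \ref{thm:hilb}. Each summand $\Ext^1\bigl(\cO_{\pi^{-1}(x_i)}(-1),\pi^*\mbb L_{n,r}\bigr)$ has dimension $2r$ (by Riemann–Roch on the fibre, using $n_\cF=n$ so $b_i=1$), and the $(\mbb C^*)^n$ scales the $n$ blocks independently while $\Aut(\mbb L_{n,r})$ acts diagonally on all of them. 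The key point is that this quotient is rational: I would realize the $\Aut(\mbb L_{n,r})$-action on $\bigoplus_i\bbC^{2r}$ (a sum of $n$ copies of one representation) as a near-linear action for which rationality of the quotient is known — e.g. by exhibiting a $\mbf{Gl}$-equivariant dominant map to a Grassmannian or product of projective spaces with rational (even linear, by the "no-name lemma") fibres, or by using that $\Aut(\mbb L_{n,r})$ is a parabolic-type subgroup of $\Gl_r$ acting on many copies, so that after slicing the action becomes generically free with a rational section. Once the generic fibre of $h$ is rational with a rational section over $U$, and $U\subset\mbb P^n$ is rational, I conclude $M_{\mbb L_{n,r}}$, hence $\bar M^{L_c}_{Y_\ell}(r;0,n)$, is rational.

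The main obstacle I expect is \emph{constructing a rational section of $h$ over $U$} and \emph{certifying rationality of the generic fibre as a quotient}, rather than merely its unirationality. Slicing away the $(\mbb C^*)^n$ is harmless (it just rigidifies each block up to scale), but the residual $\Aut(\mbb L_{n,r})$-action on $n$ copies of the standard-type representation needs a genuine rationality input; the cleanest route is probably to observe that for $n\ge r$ the action of $\Gl_r$ (or its relevant parabolic) on $n$ copies of $\bbC^r$-like blocks is generically free and "almost linear," so the no-name lemma gives a vector-bundle structure over a rational base (a product of projective spaces parametrizing, say, the images of the $x_i$-fibres), yielding a birational trivialization. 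Assembling these pieces over the family as $Z$ varies in $U$ — i.e. checking the construction is done in families and the section is rational, not just pointwise — is the technical heart of the argument.
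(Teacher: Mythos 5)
Your outline follows the same route as the paper's proof: restrict to the dense stratum $M_{\mbb L_{n,r}}$, fibre it over $\Hilb^n_{\mbb P^1}\cong\mbb P^n$ via the morphism $h$ of \ref{thm:hilb}, identify the fibre over $\bsymb{x}$ with the quotient of (an open subset of) $\Ext^1\bigl(\bigoplus_i\cO_{\pi^{-1}(x_i)}(-1),\pi^*\mbb L_{n,r}\bigr)$ by $\bigl(\Aut(\mbb L_{n,r})\times(\mbb C^*)^n\bigr)/\mbb C^*$, and eliminate the group by a slice together with the no-name lemma. However, the two steps you defer are exactly where the content lies, and the general principle you invoke to bridge them is not valid: a fibration with rational base, rational fibres and a rational section need not have rational total space (geometric rationality of the fibre plus a point of the generic fibre does not give rationality of the generic fibre over the function field of the base). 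What is actually needed, and what the paper proves, is that the total space $E$ of the family of $\Ext^1$'s admits a $(G,\bone)$-section $S\subset E$ in the sense of Reichstein, i.e.\ a subvariety meeting the generic orbit in exactly one point which is moreover a \emph{locally trivial linear} fibration over an open subset $\cal H\subset\Hilb^n_{\mbb P^1}$; then $S$ is birational to $\cal H\times\mbb A^N$ and one is done.

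The two concrete inputs closing this gap are the following. First, an explicit canonical form for the $\Aut(\mbb L_{n,r})$-action: the paper writes the extension class as a pair of $r\times n$ matrices in block form and normalizes the blocks successively (using the unipotent radical $H(z)=H_0+zH_1$ of $\Aut(\mbb L_{n,r})$ as well as the Levi part) to obtain a slice with trivial generic stabilizer; this is the ``parabolic acting on many copies'' step you gesture at, and it does go through for $n\ges r$. Second, and this is the point your phrase ``slicing away the $(\mbb C^*)^n$ is harmless'' underestimates, the torus is not a constant group over $\Hilb^n_{\mbb P^1}$ but the group scheme $(\zeta_*\cO_{\cal Z})^\times$ whose $n$ factors are permuted by the monodromy $\mfrak S_n$ of the points $x_i$; the naive torus slice (normalizing one entry in each of the $n$ blocks) is defined only after pulling back to $(\mbb P^1)^n$, is $\mfrak S_n$-invariant but \emph{not} $\Aut(\mbb L)$-invariant, and so cannot simply be intersected with the $\Aut(\mbb L)$-slice. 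The paper resolves this by applying the no-name lemma to the $\mfrak S_n\times\Aut(\mbb L)$-action on $\cal U\times\mbb A^{rn}_\lft$ to replace the second factor $\mbb A^{rn}_\rgt$ by a linearized copy with trivial action before descending to $\Hilb^n_{\mbb P^1}$ and imposing the matrix normal form. If you carry out these two steps --- which you yourself flag as the technical heart --- your argument becomes the paper's; as written, the proposal records the strategy but not the proof.
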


\begin{proof} 
Is enough to prove that 
$M^o\,{:=}\,\{ \cF\in M_{\mbb L_{n,r}}\,{\mid}\,\det(f)^\vee\text{ has simple zeros}\}$ 
is rational. Any $\cF\in M^o$ fits into 
\\ \centerline{
$0\to
\pi^*\mbb L=\pi^*\Big(
\cO_{\mbb P^1}(-a_1)^{\oplus r_1}\oplus\cO_{\mbb P^2}(-a_1-1)^{\oplus r_2}
\Big)
\to
\cF
\to
\ouset{i=1}{n}{\bigoplus}\cO_{\pi^{-1}(x_i)}(-1)
\to0,
$ 
} 
with $\bsymb{x}:=\{x_1,\ldots,x_n\}\subset\mbb P^1$ pairwise distinct, and 
$a_1,r_1,r_2$ given by \eqref{eq:generic}. For given $\bsymb{x}$ these extensions 
are parameterized by \vspace{-1ex}
\begin{equation}\label{eq:Ex}
\cal E_{\bsymb{x}}:=\ouset{i=1}{n}{\bigoplus}\;
\mbb L_{x_i}
{\otimes}\,
\overbrace{\Gamma(\cO_{\pi^{-1}(x_i)}(1))}^{\cong\;\mbb C^2}
=\Big(\mbb L\otimes\pi_*\cO_\pi(1)\Big)\otimes\cO_{\bsymb{x}}, 
\quad\dim\cal E_{\bsymb{x}}=2nr.
\end{equation}
This space is acted on by 
$$
G_{\bsymb{x}}:=
\bigl(\Aut(\mbb L)\times(\mbb C^*)^n\bigr)\bigl/\mbb C^*\bigr.
{=}\,
\bigl(\Aut(\mbb L)\times\cO_{\bsymb{x}}^\times\bigr)\bigl/\mbb C^*\bigr.
\quad
(\cO_{\bsymb{x}}^\times\subset\cO_{\bsymb{x}}
\text{ stands for the invertible elements}), 
$$ 
so the fibre 
$M^o_{\bsymb{x}}=h^{-1}(\bsymb{x})\cap M^o$ is the quotient by $G_{\bsymb{x}}$ 
of an open subset of the affine space underlying $\cal E_{\bsymb{x}}$. 
The symbol `$\invq\,$' will always stand for the quotient of some open subset. 

In order to globalize this construction as $\bsymb{x}\in\Hilb^n_{\mbb P^1}$ varies, 
we consider the diagram: 
\begin{equation}\label{eq:hilb}
\xymatrix@C=3em@R=1.5em{
\cal X\ar[r]^-{q'}\ar[d]_-\xi
&
\cal Z\;\ar[d]_-{\zeta}\ar@{^(->}[r]\ar[rd]^(.7){\pr_{\mbb P^1}}
&
\Hilb^n_{\mbb P^1}\times\mbb P^1\ar[d]
&
Y_\ell\ar[ld]^-{\pi}
\\ 
{(\mbb P^1)}^n\ar[r]^-q
&
{(\mbb P^1)}^n/\mfrak S_n=\mbb P^n\cong\Hilb^n_{\mbb P^1}&\mbb P^1&
}
\end{equation}
Here $\mfrak S_n$ stands for the group of permutations of $n$ elements. 
Since we are interested in birational properties, we will repeatedly restrict ourselves 
to appropriate open subsets; they will be denoted by $\cal U\subset (\mbb P^1)^n$ 
and $\cal H:=\cal U/\mfrak S_n\subset \Hilb^n_{\mbb P^1}$. 
We start by restricting ourselves to the complement $\cal U$ of the diagonals. 
(Thus $\mfrak S_n$ acts freely, and $q$ is flat.) 
In algebraic terms, \eqref{eq:hilb} reduces to 
\begin{equation}\label{eq:alg}
\xymatrix@C=3em@R=1.5em{
\disp\frac{\mbb C[x_1,\ldots,x_n][z]}{\langle (z-x_1)\cdot\ldots\cdot(z-x_n)\rangle}
&\disp
\frac{\mbb C[s_1,\ldots,s_n][z]}{\langle z^n-s_1z^{n-1}+s_2z^{n-2}-\ldots\rangle}
\ar[l]
&
\mbb C[s_1,\ldots,s_n][z]\ar[l]
\\ 
\mbb C[x_1,\ldots,x_n]\ar[u]
&\mbb C[s_1,\ldots,s_n]\ar[u]\ar[l]_-{\text{inclusion}}
&\mbb C[z],\ar[ul]\ar[u]
}
\end{equation}
where $s_1=x_1+\ldots+x_n,\ldots,s_n=x_1\cdot\ldots\cdot x_n$ 
are the symmetric polynomials. 

In this setting,  \eqref{eq:Ex} is the stalk at $\bsymb{x}$ of  
\begin{equation}\label{eq:E}
\cal E:=
\zeta_*\big(\pr_{\mbb P^1}^*(\mbb L\otimes\pi_*\cO_{\pi}(1))\big)
=
\zeta_*\Big(\pr_{\mbb P^1}^*\big(\,
\mbb L\otimes(\cO_{\mbb P^1}\oplus\cO_{\mbb P^1}(\ell))
\,\big)\Big),
\end{equation} 
and the symmetry group acting on $\cal E$ is 
\begin{equation}\label{eq:GG}
G:=\big(\Aut(\mbb L)\times(\zeta_*\cO_{\cal Z})^\times\big)/\mbb C^*. 
\end{equation}
Notice that $\zeta_*\cO_{\cal Z}$ is a sheaf of algebras on $\cal H$, so it 
makes sense to consider the (multiplicative) subgroup of invertible elements 
$(\zeta_*\cO_{\cal Z})^\times$. 

We simplify $\cal E$ by shrinking $\cal U$ and $\cal H=\cal U/\mfrak S_n$ further. 
Indeed, fix $\infty=\langle0,1\rangle\in\mbb P^1$ and trivialize the various 
$\cO_{\mbb P^1}(a)$, $a\in\mbb Z$, appearing in \eqref{eq:E} on the 
complement $\mbb A^1=\mbb P^1\sm\{\infty\}$. Moreover, we fix two general sections 
$\si_0,\si_1$ in $\cO_\pi(1)$. Their zero loci intersect at $\ell$ points in $Y_\ell$, 
lying above $\{u_1,\ldots,u_\ell\}\subset\mbb A^1$. For $x\neq u_j$, the restrictions 
$\si_{0,x},\si_{1,x}\in\Gamma(\pi^{-1}(x),\cO_\pi(1))$ yield a basis. 
Then take $\cal U$ to consist of $(x_1,\ldots,x_n)\in\mbb A^n\subset(\mbb P^1)^n$ 
pairwise distinct, such that $x_i\neq u_j$ for $i=1,\ldots,n$, $j=1,\ldots,\ell$. 
One can trivialize $\cal E$ over $\cal H=\cal U/\mfrak S_n$ as follows: 
\begin{equation}\label{eq:E-triv}
\begin{array}{l}
\cal E=\zeta_*\big(
\pr_{\mbb P^1}^*(\cO^r\otimes\mbb C^2)
\big)
=
(\zeta_*\cO_{\cal Z}\otimes\mbb C^{r})_\lft
\oplus
(\zeta_*\cO_{\cal Z}\otimes\mbb C^{r})_\rgt,
\\ 
\zeta_*\cO_{\cal Z}\srel{\eqref{eq:alg}}{=}
\mbb C[s_1,\ldots,s_n]\oplus\ldots\oplus\hat z^{n-1}\cdot\mbb C[s_1,\ldots,s_n]
\cong\cO_{\cal H}^n.
\end{array}
\end{equation}
The subscripts refer to the factors $\Aut(\mbb L)$,  $(\zeta_*\cO_{\cal Z})^\times$ 
of $G$, respectively. Although they act simultaneously, 
$(\cO_{\cal H}^n\otimes\mbb C^r)_\lft$ will be viewed (mainly) as an 
$\Aut(\mbb L)$-module, while $(\cO_{\cal H}^n\otimes\mbb C^r)_\rgt$ 
will be (mainly) a $(\zeta_*\cO_{\cal Z})^\times$-module. Let 
\\ \centerline{
$E:=\Spec_{\Hilb^n_{\mbb P^1}}(\Sym^\bullet\cal E^\vee)
\cong\cal H\times\mbb A^{rn}_\lft\times\mbb A^{rn}_\rgt 
$ 
}
be the linear fibre space (quasi-projective variety) determined by $\cal E$. We write 
$\mbb L=\bigl(\,\mbb C^{r_1}\otimes\cO_{\mbb P^1}\oplus
\mbb C^{r_2}\otimes\cO_{\mbb P^1}(-1)\,\bigr)
\otimes\cO_{\mbb P^1}(-a_1)$, and think off the elements 
of $\mbb L_x$, for $x\in\mbb P^1$, as column vectors with $r=r_1+r_2$ entries. 
Then the elements of $E_{\bsymb{x}}$, $\bsymb{x}\in\cal H$, can be represented 
as pairs of $r\times n$-matrices in the block form
\begin{equation}\label{eq:xi}
\scalebox{.8}{$
\bsymb{e}=
\left[
\begin{array}{ccc|ccc|ccc|c}
*&\ldots&*&*&\ldots&*&*&\ldots&*&\ldots
\\ 
&[\text{I}]_{r_1\times r_1}&&
&[\text{III}]_{r_1\times r_2}&&&[\text{V}]_{r_1\times r_2}
&&
\\ 
*&\ldots&*&*&\ldots&*&*&\ldots&*&\ldots 
\\ \hline 
*&\ldots&*&*&\ldots&*&*&\ldots&*&\ldots
\\ 
&[\text{II}]_{r_2\times r_1}&&
&[\text{IV}]_{r_2\times r_2}&&&[\text{VI}]_{r_2\times r_2}
&&
\\ 
*&\ldots&*&*&\ldots&*&*&\ldots&*&\ldots 
\end{array}
\right].
$}
\end{equation}

The strategy for proving the rationality of $M^o$ is to exhibit a subvariety (Luna slice) 
$S\subset E$ which is a locally trivial, linear fibre bundle over 
$\cal H\subset\Hilb^n_{\mbb P^1}$, and the restriction of $E\dashto E\invq G$ to $S$ 
is birational. (In the terminology of \cite[Definition 2.9]{re}, 
$S$ will be a $(G,\bone)$-section of $E\to\cal H$.) 
The slice will be constructed by proving that the \emph{generic} pair 
of matrices \eqref{eq:xi} admits a unique (suitable) canonical form. 


\subsubsection{The $\Aut(\mbb L)$-action}\label{sssct:L}  
Now we turn our attention to the $\Aut(\mbb L)$-action on $E_{\bsymb{x}}$. 
First, we observe that the elements of $\Aut(\mbb L)$ can be represented schematically 
as follows: 
\begin{equation}\label{eq:group}
\begin{array}{rl}
\Aut(\mbb L)=& 
\left\{\;\left[
\begin{array}{c|c}
A\in\Gl(r_1;\mbb C)&
H(z)\in\Hom(\mbb C^{r_2},\mbb C^{r_1})\otimes\Gamma(\cO_{\mbb P^1}(1))
\\
\hline 
0&B\in\Gl(r_2;\mbb C)
\end{array}
\right]\;\right\}, 
\\[2.5ex] 
\text{with}\;& 
\Hom(\mbb C^{r_2},\mbb C^{r_1})\otimes\Gamma(\cO_{\mbb P^1}(1))=
\{H(z)=z^{(0)}H_0+z^{(1)}H_1\mid H_0,H_1\in\Hom(\mbb C^{r_2},\mbb C^{r_1})\}.
\end{array}
\end{equation}
Since we restricted ourselves to a subset of $\mbb A^1\subset\mbb P^1$, 
we write $H(z)=H_0+zH_1$. Let us consider 
\\ \centerline{ 
$\bsymb{e}=\left[\begin{array}{c}u_0\\ v_0\end{array}\right]
+\hat z\left[\begin{array}{c}u_1\\ v_1\end{array}\right]
+\ldots+
\hat z^{n-1}\left[\begin{array}{c}u_{n-1}\\ v_{n-1}\end{array}\right]
\in\mbb A^{rn}_\lft,$
}
where the columns refer to the splitting $r=r_1+r_2$ in $\mbb L$. 
Some calculations show that 
\\ \centerline{
$g=\left[\begin{array}{c|c}
A&H_0+zH_1 \\ \hline 0&B
\end{array}\right]\in\Aut(\mbb L)$
}
acts on $\bsymb{e}$ as follows ($v_{-1}:=0$):
\begin{equation}\label{eq:ge}
g\times\bsymb{e}=
\sum_{j=0}^{n-1}\;\hat z^j\cdot\left[
\begin{array}{c}
Au_j+H_0v_j+H_1\big(v_{j-1}+(-1)^{n-j-1}s_{n-j}v_{n-1}\big)\\[1ex] Bv_j\kern5ex
\end{array}
\right].
\end{equation}
In the block form \eqref{eq:xi}, it reads:
$$
\begin{array}{rcl}
g\times\text{[I]}&=&
A\big[u_0,\ldots,u_{r_1-1}\big]
+H_0\big[v_0,\ldots,v_{r_1-1}\big]
\\[.5ex] && 
+H_1\big[(-1)^{n-1}s_n\cdot v_{n-1},\ldots,
v_{r_1-2}+(-1)^{n-r_1}s_{n-r_1+1}\cdot v_{n-1}\big]
\\[.5ex]&=&
A\text{[I]}+H_0\text{[II]}+H_1\text{[II']},
\\[1.5ex]
g\times\text{[III]}&=&
A\big[u_{r_1},\ldots,u_{r-1}\big]
+H_0\big[v_{r_1},\ldots,v_{r-1}\big]
\\[.5ex] &&
+H_1\big[v_{r_1-1}+(-1)^{n-r_1-1}s_{n-r_1}\cdot v_{n-1},\ldots,
v_{r-2}+(-1)^{n-r}s_{n-r+1}\cdot v_{n-1}\big]
\\[.5ex]&=&
A\text{[III]}+H_0\text{[IV]}+H_1\text{[IV']},
\\[1.5ex]
g\times\text{[IV]}&=&
B\big[v_{r_1},\ldots,v_{r-1}\big]=B\text{[IV]},
\\[1.5ex]
g\times\text{[V]}&=&
A\big[u_{r},\ldots,u_{r+r_2-1}\big]
+H_0\big[v_{r},\ldots,v_{r+r_2-1}\big]
\\[.5ex] && 
+H_1\big[v_{r-1}+(-1)^{n-r-1}s_{n-r}\cdot v_{n-1},\ldots,
v_{r+r_2-2}+(-1)^{n-r-r_2}s_{n-r-r_2+1}\cdot v_{n-1}\big]
\\[.5ex]&=&
A\text{[V]}+H_0\text{[VI]}+H_1\text{[VI']}.
\end{array}
$$
The slice to the $\Aut(\mbb L)$-action is obtained in several steps. 
(Recall that $\bsymb{x}, \bsymb{e}$ are generic.) 

-- By using the $\Gl(r_2)$-action, we may assume that [IV]$=\bone_{r_2}$.

-- We cancel [III]: just take 
$g\,{=}\left[
\begin{array}{c|c}
\bone_{r_1}&H(z)=-\text{[III]}\cdot\text{[IV]}^{-1}
\\ \hline 
0&\bone_{r_2}
\end{array}\,
\right]\!.$

-- Also cancel [V] with an appropriate 
$g\,{=}\left[
\begin{array}{c|c}
\bone_{r_1}&H(z)=H_0+zH_1\\ \hline 0&\bone
\end{array}\,
\right]\!$, while keeping [III]$=0$ and [IV]=$\bone_{r_2}$. Indeed, the equation 
$g\times\text{[III]}=0$ yields $H_0=-H_1\text{[IV'][IV]}^{-1}$, and then 
$g\times\text{[V]}=0$ has a unique solution 
$H_1=\text{[V]}\cdot\big(\text{[IV'][IV]}^{-1}\text{[VI]}-\text{[VI']}\big)^{-1}$.  

-- Finally, by using 
$g\,{=}\left[
\begin{array}{c|c}A&0\\ \hline 0&\bone\end{array}\,
\right]\!,$ 
we may assume [I]=$\bone_{r_1}$, while keeping [III]=[V]=$0$, [IV]=$\bone_{r_2}$. 

\nit Overall, by using the $\Aut(\mbb L)$-action, the generic $\bsymb{e}$ \eqref{eq:xi} 
can be brought into the form 
\begin{equation}\label{eq:xii}
\left[
\begin{array}{c|c|c|c}
\bone_{r_1}&0&0&\cdots
\\ 
\hline 
*&\bone_{r_2}&*&\cdots
\end{array}
\right].
\end{equation}
\nit\textit{Claim}\quad Suppose $\bsymb{x}$ and $\bsymb{e}$ are generic. 
More precisely, the following matrices should be invertible:  
\begin{center}
$\text{[I]},\kern1ex\text{[IV]},\kern1ex
\text{[IV']}\text{[IV]}^{-1}\text{[VI]}-\text{[VI']}.$\footnote{These conditions 
are indeed generic: take \textit{e.g.} [I]$=\bone_{r_1}$, [IV]$=\bone_{r_2}$, 
[VI]$=$diagonal matrix, and $v_{n-1}=0$.}
\end{center}
If both $\bsymb{e},\,g\times\bsymb{e}$ are of the form \eqref{eq:xii}, then $g=\bone$. 
\\ \centerline{ 
$\begin{array}{clccl}
g\times\text{[IV]}=\bone_{r_2}&\Rightarrow B=\bone_{r_2},
&& 
g\times\text{[III]}=0,\;g\times\text{[V]}=0&\Rightarrow H_0=H_1=0, 
\\ 
g\times\text{[I]}=\bone_{r_1}&\Rightarrow A=\bone_{r_1}.&&&
\end{array}$
} 
Henceforth, we shrink $\cal H$ to the open subset appearing in the claim above. 


\subsubsection{The $(\zeta_*\cO_{\cal Z})^\times$-action}\label{sssct:T} 
Now we turn our attention to the second action. 
Unfortunately $T:=(\zeta_*\cO_{\cal Z})^\times$ is not a group, but rather a group 
scheme over $\cal H$ with fibres isomorphic to $(\mbb C^*)^n$. 
(Notice that $\mbb C^*$ is still diagonally embedded in $(\zeta_*\cO_{\cal Z})^\times$.) 
We denote by 
\\ \centerline{
$F:=\Spec_{\Hilb^n_{\mbb P^1}}(\Sym^\bullet\zeta_*\cO_{\cal Z})
\cong\cal H\times\mbb A^{n}$ 
} 
the linear fibre space determined by $\zeta_*\cO_{\cal Z}$.
Then $T$ acts diagonally on $F^2:=F\times_{\cal H}F$, 
and the action on $E$ consists in repeating $r$ times the action on $F^2$. 

The $T$-action on $F$ is complicated in the trivialization \eqref{eq:E-triv}, 
but is easy to understand the $q^*T:=(\xi_*\cO_{\cal X})^\times$-action on 
$q^* F=\cal U\times\mbb A^{n}$ in a different trivialization. Indeed, 
\\ \centerline{
$\begin{array}{l}
\kern4ex(\xi_*\cO_{\cal X})^\times\cong(\mbb C^*)^n,
\\[1ex] \disp 
\frac{\mbb C[x_1,\ldots,x_n][z]}{\langle (z-x_1)\cdot\ldots\cdot(z-x_n)\rangle}
\cong\mbb C[x_1,\ldots,x_n]^{\oplus n}
\kern-3pt=
\pr_{\mbb P^1}^*\big(\cO_{\mbb P^1,x_1}\oplus\ldots\oplus\cO_{\mbb P^1,x_n}\big)
\;\text{is a ring isomorphism,}
\end{array}$
}
by the Chinese remainder theorem, and $(t_1,\ldots,t_n)\in(\mbb C^*)^n$ acts on the 
$j$-th coordinate of $\mbb A^n$ by $t_j$. (The difficulty with the $T$-action on 
$\zeta_*\cO_{\cal Z}$ is that $x_1,\ldots,x_n$ are permuted by $\mfrak S_n$.) 

We consider the $\mfrak S_n$-invariant linear subspace $S''\subset\mbb A^{rn}_\rgt$ 
consisting of matrices of the form \eqref{eq:xi} with the top line filled by 
an arbitrary $c\in\mbb C$, that is 
\begin{equation}\label{eq:xiii}
\left[
\begin{array}{c|c|c}c&\ldots&c\\ *&\ldots&*\end{array}
\right],
\end{equation}
and define 
$\tld\Xi'':=\cal U\times\mbb A^{rn}_\lft\times S''\subset q^*E=\cal U\times_{\cal H}E$. 
Clearly, the generic element of $q^*E$ can be brought into such a form by using $q^*T$, 
uniquely up to the diagonal $\mbb C^*$-action. 
(Thus, $\tld\Xi''$ is a slice for a `$(\mbb C^*)^n/\mbb C^*_{\rm diag}$-action'.) 

The next step is to descend $\tld\Xi''$ to $E$ itself. This is \emph{not immediate}, 
since afterward we wish to take the slice \eqref{eq:xii} for the 
$(\mfrak S_n\times\Aut(\mbb L))$-action, but $\tld\Xi''$ is not $\Aut(\mbb L)$-invariant. 
Fortunately, the no-name lemma \cite{re,dom} comes to the rescue. 
We consider the diagram 
\\ \centerline{
$\xymatrix@R=1.5em@C=5em{
\cal U\times\mbb A^{rn}_\lft\times\mbb A^{rn}_\rgt
\ar@{-->}[r]^(.5){({\rm id},\Upsilon)}\ar[d]_-{\pr}
&
\cal U\times\mbb A^{rn}_\lft\times\mbb A^{rn}_{\emptyset}
\ar[d]^-{\pr}
\\ 
\cal U\times\mbb A^{rn}_\lft
\ar@{=}[r]\ar[d]
&
\cal U\times\mbb A^{rn}_\lft
\ar[d]
\\ 
\cal U
\ar@{=}[r]
&
\cal U
}$
}\\[.5ex]
By our discussion at \ref{sssct:L}, the generic $(\mfrak S_n\times\Aut(\mbb L))$-stabilizer 
on $\cal U\times\mbb A^{rn}_\lft$ is trivial. (Notice that $\mfrak S_n$ acts trivially on 
$\mbb A^{rn}_\lft$ because the trivilization \eqref{eq:E-triv} holds on $\cal H$.) 
Then there is a $(\mfrak S_n\times\Aut(\mbb L))$-invariant open subset 
$\tld O\subset\cal U\times\mbb A^{rn}_\lft$, and a birational $\pr$-fibrewise linear 
map $({\rm id},\Upsilon)$ which is equivariant for the following actions: 

-- $\mfrak S_n$ acts on $\mbb A^{rn}_\rgt$ by permuting the 
$n$ copies of $\mbb A^{r}$;

-- $\Aut(\mbb L)$ acts on $\mbb A^{rn}_\rgt$ the same as on $\mbb A^{rn}_\lft$. 
(Anyway, $\Aut(\mbb L)$ acts diagonally on $\mbb A^{2rn}$.);

-- $\mfrak S_n\times\Aut(\mbb L)$ acts trivially on $\mbb A^{rn}_{\emptyset}$.

\nit The group $q^*T$ acts both on the fibre and the base of $\pr$; it is a priori unclear 
whether $\tld O$ is $q^*T$-invariant. (Although is likely that is possible to arrange this.) 
However, the $q^*T$-orbit of the generic point in 
$\cal U\times\mbb A^{rn}_\lft\times\mbb A^{rn}_\rgt$ intersects 
$\tld\Xi''|_{\tld O}=\tld O\times S''$ along a unique $\mbb C^*_{\rm diag}$-orbit 
(a straight line). 
Indeed, the generic stabilizer is trivial, and the dimension of the $q^*T$-orbit of the 
`bad locus' $(\cal U\times\mbb A^{rn}_\lft\sm\tld O)\times S''$ 
is at most  $\dim(\cal U\times\mbb A^{rn}_\lft\sm\tld O)+rn<\dim\cal U+2rn$. 
We consider the `$q^*T/\mbb C^*_{\rm diag}$-slice' 
\\ \centerline{
$\tld\Xi''_\emptyset:=({\rm id},\Upsilon)(\tld\Xi''|_{\tld O})
\subset
\tld O\times\mbb A^{rn}_\emptyset.$
}
By the $\pr$-linearity of $({\rm id},\Upsilon)$, $\tld\Xi''_\emptyset$ is still a linear 
fibration over $\tld O$, invariant under the $\mfrak S_n$-action (as $S''$ is so). 
At this stage only, we take the quotient by $\mfrak S_n$, and get the 
`$T/\mbb C^*_{\rm diag}$-slice' 
\\ \centerline{
$\Xi''_\emptyset=\tld\Xi''_\emptyset/\mfrak S_n
\subset
\big(\tld O/\mfrak S_n\big)
\times\mbb A^{rn}_\emptyset\subset E.$
\quad 
We denote $O:=\tld O/\mfrak S_n\subset\cal H\times\mbb A^{rn}_\lft$.
}
The essential property is that 
$\Aut(\mbb L)=\big(\Aut(\mbb L)\times\mbb C^*_{\rm diag}\big)/\mbb C^*$ 
acts on $O$ as in \ref{sssct:L}, and by multiplication on the fibres of 
$\pr{:}\,O\times\mbb A^{rn}_\emptyset\to O$. 


\subsubsection{The $G$-action}\label{sssct:G} 
Now we assemble \ref{sssct:L} and \ref{sssct:T} to produce the $G$-slice on $E$. 
We define 
\begin{equation}\label{eq:slice}
S:=\Xi''_\emptyset|_{O\cap(\cal H\times S')}\subset E,\;
\text{ where $S'$ consists of matrices of the form \eqref{eq:xii}.}
\end{equation}
The intersection $O\cap(\cal H\times\Xi')$ is non-empty, because 
$\Aut(\mbb L)\cdot(\cal H\times S')$ is open. 
Clearly, $S\subset E$ is $(n+r^2-1)$-co-dimensional, it is an open subset of a locally 
trivial fibration over some open $\cal H\subset{\rm Hilb}^n_{\mbb P^1}$. Our 
discussion shows that any the $G$-orbit of the generic intersects $S$ at only one point. 
Indeed, the $T$-orbit intersects $\Xi''_\emptyset$ along a $\mbb C^*_{\rm diag}$-orbit, 
and we use the remaining $\Aut(\mbb L)$-action to move the point over 
$O\cap(\cal H\times S')$. 
\end{proof}

\begin{corollary}\label{cor:P2}
$M_{\mbb P^2}(r;0,n)$ is a rational variety, for all $n\ges r\ges 2$. 
\end{corollary}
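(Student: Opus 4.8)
The corollary will follow from theorem \ref{thm:rtl} (with $\ell=1$) together with the observation that $Y_1=\mbb P(\cO_{\mbb P^1}\oplus\cO_{\mbb P^1}(-1))$ is the blow-up $\si\colon Y_1=\Bl_p\mbb P^2\to\mbb P^2$ of the plane at a point $p$, with exceptional line $\Lambda$, $\Lambda^2=-1$; the point is to turn $\si$ into a birational equivalence between the two moduli spaces. First I would fix the numerology on the surface $Y_1$: by \eqref{eq:K} one has $\cO_\pi(1)=\si^*\cO_{\mbb P^2}(1)=:H$, the fibre class of $\pi$ is $f=H-\Lambda$ (so $f^2=0$), whence $L_c=\cO_\pi(1)+c\,\pi^*\cO_{\mbb P^1}(1)=H+cf$ and $\mu_{L_c}(\cG)=\bigl(c_1(\cG)\cdot H+c\,c_1(\cG)\cdot f\bigr)/\rk(\cG)$ for any torsion free $\cG$ on $Y_1$, with $H^2=H\cdot f=\Lambda\cdot f=1$, $H\cdot\Lambda=0$, $\Lambda^2=-1$.

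The forward map is $\cF\mapsto\si_*\cF$. By theorem \ref{thm:hirz} there is a dense open $V\subseteq\bar M^{L_c}_{Y_1}(r;0,n)$ of $L_c$-stable vector bundles $\cF$ with $\cF_\Lambda\cong\cO_\Lambda^{\oplus r}$ and $\cF_\fbr\cong\cO_\fbr^{\oplus r}$. For $\cF\in V$ the standard fact that a vector bundle trivial along the exceptional line of a point-blow-up descends gives $\cF\cong\si^*\si_*\cF$, with $E:=\si_*\cF$ a rank $r$ vector bundle on $\mbb P^2$, $c_1(E)=0$, $c_2(E)=n$. I would check $E$ is $\cO_{\mbb P^2}(1)$-stable: if not, pick a proper saturated $\cG'\subsetneq E$ with $d:=\deg_H(\cG')\ges0$, and (replacing $\cG'$ by $\ker(E\to(E/\cG')^{\vee\vee})$, which alters neither $d$ nor $\rk$) assume $E/\cG'$ locally free; then $\si^*\cG'$ is a saturated subbundle of $\cF=\si^*E$ with $c_1(\si^*\cG')=dH$ and locally free quotient, so restricting to a general fibre $f$ (where $\cF_f$ is trivial) yields $d=c_1(\si^*\cG')\cdot f=\deg\bigl(\si^*\cG'|_f\bigr)\les0$; hence $d=0$ and $\mu_{L_c}(\si^*\cG')=0=\mu_{L_c}(\cF)$, contradicting the $L_c$-stability of $\cF$. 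Thus $\cF\mapsto\si_*\cF$ is an injective morphism $V\to M_{\mbb P^2}(r;0,n)$ — a morphism because $R^1\si_*\cF=0$ when $\cF_\Lambda$ is trivial, so cohomology and base change apply, and injective because $\si^*$ recovers $\cF$.

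For the inverse, given a stable $E$ on $\mbb P^2$ with $c_1=0$, $c_2=n$ such that the general line through $p$ is not a jumping line of $E$ — a non-empty, hence (as $M_{\mbb P^2}(r;0,n)$ is irreducible) dense open condition — the bundle $\si^*E$ is locally free, trivial along $\Lambda$, has $c_1=0$, $c_2=n$, and has trivial restriction to the general fibre of $\pi$; running the numerical identity of the first paragraph backwards (push a proper saturated $\cG\subset\si^*E$ down to $E$ and use the stability of $E$) gives $c_1(\cG)\cdot H<0$ and $c_1(\cG)\cdot f\les0$, hence $\mu_{L_c}(\cG)<0$, so $\si^*E$ is $L_c$-stable — this is also a special case of theorem \ref{thm:main}. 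Since $\si_*\si^*E\cong E$, $\si_*$ and $\si^*$ are mutually inverse isomorphisms between $V$ and a dense open of $M_{\mbb P^2}(r;0,n)$; a Riemann--Roch count ($\dim M_{\mbb P^2}(r;0,n)=2rn-r^2+1=\dim\bar M^{L_c}_{Y_1}(r;0,n)$) is consistent with this. Therefore $M_{\mbb P^2}(r;0,n)$ is birational to $\bar M^{L_c}_{Y_1}(r;0,n)$, which is rational by theorem \ref{thm:rtl}, and we are done.

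The main obstacle is the stability bookkeeping under $\si^*$ and $\si_*$: one must handle the non-flatness of $\si$ and the possible failure of $\si^*$ to be left exact (hence the reduction to $E/\cG'$ locally free, which makes pullbacks of saturated subsheaves stay saturated), keep track of the fact that $c_1$ can only change by multiples of $\Lambda$ under these operations, and — for the inverse direction — identify precisely the locus of $E$ for which $\si^*E$ is $\pi$-relatively semi-stable, that is, for which the general line through $p$ is non-jumping.
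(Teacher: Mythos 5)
Your proposal is correct and follows essentially the same route as the paper: both proofs realize $Y_1$ as the blow-up $\si:Y_1\to\mbb P^2$ and exhibit $\si_*$ as a birational map from $\bar M^{L_c}_{Y_1}(r;0,n)$ (rational by \ref{thm:rtl}) to $M_{\mbb P^2}(r;0,n)$, injective on the open locus of stable bundles trivial along $\L$ and along the generic $\l\in|\cO_\pi(1)|$ supplied by \ref{thm:hirz}. The one genuine divergence is the dominance step: the paper gets it from the deformation-theoretic identity $\Ext^1(\cF,\cF)\cong\Ext^1(\si_*\cF,\si_*\cF)$ together with Zariski's main theorem, whereas you build the inverse $E\mt\si^*E$ explicitly on the locus where the generic line through $p$ is non-jumping and verify $L_c$-stability by hand --- which is fine and in fact supplies the stability of $\si_*\cF$ that the paper merely asserts; note only that your parenthetical appeal to Theorem \ref{thm:main}$\rm(i_b)$ does not literally apply here, since $(\si^*E)_\fbr\cong\cO_\fbr^{\oplus r}$ is semi-stable but not stable for $r\ges2$, so it is your direct slope computation (not that theorem) that carries the step.
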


For $r=2$, the statement in proved in \cite{mae} (see also \cite{es}). 
For arbitrary $r$ and $c_1$, the best results are obtained in \cite{c-mr,c-mr3,yo}. 

\begin{proof}
Let $\si:Y_1\to\mbb P^2$ be the contraction of $\L$ (equivalently, the blow-up 
of a point in $\mbb P^2$). According to \ref{thm:hirz} and \ref{thm:rtl}, 
$M_{Y_1}^{L_c}(r;0,n)$, where $c$ is sufficiently large, is irreducible and rational, 
and there is an open subset of subset consisting of vector bundles $\cF$ whose restrictions 
to both $\L\subset Y_1$ and the generic $\l\in|\cO_\pi(1)|$ are trivializable. For any such 
$\cF$, the direct image $\si_*\cF$ is locally free and semi-stable on $\mbb P^2$. 
This yields a rational map 
\\ \centerline{
$\si_*:\bar M_{Y_1}^{L_c}(r;0,n)\dashto\bar M_{\mbb P^2}(r;0,n),
\quad\cF\mt\si_*\cF.$
} 
It is obviously injective on the open locus formed by $\cF$ as above. 
Moreover, $\si_*$ is dominant, because $\Ext^1(\cF,\cF)\cong\Ext^1(\hat\cF,\hat\cF)$, 
for any $\cF$ which is trivializable along $\L$. 
Zariski's main theorem implies that $\si_*$ is birational. 
\end{proof}

Somewhat unexpectedly, moduli spaces of framed vector bundles will naturally occur 
in the next section. 

\begin{definition}[See \textit{e.g.} \cite{do,bbr}]\label{def:frame} 
A \emph{framing} of a sheaf $\cF$ on $Y_\ell$ along a reduced, irreducible curve 
$\l\subset Y_\ell$ is an isomorphism $\cF_\l\srel{\th}{\to}\cO_\l^{\oplus r}$. 
Two framings of $\cF$ along $\l$ are equivalent if there is an automorphism 
of $\cF$ sending one framing into the other. 
\end{definition}

Thus, if $\cF$ is stable, two framings $\th,\th'$ are equivalent if $\th'=c\th$, for 
$c\in\mbb C^*$. 
We deduce that, in this latter case, the choice of a basis in $\Gamma(\cF_\l)$ 
(modulo $\mbb C^*$) determines a framing of $\cF$ along $\l$. 
Therefore, the possible framings of $\cF$ along $\l$ is the ${\PGl}(r)$-orbit 
of a given framing. 
We are going to show how the slice $S$ constructed above allows to do this for 
families of vector bundles. 
For shorthand, we let $M_{Y_\ell}^\vb:=M_{Y_\ell}^{L_c}(r;0,n)^\vb$, 
and denote by $M_{Y_\ell,\l}^\vb$ the moduli space of $\l$-framed, $L_c$-stable 
vector bundles on $Y_\ell$. 

\begin{corollary}\label{cor:rtl-frameY}
Assume that the restriction to $\l$ of the generic $\cF\in M_{Y_\ell}^\vb$ 
is trivializable. Then $M_{Y_\ell,\l}^\vb$ is birational to 
$M_{Y_\ell}^\vb\times\PGl(r)$, 
so $M_{Y_\ell,\l}^\vb$ is an irreducible, $2nr$-dimensional, rational variety. 
\end{corollary}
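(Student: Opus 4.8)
\emph{Strategy.} The plan is to show that the forgetful morphism $f\colon M^\vb_{Y_\ell,\l}\to M^\vb_{Y_\ell}$, $(\cF,\th)\mt\cF$, realizes $M^\vb_{Y_\ell,\l}$ as a Zariski-locally trivial $\PGl(r)$-bundle over a dense open subset of $M^\vb_{Y_\ell}$, and then to read off the statement from Theorems \ref{thm:hirz} and \ref{thm:rtl}. First I record the set-theoretic picture. Any $\cF$ carrying a framing along $\l$ has $\cF_\l\cong\cO_\l^{\oplus r}$, so the image of $f$ is the open locus $U\subset M^\vb_{Y_\ell}$ where $\cF_\l$ is trivializable, which is dense by hypothesis. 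For a stable such $\cF$ one has $\Aut(\cF)=\mbb C^*$, acting on $\cF_\l$ by scalars, whence the equivalence classes of framings of $\cF$ along $\l$ form the set $\mathrm{Isom}(\cF_\l,\cO_\l^{\oplus r})/\mbb C^*\cong\Gl(r)/\mbb C^*=\PGl(r)$. Post-composing a framing by $\Gl(r)$ defines an action of $\Gl(r)$ on $M^\vb_{Y_\ell,\l}$ which descends to $\PGl(r)$ (scalars are absorbed by $\Aut(\cF)$), and this action is free and fibrewise transitive; hence $f\colon M^\vb_{Y_\ell,\l}\to U$ is a $\PGl(r)$-torsor. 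In particular $M^\vb_{Y_\ell,\l}$ is irreducible and $\dim M^\vb_{Y_\ell,\l}=\dim M^\vb_{Y_\ell}+\dim\PGl(r)=(2nr-r^2+1)+(r^2-1)=2nr$.

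Next I would produce a universal family over a dense open of $M^\vb_{Y_\ell}$; this is where the slice $S$ from the proof of \ref{thm:rtl} enters. That slice is birational to $M^o$, hence --- since $M^o$ is dense in the unique top-dimensional stratum $M_{\mbb L_{n,r}}$ and $\bar M^{L_c}_{Y_\ell}(r;0,n)=\overline{M_{\mbb L_{n,r}}}$ is irreducible by \ref{lm:LF}(ii) and \ref{thm:hirz}(ii),(iv) --- birational to $M^\vb_{Y_\ell}$ itself; shrinking, I may assume $S$ is isomorphic to a dense open $V\subset M^\vb_{Y_\ell}$. On $S\times Y_\ell$ one has the tautological extension $0\to\pi^*\mbb L_{n,r}\to\mathcal F\to\cal Q\to0$ underlying the construction of $S$, whose generic member is an $L_c$-stable vector bundle by \ref{thm:hirz}(iii),(iv) and whose classifying map $S\to M^\vb_{Y_\ell}$ is the above birational map; restricting $\mathcal F$ to the open subset on which $S\cong V$ yields a universal family on $V\times Y_\ell$, again denoted $\mathcal F$. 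Shrinking $V$ inside $U$, the hypothesis makes $h^0(\mathcal F_s|_\l)=r$ constant on $V$, so by Grauert's theorem $\mathcal V:=\pr_{V*}(\mathcal F|_{V\times\l})$ is locally free of rank $r$, compatible with base change, and the evaluation $\pr_V^*\mathcal V\to\mathcal F|_{V\times\l}$ is an isomorphism (being one fibrewise).

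Over $V$ the $\l$-framed moduli functor is then canonically the quotient by fibrewise scalars of the frame bundle $\mathrm{Isom}_V(\mathcal V,\cO_V^{\oplus r})$: a framing of $\mathcal F$ along $\l$ over a $V$-scheme $T$ is an isomorphism $\mathcal F|_{T\times\l}\cong\cO_{T\times\l}^{\oplus r}$, which by the previous step is an isomorphism of the pulled-back $\mathcal V$ with $\cO_T^{\oplus r}$, taken modulo $\Aut(\cF)=\mbb C^*$. Since the frame bundle of a vector bundle is Zariski-locally trivial, so is its quotient by the diagonal $\mbb C^*$; shrinking $V$ once more, $f^{-1}(V)\cong V\times\PGl(r)$ over $V$, i.e.\ $f$ admits a rational section $\si\colon M^\vb_{Y_\ell}\dashto M^\vb_{Y_\ell,\l}$. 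Consequently $(\cF,\bar g)\mt\bar g\cdot\si(\cF)$ is a dominant, generically injective rational map $M^\vb_{Y_\ell}\times\PGl(r)\dashto M^\vb_{Y_\ell,\l}$ (the $\PGl(r)$-action on the fibres of $f$ being free and transitive), hence --- the ground field having characteristic zero and both sides being irreducible of dimension $2nr$ --- a birational map. As $M^\vb_{Y_\ell}$ is irreducible and rational of dimension $2nr-r^2+1$ by \ref{thm:hirz} and \ref{thm:rtl}, and $\PGl(r)$ is irreducible and rational, $M^\vb_{Y_\ell,\l}$ is irreducible, rational, and $2nr$-dimensional, as claimed.

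\emph{Main obstacle.} The only genuinely non-formal point is the second paragraph: obtaining an honest universal family over a \emph{Zariski}-dense open of $M^\vb_{Y_\ell}$ rather than merely an \'etale-local one, which is exactly what the explicit slice $S$ of \ref{thm:rtl} supplies. Granting that, the remainder reduces to the standard fact that the frame bundle of a vector bundle trivializes Zariski-locally, together with the elementary description of framings of a stable sheaf.
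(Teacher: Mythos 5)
Your proposal is correct and follows essentially the same route as the paper: both use the slice $S$ from the proof of Theorem \ref{thm:rtl} to obtain a universal family over a dense open subset of $M_{Y_\ell}^\vb$, push it forward along $\l$ to get a rank $r$ locally free sheaf, trivialize it after shrinking, and conclude that the framed moduli space is birational to $M_{Y_\ell}^\vb\times\PGl(r)$. The only cosmetic difference is that the paper constructs the universal family explicitly as $\Ker(u)^\vee$ via the tautological section and the pairing \eqref{eq:tau}, whereas you invoke the tautological extension over $S$ directly --- the very point you correctly flag as the one non-formal step.
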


\begin{proof}
Consider the sheaf $\cal E$ \eqref{eq:E}, and the corresponding linear fibre space 
$\bar\zeta:E\to\cal H$. We denote by $t$ the tautological section of $\bar\zeta^*\cal E$ 
over $E$. Also, notice that $\tld{\cal Z}:=E\times_{\cal H}\cal Z\times_{\mbb P^1}Y$ 
is a subvariety of $E\times Y$, since $\cal Z\subset\cal H\times\mbb P^1$.  
We consider the following composition of homomorphisms: 
\begin{equation}\label{eq:tau}
\xymatrix@C=3em{
{(\pi\circ\pr_Y^{E\times Y})}^*\,\mbb L^\vee\ar[r]\ar[drr]_-u
&
{(\pi\circ\pr_Y^{E\times Y})}^*\,\mbb L^\vee\otimes\cO_{\tld{\cal Z}}
\ar[r]^-{\langle\cdot,t\rangle}_-{\text{pairing}}
&
{(\pi\circ\pr_{Y}^{E\times Y})}^*\pi_*\cO_\pi(1)\otimes\cO_{\tld{\cal Z}}
\ar[d]^-{\text{evaluation}}
\\&&
{(\pr_Y^{E\times Y})}^*\cO_\pi(1)\otimes\cO_{\tld{\cal Z}}.
}
\end{equation}
At generic $\bsymb{x}\in\Hilb^n_{\mbb P^1}$, $\bsymb{e}\in\cal E_{\bsymb{x}}$, 
$u$ is 
\\ \centerline{
$\pi^*\mbb L^\vee\surj
\ouset{i=1}{n}{\bigoplus}\,\mbb L_{x_i}^\vee\srel{\langle\cdot,t\rangle}{\lar}
\ouset{i=1}{n}{\bigoplus}\Gamma(\cO_{\pi^{-1}(x_i)}(1))\surj
\ouset{i=1}{n}{\bigoplus}\,\cO_{\pi^{-1}(x_i)}(1).$
} 
The pairing with $t$ is surjective over the locus $E'$ consisting of $\bsymb{e}\in E$, 
such that $t(\bsymb{e})$ has linearly independent components at each 
$x\in\bar\zeta(\bsymb{e})$. (Obviously, $E'$ is $G$-invariant.) 
Then $\bsymb{\cF}:=\Ker(u)^\vee|_{(E'\cap S)\times Y}$ is a locally free sheaf over 
$S'\times Y:=(E'\times Y)\cap(S\times Y)$. 
(The intersection is non-empty, as $G\cdot S\subset E$ is dense.) 
Since $S$ is birational to $M_{Y_\ell}^\vb$, $\bsymb{\cF}$ is a universal sheaf 
over $M'\times Y$, for some (non-empty) open subset $M'\subset M_{Y_\ell}^\vb$. 

After possibly shrinking $M'$, we may assume that $\cF_\l\cong\cO_\l^{\oplus r}$, for 
all $\cF\in M'$. Thus  $(\pr_{M'}^{M'\!\times\,\l})_*\,\bsymb{\cF}$ is locally free over 
$M'$, and, after shrinking $M'$ further, we may assume that this latter is trivializable 
over $M'$. Then the moduli space of vector bundles $\cF\in M'$ which are $\l$-framed 
is isomorphic to $M'\times\PGl(r)$. 
\end{proof}

\begin{remark}\label{rmk:2frame}
In \ref{thm:MM}, we will encounter two types of framings: 

\nit{\rm(i)} 
$\l=\pi^{-1}(o)$ is a fibre of $\pi$. Any $\cF\in M_{Y_\ell,\l}^\vb$ 
fits into $0\,{\to}\,\pi^*\pi_*\cF\srel{f}{\to}\cF{\to}\,\cal Q_\cF\,{\to}\,0$, 
so $\cF_\l\cong\cO_\l^{\oplus r}$, if $\Supp(\cal Q_\cF)$ is disjoint of $\l$. 

\nit{\rm(ii)} 
The second kind of framings is along a general $\l\in|\cO_\pi(1)|$. 
(In \ref{thm:MM}, we will need $\ell=1$.) 
We already observed in \ref{thm:hirz}(i) that such a $\l$-framed vector bundle 
is automatically $L_c$-semi-stable. 
\end{remark}

As an immediate application, we deduce the following (apparently new) statement.

\begin{corollary}\label{cor:rtl-Y}
The moduli spaces of framed sheaves with $c_1=0$ constructed in \cite{bbr} are rational.
\end{corollary}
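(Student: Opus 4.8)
The plan is to recognize the moduli spaces of \cite{bbr} as birational models of the framed moduli spaces $M_{Y_\ell,\l}^\vb$ appearing in Corollary \ref{cor:rtl-frameY}, and then to invoke that corollary together with Theorem \ref{thm:rtl}. First I would recall the setup of \cite{bbr}: one fixes a reduced, irreducible curve $\l\subset Y_\ell$ which is either a fibre $\pi^{-1}(o)$ of $\pi$ or a member of the linear system $|\cO_\pi(1)|$ --- precisely one of the two framing curves isolated in Remark \ref{rmk:2frame} --- and one considers torsion free sheaves $\cF$ on $Y_\ell$ with $c_1(\cF)=0$, $c_2(\cF)=n$, equipped with a trivialization $\cF_\l\cong\cO_\l^{\oplus r}$. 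By \cite{bbr} the resulting moduli space is smooth and irreducible of dimension $2nr$. Since a framed stable sheaf has no automorphisms beyond those absorbed by the framing (Definition \ref{def:frame}) and two framings of a stable $\cF$ differ by the $\PGl(r)$-action, this moduli space contains as a dense open subset the moduli space $M_{Y_\ell,\l}^\vb$ of $\l$-framed $L_c$-stable vector bundles; the complement is the locus of non-locally-free framed sheaves together with the (lower-dimensional) locus where the underlying bundle is not $L_c$-stable or not $\Lambda$-trivial.

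Next I would verify the hypothesis of Corollary \ref{cor:rtl-frameY}, namely that the restriction to $\l$ of the generic $\cF\in M_{Y_\ell}^\vb$ is trivializable. By Theorem \ref{thm:hirz}(i) the generic such $\cF$ satisfies $\cF_\Lambda\cong\cO_\Lambda^{\oplus r}$ and is trivial along a general member of $|\cO_\pi(1)|$; moreover $\cF_\fbr\cong\cO_\fbr^{\oplus r}$ over the generic fibre (Lemma \ref{lm:1}(i)), so by semicontinuity $\cF$ is trivial along $\pi^{-1}(o)$ for $o$ in a dense open subset of $\mbb P^1$. Hence the generic $\cF$ is trivializable along the generic curve of each of the two types of Remark \ref{rmk:2frame}. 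To pass to a \emph{fixed} framing curve $\l$, I would use that $\Aut(Y_\ell)$ acts transitively on the fibres of $\pi$ and with a dense orbit on the irreducible members of $|\cO_\pi(1)|$, and that it acts on $\bar M^{L_c}_{Y_\ell}(r;0,n)$ by pullback: pulling framed sheaves back along an automorphism identifies $M_{Y_\ell,\l}^\vb$ with the analogous space attached to any curve in the same orbit, so it is harmless to assume $\l$ generic. (Concretely, if every $\cF$ were non-trivial along the fixed $\l$, applying an automorphism carrying $\l$ to a curve along which a given $\cF$ is trivial would yield a contradiction; since $\bar M^{L_c}_{Y_\ell}(r;0,n)$ is irreducible by Theorem \ref{thm:hirz}(i)--(ii), the trivializable locus is then open and dense.)

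Granting the hypothesis, Corollary \ref{cor:rtl-frameY} gives that $M_{Y_\ell,\l}^\vb$ is birational to $M_{Y_\ell}^\vb\times\PGl(r)$. By Theorem \ref{thm:rtl}, together with the density of $M_{Y_\ell}^\vb=M^{L_c}_{Y_\ell}(r;0,n)^\vb$ in $\bar M^{L_c}_{Y_\ell}(r;0,n)$ (Theorem \ref{thm:hirz}(iv)), the variety $M_{Y_\ell}^\vb$ is rational, and $\PGl(r)$ is rational, so the product is rational; hence so is the moduli space of \cite{bbr}. The parameter ranges not covered by Theorem \ref{thm:rtl} --- namely $r=1$, or $r\ges2$ with $n<r$ --- are degenerate: there the framed moduli space is either empty or reduces to (an open subset of) a Hilbert scheme of points on the rational surface $Y_\ell$, hence is rational by inspection, and I would dispose of these in a sentence.

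The step I expect to require the most care is the comparison made in the first paragraph: identifying \cite{bbr}'s framing curve with one of the two curves of Remark \ref{rmk:2frame}, and checking that \cite{bbr}'s genericity and stability conventions are compatible with ours, so that the (open, dense) locus of framed \emph{vector bundles} with $L_c$-stable underlying bundle really is the variety $M_{Y_\ell,\l}^\vb$ on which Theorem \ref{thm:hirz} and Corollary \ref{cor:rtl-frameY} act. The principal point to confirm is that the generic framed torsion free sheaf in \cite{bbr} is locally free with $L_c$-stable, $\Lambda$-trivial underlying bundle --- this is where the machinery of Theorem \ref{thm:hirz} is brought to bear. Since rationality is insensitive to replacing a variety by a Zariski-locally-trivial torsor under $\mbb C^*$ or $\PGl(r)$ (Hilbert's Theorem~90), any residual discrepancy in the normalization of the framing does not affect the conclusion.
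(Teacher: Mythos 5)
Your proposal is correct and follows essentially the same route as the paper, whose entire proof is the one-line instruction to combine Corollary \ref{cor:rtl-frameY} with Remark \ref{rmk:2frame}; you have simply spelled out the identification of the framing curves of \cite{bbr} with the two types in Remark \ref{rmk:2frame}, the verification of the trivializability hypothesis via Theorem \ref{thm:hirz}, and the final appeal to Theorem \ref{thm:rtl}. The extra care you take (moving the framing curve by automorphisms of $Y_\ell$, disposing of the degenerate ranges $r=1$ and $n<r$) is consistent with, and a reasonable fleshing-out of, the paper's terse argument.
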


\begin{proof}
Use \ref{cor:rtl-frameY} and \ref{rmk:2frame}.
\end{proof}


\section{Application: %
stable vector bundles on $\mbb P^2$-bundles over $\mbb P^1$}{\label{sect:p21}}

Stable vector bundles on $\mbb P^2$ are studied in \cite{bh,hu,oss}. Here we give 
a monad theoretic construction of stable vector bundles $\cF$ with $c_1=0$ on 
$\mbb P^2$-fibre bundles over $\mbb P^1$. Let us remark that \cite{c-mr2} studies the 
moduli space of \emph{rank two} vector bundles on projective bundles over curves. 
If a vector bundle obtained this way has $c_1=0$, its $c_2$ is necessarily of the form 
$-(k^2u^2+2kluv)$, with $u,v$ as in \eqref{eq:h2y} below and $k,l\in\mbb Z$. Thus 
our work brings novelties even in the rank two case.

For two integers $0\les a\les b$, 
we consider 
$Y=Y_{a,b}
:=\mbb P(\cO_{\mbb P^1}\oplus\cO_{\mbb P^1}(-a)\oplus\cO_{\mbb P^1}(-b))$. 
The $3$-fold $Y$ admits the projection $\pi:Y\to\mbb P^1$ with fibres isomorphic 
to $\mbb P^2$. (We say that $Y$ is a $\mbb P^2$-fibre bundle over $\mbb P^1$.) 
Let $\fbr=\mbb P^2$ be the generic fibre of $\pi$. The relatively ample line bundle 
$\cO_\pi(1)$ on $Y$ is big, globally generated, $\pi$-ample (except for $a=b=0$, 
when it satisfies \ref{rmk:weak}), and holds:
\begin{equation}\label{eq:h2y}
\begin{array}{l}
H^2(Y;\mbb Z)=\mbb Z\cdot
\underbrace{[\cO_{\pi}(1)]}_{=:\,u}
\;\oplus\;
\mbb Z\cdot
\underbrace{[\cO_{\mbb P^1}(1)]}_{=:\,v}\,,
\quad\text{with}\quad u^3=(a+b)\cdot u^2v=a+b\,;
\\[1.5ex]
\kappa_\pi=-3u+(a+b)v\quad\text{and}\quad
\kappa_Y=-3u+(a+b-2)v.
\end{array}
\end{equation}
Here $\kappa_\pi$ and $\kappa_Y$ stand for the relative and the (absolute) 
canonical class of $Y$ respectively. The `exceptional line' 
$\Lambda:=\mbb P(\cO_{\mbb P^1}\oplus 0\oplus 0)\subset Y$ has the property 
that $\cO_\pi(1)\otimes\cO_\Lambda\cong\cO_\Lambda$. 

\subsection{Review of the monad construction on $\mbb P^2$} 
In \cite[section 6]{bh} is proved that any stable vector bundle $\cal V$ 
on $\mbb P^2$ is the cohomology of a certain monad on $\mbb P^2$. 
For completeness, we briefly recall some details. 
For a {\em semi-stable}\footnote{In \cite[Section 6]{bh} the authors assume that $\cal V$ 
is {\em stable}. However, one can easily check that the statements below are valid for 
$\cal V$ semi-stable. The reason is that, in {\it loc. cit.}, the authors consider also minimal 
$-2$-resolutions, which indeed require the stability of $\cal V$.}, rank $r$ vector bundle 
$\cal V$ on $\Phi$, with $c_1(\cal V)=0$ and $c_2(\cal V)=n$, the following hold: 
\begin{enumerate}
\item 
$\left\{\begin{array}{l}
\Gamma(\fbr,\cal V\otimes\cO_{\fbr}(-j))
=H^2(\fbr,\cal V\otimes\cO_{\fbr}(-j))=0
\\[1ex] 
\text{and}\quad\dim H^1(\fbr,\cal V\otimes\cO_{\fbr}(-j))=n, 
\end{array}\right.$ 
for $j=1,2$.\vskip1ex 

\item 
$\underset{l\ges 0}{\mbox{$\bigoplus$}}H^1\bigl(\fbr,\cal V\otimes\cO_\fbr(l-1)\bigr)$ 
is generated by $H^1(\fbr,\cal V\otimes\cO_\fbr(-1))$ over 
$\underset{l\ges 0}{\mbox{$\bigoplus$}}\Gamma\bigl(\fbr,\cO_\fbr(l)\bigr)$. 

\item The identity in 
$\End\bigl(\,H^1(\fbr,\cal V\otimes\mbb \cO_{\fbr}(-1))\,\bigr){\cong}
\Ext^1\bigl(\,H^1(\fbr,\cal V\otimes\cO_{\fbr}(-1))\otimes\cO_{\fbr}(1),\cal V\,\bigr),$ 
defines the {\em minimal $-1$-resolution} of $\cal V$ \cite[Section 2]{bh}: 
\begin{equation}\label{eq:Q}
0\to\cal V\to\cal Q_\Phi\to 
H^1(\fbr,\cal V\otimes\mbb \cO_{\fbr}(-1))\otimes\cO_{\fbr}(1)\to 0. 
\end{equation}
Similarly, we consider the minimal $-1$-resolution of $\cal V^\vee$,  and obtain 
the display of a (minimal) monad whose cohomology is $\cal V$: 
\begin{equation}\label{eq:O}
\scalebox{.8}{
\xymatrix@C=1.5em@R=1.1em{
H^1(\fbr,\cal V^\vee(-1))^\vee\otimes\cO_{\fbr}(-1)\;
\ar@{^(->}[r]\ar@{=}[d]&
\kern1ex\cal K_\fbr\ar@{->>}[r]\ar@{^(->}[d]&
\cal V\ar@{^(->}[d]
\\ 
\mbox{$
\underbrace{H^1(\fbr,\cal V^\vee(-1))^\vee\otimes\cO_{\fbr}(-1)}_{=:\;\cal A_\fbr}\;
$}
\ar@{^(->}[r]^-{{\rm\bf A}_\fbr}&
\cO_\Phi^{\oplus r+2n}
\ar@{->>}[r]\ar@{->>}[d]^-{{\rm\bf B}_\fbr}
&
\cal Q_\fbr\ar@{->>}[d] 
\\ 
&
\mbox{$
\underbrace{H^1(\fbr,\cal V(-1))\otimes\cO_{\fbr}(1)}_{=:\;\cal B_\fbr}
$}
\ar@{=}[r]
&
H^1(\fbr,\cal V(-1))\otimes\cO_{\fbr}(1)
}}
\end{equation}

\item If $\cal V$ on $\fbr$ is stable, then $h^1(\cal V)=n-r$, thus $n\ges r$.

\item The vector bundles whose restriction to a (straight) line $\l\subset\fbr$ 
is isomorphic to $\cO_\l^{\oplus r}$ form an open and dense subset of the 
moduli space of semi-stable vector bundles on $\fbr$, with $c_1=0,c_2=n$. 
(See \cite[Lemma 2.4.1]{hu} for the proof.)
\end{enumerate}

\subsection{The relative monad construction on $Y_{a,b}$} 
Our main result is the following:

\begin{theorem}\label{thm:p2-p1}
Let $\cF$ be a rank $r$ vector bundle on $Y$ whose Chern classes are 
\begin{equation}\label{eq:chern}
c_1(\cF)=0,\;c_2(\cF)=n\cdot u^2,\;c_3(\cF)=0,
\end{equation}
and which is semi-stable with respect to $L_c:=\cO_\pi(1)+c\cO_{\mbb P^1}(1)$, 
for $c> r(r-1)n(a+b)$. 

We assume that $\cF$ has the following properties: 
\begin{eqnarray}
\text{\rm(i)}&
\begin{minipage}[t]{0.8\textwidth}
The restriction of $\cF$ to a line $\l\cong\mbb P^1$ in the generic fibre 
$\fbr\cong\mbb P^2$ is trivial.
\end{minipage}\label{eq:restr}
\\ & 
\begin{minipage}[t]{0.8\textwidth}
The restriction $\cF_\Lambda:=\cF\otimes\cO_\Lambda$ to the exceptional 
line $\Lambda$ is trivial.
\end{minipage}
\label{eq:restrL}
\\[0ex] 
\text{\rm(ii)}&
\begin{minipage}[t]{0.8\textwidth}
$R^2\pi_*\bigl(\cF\otimes\cO_\pi(-2)\bigr)=0.$
\end{minipage}\label{eq:R2}
\\[0ex] 
&
\begin{minipage}[t]{0.8\textwidth}
$H^1\bigl(
Y,\cF\otimes\cO_\pi(-1)\otimes\pi^*\cO_{\mbb P^1}(-1)
\bigr)=0,$ 
\end{minipage}\label{eq:F}
\\ 
&
\begin{minipage}[t]{0.8\textwidth}
$H^1\bigl(
Y,\cF\otimes\cO_\pi(-2)\otimes\pi^*\cO_{\mbb P^1}(a+b-1)
\bigr)=0.$
\end{minipage}\label{eq:Fv}
\end{eqnarray}
Then $\cF$ can be written as the cohomology of a monad of the form 
\begin{equation}\label{eq:AOB}
\cO_\pi(-1)^{\oplus n}
\srel{\rm\bf A}{\lar}
\cO_Y^{\oplus r+2n}
\srel{\rm\bf B}{\lar}
\cO_\pi(1)^{\oplus n}.
\end{equation}
If $\cF_\fbr$ is stable, then this monad is uniquely defined, up to the action of 
\begin{equation}\label{eq:G}
G:=\Gl(n){\times}\Gl(r+2n){\times}\Gl(n):\quad 
(g_1,g,g_2)\times({\bf A},{\bf B}):=(g{\bf A}g_1^{-1},g_2{\bf B}g^{-1}).
\end{equation} 
The isotropy group of this action is $\mbb C^*$, diagonally embedded in $G$. 
\end{theorem}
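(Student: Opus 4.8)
The plan is to run the fibrewise monad construction of \cite{bh} recalled in the previous subsection in families over $\mbb P^1$, using the cohomology and base-change theorem to promote the fibrewise vector spaces $H^1(\fbr,\cal V(-1))$, etc., to genuine vector bundles on $\mbb P^1$, and then using the triviality hypotheses \eqref{eq:restr}, \eqref{eq:restrL} to show these bundles on $\mbb P^1$ are in fact \emph{trivial}, which is what collapses the relative monad to the constant-coefficient shape \eqref{eq:AOB}.

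First I would set $\cal V:=\cF_\fbr$, which is semi-stable on $\mbb P^2$ with $c_1=0$, $c_2=n$ by Theorem \ref{thm:main} (using $c>r(r-1)n(a+b)=c_\cF$, computed from \eqref{eq:h2y}); hypothesis \eqref{eq:restr} says $\cal V$ lies in the dense open locus of item (5) above. Next I would form the relative analogue of the minimal $-1$-resolution \eqref{eq:Q}: consider $\cal R^1:=R^1\pi_*(\cF\otimes\cO_\pi(-1))$ on $\mbb P^1$; hypotheses \eqref{eq:R2} and \eqref{eq:F} (together with the vanishing $\Gamma(\fbr,\cal V(-1))=H^2(\fbr,\cal V(-1))=0$ from item (1)) let me apply cohomology and base change fibrewise to conclude that $R^0\pi_*(\cF\otimes\cO_\pi(-1))=0$, that forming $R^1$ commutes with base change, and that $\cal R^1$ is locally free of rank $n$ on $\mbb P^1$. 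The natural evaluation map then gives a relative display
$$
0\to\cF\to\cal Q_Y\to\pi^*\cal R^1\otimes\cO_\pi(1)\to 0,
$$
where $\cal Q_Y$ restricts fibrewise to $\cal Q_\fbr$. Running the same argument for $\cF^\vee$ (this is where \eqref{eq:Fv} enters, being the twist dual to \eqref{eq:F} after accounting for $\kappa_Y$ in \eqref{eq:h2y}) produces a locally free rank-$n$ sheaf $\cal A^1$ on $\mbb P^1$ and, splicing the two resolutions as in \eqref{eq:O}, a relative monad
$$
\pi^*(\cal A^1)^\vee\otimes\cO_\pi(-1)\;\srel{\mbf A}{\lar}\;\cE\;\srel{\mbf B}{\lar}\;\pi^*\cal R^1\otimes\cO_\pi(1),
$$
with cohomology $\cF$, where $\cE$ is the pullback of a rank $r+2n$ bundle on $\mbb P^1$ (the relative version of $\cO_\fbr^{\oplus r+2n}$, i.e. $\pi^*\pi_*(\cdots)$ of the middle term).

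The crucial step is to identify $(\cal A^1)^\vee$, $\cal R^1$ and the middle bundle on $\mbb P^1$ explicitly. Restricting the monad to the exceptional line $\Lambda$, on which $\cO_\pi(1)$ is trivial by the last sentence of \eqref{eq:h2y}, and using $\cF_\Lambda\cong\cO_\Lambda^{\oplus r}$ from \eqref{eq:restrL}, I would read off that the restriction of each of the three $\mbb P^1$-bundles to $\Lambda\cong\mbb P^1$ — hence, since $\pi|_\Lambda$ is an isomorphism onto the base, the bundle itself — is trivial: $\cal R^1\cong\cO_{\mbb P^1}^{\oplus n}$, $(\cal A^1)^\vee\cong\cO_{\mbb P^1}^{\oplus n}$, and the middle bundle $\cong\cO_{\mbb P^1}^{\oplus r+2n}$. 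Substituting these back collapses the relative monad to \eqref{eq:AOB}, with $\mbf A\in\Hom(\cO_\pi(-1)^{\oplus n},\cO_Y^{\oplus r+2n})$ and $\mbf B$ analogously. For the uniqueness statement when $\cal V$ is stable: the fibrewise monad \eqref{eq:O} is unique up to $\Gl(n)\times\Gl(r+2n)\times\Gl(n)$ acting as in \eqref{eq:G} by the minimality in \cite{bh}, and the relative construction above is canonical once one fixes trivializations of the three trivial bundles on $\mbb P^1$; two such trivializations differ by an element of $\Gl(n)\times\Gl(r+2n)\times\Gl(n)$ (constant on $\mbb P^1$, since $\Aut(\cO_{\mbb P^1}^{\oplus k})=\Gl(k)$). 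The isotropy is the scalars acting trivially on $(\mbf A,\mbf B)$: writing $(g\mbf A g_1^{-1},g_2\mbf B g^{-1})=(\mbf A,\mbf B)$ and using that, for stable $\cal V$, $\mbf A_\fbr$ is injective with $\mbf B_\fbr$ surjective (so the monad is non-degenerate), forces $g_1,g_2$ to be the scalar determined by $g$, and then stability of $\cF$ — giving $\Gamma(\End\cF)=\mbb C$ — pins $g$ itself to a scalar, so the isotropy is exactly the diagonal $\mbb C^*$.

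The main obstacle I anticipate is verifying that cohomology and base change genuinely applies to give \emph{local freeness} of $\cal R^1$ and $\cal A^1$ over \emph{all} of $\mbb P^1$, not just generically: the fibrewise dimensions $h^1(\fbr_t,\cF_t(-1))$ must be constant in $t$, including over the special fibres $\pi^{-1}(0)$, etc., where $\cO_\pi(1)|_{\fbr_t}$ may behave differently. Hypotheses \eqref{eq:R2}–\eqref{eq:Fv} are designed precisely to force the neighbouring cohomology groups $R^0$ and $R^2$ to vanish, so that $R^1$ is locally free and base-change-compatible by the standard criterion; checking that \eqref{eq:Fv} is the correct dual twist — i.e. that Serre duality on the fibres $\mbb P^2$ together with the relative canonical class $\kappa_\pi=-3u+(a+b)v$ from \eqref{eq:h2y} converts the $\cF^\vee$-version of \eqref{eq:F} into exactly \eqref{eq:Fv} — is the one genuinely fiddly computation, and I would carry it out first since everything downstream depends on it.
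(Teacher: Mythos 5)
Your overall strategy --- relativizing the Barth--Hulek monad over $\mbb P^1$ and then showing the three bundles on the base are trivial --- is the same as the paper's, but two of the steps you treat as routine are exactly where the paper has to work, and as stated your argument for them does not go through.

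First, the triviality of the end terms. You propose to ``read off'' that $\cal R^1=R^1\pi_*(\cF\otimes\cO_\pi(-1))$ and $(\cal A^1)^\vee$ are trivial by restricting the monad to $\Lambda$ and using \eqref{eq:restrL}. This is not a valid deduction: a monad of vector bundles on $\mbb P^1$ with trivial cohomology does not determine (let alone trivialize) its end terms, and $\cF_\Lambda\cong\cO_\Lambda^{\oplus r}$ gives no control over the splitting type of a degree-zero, rank-$n$ bundle sitting at the ends. You have also misassigned the role of \eqref{eq:F} and \eqref{eq:Fv}: local freeness and base-change compatibility of the $R^1\pi_*$'s already follow from the vanishing of $\pi_*$ and $R^2\pi_*$ (semi-stability on the fibres plus \eqref{eq:R2}; this is lemma \ref{lm:coh}(i)--(ii) and needs neither \eqref{eq:F} nor \eqref{eq:Fv}). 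What \eqref{eq:F} and \eqref{eq:Fv} actually do is force the \emph{trivial} splitting type: $H^1(Y,\cF(-1)\otimes\pi^*\cO_{\mbb P^1}(-1))=\Gamma(\mbb P^1,\cO_{\mbb P^1}(-1)\otimes R^1\pi_*(\cF(-1)))=0$ bounds all summands by degree $\les 0$, and the total degree is $0$ by Grothendieck--Riemann--Roch, whence $R^1\pi_*(\cF(-1))\cong\cO_{\mbb P^1}^{\oplus n}$; Serre duality along the fibres converts \eqref{eq:Fv} into the analogous statement for $\cF^\vee$ (lemma \ref{lm:coh}(iii)). The restriction to $\Lambda$ is used in the paper only at the very end, to trivialize the \emph{middle} term once the end terms are already known to be trivial.

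Second, and more seriously, you assert that the middle term $\cE$ ``is the pullback of a rank $r+2n$ bundle on $\mbb P^1$.'' This is the crux of the whole theorem and cannot be built into the construction: splicing the two resolutions produces a vector bundle $\cal M$ on $Y$ (the middle of the display \eqref{eq:M}), and one must \emph{prove} that $\cal M\cong\pi^*\pi_*\cal M$. That requires showing $\cal M_{\phi_x}\cong\cO_{\phi_x}^{\oplus r+2n}$ for \emph{every} fibre $\phi_x$, not just the generic one, which the paper does via Horrocks' criterion after establishing $R^1\pi_*(\cal M\otimes\cO_\pi(l))=0$ for all $l\in\mbb Z$ and the local freeness of $\pi_*(\cal M(l))$, $R^2\pi_*(\cal M(l))$ (lemmas \ref{lm:M1}, \ref{lm:M2}). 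This is where the hypothesis \eqref{eq:restr} is actually consumed --- you list it but never use it. Your uniqueness and isotropy discussion is essentially correct and matches the paper's, but the proof of existence of the monad \eqref{eq:AOB} in your proposal has these two gaps, the second of which is the bulk of the argument.
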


\begin{remark}\label{rmk:hyp} 
Before starting the proof, we analyse the various conditions imposed on $\cF$.  

-- $\cO_\pi(\pm1)$ is trivial along $\L$, thus \eqref{eq:restrL} is necessary. 
Also, a simple diagram chasing in the display of \eqref{eq:AOB} yields  
\eqref{eq:R2}, \eqref{eq:F}, \eqref{eq:Fv}. 
Hence \eqref{eq:restrL} -- \eqref{eq:Fv} \emph{must be imposed}. 

-- \eqref{eq:R2} should be interpreted as a weak, $\pi$-relative semi-stability condition 
for $\cF$, because $R^2\pi_*\bigl(\cF\otimes\cO_\pi(-2)\bigr)$ is a torsion sheaf on 
$\mbb P^1$, anyway. Moreover, if $\cF$ is semi-stable on each fibre of $\pi$, 
then \eqref{eq:R2} is automatically satisfied. However, as we explained in the 
introduction, we avoid this requirement in order to enlarge the frame of \cite{ma0,ma,si}.

-- \eqref{eq:restr} is the only assumption imposed by technical reasons. 
(Is needed to control the middle term of the monad \eqref{eq:AOB}.) 
It should be viewed as a genericity condition for $\cF$. 
Indeed, the restriction to $\fbr$ of \textit{any} $L_c$-semi-stable vector bundle on $Y$ 
is $\cO_\fbr(1)$-semi-stable; our previous discussion (point (v) above) states that 
most semi-stable vector bundles on $\fbr$ are trivializable along $\l$.  
For $r=2$, the Grauert-M\"ullich theorem (see \cite[Chapter 3]{hl}) implies that 
\eqref{eq:restr} is automatically satisfied. 
\end{remark}

Throughout this section, for $x\in\mbb P^1$, we denote 
$\phi_x:=\pi^{-1}(x)\cong\mbb P^2$. When (hopefully) no confusion is possible, 
we write $\cF(-1):=\cF\otimes\cO_\pi(-1)$, 
and similarly for $\cF^\vee,\cal Q,\cal M$, \textit{etc}. 
First we clarify the rationale for \eqref{eq:F} and \eqref{eq:Fv}.

\begin{lemma}\label{lm:coh}
Assume that $\cF$ on $Y$ is semi-stable and satisfies \eqref{eq:chern}, \eqref{eq:R2}. 
Then hold:
\begin{enumerate}
\item For $l=-2,-1$, and $k\ges -2$ we have 
$\begin{array}[t]{cc}
\pi_*\bigl(\cF\otimes\cO_\pi(l)\bigr)=0,
&
R^2\pi_*\bigl(\cF\otimes\cO_\pi(k)\bigr)=0;
\\[1ex] 
\pi_*\bigl(\cF^\vee\otimes\cO_\pi(l)\bigr)=0,
&
R^2\pi_*\bigl(\cF^\vee\otimes\cO_\pi(k)\bigr)=0;
\end{array}$  

\item $R^1\pi_*\bigl(\cF\!\otimes\cO_\pi(-2)\bigr)$, 
$R^1\pi_*\bigl(\cF^\vee\!\otimes\cO_\pi(-2)\bigr)$ 
are locally free of rank $n$ and degree $-n(a+b)$; 

\nit $R^1\pi_*\bigl(\cF\!\otimes\cO_\pi(-1)\bigr)$, 
$R^1\pi_*\bigl(\cF^\vee\!\otimes\cO_\pi(-1)\bigr)$  
are locally free of rank $n$ and degree zero. 

\item Moreover, the following implications hold: 
$$
\begin{array}{l}
\eqref{eq:F}\quad\Longrightarrow\quad 
R^1\pi_*\bigl(\cF\otimes\cO_\pi(-1)\bigr)\cong\cO_{\mbb P^1}^{\oplus n},
\quad
R^1\pi_*\bigl(\cF^\vee\otimes\cO_\pi(-2)\bigr)\cong\cO_{\mbb P^1}(-a-b)^{\oplus n}
\\[1.5ex]
\eqref{eq:Fv}\quad\Longrightarrow\quad 
R^1\pi_*\bigl(\cF^\vee\otimes\cO_\pi(-1)\bigr)\cong\cO_{\mbb P^1}^{\oplus n},
\quad
R^1\pi_*\bigl(\cF\otimes\cO_\pi(-2)\bigr)\cong\cO_{\mbb P^1}(-a-b)^{\oplus n}.
\end{array}
$$

\item The natural homomorphism 
$
H^1\bigl(Y,\cF\!\otimes\!\cO_\pi(-1)\bigr)\otimes\pi_*\cO_\pi(l+1)
\!\to\! 
R^1\pi_*\bigl(\cF\!\otimes\!\cO_\pi(l)\bigr)
$ 
is surjective, for all $l\ges -1$. 
\end{enumerate}
\end{lemma}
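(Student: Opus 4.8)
The plan is to translate every assertion into a statement about the cohomology of $\cF$ and $\cF^\vee$ along the fibres of $\pi$, using three tools: semi-stability of the restriction to the generic fibre, cohomology and base change, and relative Serre duality for the smooth $\mbb P^2$-fibration $\pi$, whose relative dualizing sheaf is $\omega_{Y/\mbb P^1}=\cO_\pi(-3)\otimes\pi^*\cO_{\mbb P^1}(a+b)$ by \eqref{eq:h2y}. Since the bound $c>r(r-1)n(a+b)$ is exactly the constant $c_\cF$ of Theorem~\ref{thm:main} in our case (here $d_X=1$, $A^{d_X}L^{d_Y-d_X}=u^2v=1$, $[c_2(\cF)]_{AL}=nu^3=n(a+b)$), Theorem~\ref{thm:main} gives that $\cF_\fbr$, hence also $\cF^\vee_\fbr$, is semi-stable on $\fbr\cong\mbb P^2$. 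Therefore $H^0(\fbr,\cF_\fbr(l))=H^0(\fbr,\cF^\vee_\fbr(l))=0$ for $l<0$ (negative slope), and $H^2(\fbr,\cF_\fbr(k))\cong H^0(\fbr,\cF^\vee_\fbr(-k-3))^\vee=0$ for $k\ges-2$, and likewise for $\cF^\vee$. For the $\pi_*$-terms in (1): $\cF\otimes\cO_\pi(l)$ and $\cF^\vee\otimes\cO_\pi(l)$ are vector bundles on $Y$, so their $\pi_*$ is torsion free on $\mbb P^1$; being generically zero it vanishes, which is the assertion for $l=-1,-2$.

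For the $R^2$-terms in (1): hypothesis \eqref{eq:R2} together with surjectivity of base change for the \emph{top} direct image gives $H^2(\phi_x,\cF_{\phi_x}(-2))=0$ for every $x\in\mbb P^1$; since $\cO_\pi(k+2)$ restricts to a globally generated sheaf on each $\phi_x\cong\mbb P^2$ for $k\ges-2$, the sheaf $\cF_{\phi_x}(k)$ is a quotient of a sum of copies of $\cF_{\phi_x}(-2)$, so $H^2(\phi_x,\cF_{\phi_x}(k))=0$ for all $x$, hence $R^2\pi_*(\cF\otimes\cO_\pi(k))=0$ by base change and Nakayama. The analogous vanishing for $\cF^\vee$ amounts, via $H^2(\phi_x,\cF^\vee_{\phi_x}(k))\cong H^0(\phi_x,\cF_{\phi_x}(-k-3))^\vee$, to the $h^0$-vanishing of the next step; alternatively it follows from relative duality once the local freeness in (2) is known, since then $R^2\pi_*(\cF^\vee\otimes\cO_\pi(k))$ is dual to $\pi_*$ of a negative twist of $\cF$, which vanishes. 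For (2), the generic rank of $R^1\pi_*(\cF\otimes\cO_\pi(l))$ is $n$ for $l=-1,-2$ by Riemann--Roch on $\mbb P^2$ ($\chi(\fbr,\cF_\fbr(l))=-n$ together with $h^0=h^2=0$), and the degrees ($0$ for $l=-1$, $-n(a+b)$ for $l=-2$) come from Grothendieck--Riemann--Roch for $\pi$, using $c_1(\cF)=0$, $c_2(\cF)=nu^2$, $c_3(\cF)=0$ and the intersection numbers of \eqref{eq:h2y}; the corresponding statements for $\cF^\vee$ follow either by the same argument applied to $\cF^\vee$ (same Chern classes) or by relative duality, which identifies $R^1\pi_*(\cF^\vee\otimes\cO_\pi(l))$ with $\bigl(R^1\pi_*(\cF\otimes\cO_\pi(-l-3))\bigr)^\vee\otimes\cO_{\mbb P^1}(a+b)$. \emph{The main obstacle} is the local freeness itself: equivalently, $h^1(\phi_x,\cF_{\phi_x}(l))=n$ for \emph{all} $x$ and $l=-1,-2$, equivalently $h^0(\phi_x,\cF_{\phi_x}(l))=0$ for all $x$ (the $h^2$-part being handled above), equivalently, by pushing forward $0\to\cF\otimes\cO_\pi(l)\otimes\pi^*\cO_{\mbb P^1}(-1)\to\cF\otimes\cO_\pi(l)\to\cF_{\phi_x}(l)\to0$ and using $\pi_*(\cF\otimes\cO_\pi(l))=0$, the absence of torsion of $R^1\pi_*(\cF\otimes\cO_\pi(l))$ at $x$. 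This cannot be settled on the fibre $\phi_x$ alone, because $\cF_{\phi_x}$ need not be semi-stable for special $x$; I would rule out a section of $\cF_{\phi_x}(l)$ by combining the global $L_c$-semi-stability of $\cF$ on $Y$ (a suitable minimal twist of such a section would produce a sub-line-bundle $\cO_\pi(-l)\otimes\pi^*\cO_{\mbb P^1}(-m)\hookrightarrow\cF$ of $L_c$-slope $-l(a+b+c)-m$, positive unless $m$ is large) with hypothesis \eqref{eq:R2} and relative duality to control the twists that can occur; closing this argument is the delicate point.

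Granting (1) and (2), part (3) is routine. By Leray, since $R^0\pi_*=R^2\pi_*=0$ on the relevant twists, one has $H^1\bigl(Y,\cF\otimes\cO_\pi(-1)\otimes\pi^*\cO_{\mbb P^1}(-1)\bigr)\cong H^0\bigl(\mbb P^1,R^1\pi_*(\cF\otimes\cO_\pi(-1))\otimes\cO_{\mbb P^1}(-1)\bigr)$, so \eqref{eq:F} forces every summand $\cO_{\mbb P^1}(d_i)$ of the degree-zero, rank-$n$ bundle $R^1\pi_*(\cF\otimes\cO_\pi(-1))$ to satisfy $d_i\les0$, hence all $d_i=0$ and $R^1\pi_*(\cF\otimes\cO_\pi(-1))\cong\cO_{\mbb P^1}^{\oplus n}$; relative duality then yields $R^1\pi_*(\cF^\vee\otimes\cO_\pi(-2))\cong\cO_{\mbb P^1}(-a-b)^{\oplus n}$. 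Symmetrically, \eqref{eq:Fv} reads $H^0\bigl(\mbb P^1,R^1\pi_*(\cF\otimes\cO_\pi(-2))\otimes\cO_{\mbb P^1}(a+b-1)\bigr)=0$, which pins the degree-$(-n(a+b))$ bundle $R^1\pi_*(\cF\otimes\cO_\pi(-2))$ to $\cO_{\mbb P^1}(-a-b)^{\oplus n}$, and relative duality gives $R^1\pi_*(\cF^\vee\otimes\cO_\pi(-1))\cong\cO_{\mbb P^1}^{\oplus n}$.

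Finally, for (4): the homomorphism factors through the relative multiplication $R^1\pi_*(\cF\otimes\cO_\pi(-1))\otimes_{\cO_{\mbb P^1}}\pi_*\cO_\pi(l+1)\to R^1\pi_*(\cF\otimes\cO_\pi(l))$, whose fibre over $x$ is the map $H^1(\phi_x,\cF_{\phi_x}(-1))\otimes\Gamma(\phi_x,\cO_{\phi_x}(l+1))\to H^1(\phi_x,\cF_{\phi_x}(l))$. Once the vanishings $h^0=h^2=0$ for $\cF_{\phi_x}(-1)$ and $\cF_{\phi_x}(-2)$ are known on \emph{all} fibres (step (2)), $\cF_{\phi_x}$ satisfies the Beilinson-type generation property recalled as item~(2) of the monad review in Section~\ref{sect:p21}, so this fibrewise map is surjective for every $x$ and every $l\ges-1$; surjectivity of the sheaf map, hence of the composite with the evaluation of global sections, then follows by Nakayama. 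Thus, apart from the local-freeness point flagged above, the whole statement reduces to Leray, base change, relative duality and Riemann--Roch.
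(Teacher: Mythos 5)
Your proposal follows the same overall route as the paper's proof: Leray, cohomology and base change over the curve, fibrewise Serre duality on $\phi_x\cong\mbb P^2$, and Grothendieck--Riemann--Roch for the ranks and degrees. Parts (iii) and (iv) are handled correctly (for (iv) the paper pushes forward the Koszul resolution $\cO_\pi(-2)\to\cO_\pi(-1)^{\oplus2}\to\cO_Y\to\cO_L\to0$ of the curve $L$ cut out by two general divisors in $|\cO_\pi(1)|$ instead of invoking the fibrewise Castelnuovo--Mumford generation statement, but the two arguments amount to the same thing), and your derivation of $R^2\pi_*(\cF\otimes\cO_\pi(k))=0$ from \eqref{eq:R2} via global generation of $\cO_\pi(k+2)$ is a legitimate variant of the paper's induction on divisors $D\in|\cO_\pi(1)|$. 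The problem is the point you yourself flag and do not close: the local freeness of $R^1\pi_*(\cF\otimes\cO_\pi(l))$ for $l=-1,-2$, equivalently the vanishing of $H^0(\phi_x,\cF_{\phi_x}(l))$ on \emph{every} fibre. This is not peripheral: as you correctly observe, the second half of part (i) (the $R^2$-vanishing for $\cF^\vee$) reduces to exactly the same fibrewise $h^0$-vanishing via Serre duality on $\phi_x$, so as submitted the proposal proves neither that half of (i) nor the local freeness in (ii). A sketched strategy (``combine global $L_c$-semi-stability with \eqref{eq:R2} and relative duality to control the twists'') accompanied by the admission that ``closing this argument is the delicate point'' is a gap, not a proof.

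For comparison, the paper closes this step in the opposite logical direction. It asserts that $\pi_*\cG=R^2\pi_*\cG=0$ already forces $R^1\pi_*\cG$ to be locally free, and then applies cohomology and base change twice: $R^2\pi_*\cG=0$ being (trivially) locally free gives that $(R^1\pi_*\cG)_x\to H^1(\phi_x,\cG_x)$ is an isomorphism for all $x$, and the local freeness of $R^1\pi_*\cG$ then gives that $(\pi_*\cG)_x\to\Gamma(\phi_x,\cG_x)$ is an isomorphism, whence $\Gamma(\phi_x,\cG_x)=0$ for all $x$; the vanishing $H^2(\phi_x,\cF^\vee_{\phi_x}(l))=\Gamma(\phi_x,\cF_{\phi_x}(-3-l))^\vee=0$ and Grauert's criterion then finish (i), and (ii) follows. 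So the paper deduces the fibrewise $h^0$-vanishing \emph{from} local freeness rather than trying to prove local freeness by ruling out sections on special fibres, and it makes no appeal to the global semi-stability of $\cF$ beyond $\pi_*\cG=0$. Your instinct that something delicate happens here is not unreasonable --- the paper's one-line justification of the local-freeness claim is itself very terse, and on a curve the torsion of $R^1\pi_*\cG$ is exactly measured by the jumping of $h^0(\phi_x,\cG_x)$ --- but the attempt, as it stands, leaves the lemma unproved at precisely this point.
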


\begin{proof} (i) 
The restrictions of $\cF,\cF^\vee$ to the generic fibre $\fbr$ are semi-stable of degree 
zero, so $\pi_*\bigl(\cF(l)\bigr)=\pi_*\bigl(\cF^\vee(l)\bigr)=0$, for $l=-2,-1$, 
because they are both torsion free sheaves. 

A generic divisor $D\in|\cO_\pi(1)|$ is isomorphic to the Hirzebruch surface 
$\mbb P\bigl(\cO_{\mbb P^1}(-a)\oplus\cO_{\mbb P^1}(-b)\bigr)$. 
The push-forward by $\pi$ of $0\to\cF(-2)\to\cF(-1)\to\cF_D(-1)\to 0$ yields 
\\ \centerline{
$\underbrace{R^2\pi_*\bigl(\cF(-2)\bigr)}_{=0}\to 
R^2\pi_*\bigl(\cF(-1)\bigr)\to
\underbrace{R^2\pi_*\bigl(\cF_D(-1)\bigr)}_{=0}$. 
}
The same argument shows $R^2\pi_*\bigl(\cF(k)\bigr)=0$, for all $k\ges -2$. 

By repeatedly applying the semi-continuity theorem \cite[Ch. III, Theorem 12.11]{ha}, 
we are going to prove that $R^2\pi_*\bigl(\cF^\vee(l)\bigr)=0$, for $l=-2,-1$. 
For shorthand, let $\cG:=\cF\otimes\cO_\pi(l)$. 
$$\kern-2ex
\begin{array}{rl}
\biggl.
\begin{array}{l}
(R^2\pi_*\cG)_x
\!\to\! 
H^2(\phi_x,\cG_x)\text{ is surjective because }\dim_{\mbb P^1}Y=2,
\\[1ex]
R^2\pi_*\cG=0\text{ is locally free} 
\end{array}
\kern-1ex\biggr\}
&
\kern-.5ex\Rightarrow\kern-.5ex
\begin{array}{l}
(R^1\pi_*\cG)_x
\to H^1(\phi_x,\cG_x)
\\ 
\text{is isomorphism, }\forall\,x\in\mbb P^1.
\end{array}
\\[3ex] 
\begin{array}{l}
(R^1\pi_*\cG)_x
\to H^1(\phi_x,\cG_x)\text{ is surjective},
\\[1ex]
R^1\pi_*\cG\text{ is locally free because }\pi_*\cG=R^2\pi_*\cG=0 
\end{array}
\kern-1ex\biggr\}
&
\kern-.5ex\Rightarrow\kern-.5ex
\begin{array}{l}
(\pi_*\cG)_x
\to\Gamma(\phi_x,\cG_x)
\\ 
\text{is isomorphism, }\forall\,x\in\mbb P^1.
\end{array}
\end{array}
$$
For $l=-2,-1$, we deduce that 
$
H^2(\phi_x,\cF^\vee(l))\cong\Gamma(\phi_x,\cF(\underbrace{-3-l}_{=-2,-1}))=0,
\;\forall\,x\in\mbb P^1.
$ 
Grauert's criterion \cite[Ch. III, Corollary 12.9]{ha} implies now that 
$R^2\pi_*\bigl(\cF^\vee(l)\bigr)=0$.\smallskip  


\nit(ii) 
Since $\pi_*\bigl(\cF(l)\bigr), R^2\pi_*\bigl(\cF(l)\bigr)$ and 
$\pi_*\bigl(\cF^\vee(l)\bigr), R^2\pi_*\bigl(\cF^\vee(l)\bigr)$ vanish for $l=-2,-1$, it 
follows that $R^1\pi_*\bigl(\cF(l)\bigr), R^1\pi_*\bigl(\cF^\vee(l)\bigr)$ are locally free. 
Their rank and degree are given by the Grothendieck-Riemann-Roch formula. 
\smallskip 

\nit(iii) The assumption \eqref{eq:F} implies \\ 
\centerline{$
0=H^1\bigl(
Y,\cF\otimes\cO_\pi(-1)\otimes\pi^*\cO_{\mbb P^1}(-1)
\bigr)
=
\Gamma\bigl(\,
\mbb P^1,\cO_{\mbb P^1}(-1)\otimes R^1\pi_*\bigl(\cF(-1)\bigr)
\,\bigr),
$} 
so the locally free sheaf $R^1\pi_*\bigl(\cF(-1)\bigr)$ decomposes into a direct sum 
of line bundles $\cO_{\mbb P^1}(l)$, with $l\les0$. But their degrees add up to zero, 
so $R^1\pi_*\bigl(\cF(-1)\bigr)\cong\cO_{\mbb P^1}^{\oplus n}$. 
Using Serre duality, we find 
\\ \centerline{
$0=H^2\bigl(
Y, \cF^\vee(-2)\otimes\cO_{\mbb P^1}(a+b-1)
\bigr)
=
H^1\bigl(\,
\mbb P^1, 
\cO_{\mbb P^1}(a+b-1)\otimes R^1\pi_*(\cF^\vee(-2))
\,\bigr).$
} 
A similar argument as before implies 
$R^1\pi^*(\cF^\vee(-2))\cong\cO_{\mbb P^1}(-a-b)^{\oplus n}$.
\smallskip 


\nit(iv)
Let $L\cong\mbb P^1$ be the intersection of two general divisors in 
$|\cO_\pi(1)|$. The push-forward of the sequence 
$0\to\cO_\pi(-2)\to\cO_\pi(-1)^{\oplus 2}\to\cO_Y\to\cO_L\to0$ 
tensored by $\cF(l)$, with $l\ges 0$, yields 
$R^1\pi_*\bigl(\cF(l-1)\bigr)\otimes\Gamma(Y,\cO_\pi(1))
\to
R^1\pi_*\bigl(\cF(l)\bigr)$ 
is surjective. The claim follows because $\Sym^{l+1}\Gamma(Y,\cO_\pi(1))$ 
generates $\pi_*\cO_\pi(l+1)$. 
\end{proof}

Now we consider the extensions 
\begin{eqnarray}
\label{eq:extq} 
0\to\cF\to\cal Q\to 
H^1(Y,\cF\otimes\cO_\pi(-1))\otimes\cO_\pi(1)\to 0, 
\\ 
\label{eq:extk} 
0\to H^1(Y,\cF^\vee\otimes\cO_\pi(-1))^\vee\otimes\cO_\pi(-1)
\to\cal K\to\cF\to 0
\end{eqnarray}
corresponding respectively to the identity elements  
\begin{equation}\label{eq:bone}
\begin{array}{r}
\bone\in\End\bigl(\,H^1(Y,\cF\otimes\cO_\pi(-1))\,\bigr)\cong 
\Ext^1\bigl(\,
H^1(Y,\cF\otimes\cO_\pi(-1))\otimes\cO_\pi(1),\cF
\,\bigr),
\\ 
\bone\in\End\bigl(\,H^1(Y,\cF^\vee\otimes\cO_\pi(-1))\,\bigr)
\cong
\Ext^1\bigl(\,
\cF,
H^1(Y,\cF^\vee\otimes\cO_\pi(-1))^\vee\otimes\cO_\pi(-1)
\,\bigr).
\end{array}
\end{equation}
(We remark that these extensions exist for any vector bundle $\cF$.) 

\begin{lemma}
The extensions \eqref{eq:extq} and \eqref{eq:extk} can be uniquely completed 
to a monad over $Y$ whose cohomology is $\cF$, and whose restriction to the 
generic fibre of $\pi$ is the monad \eqref{eq:O}:
\begin{equation}\label{eq:M}
\scalebox{.8}{
\xymatrix@C=1.5em@R-=1.1em{
H^1(\cF^\vee(-1))^\vee\otimes\cO_\pi(-1)\;
\ar@{^(->}[r]\ar@{=}[d]&
\cal K_{\phantom{f}}\ar@{->>}[r]\ar@<-3pt>@{^(->}[d]&
\cF\ar@{^(->}[d]
\\ 
\mbox{$
\underbrace{H^1(\cF^\vee(-1))^\vee\otimes\cO_\pi(-1)}_{=:\;\cal A}\;
$}
\ar@{^(->}[r]^-{{\rm\bf A}}&
\cal M\ar@{->>}[r]\ar@{->>}[d]^-{{\rm\bf B}}&
\cal Q\ar@{->>}[d]
\\ 
&
\mbox{$
\underbrace{H^1(\cF(-1))\otimes\cO_\pi(1)}_{=:\;\cal B}
$}
\ar@{=}[r]
&H^1(\cF(-1))\otimes\cO_\pi(1)
}}
\end{equation}
\end{lemma}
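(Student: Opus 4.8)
The plan is to obtain $\cal M$ as an extension of $\cal B$ by $\cal K$ whose class lifts that of the extension \eqref{eq:extq}, the point being that the relevant obstruction and indeterminacy groups on $Y$ vanish. First I would pin down the outer terms of the monad. By Lemma \ref{lm:coh}(i),(iii) we have $\pi_*\bigl(\cF\otimes\cO_\pi(-1)\bigr)=\pi_*\bigl(\cF^\vee\otimes\cO_\pi(-1)\bigr)=0$ and $R^1\pi_*\bigl(\cF\otimes\cO_\pi(-1)\bigr)\cong R^1\pi_*\bigl(\cF^\vee\otimes\cO_\pi(-1)\bigr)\cong\cO_{\mbb P^1}^{\oplus n}$, so the Leray spectral sequence gives $H^1\bigl(Y,\cF\otimes\cO_\pi(-1)\bigr)\cong H^1\bigl(Y,\cF^\vee\otimes\cO_\pi(-1)\bigr)\cong\mbb C^n$, hence $\cal A\cong\cO_\pi(-1)^{\oplus n}$ and $\cal B\cong\cO_\pi(1)^{\oplus n}$. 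Then $\Ext^i_Y(\cal B,\cal A)\cong H^i\bigl(Y,\cO_\pi(-2)\bigr)^{\oplus n^2}$, and since $R^q\pi_*\cO_\pi(-2)=0$ for $q=0,1,2$ (each of $H^0,H^1,H^2$ of $\cO_{\mbb P^2}(-2)$ vanishes) the Leray spectral sequence yields $H^i\bigl(Y,\cO_\pi(-2)\bigr)=0$ for all $i$; in particular $\Ext^1_Y(\cal B,\cal A)=\Ext^2_Y(\cal B,\cal A)=0$.

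Applying $\Ext^\bullet_Y(\cal B,-)$ to \eqref{eq:extk}, the class $\eta\in\Ext^1_Y(\cal B,\cF)$ of \eqref{eq:extq} lifts (its obstruction lies in $\Ext^2_Y(\cal B,\cal A)=0$) to a unique class $\tld\eta\in\Ext^1_Y(\cal B,\cal K)$ (the indeterminacy lies in $\Ext^1_Y(\cal B,\cal A)=0$). I would then let $\cal M$ be the extension $0\to\cal K\to\cal M\to\cal B\to0$ with class $\tld\eta$; pushing it out along the surjection $\cal K\twoheadrightarrow\cal K/\cal A=\cF$ of \eqref{eq:extk} produces an extension of $\cal B$ by $\cF$ of class $\eta$, that is \eqref{eq:extq}, so $\cal M/\cal A\cong\cal Q$, which is the second row of \eqref{eq:M}; together with the first row \eqref{eq:extk} and the third column \eqref{eq:extq} this yields the whole display. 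Since $\cal A,\cal B,\cF$ are locally free, so are $\cal K,\cal Q,\cal M$ (an extension of a locally free sheaf by a locally free sheaf is locally free), hence $\mathbf A\colon\cal A\hookrightarrow\cal K\hookrightarrow\cal M$ is a subbundle inclusion with locally free cokernel $\cal Q$, $\mathbf B\colon\cal M\twoheadrightarrow\cal M/\cal K=\cal B$ is fibrewise surjective, $\mathbf B\circ\mathbf A=0$, and the cohomology of the monad $(\mathbf A,\mathbf B)$ is $\cal K/\cal A=\cF$. Uniqueness follows the same way: any completion must have $\Ker(\mathbf B')=\cal K$ and $\cal M'/\cal A\cong\cal Q$, so the class of $\cal M'$ in $\Ext^1_Y(\cal B,\cal K)$ maps to $\eta$ and hence equals $\tld\eta$, forcing $\cal M'\cong\cal M$ compatibly with the display.

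For the restriction to the generic fibre $\fbr\cong\mbb P^2$ I would invoke cohomology and base change. By Lemma \ref{lm:coh} the sheaves $R^1\pi_*\bigl(\cF\otimes\cO_\pi(-1)\bigr)$ and $R^1\pi_*\bigl(\cF^\vee\otimes\cO_\pi(-1)\bigr)$ are locally free and (because the relevant $R^2\pi_*$ vanish) their formation commutes with restriction to a fibre, so writing $\cal V:=\cF_\fbr$ one gets $\cal A|_\fbr\cong H^1\bigl(\fbr,\cal V^\vee(-1)\bigr)^\vee\otimes\cO_\fbr(-1)$ and $\cal B|_\fbr\cong H^1\bigl(\fbr,\cal V(-1)\bigr)\otimes\cO_\fbr(1)$; moreover, since $\pi_*\bigl(\cF(-1)\bigr)=\pi_*\bigl(\cF^\vee(-1)\bigr)=0$, the isomorphisms $\Ext^1\cong\End$ of \eqref{eq:bone} are compatible with base change, so the identity classes restrict to identity classes and \eqref{eq:extq}$|_\fbr$, \eqref{eq:extk}$|_\fbr$ are exactly the minimal $-1$-resolutions \eqref{eq:Q} of $\cal V$ and of $\cal V^\vee$. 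Thus \eqref{eq:M}$|_\fbr$ is a completion of these resolutions to a monad for $\cal V$, and the fibrewise version of the uniqueness argument (using $\Ext^1_\fbr(\cal B|_\fbr,\cal A|_\fbr)=\Ext^2_\fbr(\cal B|_\fbr,\cal A|_\fbr)=H^\bullet\bigl(\mbb P^2,\cO(-2)\bigr)^{\oplus n^2}=0$) identifies it with the monad \eqref{eq:O} of \cite{bh}; in particular $\cal M|_\fbr\cong\cO_\fbr^{\oplus r+2n}$.

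I expect the main obstacle to be the base-change bookkeeping in this last step: one must check carefully that the canonical (identity) extension classes and the comparison maps between $H^1(Y,-)$ and $H^1(\fbr,-|_\fbr)$ are genuinely compatible, so that the restricted display is literally the monad \eqref{eq:O} rather than merely abstractly isomorphic to it. Existence and uniqueness of the global completion are, by contrast, essentially forced by the single vanishing $H^\bullet\bigl(Y,\cO_\pi(-2)\bigr)=0$.
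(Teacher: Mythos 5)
Your proof is correct and is essentially the paper's argument in mirror image: the paper obtains $\cal M$ by lifting the class of \eqref{eq:extk} along the isomorphism $\Ext^1(\cal Q,\cal A)\srel{\cong}{\to}\Ext^1(\cF,\cal A)$ coming from applying $\Hom(-,\cal A)$ to \eqref{eq:extq}, whereas you lift the class of \eqref{eq:extq} along $\Ext^1(\cal B,\cal K)\srel{\cong}{\to}\Ext^1(\cal B,\cF)$; both hinge on the same vanishing $\Ext^1(\cal B,\cal A)=\Ext^2(\cal B,\cal A)=H^\bullet(Y,\cO_\pi(-2))^{\oplus n^2}=0$, which you (unlike the paper) spell out. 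The base-change verification for the restriction to the generic fibre, which the paper leaves implicit, is handled adequately.
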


\begin{proof}
Indeed, the middle entry $\cal M$ exists and is uniquely defined, because 
the left- and rightmost entries of the following exact sequence vanish: \\ 
\centerline{$
\underbrace{\Ext^1(\cal B,\cal A)}_{=\,0}
\to 
\Ext^1(\cal Q,\cal A)
\to
\Ext^1(\cF,\cal A)
\to
\underbrace{\Ext^2(\cal B,\cal A)}_{=\,0}.
$} 
The restriction of \eqref{eq:M} to the generic fibre $\fbr$ is the monad \eqref{eq:O}, 
because \eqref{eq:extq} and \eqref{eq:extk} restrict to the corresponding extensions 
\eqref{eq:Q} for $\cal V:=\cF\otimes\cO_\fbr$ and $\cal V^\vee$ respectively. 
\end{proof}

Next we study the cohomological properties of the vector bundle $\cal M$ appearing 
in \eqref{eq:M}. 

\begin{lemma}\label{lm:M1}
Let $\cF$ be a vector bundle on $Y$ satisfying \eqref{eq:restr} -- \eqref{eq:Fv}. 
Then holds: 
\\ \centerline{
$R^1\pi_*\bigl(\cal M\otimes\cO_\pi(l)\bigr)=0,\;\forall\,l\in\mbb Z$.
}
\end{lemma}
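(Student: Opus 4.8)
The plan is to prove the vanishing first in the range $l\ges -1$ by a short monad chase, and then deduce the range $l\les -2$ by applying the same argument to the dual monad together with relative Serre duality.

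\medskip
\textbf{Reduction to $l\ges -1$.} First I would check that $\cF^\vee$ again satisfies \eqref{eq:restr}--\eqref{eq:Fv}: $\cF^\vee_\l$ and $\cF^\vee_\L$ are trivial; the hypothesis \eqref{eq:R2} for $\cF^\vee$ is the vanishing $R^2\pi_*(\cF^\vee(-2))=0$ already recorded in Lemma \ref{lm:coh}(i); and \eqref{eq:F}, \eqref{eq:Fv} for $\cF^\vee$ follow from Lemma \ref{lm:coh}(iii) for $\cF$, e.g. $H^1(Y,\cF^\vee(-1)\otimes\pi^*\cO_{\mbb P^1}(-1))=H^0\bigl(\mbb P^1,\cO_{\mbb P^1}(-1)\otimes R^1\pi_*(\cF^\vee(-1))\bigr)=H^0(\mbb P^1,\cO_{\mbb P^1}(-1)^{\oplus n})=0$. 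Dualizing the display \eqref{eq:M} yields a monad of exactly the same shape for $\cF^\vee$, with middle term $\cal M^\vee$, with $\cal B^\vee$ (a sum of copies of $\cO_\pi(-1)$) in the role of $\cal A$, and whose $(-1)$-resolution is $\cal K^\vee$ — the latter being the dual of \eqref{eq:extk}, hence the extension \eqref{eq:extq} of $\cF^\vee$ attached to $\bone$. Since by \eqref{eq:h2y} $\omega_{Y/\mbb P^1}\cong\cO_\pi(-3)\otimes\pi^*\cO_{\mbb P^1}(a+b)$, relative Serre duality and the projection formula give $R^1\pi_*(\cal M(l))\cong\bigl(R^1\pi_*(\cal M^\vee(-l-3))\otimes\cO_{\mbb P^1}(a+b)\bigr)^\vee$; as $-l-3\ges -1$ when $l\les -2$, the case $l\les -2$ for $\cF$ follows from the case $m\ges -1$ for $\cF^\vee$. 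So it suffices to prove $R^1\pi_*(\cal M(l))=0$ for all $l\ges -1$.

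\medskip
\textbf{From $\cal M$ to $\cal Q$.} Tensoring the middle row $0\to\cal A\to\cal M\to\cal Q\to 0$ of \eqref{eq:M} by $\cO_\pi(l)$ and pushing forward, note that $\cal A(l)$ is a direct sum of copies of $\cO_\pi(l-1)$, so $R^1\pi_*(\cal A(l))=0$ for \emph{every} $l$ (fibrewise $H^1(\mbb P^2,\cO(l-1))=0$). Hence $R^1\pi_*(\cal M(l))$ injects into $R^1\pi_*(\cal Q(l))$, and it is enough to prove
\[
R^1\pi_*(\cal Q(l))=0\qquad\text{for all }l\ges -1.
\]

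\medskip
\textbf{Vanishing for $\cal Q$.} I would argue by induction on $l$. For the base case $l=-1$: by \eqref{eq:bone}, $\cal Q$ is the extension \eqref{eq:extq} attached to $\bone\in\End(H^1(\cF(-1)))$. Tensoring \eqref{eq:extq} by $\cO_\pi(-1)$ gives $0\to\cF(-1)\to\cal Q(-1)\to H^1(\cF(-1))\otimes\cO_Y\to 0$; pushing forward, the connecting map $H^1(\cF(-1))\otimes\cO_{\mbb P^1}\to R^1\pi_*(\cF(-1))$ is, on global sections, contraction with the extension class, i.e. the identity of $H^1(\cF(-1))$. Since $R^1\pi_*(\cF(-1))\cong\cO_{\mbb P^1}^{\oplus n}$ by Lemma \ref{lm:coh}(iii) (this is where \eqref{eq:F} enters), a map $\cO_{\mbb P^1}^{\oplus n}\to\cO_{\mbb P^1}^{\oplus n}$ that is an isomorphism on $H^0$ is an isomorphism, so $R^1\pi_*(\cal Q(-1))=0$. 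Next I would record $R^2\pi_*(\cal Q(l))=0$ for $l\ges -2$, from pushing forward $0\to\cF(l)\to\cal Q(l)\to H^1(\cF(-1))\otimes\cO_\pi(l+1)\to 0$ using $R^2\pi_*(\cF(l))=0$ for $l\ges -2$ (Lemma \ref{lm:coh}(i)) and $R^2\pi_*\cO_\pi(l+1)=0$ for $l\ges -3$. For the inductive step, choose two general $D_1,D_2\in|\cO_\pi(1)|$ whose intersection $L$ is a section of $\pi$, and tensor the relative Koszul complex $0\to\cO_\pi(-2)\to\cO_\pi(-1)^{\oplus 2}\to\cO_Y\to\cO_L\to 0$ by the locally free $\cal Q(l)$; splitting into two short exact sequences and using $R^i\pi_*(\cal Q(l)|_L)=0$ for $i\ges 1$ (as $\pi|_L$ is an isomorphism) shows $R^1\pi_*(\cal Q(l))$ is a subquotient of $R^1\pi_*(\cal Q(l-1))^{\oplus 2}$ and $R^2\pi_*(\cal Q(l-2))$, both of which vanish for $l\ges 0$ by the inductive hypothesis and the previous observation. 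This completes the induction, and with the first two paragraphs, the proof.

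\medskip
The only genuinely delicate point is the base case: identifying the connecting map in the Leray sequence with the identity, which forces one to keep careful track of how the defining class $\bone$ of $\cal Q$ transforms under $\pi_*$ — and is precisely why the normalizations \eqref{eq:F}, \eqref{eq:Fv}, which make $R^1\pi_*(\cF(-1))$ and $R^1\pi_*(\cF^\vee(-1))$ \emph{trivial}, are imposed. Everything else is a routine diagram chase with the cohomology facts of Lemma \ref{lm:coh}.
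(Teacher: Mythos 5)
Your proof is essentially correct, and for the hard range $l\les -2$ it takes a genuinely different route from the paper. For $l\ges -1$ the two arguments coincide in substance: you reduce from $\cal M$ to $\cal Q$ via the middle row of \eqref{eq:M} exactly as the paper does, and your induction on $l$ using the Koszul complex of $L=D_1\cap D_2$ is precisely the mechanism by which the paper proves Lemma \ref{lm:coh}(iv), which it then quotes to kill $R^1\pi_*(\cal Q(l))$ in one stroke; your explicit identification of the connecting map at $l=-1$ with the identity of $H^1(\cF(-1))$ is the correct base case and correctly isolates where \eqref{eq:F} enters. For $l\les -2$, however, the paper restricts $\cal K$ and $\cal M$ to a generic $D\in|\cO_\pi(1)|$, uses \eqref{eq:restr} to get $\pi_*(\cal M_D(l))=0$, descends to the single case $R^1\pi_*(\cal M(-2))=0$, and settles that via the column $\cal K\subset\cal M\surj\cal B$; you instead exploit the self-duality of the monad construction and relative Serre duality. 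Your route is conceptually cleaner and avoids \eqref{eq:restr} in this half of the argument, but it buys this at the price of having to verify that $\cF^\vee$ again satisfies the hypotheses and that the dual display is the display of $\cF^\vee$ — both of which you do plausibly.

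The one step you must shore up is the displayed duality isomorphism $R^1\pi_*(\cal M(l))\cong\bigl(R^1\pi_*(\cal M^\vee(-l-3))\otimes\cO_{\mbb P^1}(a+b)\bigr)^\vee$. For coherent (not a priori locally free) higher direct images this is not literally true: Grothendieck duality identifies $R\pi_*(\cal M(l)^\vee\otimes\kappa_\pi[2])$ with the derived dual of $R\pi_*(\cal M(l))$, and the resulting spectral sequence shows that the vanishing of $R^1\pi_*(\cal M^\vee(-l-3))$ alone only forces $\cal Hom\bigl(R^1\pi_*(\cal M(l)),\cO_{\mbb P^1}\bigr)=0$, i.e.\ that $R^1\pi_*(\cal M(l))$ is torsion. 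To kill the torsion you also need $R^2\pi_*(\cal M^\vee(-l-3))=0$ (so that $\cal Ext^1\bigl(R^1\pi_*(\cal M(l)),\cO_{\mbb P^1}\bigr)$ vanishes as well); fortunately this holds for $-l-3\ges-1$ by the same chase you already perform for $R^2\pi_*(\cal Q(\cdot))$, applied to the dual display. Equivalently, and perhaps more simply, you can argue fibrewise: your $l\ges-1$ result for $\cF^\vee$, together with the $R^2$-vanishing just mentioned and cohomology-and-base-change, gives $H^1(\phi_x,\cal M^\vee(-l-3)_x)=0$ for all $x$, whence $H^1(\phi_x,\cal M(l)_x)=0$ by Serre duality on the fibre, and Grauert's criterion then yields $R^1\pi_*(\cal M(l))=0$. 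With either repair the argument is complete.
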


\begin{proof}
The last column of \eqref{eq:M} and lemma \ref{lm:coh}(iv) imply 
$R^1\pi_*\bigl(\cal Q(l)\bigr)=0$, $\forall\, l\ges -1$. The middle 
horizontal sequence in \eqref{eq:M} immediately yields the conclusion for $l\ges -1$. 

The case $l\les -2$ is treated in several steps. We consider $D\in|\cO_{\pi}(1)|$ generic.

\nit\textit{Step 1}\quad 
For $l\les-1$, the upper horizontal sequence yields 
$\pi_*(\cal K_D(l))\srel{\cong}{\to}\pi_*(\cF_D(l)).$ 
By \eqref{eq:restr}, $\cF_D$ is trivial along the general fibre of $D\,{\to}\,\mbb P^1$, 
so $\pi_*(\cal K_D(l))\cong\pi_*(\cF_D(l)){=}\,0$.

\nit\textit{Step 2}\quad 
For $l\les-2$, the middle vertical exact sequence yields 
$\pi_*(\cal K_D(l))\srel{\cong}{\to}
\pi_*(\cal M_D(l)),$ 
so $\pi_*(\cal M_D(l))=0$. 

\nit\textit{Step 3}\quad
After twisting $0\to\cO_\pi(-1)\to\cO_Y\to\cO_D\to0$ by $\cal M(l)$, with $l\les-2$, 
we deduce 
$0=\pi_*(\cal M_D(l))\to R^1\pi_*(\cal M(l-1))\to R^1\pi_*(\cal M(l)),\;\forall\,l\les-2.$ 
Hence it suffices to prove the vanishing of $R^1\pi_*(\cal M(-2))$. 

\nit\textit{Step 4}\quad
The upper sequence yields 
\\ \centerline{
$0\to R^1\pi_*(\cal K(-2))\to 
R^1\pi_*(\cF(-2))\to 
H^1(Y,\cF^\vee(-1))^\vee\otimes R^2\pi_*\cO_\pi(-3).$ 
} 
The rightmost arrow is injective, because the dual homomorphism \\ 
\centerline{ 
$H^1(Y,\cF^\vee(-1))\otimes\cO_{\mbb P^1}(a+b)
\to
R^1\pi_*(\cF^\vee(-1))\otimes\cO_{\mbb P^1}(a+b)$
}
is surjective by lemma \ref{lm:coh}(iv). 
(Notice that $R^1\pi_*(\cF(-2))^\vee
=R^1\pi_*(\cF^\vee(-1))\otimes\cO_{\mbb P^1}(a+b)$ holds because 
$\pi_*(\cF(-2))=R^2\pi_*(\cF(-2))=0$. This is unclear for $\cF(l)$, $l\les-3$.) 
We deduce that $R^1\pi_*(\cal K(-2))=0$. 

\nit\textit{Step 5}\quad
Finally, the middle vertical sequence implies $R^1\pi_*(\cal M(-2))=0$. 
\end{proof}

\begin{lemma}\label{lm:M2}
Let $\cF$ be a vector bundle on $Y$ satisfying \eqref{eq:restr} -- \eqref{eq:Fv}. 
Then $\pi_*(\cal M\otimes\cO_\pi(l))$ and $R^2\pi_*(\cal M\otimes\cO_\pi(l))$ 
are locally free for all $l\in\mbb Z$. 
\end{lemma}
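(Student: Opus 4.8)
The proof splits according to the two sheaves. For $\pi_*(\cal M\otimes\cO_\pi(l))$ one needs essentially nothing: $\cal M\otimes\cO_\pi(l)$ is a vector bundle on the \emph{irreducible} variety $Y$ and $\pi$ is dominant, so $\pi_*(\cal M\otimes\cO_\pi(l))$ is a torsion free sheaf on the smooth curve $\mbb P^1$ --- a local section annihilated by a non-zero $f\in\cO_{\mbb P^1}$ would be annihilated by the non-zero function $\pi^*f$ on $Y$ --- hence locally free. Thus the only substantial point is the local freeness of $R^2\pi_*(\cal M\otimes\cO_\pi(l))$.

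For this I would invoke relative Serre duality for $\pi$, which is smooth and proper of relative dimension two with relative dualizing sheaf $\omega_\pi=\cO_\pi(-3)\otimes\pi^*\cO_{\mbb P^1}(a+b)$ (compare $\kappa_\pi$ in \eqref{eq:h2y}). The duality isomorphism $R\pi_*\bigl(\cal M(l)^\vee\otimes\omega_\pi\bigr)[2]\cong R\mathcal{H}om_{\mbb P^1}\bigl(R\pi_*\cal M(l),\cO_{\mbb P^1}\bigr)$, together with the vanishing $R^1\pi_*\cal M(l)=0$ from lemma \ref{lm:M1} (which lets one split off the degree-$2$ part of $R\pi_*\cal M(l)$ in $D^b(\mbb P^1)$), yields after comparing cohomology sheaves an isomorphism $R^1\pi_*\bigl(\cal M(l)^\vee\otimes\omega_\pi\bigr)\cong\mathcal{E}xt^1_{\mbb P^1}\bigl(R^2\pi_*\cal M(l),\cO_{\mbb P^1}\bigr)$. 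Since on a smooth curve a coherent sheaf is locally free exactly when this $\mathcal{E}xt^1$ vanishes, and since $\cal M(l)^\vee\otimes\omega_\pi\cong\cal M^\vee(-l-3)\otimes\pi^*\cO_{\mbb P^1}(a+b)$, the lemma is reduced to proving that $R^1\pi_*\bigl(\cal M^\vee(m)\bigr)=0$ for every $m\in\mbb Z$.

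To obtain this, I would dualize the monad \eqref{eq:M}: its $\cO_Y$-dual is again a monad of the shape $\cO_\pi(-1)^{\oplus n}\to\cO_Y^{\oplus r+2n}\to\cO_\pi(1)^{\oplus n}$, with cohomology $\cF^\vee$ and middle term $\cal M^\vee$, and by the uniqueness of the middle term in the construction of \eqref{eq:M} this is precisely the monad attached to $\cF^\vee$. It then suffices to check that $\cF^\vee$ again satisfies \eqref{eq:restr}--\eqref{eq:Fv} (and is $L_c$-semi-stable with the same numerical data $r,n$, which it is): conditions \eqref{eq:restr} and \eqref{eq:restrL} are self-dual; \eqref{eq:R2} for $\cF^\vee$ is the ``$\cF^\vee$'' half of lemma \ref{lm:coh}(i); and \eqref{eq:F}, \eqref{eq:Fv} for $\cF^\vee$ follow from Serre duality on $Y$ (using $\omega_Y=\cO_\pi(-3)\otimes\pi^*\cO_{\mbb P^1}(a+b-2)$), rewriting the relevant $H^1$ as an $H^2$ which one computes by the Leray spectral sequence via lemma \ref{lm:coh}(i),(iii) for $\cF$. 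Lemma \ref{lm:M1} applied to $\cF^\vee$ then gives $R^1\pi_*(\cal M^\vee(m))=0$ for all $m$, and the proof is finished.

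The hard part is organizational rather than conceptual. First, one must pin down the relative-duality identification $R^1\pi_*(\cal M(l)^\vee\otimes\omega_\pi)\cong\mathcal{E}xt^1_{\mbb P^1}(R^2\pi_*\cal M(l),\cO_{\mbb P^1})$ --- this is exactly where the hypothesis $R^1\pi_*\cal M(l)=0$ is used; concretely one may work over an affine piece of $\mbb P^1$, represent $R\pi_*\cal M(l)$ by a short complex of vector bundles and compare it with its $\cO$-dual, the vanishing $R^1=0$ being what forces the cokernel in the top degree to be a subbundle cokernel. Second, one has to carry out the routine but slightly lengthy verification that $\cF^\vee$ inherits \eqref{eq:restr}--\eqref{eq:Fv}; once that is in place, the reduction to the already-proved lemma \ref{lm:M1} is immediate.
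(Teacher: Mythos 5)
Your proposal is correct, but it takes a genuinely different route from the paper's on the substantive half. For $\pi_*(\cal M\otimes\cO_\pi(l))$ your torsion-freeness-on-a-curve argument is fine and is actually more economical than what the paper does. For $R^2\pi_*(\cal M\otimes\cO_\pi(l))$ the paper argues directly from the display \eqref{eq:M}: for $l\ges-1$ the last column and the middle row give $R^2\pi_*(\cal M(l))=0$, while for $l\les-2$ the first row and the middle column give $\pi_*(\cal M(l))=0$; combined with $R^1\pi_*(\cal M(l))=0$ from \ref{lm:M1} and cohomology-and-base-change over $\mbb P^1$, the remaining direct image is locally free in each range. You instead pass through relative Serre duality, identifying the obstruction to local freeness of $R^2\pi_*(\cal M(l))$ (its local $\Ext^1$ into $\cO_{\mbb P^1}$, which is where the splitting of $R\pi_*\cal M(l)$ granted by $R^1\pi_*(\cal M(l))=0$ and by $\Ext^{3}=0$ on a curve enters) with $R^1\pi_*\bigl(\cal M^\vee(-l-3)\bigr)\otimes\cO_{\mbb P^1}(a+b)$, and then apply \ref{lm:M1} to $\cF^\vee$ after checking that the dual of \eqref{eq:M} is the monad of $\cF^\vee$ and that $\cF^\vee$ inherits \eqref{eq:restr}--\eqref{eq:Fv}; that verification does go through, via \ref{lm:coh}(i),(iii) and Serre duality exactly as you indicate. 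Both arguments are sound. The paper's is shorter and more elementary, and in passing records the sharper vanishings $R^2\pi_*(\cal M(l))=0$ for $l\ges-1$ and $\pi_*(\cal M(l))=0$ for $l\les-2$; yours buys the (independently useful) identification of $\cal M^\vee$ as the middle term of the monad of $\cF^\vee$, at the cost of the duality bookkeeping you acknowledge.
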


\begin{proof}
For $l\ges -1$, the last column and the middle line of \eqref{eq:M}, imply that 
$R^2\pi_*(\cal M(l))=0$. Since $R^1\pi_*(\cal M(l))=0$, it follows that 
$\pi_*(\cal M(l))$ is locally free. 

On the other hand, for $l\les -2$, the first line and the middle column in \eqref{eq:M} 
imply that $\pi_*(\cal M(l))=0$, so $R^2\pi_*(\cal M(l))$ is locally free again. 
\end{proof}

\begin{proof}(of theorem \ref{thm:p2-p1})
All that remains to prove is that $\cal M\cong\cO_Y^{\oplus r+2n}$. 
We do this in two steps.\smallskip  

\nit\textit{Step 1}\quad 
First we prove that $\cal M\cong\pi^*\pi_*\cal M$. 
Indeed, \cite[Ch. III, Theorem 12.11]{ha} yields  
$$
\biggl.
\begin{array}{l}
{R^2\pi_*(\cal M(l))}_x\to H^2(\phi_x,\cal M(l))
\text{ is surjective}
\\ 
R^2\pi_*(\cal M(l))\text{ is locally free}
\end{array}
\biggr\}
\Rightarrow
\begin{array}{l}
{R^1\pi_*(\cal M(l))}_x\to H^1(\phi_x,\cal M(l))
\\ 
\text{is an isomorphism, }\forall\,x\in\mbb P^1,
\end{array}
$$
thus $H^1(\phi_x,\cal M(l))=0$ for all $x\in\mbb P^1$ and $l\in\mbb Z$. 
Horrocks' criterion \cite[Lemma 1, pp.334]{bh} implies that the restriction 
of $\cal M$ to each fibre of $\pi$ splits into a direct sum of line bundles. 

The restriction of \eqref{eq:M} to  $\fbr$ is 
the monad \eqref{eq:O}, so $\cal M_\fbr\cong\cO_\fbr^{\oplus r+2n}$. 
Then $\cal M(-1)$ splits fibrewise, and its direct image under 
$\pi$ vanishes. (It is simultaneously a torsion and torsion-free sheaf.) 
As before, $\,{\pi_*(\cal M(-1))}_x\to\Gamma(\phi_x,\cal M(-1))\,$ 
is an isomorphism, so $\Gamma(\phi_x,\cal M(-1))=0$ and the degrees 
of the direct summands of $\cal M_{\phi_x}$ are less or equal to zero. 
As the (total) degree of $\cal M$ is zero, we have 
$\cal M_{\phi_x}\cong\cO_{\phi_x}^{\oplus r+2n}$, for all $x\in\mbb P^1$, 
so the natural homomorphism $\pi^*\pi_*\cal M\to\cal M$ is an isomorphism. 

\nit\textit{Step 2}\quad Let us denote $\cal S:=\pi_*\cal M$. The restriction 
of \eqref{eq:M} to the exceptional line $\L\cong\mbb P^1$ is a monad over $\mbb P^1$, 
whose middle entry is $\cal S$ and all the other entries are trivial vector bundles. 
It follows that $\cal S$ is itself trivial. This finishes the proof of the existence of 
the monad \eqref{eq:AOB}. 

Now we assume that $\cF_\fbr$ is stable. Then \cite[Remark, pp. 332]{bh} implies 
that the restriction of the monad to $\fbr$, whose cohomology is $\cF_\fbr$, 
is uniquely defined up to the $G$-action. Thus the same statement holds for 
$\cF$. Furthermore, an element $(g_1,g,g_2)\in G$ in the isotropy group of the 
action induces an automorphism of $\cF$. As $\cF_\fbr$ is stable, this 
automorphism is the multiplication by a scalar $\veps$, so 
$(g_1,g,g_2)=(\veps\bone_r,\veps\bone_{r+2n},\veps\bone_r)$. 
\end{proof}

Let $\textbf{H}$ be the affine space underlying 
$\Hom(\mbb C^{n},\mbb C^{r+2n}){\otimes}\,\Gamma(\cO_\pi(1))$, and define: 
\begin{eqnarray}
\bar{\bf V}{:=}\,
\bigl\{({\bf A},^{\sf t}\kern-1pt{\bf B})\,{\in}\textbf{H}^2
\,\bigl|\,
{\bf A}\text{ injective},\kern1ex{\bf B}\text{ surjective, }
{\bf B}{\bf A}=0
\bigr.\bigr\},
\label{eq:bV} 
\\
{\bf V}{:=}\,
\bigl\{({\bf A},^{\sf t}\kern-1pt{\bf B})\,{\in}\bar{\bf V}
\,\bigl|\,
{\bf A}_y\text{ injective,}\,\forall\,y\in Y
\bigr.\bigr\}.
\label{eq:V}
\end{eqnarray}
The group $G=[\Gl(n){\times}\Gl(r+2n){\times}\Gl(n)]/\mbb C^*$ 
acts on the affine variety 
$\{({\bf A},^{\sf t}\kern-1pt{\bf B})\,{\in}\textbf{H}^2\,\bigl|\,{\bf B}{\bf A}=0\}$, 
and $\bar{\bf V},\bf V$ are $G$-invariant open subsets.

\begin{corollary}
For $n\,{\ges}\,r$ and $c>r(r-1)n(a+b)$, the moduli space 
\\ \centerline{$\bar M_Y^\vb{=}\bar M_{Y_{a,b}}^\vb{=}
\bar M_{Y_{a,b}}^{L_c}\bigl(r;0,n[\cO_\pi(1)]^2,0\bigr)^\vb$} 
of $L_c$-semi-stable vector bundles on $\kern-1ptY_{a,b}\kern-1pt$ satisfying 
\eqref{eq:restr}\kern0.5pt--\kern1.5pt\eqref{eq:Fv} is the quotient 
of an open subset\footnote{Indeed, the extensions \eqref{eq:extq},\eqref{eq:extk} 
satisfy \eqref{eq:bone}.} of $\bf V$ by the action \eqref{eq:G} of $G$. 
For $\cF\in\bar M_Y^\vb$, holds 
\begin{equation}\label{eq:m}
\chi(End(\cF))=1-m,\quad\text{with}\quad m:=2(1+a+b)nr-r^2+1.
\end{equation}
In particular, if $\bar M_Y^\vb$ is non-empty, then its dimension is at least $m$. 
\end{corollary}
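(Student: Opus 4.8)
The first assertion is a direct consequence of Theorem \ref{thm:p2-p1}. The plan is to let ${\bf V}^\circ\subset{\bf V}$ be the $G$-invariant open locus on which the cohomology $\cF$ of the monad \eqref{eq:AOB} is an $L_c$-stable vector bundle with $\cF_\fbr$ stable and satisfying \eqref{eq:restr}--\eqref{eq:Fv}; all these are open conditions, by openness of stability and semicontinuity of cohomology (cf. Remark \ref{rmk:hyp}). Taking cohomology over the tautological family on ${\bf V}^\circ$ gives a flat family of such bundles, hence a $G$-invariant morphism ${\bf V}^\circ\to\bar M_Y^\vb$; by Theorem \ref{thm:p2-p1} it is surjective onto the dense open part of $\bar M_Y^\vb$ and its fibres are precisely the $G$-orbits, the stabilizer being the diagonal $\mbb C^*$, which acts trivially on bundles. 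By the universal property of the moduli space (equivalently, a GIT argument of quot-scheme type) this identifies $\bar M_Y^\vb$ with the geometric quotient ${\bf V}^\circ\invq G$; functoriality of the assignment bundle $\mapsto$ monad is guaranteed by the footnote observation that \eqref{eq:extq}, \eqref{eq:extk} canonically realise the identity classes \eqref{eq:bone}.

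For \eqref{eq:m} I would compute $\chi(\End(\cF))$ by Hirzebruch--Riemann--Roch on the threefold $Y$. From $c_1(\cF)=0$, $c_2(\cF)=nu^2$, $c_3(\cF)=0$ one gets $\ch(\cF)=\ch(\cF^\vee)=r-nu^2$, hence $\ch(\End(\cF))=\ch(\cF)\ch(\cF^\vee)=(r-nu^2)^2=r^2-2rnu^2$ in $H^\ast(Y;\mbb Q)$, the degree-$8$ term vanishing as $\dim Y=3$. Therefore
\[
\chi(\End(\cF))=r^2\!\int_Y\mathrm{td}_3(Y)-2rn\!\int_Y u^2\cdot\mathrm{td}_1(Y).
\]
Now $\int_Y\mathrm{td}_3(Y)=\chi(\cO_Y)=1$ because $\pi$ is a $\mbb P^2$-bundle over $\mbb P^1$, so $R\pi_\ast\cO_Y=\cO_{\mbb P^1}$; and $\mathrm{td}_1(Y)=-\tfrac12\kappa_Y=\tfrac32 u-\tfrac12(a+b-2)v$, so by the intersection numbers in \eqref{eq:h2y} ($u^3=a+b$, $u^2v=1$, $v^2=0$) one finds $\int_Y u^2\cdot\mathrm{td}_1(Y)=1+a+b$. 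Hence $\chi(\End(\cF))=r^2-2rn(1+a+b)=1-m$.

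For the dimension bound, at a stable point $[\cF]\in\bar M_Y^\vb$ deformation theory gives $\dim_{[\cF]}\bar M_Y^\vb\ge\ext^1(\cF,\cF)-\ext^2(\cF,\cF)=h^1(\End(\cF))-h^2(\End(\cF))$. The key input is the vanishing $h^3(\End(\cF))=0$: by Serre duality this equals $h^0(Y,\End(\cF)\otimes\cO_Y(\kappa_Y))$, and since $\kappa_Y$ restricts to $\cO_\fbr(-3)$ on the generic fibre $\fbr\cong\mbb P^2$ while $\cF_\fbr$ is semistable of degree $0$ (Theorem \ref{thm:main}, as $c>r(r-1)n(a+b)$ is the bound $c_\cF$ there), the sheaf $\End(\cF)\otimes\cO_Y(\kappa_Y)$ restricts to $\End(\cF_\fbr)(-3)$, a semistable sheaf of negative slope with no global sections; thus $\pi_\ast\bigl(\End(\cF)\otimes\cO_Y(\kappa_Y)\bigr)$ is torsion-free on $\mbb P^1$ and generically zero, hence zero. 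Combined with $h^0(\End(\cF))\ge1$ and $\chi(\End(\cF))=1-m$, this yields $h^1-h^2=h^0-h^3-\chi\ge 1-0-(1-m)=m$, so $\dim_{[\cF]}\bar M_Y^\vb\ge m$; since $\bar M_Y^\vb$ is assumed non-empty and — by an argument parallel to Theorem \ref{thm:hirz}(iii), comparing deformation dimensions of $\cF$ and of its possible de-semistabilising subsheaves — its $L_c$-stable locus is dense, the bound $\dim\bar M_Y^\vb\ge m$ follows.

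I expect the obstacle to be twofold. The genuinely non-formal ingredient is the vanishing $h^3(\End(\cF))=0$, which rests on the relative semistability from Part 1 (Theorem \ref{thm:main}); without it one only gets $\dim\ge m-h^3$. The remaining difficulty is foundational rather than substantive: making precise that ${\bf V}^\circ\to\bar M_Y^\vb$ realises the target as a GIT quotient with the stated $\mbb C^*$-isotropy, and, if one wants the bound at every point of $\bar M_Y^\vb$ rather than only on the dense stable locus, handling the strictly semistable S-equivalence classes.
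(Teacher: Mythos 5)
Your quotient description (via theorem \ref{thm:p2-p1}) and your Riemann--Roch computation of $\chi(\End(\cF))=r^2-2(1+a+b)nr=1-m$ coincide with the paper's; the evaluation $\int_Y u^2\,{\rm td}_1(Y)=1+a+b$ using $u^3=a+b$, $u^2v=1$, $v^2=0$ is correct. For the dimension bound, however, the paper argues differently and more elementarily: it counts parameters on the monad side. Since $h^0(\cO_\pi(1))=3+a+b$, the ambient space $\textbf{H}^2$ has dimension $2(3+a+b)n(r+2n)$; the equation ${\bf B}{\bf A}=0$ takes values in $\Hom(\mbb C^n,\mbb C^n)\otimes\Gamma(\cO_\pi(2))$ and so cuts out at most $n^2h^0(\cO_\pi(2))=(6+4a+4b)n^2$ conditions, whence \emph{every} irreducible component of the relevant locus in $\bf V$ has dimension at least $2(3+a+b)n(r+2n)-(6+4a+4b)n^2$; subtracting $\dim G=2n^2+(r+2n)^2-1$ gives exactly $m$. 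This yields the bound on every component of $\bar M_Y^\vb$ with no deformation theory and no cohomology vanishing.

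Your route --- $\dim\ges\ext^1(\cF,\cF)-\ext^2(\cF,\cF)=h^0-h^3-\chi$ combined with the Serre-duality vanishing $h^3(\End(\cF))=h^0(\End(\cF)\otimes\kappa_Y)=0$ coming from fibrewise semi-stability --- is sound at a stable point, but to conclude for all of $\bar M_Y^\vb$ you need every irreducible component to contain a stable (or at least simple) point. You assert this by analogy with \ref{thm:hirz}(iii), yet the corresponding density statement on $Y_{a,b}$ is established in the paper only later and only for the main component $\mbb M$ (lemma \ref{lm:h2=0}(iii)), not for arbitrary components of $\bar M_Y^\vb$; a hypothetical component consisting entirely of strictly semi-stable bundles would escape your argument. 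Either supply that density argument for each component, or replace the deformation-theoretic step by the paper's direct parameter count, which is what buys the statement in full generality.
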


\begin{proof}
The condition $\bf{BA}=0$ imposes at most $n^2\cdot h^0(\cO_\pi(2))$ conditions, 
so the dimension of $\bar M_Y^\vb$ is at least \\ 
\centerline{$
2(3+a+b)n(r+2n)-(6+4a+4b)n^2-[2n^2+(r+2n)^2-1]
\srel{\eqref{eq:m}}=m.
$}
The Euler characteristic $\chi(End(\cF))$ is given by Riemann-Roch. 
\end{proof}

The monad \eqref{eq:AOB} still makes sense for 
$(\bf A,^{\sf t}\kern-1pt\bf B)\in\bar{\bf V}$. Its cohomology 
is a sheaf on $Y$ which still satisfies \eqref{eq:chern} -- \eqref{eq:Fv}. These objects 
naturally occur if one wishes to compactify $\bar M_Y^\vb$. It is unlikely, however, 
that by adding these sheaves one gets a projective (complete) variety. 
(If one thinks off the group action on $\bar{\textbf{V}}$ in terms of quivers, 
one obtains a quiver with loops.)

\subsection{Geometric properties of $\bar M_{Y_{a,b}}^\vb$} 
At this point is natural to ask whether $\bar M_Y^\vb$ is non-empty, irreducible, 
and has the expected dimension. The irreducibility is a complicated issue. 
If our benchmark is the case $a=0,b=1,r=2$ (so $Y_{0,1}$ is the blow-up of 
$\mbb P^3$ along a line), the question reduces to the irreducibility of the moduli 
space of rank two mathematical instantons (see \cite{ti,ctt} for details). 
The recent answer to this problem (see \cite{ti1,ti2}) involves impressive computations. 

For this reason, our approach is similar to \cite{ti}, namely we pin down 
a `main' component of $\bar M_Y^\vb$ which has all the desired properties.  
For shorthand, we let $\cO(k,l)\,{:=}\cO_\pi(k)\otimes\pi^*\cO_{\mbb P^1}(l)$. 
The following remark will be useful:\footnote{The computations are unpleasant, 
and the author used MAPLE. 
For $u,v$ as in \eqref{eq:h2y} and $\rk(\cal S)=s$, holds:\\ 
${\rm Td}(Y_{a,b})=1+\frac{3u-(a+b-2)v}{2}+u^2-\frac{(4(a+b)-9)uv}{6}+u^2v$, \\ 
${\rm ch}(\cal S(-1,-1))=s-s(u+v)+\frac{s+c}{2}u^2+suv
-\Big(\frac{s+c}{2}+\frac{(s+3c)(a+b)}{6}\Big)u^2v$.} 
by the Riemann-Roch formula, 
\begin{equation}\label{eq:-1-1}
\chi(\cal S(-1,-1))=0,\;\text{for any sheaf $\cal S$ on $Y$ with $c_1=0$, $c_2=cu^2$}. 
\end{equation}
In particular, for $\cF\in\bar M_Y^\vb$ and $\cal S=End(\cF)$, we deduce 
\\ \centerline{
$H^1\big(\,End(\cF)(-1,-1)\,\big)=0\;\Leftrightarrow\;H^2\big(\,End(\cF)(-1,-1)\,\big)=0.$
} 

\begin{theorem}\label{thm:MM}
Let $D\in|\cO_\pi(1)|$ be a generic section, 
so $D\cong\mbb P(\cO_{\mbb P^1}(-a)\oplus\cO_{\mbb P^1}(-b))$, and consider 
the following `main component' of $\bar M_Y^\vb$: 
\begin{equation}\label{eq:MM}
\mbb M:=\{\cF\in\bar M_Y^\vb\mid H^1\big(\,End(\cF)(-1,-1)\,\big)=0,
\;\cF_D\text{ is $L_c$-semi-stable}\,\}.
\end{equation}
Then $\mbb M$ is non-empty, irreducible, generically smooth of the expected dimension, 
and the locus corresponding to the stable vector bundles is dense. 
Moreover, $\mbb M$ is a rational variety. 
\end{theorem}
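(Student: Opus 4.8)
The plan is to prove Theorem \ref{thm:MM} by reducing the study of the main component $\mbb M$ to a moduli space of \emph{framed} vector bundles on the reducible surface obtained by gluing a Hirzebruch surface and a plane along a line, and then invoking the rationality results of Section \ref{sect:hirz}. First I would establish non-emptiness and generic smoothness: pick a generic divisor $D\in|\cO_\pi(1)|$, which is a Hirzebruch surface, and a generic fibre $\fbr\cong\mbb P^2$; they meet along a line $\l$. Using Theorem \ref{thm:hirz} (applied to $D$) and the monad description of stable bundles on $\mbb P^2$ recalled above, one constructs a vector bundle on $D\cup_\l\fbr$ by gluing an $L_c$-stable bundle on $D$, trivial along $\l$, to a stable bundle on $\fbr$, trivial along $\l$, via an isomorphism of their framings; Theorem \ref{thm:p2-p1} then lets one deform this to an honest vector bundle $\cF$ on $Y$ satisfying \eqref{eq:restr}--\eqref{eq:Fv}. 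Semicontinuity plus the Riemann--Roch identity \eqref{eq:-1-1} shows $H^1(End(\cF)(-1,-1))=0$ on an open dense set, so $\mbb M$ is non-empty, and the vanishing of $H^2(End(\cF)(-1,-1))$ (equivalent by \eqref{eq:-1-1}) feeds into a standard obstruction-theory argument to get generic smoothness of the expected dimension $m$.

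Next I would set up the birational model. The restriction map $\cF\mapsto(\cF_D,\cF_\fbr,\text{gluing datum along }\l)$ should, on a suitable open subscheme of $\mbb M$, realize $\mbb M$ as birational to a fibre product: over the space of pairs (a point of $\bar M^{L_c}_D(r;0,n)^\vb$ with $\l$-framing, a point of $M_{\mbb P^2}(r;0,n)$ with $\l$-framing) modulo the diagonal $\PGl(r)$ identifying the two framings. Using Corollary \ref{cor:rtl-frameY} (with Remark \ref{rmk:2frame}), the space of $\l$-framed $L_c$-stable bundles on $D$ is birational to $\bar M^{L_c}_D(r;0,n)^\vb\times\PGl(r)$, hence rational by Theorem \ref{thm:rtl} since $D$ is a Hirzebruch surface; similarly, the $\l$-framed stable bundles on $\mbb P^2$ form a variety birational to $M_{\mbb P^2}(r;0,n)\times\PGl(r)$, rational by Corollary \ref{cor:P2}. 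Quotienting the product of two such $\PGl(r)$-torsors by the diagonal $\PGl(r)$ — which acts freely, so the quotient is again a $\PGl(r)$-torsor over a rational base, and $\PGl(r)$ is a rational variety — yields a rational variety birational to $\mbb M$. I would need to check that the gluing construction is generically surjective onto $\mbb M$ and generically injective: surjectivity because any $\cF\in\mbb M$ restricts, by definition of $\mbb M$ and by \eqref{eq:restr}, \eqref{eq:restrL}, to semistable bundles on $D$ and $\fbr$ trivial along $\l$, and $\cF$ is recovered from these restrictions together with the gluing by the monad \eqref{eq:M}; injectivity from the uniqueness clause in Theorem \ref{thm:p2-p1}.

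The irreducibility of $\mbb M$ follows once the birational model is in place: the total space of a $\PGl(r)$-torsor over an irreducible base is irreducible, and $\bar M^{L_c}_D(r;0,n)$ and $M_{\mbb P^2}(r;0,n)$ are irreducible by Theorem \ref{thm:hirz}(iv) and the classical theory of stable bundles on $\mbb P^2$. Density of the stable locus comes from the fact that the generic member of each factor is stable (Theorem \ref{thm:hirz}(iii) and stability being generic on $\mbb P^2$), so the generic glued-and-deformed bundle $\cF$ is $L_c$-stable by a deformation argument analogous to the proof of Theorem \ref{thm:hirz}(iii).

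The hard part will be making the gluing/restriction correspondence precise enough to be a genuine birational map of varieties rather than just a bijection on generic points: one must produce the correspondence as a morphism on suitable parameter schemes (e.g. open subsets of the $\bf V$ of \eqref{eq:V} and of the analogous framed quot-type schemes for $D$ and $\fbr$), check that the framings along $\l$ can be chosen compatibly in families — this is exactly what the Luna-slice construction $S$ of Theorem \ref{thm:rtl} and Corollary \ref{cor:rtl-frameY} is designed to handle — and verify that the deformation from $D\cup_\l\fbr$ to $Y$ is unobstructed along the locus where $H^1(End(\cF)(-1,-1))=0$, so that $\mbb M$ is dominated by the framed picture. Once those technical points are settled, the rationality is a formal consequence of the results already proved in Sections \ref{sect:hirz} and \ref{sect:p21}.
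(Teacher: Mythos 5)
Your overall strategy coincides with the paper's: restrict to the wedge $\Delta=D\cup P$ (a Hirzebruch surface glued to a plane along $\l$), identify the target with a product of $\l$-framed moduli spaces modulo the diagonal $\PGl(r)$, and pull rationality back through the restriction map $\Th$. However, two essential steps are missing, and the substitutes you propose for them do not work.

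First, non-emptiness. You construct a bundle on $D\cup_\l\fbr$ by gluing and then claim Theorem \ref{thm:p2-p1} ``lets one deform this to an honest vector bundle on $Y$.'' That theorem only expresses a bundle \emph{already given on $Y$} as the cohomology of a monad; it provides no mechanism for extending or smoothing a sheaf from the reducible surface $\Delta$ to the threefold $Y$. Whether $\Th$ is dominant is exactly the point at issue, and it cannot be assumed. The paper instead exhibits explicit members of $\mbb M$: for $r\ges3$ a deformation of $\cO_Y^{r-1}\oplus\cal I_{L_n}$ built from the Koszul-type resolution \eqref{eq:F0}--\eqref{eq:Ft} (Lemma \ref{lm:Ft}), and for $r=2$ a Hartshorne--Serre extension \eqref{eq:IZ} (Lemma \ref{lm:Ft2}); only then does the vanishing $H^1(End(\cF)(-1,-1))=0$ show that $\rd\Th$ is an isomorphism, hence that $\Th$ is dominant from the component so constructed.

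Second, generic injectivity of $\Th$ and irreducibility of $\mbb M$. You attribute injectivity to ``the uniqueness clause in Theorem \ref{thm:p2-p1},'' but that clause says only that the monad of a \emph{fixed} $\cF$ is unique up to the $G$-action \eqref{eq:G}; it does not say that $\cF$ is determined by $\cF_\Delta$. The actual argument must extend an isomorphism $\cF_{o,\Delta}\to\cF_\Delta$ to a homomorphism $\cF_o\to\cF$ on $Y$, which requires the vanishing $H^1\big(Hom(\cF_o,\cF)(-\Delta)\big)=H^1\big(Hom(\cF_o,\cF)(-1,-1)\big)=0$. This is Lemma \ref{lm:inj}, the technical heart of the proof, established via the monad display of $\cF_o^\vee$ and the filtrations \eqref{eq:si}. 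Without it you obtain neither the birationality of $\Th$ nor the irreducibility of $\mbb M$: your torsor argument only shows that the \emph{target} $\bar M_\Delta^\vb$ is irreducible, whereas $\mbb M$ could a priori have several components, each generically finite over $\bar M_\Delta^\vb$ (cf.\ Lemma \ref{lm:h2=0}(iii)); it is precisely Lemma \ref{lm:inj} that forces any two such components to share their general point (Lemma \ref{lm:irred-M}).
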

\nit The proof of this statement is contained in the forthcoming lemmas. 

For $\cF\in\mbb M$ and general $D\in|\cO_\pi(1)|$ and $P\in|\pi^*\cO_{\mbb P^1}(1)|$, 
the restrictions $\cF_D$ and $\cF_P$ are semi-stable, and theorem \ref{thm:main} 
implies that $\cF$ is trivializable along $\l:=D\cap P$ 
(so $\cF$ automatically satisfies the technical condition \eqref{eq:restr}). 

\begin{definition}\label{defn:m-frame}
For $D,P$ as above, let $\l:=D\cap P$ and  $\Delta:=D\cup P$. We denote 
\\ \centerline{
$\bar M_{D,\l}^\vb:=\bar M_{D,\l}^{L_c}(r;0,n(a+b))^\vb$\quad 
(respectively $\bar M_{P,\l}^\vb, \bar M_{\Delta,\l}^\vb$) 
} 
the moduli spaces of semi-stable vector bundles on $D$ (respectively on $P,\Delta$), 
framed along $\l$. (See remark \ref{def:frame} for the definition of a framing.) 
\end{definition}
Then the map which identifies (glues) the framings  
\begin{equation}\label{eq:glue}
\bar M_{D,\l}^\vb\times \bar M_{P,\l}^\vb\to\bar M_{\Delta,\l}^\vb
\end{equation}
is an isomorphism (its inverse is the restriction to $D,P$), and ${\PGl}(r)$ 
acts on $\bar M_{\Delta,\l}^\vb$ by changing the framing along $\l$. 
The quotient map for this action is the morphism which forgets the framing. 
We denote $\bar M_{\Delta}^\vb:=
\bar M_{\Delta,\l}^\vb/{\PGl}(r)$. 
A key role for understanding the geometry of $\mbb M$ is played by the rational map 
\begin{equation}\label{eq:th}
\Th:\mbb M\dashto \bar M_{\Delta}^\vb,\quad  \cF\to \cF_\Delta.
\end{equation}

\begin{lemma}\label{lm:MD}
Let $M_\Delta^\vb$ be the open locus of vector bundles whose restrictions to $D,P$ 
are stable. Then $M_\Delta^\vb$ is birational to 
$M_D^{L_c}(r;0,n)^\vb\times M_{P,\l}^\vb$, thus $M_\Delta^\vb$ is a rational variety 
of dimension 
\begin{equation}\label{eq:dimD}
\dim\bar M_\Delta^\vb=2(a+b)nr+2nr-(r^2-1)=m.
\end{equation}
\end{lemma}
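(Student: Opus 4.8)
The plan is to combine the gluing isomorphism \eqref{eq:glue} with the birational triviality of the $\l$-framing established, for Hirzebruch surfaces, in \ref{cor:rtl-frameY}. Since $D\cap P=\l$, a vector bundle on the wedge $\Delta=D\cup P$ equipped with an $\l$-framing amounts to a pair consisting of an $\l$-framed bundle on $D$ and an $\l$-framed bundle on $P$ (the two framings automatically match along $\l$), which is precisely the content of \eqref{eq:glue}. Cutting out the open loci where the two pieces are $L_c$-stable and dividing by the $\PGl(r)$ that changes both framings simultaneously, one obtains a birational identification $M_\Delta^\vb\sim\bigl(M_{D,\l}^\vb\times M_{P,\l}^\vb\bigr)/\PGl(r)$ with $\PGl(r)$ acting diagonally. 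These loci are non-empty and dense: the generic bundle on $D$ is stable and trivial along $\l$ by \ref{thm:hirz} and \ref{rmk:2frame}(i) --- here $\l$ is a fibre of the ruling $D\to\mbb P^1$, and $c_2(\cF_D)=n\,u^3=n(a+b)$, so the hypothesis $c>r(r-1)n(a+b)$ makes the results of \ref{sect:hirz} applicable to $D$ with its restricted polarization --- while the generic bundle on $P\cong\mbb P^2$ is stable and trivial along the line $\l$ by the density statement recalled for $\mbb P^2$ before \ref{thm:p2-p1} (cf.\ \cite[Lemma 2.4.1]{hu}).

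Next I would use that a stable sheaf has only scalar automorphisms, so for a fixed stable bundle on $D$ whose restriction to $\l$ is trivial the set of $\l$-framings is a $\PGl(r)$-torsor; hence $M_{D,\l}^\vb\to M^{L_c}_D(r;0,n(a+b))^\vb$ is a $\PGl(r)$-bundle over a dense open subset. By \ref{cor:rtl-frameY} (applied with $\l$ as in \ref{rmk:2frame}(i)) this torsor is birationally trivial, and the trivialization is $\PGl(r)$-equivariant: there is a $\PGl(r)$-equivariant birational map $M_{D,\l}^\vb\dashto M^{L_c}_D(r;0,n(a+b))^\vb\times\PGl(r)$ under which the framing action becomes trivial on the first factor and left translation on the second. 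Substituting this in, one gets $M_\Delta^\vb\sim\bigl(M^{L_c}_D(r;0,n(a+b))^\vb\times\PGl(r)\times M_{P,\l}^\vb\bigr)/\PGl(r)$, where the diagonal $\PGl(r)$ acts trivially on the first factor, by left translation on the second, and by framing-change on the third; the action is therefore free, the invariant $(d,h,p)\mapsto\bigl(d,h^{-1}\!\cdot p\bigr)$ realizes the quotient, and one concludes $M_\Delta^\vb\sim M^{L_c}_D(r;0,n(a+b))^\vb\times M_{P,\l}^\vb$.

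It remains to read off rationality and dimension. The factor $M^{L_c}_D(r;0,n(a+b))^\vb$ is dense in $\bar M^{L_c}_{Y_{b-a}}(r;0,n(a+b))$ (recall $D\cong Y_{b-a}$), which is rational by \ref{thm:rtl}; and for $M_{P,\l}^\vb$ the same torsor/no-name argument as in \ref{cor:rtl-frameY}, carried out on $\mbb P^2$, gives $M_{P,\l}^\vb\sim M_{\mbb P^2}(r;0,n)^\vb\times\PGl(r)$, rational by \ref{cor:P2}; hence $M_\Delta^\vb$ is rational. For the dimension, \ref{thm:hirz}(iv) gives $\dim M^{L_c}_D(r;0,n(a+b))^\vb=2rn(a+b)-r^2+1$, while $\dim M_{P,\l}^\vb=\dim M_{\mbb P^2}(r;0,n)+\dim\PGl(r)=(2rn-r^2+1)+(r^2-1)=2rn$, so the product has dimension $2(a+b)nr+2nr-(r^2-1)=m$, in agreement with \eqref{eq:dimD}.

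The step I expect to be the main obstacle is arranging the reduction to \ref{cor:rtl-frameY} so that the birational trivialization of the $\l$-framing torsor on $D$ is genuinely $\PGl(r)$-equivariant, and then checking that the diagonal quotient can be computed by slicing at the identity (equivalently, that this trivialization is compatible with the group action); once that is in place, everything else is bookkeeping with Chern numbers and the already-available rationality results \ref{thm:rtl} and \ref{cor:P2}. A minor secondary point is to verify that all the open loci of stable, $\l$-trivial bundles entering the argument are non-empty, so that the chain of birational maps is well defined; this follows from \ref{thm:hirz}, \ref{thm:p2-p1}, and the assumption $n\ges r$.
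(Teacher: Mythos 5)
Your proposal is correct and follows essentially the same route as the paper: decompose $M_\Delta^\vb$ as the diagonal $\PGl(r)$-quotient of $M_{D,\l}^\vb\times M_{P,\l}^\vb$ via \eqref{eq:glue}, birationally trivialize the framing torsor on the $D$-factor using \ref{cor:rtl-frameY} (with $D\cong Y_{b-a}$ and $c_2(\cF_D)=n(a+b)$), and absorb the group into that factor. The paper compresses all of this into one sentence (``thus, by the definition''), so the $\PGl(r)$-equivariance and quotient bookkeeping you supply, as well as your dimension count and appeals to \ref{thm:rtl} and \ref{cor:P2}, are exactly the implicit content of its argument.
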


\begin{proof}
First, $D$ is isomorphic to the Hirzebruch surface $Y_{\ell}$, with $\ell=b-a$, 
so $M_{D,\l}^\vb$ is birational to $M_D^{L_c}(r;0,n)^\vb\times\PGl(r)$, 
according to corollary \ref{cor:rtl-frameY}. Thus, by the definition, $M_\Delta^\vb$ 
is birational to $M_D^{L_c}(r;0,n)^\vb\times M_{P,\l}^\vb$. 
Second, $M_{P,\l}^\vb$ is irreducible because $P\cong\mbb P^2$ 
(see \cite[Theorem 2.2]{hu}), and is also rational, by \ref{cor:P2}. 
\end{proof}


\subsubsection{Differential properties of $\mbb M$} 
We start by addressing the generic smoothness of $\mbb M$. 

\begin{lemma}\label{lm:h2=0}
For all $\cF\in\mbb M$ holds: 
\begin{enumerate}
\item $H^2(End(\cF))=0$;
\item The differential of $\Th$ at $\cF$ is an isomorphism;
\item Each irreducible component has the expected dimension, the locus corresponding 
to stable bundles in dense, and $\Th$ is generically finite onto $\bar M_{\Delta}^\vb$.
\end{enumerate}
\end{lemma}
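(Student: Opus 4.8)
\textbf{Proof strategy for Lemma \ref{lm:h2=0}.}

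The plan is to reduce everything about $\cF$ on $Y$ to the restriction $\cF_\Delta$ on the wedge $\Delta = D\cup P$, exploiting the vanishing $H^1(End(\cF)(-1,-1))=0$ built into the definition \eqref{eq:MM} of $\mbb M$, and then invoke Lemma \ref{lm:MD} and the rationality results of section \ref{sect:hirz}. First I would establish (i): consider the sheaf $\cE nd(\cF)$, which has $c_1=0$ and (since $\cF$ satisfies \eqref{eq:chern}) satisfies the hypotheses of \eqref{eq:-1-1}, so $\chi(End(\cF)(-1,-1))=0$; combined with $H^1(End(\cF)(-1,-1))=0$ this forces $H^0=H^2=H^3=0$ for $End(\cF)(-1,-1)$ as well (one checks $H^0$ vanishes by semistability of $\cF_\fbr$ fibrewise, $H^3$ by Serre duality against $\kappa_Y$ as in \eqref{eq:h2y}, leaving $H^2=0$ from the Euler characteristic). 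To pass from $End(\cF)(-1,-1)$ to $End(\cF)$ I would use the Koszul-type resolution of $\cO_\Delta$-twists: since $\Delta = D + P$ with $D\in|\cO_\pi(1)|$, $P\in|\pi^*\cO_{\mbb P^1}(1)|$, there is an exact sequence $0\to\cO_Y(-1,-1)\to\cO_Y(-1,0)\oplus\cO_Y(0,-1)\to\cO_Y\to\cO_\Delta\to 0$; tensoring with $\cE nd(\cF)$ and chasing cohomology, the vanishing of the $(-1,-1)$-twisted $H^2$ (and of the relevant $H^1,H^2$ of the $(-1,0)$ and $(0,-1)$ twists, which follow similarly from the semistability hypotheses on $\cF_D$ and on the generic fibre together with Lemma \ref{lm:coh}-type arguments applied to $\cE nd(\cF)$) yields $H^2(Y,End(\cF))\cong H^2(\Delta, End(\cF_\Delta))$, and the latter vanishes because $\Delta$ is a (nodal) surface and $End(\cF_\Delta)$ is a semistable-ish sheaf of degree zero whose top cohomology is killed by Serre duality. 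This gives (i).

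For (ii), the differential of $\Th$ at $\cF$ is the natural restriction map $\Ext^1_Y(\cF,\cF)\to\Ext^1_\Delta(\cF_\Delta,\cF_\Delta)$ (the framing along $\l$ is rigid for stable bundles, so it does not affect the count at the level of tangent spaces once one works with the $\PGl(r)$-quotient $\bar M_\Delta^\vb$). Using the same four-term sequence $0\to\cE nd(\cF)(-1,-1)\to\cE nd(\cF)(-1,0)\oplus\cE nd(\cF)(0,-1)\to\cE nd(\cF)\to\cE nd(\cF)_\Delta\to 0$, the kernel and cokernel of the restriction map on $\Ext^1$ are governed by the cohomology of the twists $\cE nd(\cF)(-1,-1)$, $\cE nd(\cF)(-1,0)$, $\cE nd(\cF)(0,-1)$ in degrees $1$ and $2$. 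The hypothesis $H^1(End(\cF)(-1,-1))=0$ (hence also $H^2(End(\cF)(-1,-1))=0$ by \eqref{eq:-1-1}) kills the contribution of the most negative twist; the contributions of the two single-direction twists vanish by the same kind of push-forward analysis as in Lemma \ref{lm:coh} — for $\cF_D$ semistable one controls $R^i\pi_*$ of $\cE nd(\cF)(-1,0)$ and for $\cF$ fibrewise semistable of degree zero one controls $\cE nd(\cF)(0,-1)$ — so the restriction map on $\Ext^1$ is an isomorphism.

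For (iii): part (ii) shows $\Th$ is étale onto its image at every point of $\mbb M$, so $\mbb M$ is generically smooth and $\Th$ is generically finite onto $\bar M_\Delta^\vb$; since by Lemma \ref{lm:MD} $\dim\bar M_\Delta^\vb = m$, which by \eqref{eq:m} is exactly the expected dimension $\chi$-computation gives, every component of $\mbb M$ dominating $\bar M_\Delta^\vb$ has dimension $m$. That $\mbb M$ is non-empty and that the stable locus is dense will follow once one produces, for a generic point $(\cF_D,\cF_P)$ with $\cF_D$ and $\cF_P$ stable, a vector bundle $\cF$ on $Y$ restricting to it (this is the gluing/deformation step deferred to the subsequent lemmas, using the monad \eqref{eq:AOB} and the surjectivity of the relevant restriction map in cohomology); stability of the generic $\cF$ then propagates from stability of $\cF_D$ (a destabilizing subsheaf would restrict to one on $D$). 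The main obstacle I expect is the bookkeeping in (ii): one must verify that \emph{both} single-direction twisted cohomology groups $H^i(\cE nd(\cF)(-1,0))$ and $H^i(\cE nd(\cF)(0,-1))$ vanish in the relevant degrees, and this is precisely where the semistability of $\cF_D$ (not just fibrewise semistability) is used — it is the reason $\cF_D$ semistable is imposed in \eqref{eq:MM} — so the argument must carefully separate the two vanishings and cite the correct push-forward lemma for each.
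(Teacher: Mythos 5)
Your overall strategy --- exploiting the defining vanishing $H^1(End(\cF)(-1,-1))=0$ together with $\chi(End(\cF)(-1,-1))=0$ from \eqref{eq:-1-1} to compare cohomology on $Y$ with cohomology on $\Delta$ --- is the right one, and your identification of $\rd\Th$ with the restriction map on $\Ext^1$ is correct. However, the exact sequence you build everything on is wrong. The four-term complex $0\to\cO_Y(-1,-1)\to\cO_Y(-1,0)\oplus\cO_Y(0,-1)\to\cO_Y\to\cO_\Delta\to0$ is not a resolution of $\cO_\Delta$: the cokernel of the middle map (multiplication by the two defining sections) is the structure sheaf of the codimension-two intersection $\l=D\cap P$, not of the union. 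Since $\Delta=D\cup P$ is the divisor $D+P$, its ideal sheaf is the line bundle $\cO_Y(-1,-1)$, and the correct sequence is simply $0\to\cE(-1,-1)\to\cE\to\cE_\Delta\to0$ with $\cE:=End(\cF)$. Once this is fixed, part (ii) is immediate: $H^1(\cE(-1,-1))=0$ by hypothesis and $H^2(\cE(-1,-1))=0$ by \eqref{eq:-1-1} (the groups $H^0$ and $H^3$ vanish by semi-stability), so $H^1(\cE)\to H^1(\cE_\Delta)$ is an isomorphism. In particular the ``main obstacle'' you flag --- controlling $H^i(\cE(-1,0))$ and $H^i(\cE(0,-1))$ separately --- is an artifact of the wrong resolution and disappears entirely; those intermediate vanishings are neither all needed nor all easy to establish.

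For part (i), rather than passing to $H^2(\cE_\Delta)$ on the reducible surface (where your appeal to Serre duality for a ``semistable-ish'' sheaf is not justified as stated and would require an extra Mayer--Vietoris argument through $\cO_\Delta\subset\cO_D\oplus\cO_P\surj\cO_\l$), it is cleaner to filter in two steps, as the paper does: $\cE(-1,-1)\subset\cE(-1,0)\surj\cE_P(-1,0)$ gives $H^2(\cE(-1,0))=0$, because $H^2(\mbb P^2,End(\cF_P)(-1))=0$ by semi-stability of $\cF_P$ and Serre duality on $P\cong\mbb P^2$; then $\cE(-1,0)\subset\cE\surj\cE_D$ gives $H^2(\cE)=0$, because $H^2(End(\cF_D))=0$ by semi-stability of $\cF_D$ on the Hirzebruch surface $D$. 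This is precisely where the hypothesis that $\cF_D$ be semi-stable in \eqref{eq:MM} enters. Your part (iii) is essentially correct, except that non-emptiness and the gluing of a pair $(\cF_D,\cF_P)$ to a bundle on $Y$ are not part of this lemma (they are deferred to the subsequent lemmas); here one only needs that $\rd\Th$ being an isomorphism forces each component to dominate $\bar M_{\Delta}^\vb$, whence generic finiteness, dimension $m$ by \eqref{eq:dimD}, and density of the stable locus via \ref{thm:hirz}(iv) and the fact that $\cF$ is stable whenever $\cF_\Delta$ is.
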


\begin{proof}
(i) Since $\cF_D$ and $\cF_P$ are semi-stable, $\cal E:=End(\cF)$ is the same, 
thus $\cal E_D,\cal E_P(-1,0)$ have vanishing $H^2$. 
Now we apply this in 
$\cal E(-1,-1)\subset\cal E(-1,0)\surj\,\cal E_P(-1,0)$ and 
$\cal E(-1,0)\subset\cal E\surj\,\cal E_D$.

\nit(ii) The differential of $\Th$ at $\cF$ is the restriction homomorphism 
$H^1(\cal E)\to H^1(\cal E_\Delta)$. The exact sequence 
$\cal E(-1,-1)\subset \cal E\surj\,\cal E_\Delta$ implies that this is indeed an isomorphism. 

\nit(iii) Since $\rd\Th$ is an isomorphism, the restriction of $\Th$ to each component 
of $\mbb M$ is dominant. But the stable vector bundles are dense in both 
$\bar M_D^\vb, \bar M_P^\vb$ (see theorem \ref{thm:hirz}(iv)), and $\cF$ is stable 
on $Y$, as soon as its restriction to $\cF_\Delta$ is stable. (This also shows that $\Th$ 
is well-defined at the generic point of each irreducible component of $\mbb M$.) 
In this case, $\ext^1(\cF,\cF)=m$, so each component has the expected dimension. 
For the generic finiteness of $\Th$, use \eqref{eq:dimD}.
\end{proof}

\subsubsection{Non-emptiness of $\mbb M$} 
Here we give explicit examples of vector bundles satisfying the defining properties 
of $\mbb M$. 

\begin{lemma}\label{lm:Ft}
Assume $r\ges 3$. There is a non-empty component $\mbb M_o\subset\bar M_Y^\vb$ 
such that the generic $\cF\in\mbb M_o$ has the following properties: 
\begin{enumerate}
\item 
Its restriction to the generic divisor in $|\cO_\pi(1)|$ is semi-stable, the restriction to 
the line cut out by two generic divisors in $|\cO_\pi(1)|$ is trivializable, and is semi-stable 
on \emph{all} the fibres of $\pi$. Hence $\cF$ is $L_c$-semi-stable on $Y$. 
\item 
$H^1\big(End(\cF)(-1,-1)\big)=0$.
\end{enumerate}
\end{lemma}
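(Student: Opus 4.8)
The strategy is to build $\cF$ fibrewise from the monad description, realizing it as the cohomology of a relative monad \eqref{eq:AOB} whose data $(\mathbf{A},\mathbf{B})$ is chosen so that the restriction to \emph{every} fibre $\phi_x$ is a monad with stable (or at least semi-stable) cohomology. The natural source of such examples is a \emph{trivial family}: pick a single stable vector bundle $\cal V_0$ on $\mbb P^2$ with $c_1=0$, $c_2=n$, trivial along a line, as provided by the monad theory of \cite{bh} and point (v) of the review section (such $\cal V_0$ exist since $n\ges r$, and for $r\ges 3$ one has room to move). Set $\cF_0:=\pi^*\cal V_0$ after an appropriate choice of identification $Y\cong \mbb P^1\times\mbb P^2$ near a formal neighbourhood — more precisely, first I would verify the statement on $Y_{0,0}=\mbb P^1\times\mbb P^2$, where $\pi$ is literally a product projection, take $\cF_0=\pr_{\mbb P^2}^*\cal V_0$, and check directly that $\cF_0$ satisfies \eqref{eq:chern}--\eqref{eq:Fv}: the Chern classes are immediate by the product formula, $R^i\pi_*(\cF_0(l))=H^i(\mbb P^2,\cal V_0(l))\otimes\cO_{\mbb P^1}$ by flat base change, and conditions \eqref{eq:R2}, \eqref{eq:F}, \eqref{eq:Fv} follow from the vanishing in point (1) of the review ($H^2(\cal V_0(-j))=0=H^1(\cal V_0(-j))$ appropriately twisted). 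Since $\cal V_0$ is stable, $\cF_0$ is semi-stable on every fibre, hence $L_c$-semi-stable on $Y$ by Theorem \ref{thm:main}; and $\cF_{0,D}$, $\cF_{0,P}$ are semi-stable, $\cF_0$ is trivial along $\l=D\cap P$. This gives a point of $\bar M_Y^\vb$ satisfying part (i).

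For part (ii), the key computation is $H^1(\mathit{End}(\cF_0)(-1,-1))=0$. On the product, $\mathit{End}(\cF_0)(-1,-1)=\pr_{\mbb P^1}^*\cO_{\mbb P^1}(-1)\otimes\pr_{\mbb P^2}^*(\mathit{End}(\cal V_0)\otimes\cO_{\mbb P^2}(-1))$, so by the Künneth formula the relevant cohomology is a sum of tensor products $H^p(\mbb P^1,\cO(-1))\otimes H^q(\mbb P^2,\mathit{End}(\cal V_0)(-1))$ with $p+q=1$; since $H^0(\cO_{\mbb P^1}(-1))=H^1(\cO_{\mbb P^1}(-1))=0$, this vanishes outright, with no input on $\cal V_0$ needed. (Alternatively, and this is the form I would actually use for $a,b$ not both zero, the identity \eqref{eq:-1-1} tells us $\chi(\mathit{End}(\cF)(-1,-1))=0$, so it suffices to kill $H^0$ and $H^2$; stability of $\cF_\fbr$ and the exact sequences used in Lemma \ref{lm:h2=0}(i) handle $H^2$, and $H^0(\mathit{End}(\cF)(-1,-1))\subset H^0(\mathit{End}(\cF)(-1,0))\subset H^0(\mathit{End}(\cF)(0,0))\otimes\ldots$ is controlled by semistability of $\cF_D$.) So on $Y_{0,0}$ both conclusions hold, and $\cF_0$ lies in a well-defined component $\mbb M_o$ of $\bar M_Y^\vb$; generic smoothness at $\cF_0$ (Lemma \ref{lm:h2=0}) shows this component has the expected dimension, and openness of all the defining conditions of part (i) shows the generic member of $\mbb M_o$ inherits them.

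The remaining issue — and this is where I expect the real work — is deforming from $Y_{0,0}$ to a general $Y_{a,b}$. The clean way is \emph{not} to deform the base but to argue directly on $Y_{a,b}$ using the monad \eqref{eq:AOB}: the space $\bar{\mathbf V}$ of monad data is irreducible (it is an affine variety, an open subset of the fibres of a linear map $\mathbf{BA}=0$ between vector spaces, and one checks irreducibility as in the $\mbb P^2$ case), and the locus $\mathbf V\subset\bar{\mathbf V}$ of \emph{fibrewise stable} monads is non-empty — one exhibits an explicit $(\mathbf A,\mathbf B)$ whose restriction to each $\phi_x$ is a stable monad, e.g. by taking $\mathbf A,\mathbf B$ with constant coefficients (independent of the $\mbb P^1$-coordinate) equal to the data of a fixed stable $\cal V_0$ on $\mbb P^2$, which is legitimate because $\cO_\pi(1)$ is globally generated so $\Gamma(\cO_\pi(1))$ surjects onto each $\Gamma(\cO_{\phi_x}(1))$. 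The cohomology $\cF$ of this monad is semistable on every fibre, satisfies \eqref{eq:chern}--\eqref{eq:Fv} by the diagram-chase of Remark \ref{rmk:hyp}, and one recomputes $H^1(\mathit{End}(\cF)(-1,-1))$ via \eqref{eq:-1-1} plus the vanishing of $H^0$ and $H^2$ exactly as above. The honest obstacle is verifying fibrewise \emph{semi}-stability survives after passing to a nearby genuinely-$Y_{a,b}$ monad (the constant-coefficient construction gives the \emph{same} bundle on each fibre, which is fine for semi-stability on fibres, but one must then check it is not destabilized by a horizontal subsheaf — this is where \eqref{eq:R2} as a ``weak relative semistability'' and the bound $c>r(r-1)n(a+b)$ enter, via Theorem \ref{thm:main}); once semistability on $Y$ is in hand, $\cF\in\bar M_Y^\vb$ and the component of $\bar{\mathbf V}$ containing our explicit point descends to the desired $\mbb M_o$.
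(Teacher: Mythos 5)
Your construction is fine on $Y_{0,0}=\mbb P^1\times\mbb P^2$, but the passage to general $Y_{a,b}$ --- which you yourself identify as ``the real work'' --- contains a genuine gap. A monad \eqref{eq:AOB} ``with constant coefficients equal to the data of a fixed stable $\cal V_0$ on $\mbb P^2$'' does not exist on $Y_{a,b}$ when $a+b>0$: the datum $\mathbf A$ lives in $\Hom(\mbb C^n,\mbb C^{r+2n})\otimes\Gamma(\cO_\pi(1))$, and $\pi_*\cO_\pi(1)$ is a non-trivial rank-three bundle of degree $a+b$ on $\mbb P^1$, so there is no consistent identification of $\Gamma(\cO_{\phi_x}(1))$ with a fixed $\mbb C^3$ as $x$ varies; any choice of three sections of $\cO_\pi(1)$ fails to span $\Gamma(\cO_{\phi_x}(1))$ at $a+b$ points of $\mbb P^1$ (the zero divisor of the determinant), and at those fibres the restricted monad degenerates. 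Surjectivity of $\Gamma(\cO_\pi(1))\to\Gamma(\cO_{\phi_x}(1))$ for each individual $x$ does not repair this, so for $a+b>0$ you have not exhibited any point of $\bar M_{Y_{a,b}}^\vb$.

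Second, your fallback verification of (ii) is circular. You propose to kill $H^2\bigl(End(\cF)(-1,-1)\bigr)$ using ``the exact sequences of Lemma \ref{lm:h2=0}(i)'', but those sequences run the other way: they deduce $H^2(End(\cF))=0$ \emph{from} $H^2\bigl(End(\cF)(-1,-1)\bigr)=0$, which in that lemma is available only because $\cF\in\mbb M$, i.e.\ because condition (ii) is already assumed (via \eqref{eq:-1-1}). Semi-stability alone cannot yield (ii): it is precisely the non-trivial open condition cutting out the main component, and the paper's own degenerate sheaf $\cF_0=\cO_Y^{r-1}\oplus\cal I_{L_n}$ is $L_c$-semi-stable yet has $\Ext^2(\cF_0,\cF_0)\neq0$. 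The paper sidesteps both problems by an entirely explicit construction: it takes the fat line $L_n=nD\cap D'$, forms $\cF_0=\cO_Y^{r-1}\oplus\cal I_{L_n}$ from the resolution $0\to\cO_\pi(-n-1)\to\cO_\pi(-1)\oplus\cO_\pi(-n)\to\cal I_{L_n}\to0$, deforms the zero map to a generic $t\in\Gamma(\cO_\pi(n+1))$ to obtain a locally free $\cF_t$ (this is where $r\ges3$ enters), and then checks $H^2\bigl(End(\cF_t)(-1,-1)\bigr)=0$ directly from the dual of the defining sequence \eqref{eq:Ft}. To salvage your route you would need monad data on $Y_{a,b}$ that is genuinely non-degenerate on every fibre, which the constant ansatz does not provide.
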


\begin{proof}
\nit\textit{Step 1}\quad 
We consider two generic divisors $D,D'\in|\cO_\pi(1)|$, and the (fat) line 
$L_n:=nD\cap D'$ in $Y$. Its ideal sheaf admits the resolution 
$0\,{\to}\,\cO_\pi(-n-1)\,{\srel{a}{\to}}\,
\cO_\pi(-1)\,{\oplus}\,\cO_\pi(-n)\,{\to}\,\cal I_{L_n}\,{\to}\,0,$ 
and determines the exact sequence 
\begin{equation}\label{eq:F0}
0\to \cO_\pi(-n-1)\srel{(0,a)}{\lar}
\cO_Y^{r-1}\oplus\cO_\pi(-1)\oplus\cO_\pi(-n)\to
\cF_0:=\cO_Y^{r-1}\oplus\cal I_{L_n}\to0.
\end{equation}
Then $\cF_0$ is torsion free of rank $r$, with $c_1=0,c_2=n\cdot [\cO_\pi(1)]^2$. 
Its restriction to the generic intersection of two divisors in $|\cO_\pi(1)|$ is trivializable, 
so $\cF_0$ is $L_c$-semi-stable, for all $c>0$. 

\nit\textit{Step 2}\quad 
By deforming $0$ to $t\in\Gamma(\cO_\pi(n+1))$ in \eqref{eq:F0}, we obtain a flat 
family of sheaves (the Hilbert polynomial is constant) on $Y$ 
\begin{equation}\label{eq:Ft}
0\to \cO_\pi(-n-1)\srel{(t,a)}{\lar}
\cO_Y^{r-1}\oplus\cO_\pi(-1)\oplus\cO_\pi(-n)\to
\cF_t\to0
\end{equation}
Also, since $r\ges 3$, $(t,a)$ is pointwise injective for generic $t$,  
so $\cF_t$ is locally free. 

\nit\textit{Claim}\quad For generic $t$, 
the vector bundle $\cF_t$ defined by \eqref{eq:Ft} has the desired properties. \\ 
-- $\cF_0$ satisfies \eqref{eq:restr}  and \eqref{eq:restrL}, 
so the same holds for generic $t$.\\ 
--  One may check that $\cF_0$ satisfies \eqref{eq:R2}, so the same holds for $\cF_t$. 
Alternatively, $\cF_t$ is $\pi$-fibrewise semi-stable, so \eqref{eq:R2} is automatically 
satisfied.\\ 
-- \eqref{eq:F} and \eqref{eq:Fv} follow directly from \eqref{eq:Ft}. \\ 
-- The three properties at (i) are open in flat families of torsion free sheaves, 
and they hold for $\cF_0$. Thus the same holds for generic $t$. \smallskip

\nit-- Let us verify (ii). (Incidentally, observe that $\Ext^2(\cF_0,\cF_0)\neq 0$.) 
Since $\cF_t$, so $End(\cF_t)$ is semi-stable, we have 
$h^0\big(End(\cF_t)(-1,-1)\big){=}\,h^3\big(End(\cF_t)(-1,-1)\big){=}\,0$. 
By \eqref{eq:-1-1}, holds 
\\ \centerline{
$h^1(End(\cF_t)(-1,-1))=0\;\Leftrightarrow\;h^2(End(\cF_t)(-1,-1))=0$. 
} 
This latter property is easier to check. 
For generic $t$, the dual of \eqref{eq:Ft} yields 
\\ \centerline{ 
$0\to End(\cF_t)(-1,-1)\to\cF_t(-1,-1)^{r-1}\oplus\cF_t(0,-1)\oplus\cF_t(n-1,-1)
\to\cF_t(n,-1)\to0.$ 
}
Now remark that \eqref{eq:Ft} implies $H^1(\cF_t(n,-1))\,{=}\,0$. 
Second, we claim that $H^2(\cF_t(k,-1))=0$, for all $k\ges -1$. 
Indeed, the vanishing holds for $k=-1$, by \ref{lm:coh}(iii). For the induction, 
let $L$ be the intersection of two generic divisors in $|\cO_\pi(1)|$, twist the exact 
sequence $\cO_\pi(-2)\,{\subset}\,\cO_\pi(-1)^{\oplus 2}\,{\surj}\,\cal I_L$ 
by $\cF_t(k,-1)$, and use the semi-stability of $\cF_t$. 

We explicitly produced vector bundles satisfying the lemma. The properties are open 
in flat families, so there is a non-empty component $\mbb M_o\subset\bar M_Y^\vb$ 
which (generically) satisfies all of them. 
\end{proof}

It remains to address the case of rank two vector bundles. 

\begin{lemma}\label{lm:Ft2}
For $r=2$, there is a non-empty component $\mbb M_o\subset\mbb M$ whose 
generic point satisfies \ref{lm:Ft} {\rm(i)},{\rm(ii)}.
\end{lemma}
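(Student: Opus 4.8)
The plan is to produce $\cF$ by a Hartshorne--Serre construction, in the spirit of the 't~Hooft bundles on $\mbb P^3$, and then to verify the properties of \ref{lm:Ft} {\rm(i)},{\rm(ii)} essentially as in the case $r\ges3$. First I would choose $n+1$ general pairs $D_i,D_i'\in|\cO_\pi(1)|$, set $C_i:=D_i\cap D_i'$ and $Z:=C_1\cup\dots\cup C_{n+1}$; for $Z$ general this is a disjoint union of smooth rational curves, each of class $u^2$, with $Z\cap\L=\emptyset$ and $Z$ meeting the generic fibre $\fbr\cong\mbb P^2$ in $n+1$ points that span $\mbb P^2$ (here $n\ges2$ is used). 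A Chern class computation gives $\det\cal N_{Z/Y}\cong\cO_\pi(2)|_Z$; moreover all $R^q\pi_*\cO_\pi(-2)$ vanish, so $H^1(Y,\cO_\pi(-2))=H^2(Y,\cO_\pi(-2))=0$ and $\Ext^1_Y(\cal I_Z\otimes\cO_\pi(1),\cO_\pi(-1))\cong H^0(Z,\cO_Z)$. For a generic class in this group (one nonzero on each $C_i$) the corresponding extension
\[
0\to\cO_\pi(-1)\to\cF\to\cal I_Z\otimes\cO_\pi(1)\to0
\]
has locally free middle term, and $\cF$ is then a rank $2$ vector bundle with $c_1(\cF)=0$, $c_2(\cF)=n\,u^2$ and $c_3(\cF)=0$.

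Next I would run through \eqref{eq:restr}--\eqref{eq:Fv} and part {\rm(i)}. Restricting the sequence to $\L\cong\mbb P^1$ (disjoint from $Z$) presents $\cF_\L$ as an extension of $\cO_\L$ by $\cO_\L$, which splits; so \eqref{eq:restrL} holds. Restricting to a fibre $\phi_x$ (Tor-independent from $Z$, since $Z\cap\phi_x$ is $0$-dimensional for every $x$) gives $0\to\cO_{\phi_x}(-1)\to\cF_{\phi_x}\to\cal I_{Z\cap\phi_x}(1)\to0$ with $\cF_{\phi_x}$ locally free of rank $2$, $c_1=0$, $c_2=n$, and this sequence forces $H^0(\cF_{\phi_x}(-1))=0$ for every $x$ and $H^0(\cF_{\phi_x})=0$ for general $x$; hence $\cF$ is semi-stable on all fibres of $\pi$ and $\cF_\fbr$ is stable. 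In particular \eqref{eq:R2} holds automatically (remark \ref{rmk:hyp}), \eqref{eq:F} and \eqref{eq:Fv} follow from the push-forward computations of lemma \ref{lm:coh}, and trivializability of $\cF$ along the generic $\l=D\cap P$ (so also \eqref{eq:restr}) follows from the fibrewise semi-stability and the Grauert-M\"ullich theorem. Since $\cF_\fbr$ is stable and $c>r(r-1)n(a+b)=c_\cF$, theorem \ref{thm:main}$\rm(i_b)$ gives that $\cF$ is $L_c$-stable on $Y$, so $\cF\in\bar M_Y^\vb$.

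The remaining requirement for membership in $\mbb M$ is that $\cF_D$ be $L_c|_D$-semi-stable for the generic $D\in|\cO_\pi(1)|$, and this is where I expect the real obstacle. Restricting the defining sequence to $D\cong\mbb P(\cO_{\mbb P^1}(-a)\oplus\cO_{\mbb P^1}(-b))$ realizes $\cF_D$ as an extension of $\cal I_{Z_D}\otimes\cO_\pi(1)|_D$ by $\cO_\pi(-1)|_D$, with $Z_D:=Z\cap D$ a $0$-dimensional subscheme of length $(n+1)(a+b)$ in general position on $D$. A saturated sub-line-bundle $\cal L\subset\cF_D$ of non-negative $L_c$-slope maps either into $\cO_\pi(-1)|_D$ (whence $\mu_{L_c}(\cal L)<0$) or has the form $\cO_\pi(1)|_D\otimes\cO_D(-E_D)$ with $E_D$ effective and $Z_D\subset E_D$; a bound on $\deg_{L_c}(E_D)$ shows that, in the range $c>r(r-1)n(a+b)$, the only candidate with $\mu_{L_c}(\cal L)\ges0$ is $\cal L_0:=\cO_\pi(1)|_D\otimes\cO_D(-E_D)$ where $E_D$ is the sum of the $(n+1)(a+b)$ fibres of $\pi_D$ through $Z_D$. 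But $\cal L_0$ embeds into $\cF_D$ only if the restriction to $D$ of the (generic) extension class of $\cF$ pulls back to zero along $\cal L_0\hookrightarrow\cal I_{Z_D}\otimes\cO_\pi(1)|_D$; since the cokernel of $\cO_D(-E_D)\hookrightarrow\cal I_{Z_D}$ is a torsion-free rank one sheaf on the $1$-dimensional scheme $E_D$, the induced pull-back map between the relevant $\Ext^1$-groups is surjective, and this vanishing fails for a generic class. The delicate point -- a relative analogue of the general-position hypothesis for the classical 't~Hooft bundles -- is to arrange that a \emph{single} general choice of $Z$ and of the extension class works for a dense set of $D$; for this one checks that the incidence locus of pairs (bad $D$, extension class) does not dominate the space of extension classes.

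Finally I would prove {\rm(ii)}, $H^1\bigl(End(\cF)(-1,-1)\bigr)=0$. By \eqref{eq:-1-1} this is equivalent to $H^2\bigl(End(\cF)(-1,-1)\bigr)=0$, which follows by the same cohomological bootstrap as in the proof of \ref{lm:Ft} {\rm(ii)}: tensoring the defining sequence and its dual (using $\cF^\vee\cong\cF$) by twists of $\cF$ reduces the claim to the vanishing of $H^2\bigl(\cF\otimes\cO(k,-1)\bigr)$ for $k\ges-1$, which is established by induction on $k$ via the Koszul complex of a curve cut out by two members of $|\cO_\pi(1)|$ together with the fibrewise semi-stability, the base case $k=-1$ being lemma \ref{lm:coh}(iii). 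The only extra care, compared with \ref{lm:Ft}, is that $\cal I_Z$ is not locally free; one first arranges, by taking $Z$ general, that the direct images $R^1\pi_*\bigl(\cF\otimes\cO_\pi(k)\bigr)$ are as balanced as Riemann--Roch permits. All the properties in \ref{lm:Ft} {\rm(i)},{\rm(ii)} are open in flat families of sheaves and hold for this $\cF$, so they hold on a non-empty open subset of an irreducible component of $\bar M_Y^\vb$, which is the required $\mbb M_o\subset\mbb M$.
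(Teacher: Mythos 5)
Your construction is essentially the paper's: a Serre/Hartshorne--Serre extension $0\to\cO_\pi(-1)\to\cF\to\cal I_Z(1)\to0$ with $Z$ a disjoint union of complete-intersection sections of $\pi$ (the paper takes $Z$ to be such a union and the sequence \eqref{eq:IZ}), and the verification of \ref{lm:Ft}(i) goes through along the lines you indicate. Two remarks on that part: the ``real obstacle'' you locate in the semi-stability of $\cF_D$ is largely illusory --- since $\cF$ is semi-stable on every fibre of $\pi$ and has rank $2$, Grauert--M\"ullich gives triviality of $\cF$ on the generic line of the generic fibre, i.e.\ on the generic fibre of $D\to\mbb P^1$, and then the criterion of \ref{thm:hirz}(i) (triviality along the generic fibre and along the exceptional curve of $D$) yields $L_c$-semi-stability of $\cF_D$ without any incidence-variety argument; in any case you leave that ``delicate point'' unproved.

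The genuine gap is in part (ii). You claim the vanishing of $H^1\bigl(End(\cF)(-1,-1)\bigr)$, equivalently of $H^2\bigl(End(\cF)(-1,-1)\bigr)$ by \eqref{eq:-1-1}, reduces ``by the same cohomological bootstrap as in \ref{lm:Ft}'' to the vanishing of $H^2\bigl(\cF(k,-1)\bigr)$ for $k\ges-1$. That reduction fails for $r=2$: tensoring \eqref{eq:IZ} by $\cF(-1,-1)\cong\cF^\vee(-1,-1)$ places $End(\cF)(-1,-1)$ in the \emph{middle} of the sequence $\cF(-2,-1)\subset End(\cF)(-1,-1)\surj\cal I_Z\otimes\cF(0,-1)$, whose left-hand term has $H^2\bigl(\cF(-2,-1)\bigr)\cong H^1\bigl(\mbb P^1,\cO_{\mbb P^1}(-a-b-1)\bigr)^{\oplus n}\neq0$ whenever $a+b\ges1$ (by \ref{lm:coh}(iii)). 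So the twists with $k\ges-2$ unavoidably enter, unlike in \ref{lm:Ft} where the length-$3$ locally free resolution \eqref{eq:Ft} exhibits $End(\cF_t)(-1,-1)$ as a kernel of a map between sums of $\cF_t(k,-1)$ with $k\ges-1$. The whole content of the statement for $r=2$ is therefore the behaviour of the connecting homomorphism $\phi:H^1\bigl(\cal I_Z\otimes\cF(0,-1)\bigr)\to H^2\bigl(\cF(-2,-1)\bigr)$: one must prove it is injective (the paper shows it is an isomorphism by identifying it, via $\cF_Z\cong\cO_Z^{\oplus2}$ and the local Koszul resolutions of the components of $Z$, with the boundary map of the Koszul complex of a single section $L\subset Z$). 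Your proposal never confronts this connecting map, and the vague appeal to making $R^1\pi_*\bigl(\cF\otimes\cO_\pi(k)\bigr)$ ``balanced'' does not substitute for it; as written, the argument for (ii) does not close.
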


\begin{proof}
Let $Z$ be the union of $n$ sections of $\pi:Y\to\mbb P^1$, such that each 
$L\subset Z$ is the intersection of two generic divisors in $|\cO_\pi(1)|$. 
The Hartshorne-Serre correspondence (see \cite{ha1,arr}) yields a vector bundle $\cF$ 
with $c_1=0,c_2=[Z]=n\cdot[\cO_\pi(1)]^2$, which fits into an exact sequence 
\begin{equation}\label{eq:IZ}
0\to\cO_\pi(-1)\to\cF\to\cal I_Z(1)\to 0;\quad\cF_Z\cong\cO_Z^{\,2}.
\end{equation}
The properties (i) are easy to verify. For the second statement, the dual of \eqref{eq:IZ} 
yields 
\\ \centerline{ 
$0\srel{\ref{lm:coh}\text{(iv)}}{=}H^1(\cF(-2,-1))\to H^1(End(\cF)(-1,-1))
\to H^1(\cal I_Z\otimes\cF(0,-1))\srel{\phi}{\to}H^2(\cF(-2,-1))...$ 
} 
We claim that $\phi$ is an isomorphism. Indeed, $\cF$ is obtained by glueing 
the local Koszul resolutions of the components $\l$ of $Z$. Thus $\phi$ is the 
boundary map corresponding to the Koszul resolution of (any) one of $L\subset Z$. 
Then $\cF_Z\cong\cO_Z^{\,2}$ implies 
$H^1(\cal I_Z\otimes\cF(0,-1))=H^1(\cal I_L\otimes\cF(0,-1))=H^1(\cF(0,-1))$. 
Now use $\cF(-2,-1)\subset\cF(-1,-1)^2\to\cal I_L\otimes\cF(0,-1)$ to deduce that 
$H^1(\cal I_L\otimes\cF(0,-1))\to H^1(\cF(-2,-1))$ is an isomorphism. 
\end{proof}

\subsubsection{Irreducibility of $\mbb M$} 
We are going to prove that $\mbb M=\mbb M_o$, which yields the conclusion. 

\begin{lemma}\label{lm:inj}
For $\cF_o\in\mbb M_o$ and $\cF\in\mbb M$ arbitrary, 
holds $H^1(Hom(\cF_o,\cF)(-1,-1))=0$. 
\end{lemma}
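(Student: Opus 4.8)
The plan is to compare the cohomology of $Hom(\cF_o,\cF)$ on $Y$ with that of its restriction to the reducible surface $\D=D\cup P$, exactly as in the proof of \ref{lm:h2=0}. The starting point is the exact sequence
$$
0\to Hom(\cF_o,\cF)(-1,-1)\to Hom(\cF_o,\cF)\to Hom(\cF_o,\cF)_\D\to 0,
$$
together with the twisted analogues that split off the contributions of $D$ and $P$ separately, namely $Hom(\cF_o,\cF)(-1,-1)\subset Hom(\cF_o,\cF)(-1,0)\surj Hom(\cF_o,\cF)_P(-1,0)$ and $Hom(\cF_o,\cF)(-1,0)\subset Hom(\cF_o,\cF)\surj Hom(\cF_o,\cF)_D$. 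First I would observe that it suffices to prove the vanishing of $H^2\bigl(Hom(\cF_o,\cF)(-1,-1)\bigr)$: indeed, by the Riemann--Roch computation \eqref{eq:-1-1} applied to the sheaf $\cal S=Hom(\cF_o,\cF)$ (which has $c_1=0$ and $c_2=cu^2$ for some $c$, since $\cF_o$ and $\cF$ have the same Chern classes), one has $\chi\bigl(Hom(\cF_o,\cF)(-1,-1)\bigr)=0$, and $H^0$ and $H^3$ of this twist vanish because $\cF_{o,D}$, $\cF_D$ (resp.\ the $P$-restrictions) are semi-stable of the same slope, so that $Hom(\cF_{o},\cF)_D$, $Hom(\cF_{o},\cF)_P$ are semi-stable of slope zero and hence have no sections after a negative twist, and dually no $H^2$ on the surfaces.

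Next, the vanishing of $H^2$ on $Y$ is reduced to vanishings of $H^2$ on the two surfaces $D$ and $P$ via the two short exact sequences above: from $Hom(\cF_o,\cF)(-1,-1)\subset Hom(\cF_o,\cF)(-1,0)\surj Hom(\cF_o,\cF)_P(-1,0)$ and $Hom(\cF_o,\cF)(-1,0)\subset Hom(\cF_o,\cF)\surj Hom(\cF_o,\cF)_D$ one gets a chain of inequalities reducing $h^2(Hom(\cF_o,\cF)(-1,-1))$ to $h^2\bigl(Hom(\cF_o,\cF)_P(-1,0)\bigr)+h^2\bigl(Hom(\cF_o,\cF)_D\bigr)$ plus an $H^3$ term that already vanishes. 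So the heart of the matter is the surface statement: if $\cG_o,\cG$ are semi-stable vector bundles of the same rank and slope on the Hirzebruch surface $D\cong Y_{b-a}$ (resp.\ on $P\cong\mbb P^2$), then $H^2\bigl(Hom(\cG_o,\cG)\bigr)=0$ and $H^2\bigl(Hom(\cG_o,\cG)(-1,0)\bigr)=0$. By Serre duality on the surface, $H^2(Hom(\cG_o,\cG))^\vee=\Hom(\cG,\cG_o\otimes\kappa)$, and one checks this is zero because $\kappa_D$ (resp.\ $\kappa_{\mbb P^2}$) has negative degree along the relevant polarization, so $\cG_o\otimes\kappa$ has strictly smaller slope than $\cG$, and there is no nonzero map from a semi-stable sheaf to a semi-stable sheaf of strictly smaller slope. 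The same reasoning handles the twist by $(-1,0)$, which only decreases the slope of the target further.

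The one genuinely new input compared with \ref{lm:h2=0} is that there $\cF_o=\cF$ and stability gave $h^0=1$ directly; here $\cF_o$ and $\cF$ are \emph{distinct} bundles in possibly different components, so I cannot invoke simplicity. The point I expect to be the main obstacle is therefore making the $H^0$ and $H^3$ vanishing on the surfaces fully rigorous in the \emph{semi-stable} (not stable) case: one must use that $\cF_{o,D}$ and $\cF_D$ are semi-stable with the \emph{same} reduced Hilbert polynomial, so that $Hom(\cF_{o,D},\cF_D)$, though possibly nonzero, has slope zero and thus no sections after a strictly negative twist (that takes care of $h^0$ of the twisted sheaf), while for the untwisted $h^0$ on $Y$ one does not actually need vanishing — the argument only requires $h^3$, i.e.\ no maps from $\cF_D$ to $\cF_{o,D}\otimes\kappa_D$, which holds since $\kappa_D$ is $\cO_\pi(1)$-negative. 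I would write this out carefully, keeping track of which twist is needed at each step, and then conclude that $H^2(Hom(\cF_o,\cF)(-1,-1))=0$, hence $H^1(Hom(\cF_o,\cF)(-1,-1))=0$ by the $\chi=0$ identity, as claimed.
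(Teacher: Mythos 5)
Your reduction from $Y$ to the surfaces $D$ and $P$ does not work, and the gap is fatal rather than technical. Write $\cal H:=Hom(\cF_o,\cF)$. The first half of your argument is fine: $\chi(\cal H(-1,-1))=0$ by \eqref{eq:-1-1}, and $H^0$, $H^3$ of this twist vanish by semi-stability, so it suffices to kill $H^2(\cal H(-1,-1))$ (equivalently $H^1$). But in the long exact sequence attached to $\cal H(-1,-1)\subset\cal H(-1,0)\surj\cal H_P(-1,0)$, the group $H^2(\cal H(-1,-1))$ sits between $H^1(\cal H_P(-1,0))$ and $H^2(\cal H(-1,0))$; it is \emph{not} bounded by $h^2$ of the surface restrictions, as you claim, but by $h^1(\cal H_P(-1,0))+h^2(\cal H(-1,0))$. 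The term $h^1(\cal H_P(-1,0))=h^1\bigl(\mbb P^2, Hom(\cF_{o,P},\cF_P)(-1)\bigr)$ is nonzero in general ($\chi=-2rn<0$ and $H^0=H^2=0$ by semi-stability), so you would have to show that the connecting map $H^1(\cal H_P(-1,0))\to H^2(\cal H(-1,-1))$ vanishes, which is essentially equivalent to the statement being proved. A structural reason the soft argument must fail: taking $\cF_o=\cF$, your proof would show $H^1(End(\cF)(-1,-1))=0$ for \emph{every} $\cF\in\bar M_Y^\vb$ with semi-stable restrictions to $D$ and $P$, i.e.\ that the defining condition \eqref{eq:MM} of the main component is automatic — but that condition is precisely the nontrivial open condition that forces the paper to construct $\mbb M_o$ by hand in \ref{lm:Ft} and \ref{lm:Ft2}. (Note also that in \ref{lm:h2=0}(i) the paper runs these exact sequences in the \emph{opposite} direction: it takes $H^2(\cal E(-1,-1))=0$ as the input, coming from the definition of $\mbb M$, and deduces $H^2(\cal E)=0$.)

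The paper's actual proof uses the additional structure of $\cF_o\in\mbb M_o$ in an essential way: since $\cF_o$ is semi-stable on all fibres and trivial on $\l$, its dual $\cF_o^\vee$ is again the cohomology of a monad \eqref{eq:AOB}; tensoring the display with $\cF(-1,-1)$ gives $0\to\cal H(-1,-1)\to\cal Q_o\otimes\cF(-1,-1)\to\cF(0,-1)^n\to0$, and the vanishing is then obtained by filtering $\cal Q_o$ (diagrams (I)--(III) of \eqref{eq:si}) into pieces whose cohomology after tensoring with $\cF(-1,-1)$ is computable from \ref{lm:coh}, showing the map $H^1(\cal Q_o\otimes\cF(-1,-1))\to H^1(\cF(0,-1))^n$ is an isomorphism. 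You would need some input of this kind — the restriction-to-$\Delta$ sequences alone cannot produce the result.
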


\begin{proof}
First notice that $\cF_o^\vee$ satisfies the hypotheses of the theorem \ref{thm:p2-p1}: 
the conditions \eqref{eq:F}, \eqref{eq:Fv} are satisfied by \eqref{eq:-1-1}, and 
\eqref{eq:R2} holds because $\cF_o$ is semi-stable on all the fibres of $\pi$. 
Thus $\cF_o^\vee$ is the cohomology of a monad \eqref{eq:AOB}, and we denote by 
$\cal Q_o$ the corresponding entry in its display. 

For $\cal H:=Hom(\cF_o,\cF)$, we prove that $H^1(\cal H(-1,-1))=0$. 
Since $\cF$ is semi-stable, the exact sequence 
\\ \centerline{
$\begin{array}{c}
0\to\cal H(-1,-1)\to\cal Q_o\otimes\cF(-1,-1)\to\cF(0,-1)^n\to 0,
\end{array}$
}
yields 
\begin{equation}\label{eq:QF}
0=\Gamma(\cF(0,-1))\to 
H^1(\cal H(-1,-1))\to H^1(\cal Q_o\otimes\cF(-1,-1))\to H^1(\cF(0,-1))^n.
\end{equation}
The conclusion follows as soon as we prove that the rightmost arrow is an isomorphism. 
For this, we must understand $\cal Q_o$ better. 

As $\cF_{o,\l}\cong\cal O_\l^r$, the restriction to $\l$ of the monad defining 
$\cF_{o}^\vee$ yields $\cal O_\l^{\,r}\subset\cal Q_{o,\l}\surj\,\cal O_\l(1)^n$. This 
extension is necessarily trivial, so $\cal Q_{o,\l}\cong\cO_\l^{\,r}\oplus\cO_\l(1)^n$. 
We deduce that the sheaf homomorphism $s$ in the diagram (I) below is injective: 
\begin{equation}\label{eq:si}
\scalebox{.8}{
$\begin{array}{cc|ccc|cc}
\xymatrix@R=1.75em@C=1.5em{
&\kern2ex\cO_Y^r\ar@{=}[r]\ar@{^(->}[d]&\cO_Y^r\ar@{^(->}@<-1ex>[d]^-s
\\ 
\cO_\pi(-1)^n\ar@{^(->}[r]\ar@{=}[d]
&\cO_Y^{r+2n}\ar@{->>}[d]\ar@{->>}[r]
&\cal Q_o\ar@{->>}@<-1ex>[d]
\\
\cO_\pi(-1)^n\ar@{^(->}[r]&\cO_Y^{2n}\ar@{->>}[r]&
\cal R_o
}
&&&
\xymatrix@R=1.75em@C=1.5em{
&\cO_Y^n\ar@{=}[r]\ar@<-1ex>@{^(->}[d]&\cO_Y^n\ar@<-1ex>@{^(->}[d]
\\ 
\cO_\pi(-1)^n\ar@{^(->}[r]\ar@{=}[d]
&\cO_Y^{2n}\ar@<-1ex>@{->>}[d]\ar@{->>}[r]
&\cal R_o\ar@<-1ex>@{->>}[d]
\\ 
\cO_\pi(-1)^n\ar@{^(->}[r]
&\cO_Y^n\ar@{->>}[r]&\cal S_o
}
&&&
\xymatrix@R=1.75em@C=1.5em{
\cO_\pi(-1)\ar@{^(->}[r]\ar@{^(->}[d]
&
\cO_Y^n\ar[r]\ar@<-1ex>@{^(->}[d]&\cO_D\ar@<-1ex>@{^(->}[d]
\\ 
\cO_\pi(-1)^n\ar@{^(->}[r]\ar@{=}[d]
&\cO_Y^{n}\ar@<-1ex>@{->>}[d]\ar@{->>}[r]
&\cal S_o\ar@<-1ex>@{->>}[d]
\\ 
\cO_\pi(-1)^{n-1}\ar@{^(->}[r]
&\cO_Y^{n-1}\ar@{->>}[r]&\cal S_o^{(n-1)}.
}
\\[5mm] 
\text{(I)}&&&\text{(II)}&&&\text{(III)}
\end{array}
$}
\end{equation}
Now we further decompose $\cal R_o$; clearly, there is a decomposition 
$\cO_Y^{2n}=\cO_Y^n\oplus\cO_Y^n$, such that the diagram (II) 
has exact columns and rows; it determines the torsion sheaf $\cal S_o$. 

In order to understand this latter, we proceed inductively: clearly, there is a decomposition 
$\cO_Y^n=\cO_Y\oplus\cO_Y^{n-1}$ such that the diagram (III) has exact rows 
and columns. Here $D$ stands for a generic (the starting $\cF_o$ is so) divisor in 
$|\cO_\pi(1)|$. By repeating $n$ times this process, we deduce that $\cal S_o$ is a 
successive extension of $\cO_{D_j}$, with $D_j\in|\cO_\pi(1)|$ for $j=1,\ldots,n$. 
Let $\cal S_o^{(\nu)}$ be the sheaf obtained by $\nu$ extensions, $\nu=1,\ldots,n$. 

By using that $H^1(\cF(-2,-1))=H^1(\cF(-1,-1))=H^2(\cF(-1,-1))=0$ 
(see \ref{lm:coh}), we deduce the following implications, for arbitrary $D\in|\cO_\pi(1)|$: 
\\ \centerline{
$\begin{array}{rl}
\text{(I)}\Rightarrow&
H^1(\cal Q_o\otimes\cF(-1,-1))\srel{\cong}{\to} H^1(\cal R_o\otimes\cF(-1,-1))^n,
\\[1.5ex] 
\text{(II)}\Rightarrow&
H^1(\cal R_o\otimes\cF(-1,-1))\srel{\cong}{\to} H^1(\cal S_o\otimes\cF(-1,-1))^n,
\\[1.5ex]
\cO_\pi(-1)\subset\cO_Y\surj\,\cO_D\;\Rightarrow&
\Gamma(\cF(-1,-1)_D)=H^2(\cF(-1,-1)_D)=0,
\\ 
&
H^1(\cF(-1,-1)_D)\srel{\cong}{\to}H^2(\cF(-2,-1)),
\\[1.5ex]
\text{use inductively (III)}\Rightarrow&
\Gamma(\cal S_o^{(\nu)}\otimes\cF(-1,-1))=0,\;\nu=1,\ldots,n,
\\[.5ex] 
&\kern-25mm
0\to H^1(\cF(-1,-1)_D)\to H^1(\cal S_o^{(\nu)}\otimes\cF(-1,-1))
\to H^1(\cal S_o^{(\nu-1)}\otimes\cF(-1,-1))\to0.
\end{array}$
}
Overall, we deduce that $H^1(\cal Q_o\otimes\cF(-1,-1))\srel{\cong}{\to}
\ouset{j=1}{n}H^1(\cF(-1,-1)_{D_j})\srel{\cong}{\to}H^2(\cF(-2,-1))^n$. 
Thus the rightmost arrow in \eqref{eq:QF} is an isomorphism, and consequently 
$H^1(\cal H(-1,-1))=0$. 
\end{proof}

\begin{lemma}\label{lm:irred-M}
$\mbb M$ is irreducible.
\end{lemma}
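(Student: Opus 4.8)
The plan is to use the rational map $\Th\colon\mbb M\dashto\bar M_\Delta^\vb$ of \eqref{eq:th} together with Lemma \ref{lm:inj}. First I would observe that $\mbb M$ is \emph{smooth}: by Lemma \ref{lm:h2=0}(i) one has $H^2(End(\cF))=0$ for every $\cF\in\mbb M$, so $\mbb M$ is a smooth quasi-projective variety, and hence its irreducible components are pairwise disjoint. By Lemmas \ref{lm:Ft} and \ref{lm:Ft2} there is a non-empty component $\mbb M_o\subseteq\mbb M$, and it suffices to show $\mbb M=\mbb M_o$. Assume, for contradiction, that there is another irreducible component $\mbb M_1\neq\mbb M_o$. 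Fix generic $D\in|\cO_\pi(1)|$ and $P\in|\pi^*\cO_{\mbb P^1}(1)|$. By Lemma \ref{lm:h2=0}(ii)--(iii), $\Th$ is defined on a dense open subset of each component of $\mbb M$, where it is a dominant morphism with everywhere-isomorphic differential (in particular an open map), the stable locus is dense in each component, and $\bar M_\Delta^\vb$ is irreducible of dimension $m$ (Lemma \ref{lm:MD}, using \ref{thm:hirz}(iv) and \cite[Theorem 2.2]{hu}). Since $\dim\mbb M_o=\dim\mbb M_1=m$, the images $\Th(\mbb M_o)$ and $\Th(\mbb M_1)$ are both dense in $\bar M_\Delta^\vb$, so they meet the open stable locus $M_\Delta^\vb$ in a common point; hence there are a stable $\cF_o\in\mbb M_o$ and a stable $\cF\in\mbb M_1$ together with an isomorphism $\cF_{o,\Delta}\cong\cF_\Delta$ of (stable) vector bundles on $\Delta$.

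The heart of the matter is to deduce $\cF\cong\cF_o$ from this. Set $\cal H:=Hom(\cF_o,\cF)$ and consider $0\to\cal H(-1,-1)\to\cal H\to\cal H_\Delta\to 0$, where $\cal H_\Delta=Hom(\cF_{o,\Delta},\cF_\Delta)$ as $\cF_o,\cF$ are locally free. A slope comparison shows $H^0(\cal H(-1,-1))=\Hom_Y(\cF_o(1,1),\cF)=0$: indeed $\cF_o(1,1)$ is $L_c$-semistable of positive slope while $\cF$ is $L_c$-semistable of slope $0$, so there is no non-zero homomorphism. Moreover $H^1(\cal H(-1,-1))=0$ by Lemma \ref{lm:inj}. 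Therefore the restriction map $\Hom_Y(\cF_o,\cF)\srel{\sim}{\lar}\Hom_\Delta(\cF_{o,\Delta},\cF_\Delta)$ is an isomorphism; let $\phi\colon\cF_o\to\cF$ be the element that restricts on $\Delta$ to a fixed isomorphism $\cF_{o,\Delta}\srel{\sim}{\lar}\cF_\Delta$. Since $c_1(\cF_o)=c_1(\cF)=0$ and $\Pic(Y)=\mbb Z\cdot u\oplus\mbb Z\cdot v$, we have $\det\cF_o=\det\cF=\cO_Y$, so $\det\phi\in H^0(Y,\cO_Y)=\mbb C$; its restriction to $\Delta$ equals $\det(\phi|_\Delta)$, which is a nowhere-vanishing section of $\cO_\Delta$ (because $\phi|_\Delta$ is an isomorphism), hence a non-zero constant (recall $H^0(\cO_\Delta)=\mbb C$, as $D$ and $P$ meet along $\l$). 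Thus $\det\phi$ is a non-zero constant, so $\phi$ is an isomorphism of vector bundles on $Y$.

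Consequently $\cF$ and $\cF_o$ represent the same point of $\mbb M$, which then lies in both $\mbb M_o$ and $\mbb M_1$ --- impossible, since $\mbb M$ is smooth and these components are distinct. Hence $\mbb M$ has a unique irreducible component, i.e.\ $\mbb M=\mbb M_o$ is irreducible. (Together with Lemmas \ref{lm:Ft}--\ref{lm:Ft2} and \ref{lm:h2=0} this also yields the remaining assertions of \ref{thm:MM}: $\mbb M$ is non-empty and generically smooth of the expected dimension $m$ with a dense locus of stable bundles, and the same argument applied to two generic points of $\mbb M=\mbb M_o$ shows that $\Th$ is generically injective, hence birational onto a dense open of $\bar M_\Delta^\vb$, so $\mbb M$ is rational by Lemma \ref{lm:MD}.) The step I expect to be the main obstacle is the vanishing $H^1(Hom(\cF_o,\cF)(-1,-1))=0$ for \emph{arbitrary} $\cF\in\mbb M$, but this is precisely the content of Lemma \ref{lm:inj}; the remaining ingredients --- smoothness of $\mbb M$, the genericity needed to match the restrictions to $\Delta$, and the determinant computation for $\phi$ --- are routine. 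A minor point still to be checked carefully is that, for fixed generic $D$ and $P$, the map $\Th$ is indeed defined and open on a dense open subset of every component of $\mbb M$, which follows from \ref{lm:h2=0}(ii)--(iii).
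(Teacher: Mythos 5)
Your proposal is correct and follows essentially the same route as the paper: both arguments pick general stable points $\cF_o,\cF$ in two putative components with $\cF_{o,\Delta}\cong\cF_\Delta$ (possible since each component dominates the irreducible $\bar M_\Delta^\vb$ by Lemma \ref{lm:h2=0}), use the vanishing of $\Gamma(\cal H(-\Delta))$ by semi-stability and of $H^1(\cal H(-\Delta))$ by Lemma \ref{lm:inj} to extend the isomorphism on $\Delta$ to a map $\cF_o\to\cF$, and conclude by the determinant argument. Your explicit appeal to smoothness of $\mbb M$ (from $H^2(End(\cF))=0$) to rule out a common point of two components just makes precise what the paper leaves implicit.
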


\begin{proof}
We constructed the component $\mbb M_o$, and assume $\mbb M'$ is 
another component of $\mbb M$. Since both $\mbb M_o,\mbb M'$ dominate 
$\bar M_{\Delta}^\vb$ (see \ref{lm:h2=0}), we find general points 
$\cF_o\in\mbb M_o$ and $\cF\in\mbb M'$ with the properties: 

-- $\Th(\cF)=\Th(\cF_o)$, that is $\cF_\Delta\cong\cF_{o,\Delta}$. 
 
-- Both $\cF,\cF_o$ are stable (by the density of the stable locus). 

-- $\cF_o$ satisfies the conditions \ref{lm:Ft}(i).

Then, for $\cal H:=Hom(\cF_o,\cF)$, the exact sequence 
\\ \centerline{
$0\to\Gamma(\cal H(-\Delta))\to\Gamma(\cal H)\to\Gamma(\cal H_\Delta)
\to H^1(\cal H(-\Delta))\to\ldots,$
}
has vanishing left hand side (as $\cal H$ is semi-stable) and also right hand side 
(by lemma \ref{lm:inj}). Hence the isomorphism $\cF_{o,\Delta}\to\cF_\Delta$ 
can be extended to a homomorphism $\cF_o\to\cF$. Its determinant is a section 
of $\cO_Y$, non-zero along $\Delta$, so $\cF_o,\cF$ are isomorphic too. 
We deduce that $\mbb M_o=\mbb M'$, that is $\mbb M$ is irreducible, 
since they are irreducible components, and their general points coincide. 
\end{proof}

\subsubsection{Rationality of $\mbb M$} 

\begin{lemma}\label{lm:M}
$\Th:\mbb M\to\bar M_{\Delta}^\vb$ is generically injective and birational, 
so $\mbb M$ is a rational variety. 
\end{lemma}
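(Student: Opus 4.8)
The plan is to show $\Th$ is generically injective, since we already know from Lemma \ref{lm:h2=0}(iii) that $\Th$ is generically finite onto $\bar M_\Delta^\vb$ and from Lemma \ref{lm:MD} that $\dim\bar M_\Delta^\vb = m = \dim\mbb M$, so generic injectivity plus Zariski's main theorem will give birationality, and then rationality of $\mbb M$ will follow from the rationality of $\bar M_\Delta^\vb$ established in Lemma \ref{lm:MD} (or via $M_\Delta^\vb$, which is dense). So the content is entirely the injectivity statement.

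First I would take two general points $\cF, \cF' \in \mbb M$ with $\Th(\cF) = \Th(\cF')$, i.e. an isomorphism $\psi : \cF_\Delta \xrightarrow{\sim} \cF'_\Delta$ of the restrictions to $\Delta = D \cup P$. By density of the stable locus (Lemma \ref{lm:h2=0}(iii)) both $\cF, \cF'$ may be assumed stable, and by irreducibility (Lemma \ref{lm:irred-M}) we may also assume $\cF$ (say) enjoys the properties of the generic point of $\mbb M_o$ listed in Lemma \ref{lm:Ft}(i), in particular that $\cF^\vee$ satisfies the hypotheses of Theorem \ref{thm:p2-p1}. The goal is to extend $\psi$ to an isomorphism $\cF \to \cF'$ on all of $Y$.

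The key step is the vanishing $H^1\big(Hom(\cF,\cF')(-1,-1)\big) = 0$, which is exactly Lemma \ref{lm:inj} applied with $\cF_o := \cF$ and the rôle of the arbitrary element played by $\cF'$ — this is why Lemma \ref{lm:inj} was set up with that asymmetry. Granting this, I would use the restriction exact sequence
$$
0 \to \cal H(-1,-1) \to \cal H \to \cal H_\Delta \to 0, \qquad \cal H := Hom(\cF,\cF'),
$$
coming from $\cO_\pi(-1)\otimes\pi^*\cO_{\mbb P^1}(-1)\subset\cO_Y\surj\cO_\Delta$ (note $[\Delta] = [\cO_\pi(1)] + [\pi^*\cO_{\mbb P^1}(1)]$). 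Taking global sections, the term $\Gamma(\cal H(-1,-1))$ vanishes because $\cal H$ is semi-stable of degree zero (both $\cF,\cF'$ being semi-stable with $c_1 = 0$) and $\cal H(-1,-1)$ has strictly negative slope, while $H^1(\cal H(-1,-1)) = 0$ by Lemma \ref{lm:inj}; hence $\Gamma(\cal H) \xrightarrow{\sim} \Gamma(\cal H_\Delta)$. Therefore the isomorphism $\psi \in \Gamma(\cal H_\Delta)$ lifts to a homomorphism $\Psi : \cF \to \cF'$ whose restriction to $\Delta$ is $\psi$. Since $\det\Psi \in \Gamma(\cO_Y) = \mbb C$ is non-zero along $\Delta$, it is a non-zero constant, so $\Psi$ is an isomorphism. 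Thus $\cF \cong \cF'$, proving $\Th$ is generically injective.

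The main obstacle is making sure the hypotheses needed to invoke Lemma \ref{lm:inj} are genuinely in force — specifically that one of the two bundles (here $\cF$) can be taken with $\cF^\vee$ satisfying Theorem \ref{thm:p2-p1}'s hypotheses and restricting trivially along $\l$; this is precisely where irreducibility of $\mbb M$ (Lemma \ref{lm:irred-M}) is used, letting us slide the general point of $\mbb M$ into $\mbb M_o$. A secondary point to check is that $\Th$ is actually defined (not merely a rational map) at the general point, which is part of Lemma \ref{lm:h2=0}(iii), and that $\bar M_\Delta^\vb$ is irreducible of the right dimension so that "generically injective and generically finite" upgrades to birational — both are in hand from Lemmas \ref{lm:MD} and \ref{lm:h2=0}. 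The rationality conclusion is then immediate: $\mbb M$ is birational to $\bar M_\Delta^\vb$, which contains the dense rational open subset $M_\Delta^\vb$ by Lemma \ref{lm:MD}.
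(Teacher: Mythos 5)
Your proposal is correct and follows essentially the same route as the paper: the paper's proof simply says ``the same argument as in the proof of the previous lemma [\ref{lm:irred-M}] shows generic injectivity,'' and what you have written out --- lifting the isomorphism $\cF_\Delta\cong\cF'_\Delta$ via the vanishing $H^1(Hom(\cF,\cF')(-\Delta))=0$ from Lemma \ref{lm:inj} (using irreducibility to place the general point in $\mbb M_o$), concluding with the determinant argument, then restricting to the locus with stable restrictions to $D,P$ and invoking Zariski's main theorem and Lemma \ref{lm:MD} --- is exactly that argument, spelled out.
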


\begin{proof} 
The same argument as in the proof of the previous lemma shows that 
$\Th:\mbb M\to\bar M_\Delta^\vb$ is generically injective. 
Let $\mbb M^s\subset\mbb M_o$ the locus consisting of vector bundles whose 
restrictions to both $D,P$ are stable. Then $\Th|_{\mbb M^s}$ is well-defined, 
generically injective, and dominant. Zariski's main theorem implies that $\Th$ 
is birational, so $\mbb M$ is a rational variety by \ref{lm:MD}. 
\end{proof}

\begin{remark}\label{rmk:frame}
\nit{\rm(i)} 
In several cases (see \cite{do,bbr}) is more convenient to work with framed vector 
bundles (especially for the existence of universal families). We can reformulate the 
theorem by saying that the moduli space $\mbb M_{\l}$, consisting of semi-stable 
vector bundles $\cF\in\mbb M$ together with a framing along the line $\l$ 
(in contrast with the usual framings along divisors) is birational to 
$\bar M_{\Delta,\l}^\vb$. 

\nit{\rm(ii)} 
For $a=0, b=1$, one may easily check that $Y_{0,1}$ is isomorphic to the blow-up 
of $\mbb P^3$ along a line, and theorem \ref{thm:p2-p1} reduces precisely to the monad 
construction  \cite{bh,do} of instantons on $\mbb P^3$ trivialized along the line. 

\nit{\rm(iii)} 
We conclude by noticing that  theorem \ref{thm:p2-p1} yields also principal symplectic, 
respectively orthogonal bundles on $Y_{a,b}$. (Higher rank symplectic instanton bundles 
on $\mbb P^3$ have been constructed recently in \cite{bmt}.) In this case, 
\eqref{eq:F} and \eqref{eq:Fv} are equivalent by the Riemann-Roch formula, 
so one should impose only the conditions \eqref{eq:restr} -- \eqref{eq:F}. 
The outcome is that there is a non-degenerate (skew-)symmetric, bilinear form $b$ on 
$\mbb C^{r+2n}$ (the middle term of \eqref{eq:AOB}), such that the homomorphisms 
$\bf{A,B}$ are dual to each other with respect to $b$, that is 
$\textbf{B}=\,^{\sf t}\kern-1pt{\bf A}\cdot b$ (compare with \cite[Section 4]{bh}). 
The monad condition $\textbf{BA}=0$ translates into 
$^{\sf t}\kern-1pt{\bf A}\cdot b\cdot{\bf A}=0$. 
The properties of the corresponding moduli spaces will be investigated in a future article.
\end{remark}


\end{document}